\theoremstyle{plain}
\newtheorem{thm}{Theorem}[section]
\newtheorem{lem}[thm]{Lemma}
\newtheorem{prop}[thm]{Proposition}
\newtheorem{cor}[thm]{Corollary}
\newtheorem*{assumption*}{Assumption}
\theoremstyle{remark}
\newtheorem{rem}[thm]{Remark}
\newtheorem{exam}[thm]{Example}
\newcommand{\ind}{{\mathbbm{1}}}
\newcommand{\1}[1]{{\ind\mkern -1.5mu}{\{#1\}}}
\DeclareMathOperator{\Tr}{Tr}
\DeclareMathOperator{\Hessian}{Hess}
\DeclareMathOperator{\E}{\mathbb E}
\renewcommand{\P}{\mathbb P}
\renewcommand{\tilde}{\widetilde}
\renewcommand{\epsilon}{\varepsilon}
\newcommand{\TV}{\mathrm{TV}}
\newcommand{\eps}{\varepsilon}
\newcommand{\ud}{{\mathrm d}}
\newcommand{\R}{{\mathbb R}}
\newcommand{\N}{{\mathbb N}}
\newcommand{\RP}{{\mathbb R}_+}
\newcommand{\cA}{{\mathcal A}}
\newcommand{\cB}{{\mathcal B}}
\newcommand{\cD}{{\mathcal D}}
\newcommand{\cF}{{\mathcal F}}
\newcommand{\cT}{{\mathcal T}}
\newcommand{\cX}{{\mathcal X}}
\def\namedlabel#1#2{\begingroup  
    #2%
    \def\@currentlabel{#2}%
    \phantomsection\label{#1}\endgroup
}
\newlist{myenumi}{enumerate}{10}
\setlist[myenumi]{leftmargin=0pt, labelindent=\parindent, listparindent=\parindent, labelwidth=0pt, itemindent=!, itemsep=1pt, parsep=4pt}
\newlist{thmenumi}{enumerate}{10}
\setlist[thmenumi]{leftmargin=0pt, labelindent=\parindent, listparindent=\parindent, labelwidth=0pt, itemindent=!}
\title{Subexponential lower bounds for $f$-ergodic Markov processes}
\author{Miha Bre\v{s}ar} 
\address{Department of Statistics, University of Warwick, UK}
\email{miha.bresar.2@warwick.ac.uk}
\author{Aleksandar Mijatovi\'c} 
\address{Department of Statistics, University of Warwick, UK}
\email{a.mijatovic@warwick.ac.uk}
\begin{document}

\begin{abstract}
We provide a  criterion for establishing lower bounds
on the rate of convergence in $f$-variation
of a continuous-time ergodic Markov process to its invariant measure. The criterion consists of  novel super- and submartingale  conditions for certain functionals of the
Markov process. It provides a general approach for proving lower bounds on the tails of the invariant measure and the rate of convergence in $f$-variation of a Markov process, analogous to the widely used  Lyapunov drift conditions for upper bounds. Our key technical innovation produces lower bounds on the tails of the heights and durations of the excursions from bounded sets of a continuous-time  Markov process using path-wise arguments.   

We apply our
theory to elliptic diffusions and  L\'evy-driven stochastic differential equations  with known polynomial/stretched exponential upper bounds on their rates of convergence. Our lower bounds match asymptotically the known upper bounds for these classes of models, thus establishing their rate of convergence to stationarity. The generality of the approach suggests that, analogous to the Lyapunov drift conditions for upper bounds, our methods can be expected to find  applications in many other settings.
\end{abstract}

\maketitle

\noindent
{\em Key words:} 
subgeometric ergodicity; lower bounds; rate of convergence in  f-variation and total variation; invariant measure; return times; Lyapunov functions and Foster-type drift conditions.

\smallskip

\noindent
{\em AMS Subject Classification 2020:}  60J25; 37A25 (Primary); 60J35; 60J60 (Secondary).

\section{Introduction}
 Quantifying the rate of convergence of a Markov process $X$ towards its invariant measure $\pi$ is a fundamental problem in probability and its applications (see~\cite{MeynTweedie,Moulin18} and the references therein).  The  literature typically focuses on  \textit{upper} bounds on the rates of convergence (see e.g.~\cite{meyn1993stability1,douc2009subgeometric,hairer2010convergence}). One of the most effective approaches to address this problem is via Lyapunov functions and the corresponding drift conditions. This approach has been widely used since the seminal work of Meyn and Tweedie~\cite{MeynTweedie,meyn1993stability2} and, due to its broad applicability and robustness,  remains one of the most popular methods for quantifying upper bounds on the convergence rates of Markov processes. 
 However, an important limitation of this method, particularly in the case when the upper bounds are subexponential, is the lack of the corresponding lower bounds, which would allow the user to establish the rates of convergence to stationarity. 
 In this paper we develop a theory based on novel drift conditions that yield \textit{lower} subexponential bounds on the rates of convergence in a general setting. We apply it to models previously studied using Lyapunov functions for upper bounds and establish asymptotically matching lower bounds.

Our results can be summarized as follows: 
take a Lyapunov function $V:\cX\to[1,\infty)$ (where $\cX$ is the state space of $X$), which may (but need not) satisfy the upper bound drift conditions in~\cite{douc2009subgeometric} (also given in~\eqref{eq:douc_drif_condition} below). 
Construct scalar functions $\varphi:(0,1] \to\RP$ and $\Psi:[1,\infty)\to[1,\infty)$ such that, outside of a compact set,  the following processes 
\begin{equation}
\label{eq:bacis_assumption}
1/V(X) - \int_0^\cdot \varphi \circ (1/V)(X_s)\ud s \qquad \text{and}\qquad \Psi\circ V(X)
\end{equation}
are a supermartingale and a submartingale, respectively. 
Then we obtain \textit{lower} bounds on the tails of the return times of $X$ to compact sets, the tails of the invariant measure $\pi$ of $X$  and the rate of convergence in $f$-variation (including total variation) distance of the law of $X_t$
to $\pi$. 

Building on ideas in~\cite{Hairer09,hairer2010convergence}, 
we provide general Foster-Lyapunov-type drift conditions implying \textit{lower} bounds for \textit{all times} on the convergence rates of ergodic Markov processes. 
The super- and submartingale properties of the processes in~\eqref{eq:bacis_assumption} are typically verified using the infinitesimal characteristics of $X$ and It\^o's formula. 
In conjunction with the classical drift conditions for upper bounds~\cite{douc2009subgeometric,Fort2005,meyn1993stability1}, our results offer a comprehensive and robust approach to quantifying the convergence rate of ergodic Markov processes. 

As pointed out in~\cite[Sec.~5]{hairer2010convergence}, deriving a lower bound on the rate of convergence to stationarity typically requires a deep understanding of the tail of the invariant measure, which is available only in very special cases (e.g.~Langevin diffusions). The main contribution of this paper consists of providing a robust framework for  deriving lower bounds on the convergence rates without prior knowledge of the tail behavior of the invariant measure. For example, in the context of (possibly L\'evy-driven) stochastic differential equations with multiplicative noise, our theory  makes it possible to establish lower bounds on $f$-variation for \textit{all times} (as opposed to along a sparse sequence of times going to infinity as in~\cite{Hairer09}). To the best of our knowledge, such results were previously only available for Langevin diffusions~\cite[Thm~2.1]{Hairer09}, cf. Section~\ref{subsec:related_literature} below.

The remainder of the paper is structured as follows. 
Section~\ref{sec:main_results} states the novel $\mathbf{L}$-drift condition and formulates our main theorems for ergodic Markov processes. Section~\ref{sec:examples} presents the applications of the results of Section~\ref{sec:main_results} to classes of models, studied in~\cite{douc2009subgeometric,Fort2005}, exhibiting subexponential (and exponential) ergodicity. Sections~\ref{sec:return_times} and~\ref{sec:proofs} contain the proofs of the results of Section~\ref{sec:main_results}.
More precisely, Sections~\ref{sec:return_times} states and proves lower bounds for the tails of return times to bounded sets for c\`adl\`ag semimartingales.
Section~\ref{sec:proofs} applies these results in the context of c\`adl\`ag ergodic Markov processes to prove the theorems of Section~\ref{sec:main_results}.
The proofs of the  results, stated in Section~\ref{sec:examples}, for ergodic elliptic diffusions, L\'evy-driven stochastic differential equations and hypoelliptic stochastic damping Hamiltonian system  are in Section~\ref{sec:examples_proofs}. Finally, Section~\ref{sec:conclusion} offers concluding remarks, highlights some open questions and describes future directions of research.  (See~\cite{Presentation_AM} for short YouTube presentations describing the \href{https://youtu.be/r3eiRywC0js?si=XAuPbHMpDoe-bHud}{applications} and  \href{https://youtu.be/r3eiRywC0js?si=sJ_NA4GuQP8cVGcJ}{proofs} of our main results.)

\section{Main results}
\label{sec:main_results}
\subsection{Basic definitions}
\label{subsec:Definitions}
Let 
$X=(X_t)_{t\in\RP}$, where $\RP\coloneqq[0,\infty)$, be a strong Markov process on a filtered space 
$(\Omega,\cF,(\cF_t)_{t\in\RP})$, taking values 
in a locally compact separable metric space $\cX$, endowed with the Borel  $\sigma$-algebra $\cB(\cX)$.
For any $x\in\cX$,
denote by 
$\P_x$ and (resp. $\E_x$)  the associated probability measure (resp. expectation), 
satisfying $\P_x(X_0=x)=1$, and assume that $X$ has c\`adl\`ag  (i.e. right-continuous paths with left limits) paths.
The process $X$ is \textit{$\nu$-irreducible}
(resp. \textit{Harris recurrent}), where $\nu$ is a $\sigma$-finite measure on $\cB(\cX)$, if for every $A\in\cB(\cX)$, such that $\nu(A)>0$, and $x\in\cX$ we have $\E_x[\int_0^\infty \mathbbm{1}\{X_t\in A\}\ud t] >0$ 
(resp. $\P_x(\int_0^\infty \mathbbm{1}\{X_t\in A\}\ud t =\infty)=1$).
The probability measure $\pi$ on $(\cX,\cB(\cX))$ is an \textit{invariant measure} for $X$  if for all bounded measurable functions $g:\cX\to\RP$ and $t\in\RP$ we have 
$\int_{\cX}\E_x[g(X_t)]\pi(\ud x) = \int_{\cX} g(x)\pi(\ud x)$. A Harris recurrent process is  \textit{positive Harris recurrent} if it admits an invariant measure.
For any measurable function $f:\cX\to[1,\infty)$,
the \textit{$f$-variation} of a signed measure $\mu$ on $(\cX,\cB(\cX))$ is given by
 $\|\mu\|_{f} \coloneqq \sup\{\vert\int_\cX g(x)\mu(\ud x)\vert:g:\cX\to \R \text{ measurable, } |g|\leq f\}$.
 In the special case $f\equiv 1$ we obtain the \textit{total variation} $\|\mu\|_{\TV} \coloneqq\|\mu\|_f$ of the measure $\mu$. The process $X$ is \textit{ergodic} if $\lim_{t\to\infty}\|\P_x(X_t\in\cdot)-\pi(\cdot)\|_{\TV} = 0$ for all $x\in\cX$.

As we are interested in convergent Markov processes, unless explicitly stated otherwise, the following \textbf{standard assumption} 
 holds throughout the paper: % (but will not be repeated):  
$X=(X_t)_{t\in\RP}$ \textit{is an ergodic, positive Harris recurrent  Markov process with c\`adl\`ag paths and invariant measure $\pi$ on $(\cX,\cB(\cX))$.}

\subsection{Lower bounds: the tails of \texorpdfstring{$\pi$}{pi} and \texorpdfstring{$f$}{f}-variation}
The lower bounds on the tails of the invariant measure and the rate of convergence of $X$ will be implied by the following $\mathbf{L}$-\textit{drift condition}.

\begin{assumption*}[\textbf{L($V$,$\varphi$,$\Psi$)}]
\namedlabel{sub_drift_conditions}
Let $V:\cX \to [1,\infty)$  be a continuous function, such that, for all $x\in\cX$, $\limsup_{t\to\infty} V(X_t) = \infty$ $\P_x$-a.s.  Assume also there exists $\ell_0\in[1,\infty)$ such that~\ref{sub_drift_conditions(i)} and~\ref{sub_drift_conditions(ii)} hold.\\
\namedlabel{sub_drift_conditions(i)}{\textbf{(i)}}
Let $\varphi:(0,1] \to\RP$ 
be non-decreasing\footnote{A non-decreasing function may have intervals of constancy, while an increasing function does not; similarly for non-increasing and decreasing.} and continuous,
such that  $r\mapsto r\varphi(1/r)$ is decreasing on $[1,\infty)$
and $\lim_{r\to\infty}r\varphi(1/r)=0$.
Assume that for  $b\in\RP$ and any $x\in\cX$, the process
\begin{equation*}
\label{eq:submartingale_drift}
1/V(X)-\int_0^\cdot \varphi(1/ V(X_u))\ud u -b \int_0^\cdot \mathbbm{1}\{V(X_u)\leq\ell_0\}\ud u
\quad\text{is an $(\cF_t)$-supermartingale under $\P_x$.}
\end{equation*}
\namedlabel{sub_drift_conditions(ii)}{\textbf{(ii)}}
Let $\Psi:[1,\infty)\to[1,\infty)$ be a differentiable, increasing, submultiplicative\footnote{A function $\Psi:[1,\infty) \to[1,\infty)$ is
 \textit{submultiplicative} if it satisfies
$\Psi(r_1+r_2)\leq C\Psi(r_1)\Psi(r_2)$
for some constant $C\in(0,\infty)$ and all $r_1,r_2\in[1,\infty)$~\cite[Def.~25.2]{MR3185174}.}\label{footnote:submultiplicative} function satisfying the following: for any $\ell\in(\ell_0,\infty)$, there exists a 
constant $C_\ell\in(0,\infty)$ such that 
\begin{equation}
\label{eq:assumption_heavy_tail_bound}
\P_x(T^{(r)}<S_{(\ell)}) \geq 
%\1{V(x)\geq r} + 
C_\ell/\Psi(r) \quad \text{for all $r\in(\ell+1,\infty)$ and $x\in\{\ell+1\leq V\}$,}
\end{equation}
where $T^{(r)}\coloneqq\inf\{t\geq0:V(X_t)>r\}$ and $S_{(\ell)} \coloneqq \inf\{t\geq 0: V(X_t)<\ell\}$.
\end{assumption*}

In applications, the $\mathbf{L}$-drift condition~\nameref{sub_drift_conditions}
is verified via the infinitesimal generator of $X$, see Theorem~\ref{thm:generator}, Subsection~\ref{subsec:generator} and examples in Section~\ref{sec:examples} %and~\ref{sec:examples_proofs} 
below.
In particular, the exit probability estimate in~\eqref{eq:assumption_heavy_tail_bound} is typically implied by a submartingale property of the (appropriately stopped) process $\Psi(V(X))$, see Lemma~\ref{lem:assumption_submart_exit_prob}  below for details.

Theorem~\ref{thm:invariant} gives a lower bound on the tail of invariant measure $\pi$ of the process $X$ under Assumption~\nameref{sub_drift_conditions}. Our lower bounds are in terms  of the functions $V$, $\varphi$, $\Psi$ and a logarithmic correction:\footnote{In this paper we adopt the convention that $\log x$
equals $1$ for $x\in(0,\mathrm{e})$
and  the natural logarithm on $[\mathrm{e},\infty)$.}
for any $\eps,q\in(0,1)$, define the function
\begin{equation}
\label{eq:def_L_eps_q}
    L_{\eps,q}(r)\coloneqq r\varphi(1/r)\Psi(2r/(1-q))(\log\log r)^{\eps},\quad\text{$r\in[1,\infty)$.}
\end{equation}
 
\begin{thm}[Tails of the invariant measure]
\label{thm:invariant}
Let Assumption~\nameref{sub_drift_conditions} hold. Then for any $q,\eps\in(0,1)$  there exists a constant $c_{\eps,q}\in(0,1)$ such that
\begin{equation}
\label{eq:main_result_invariant}
c_{\eps,q}/L_{\eps,q}(r)\leq \pi(x\in\cX:V(x) \geq r )\qquad\text{for all $r\in[1,\infty)$.}
\end{equation}
 \end{thm}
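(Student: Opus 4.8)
The plan is to derive~\eqref{eq:main_result_invariant} from the standard excursion representation of $\pi$, fed by two path-wise inputs: Assumption~\ref{sub_drift_conditions(ii)} controls the probability that an excursion of $X$ from a bounded set climbs to a height of order $r/(1-q)$, while Assumption~\ref{sub_drift_conditions(i)} bounds below the time such an excursion then spends in $\{V\ge r\}$. Fix $q,\eps\in(0,1)$ and choose $\ell\in(\ell_0,\infty)$ large enough that $\mathcal E\coloneqq\{x\in\cX:V(x)\le\ell\}$ has $\pi(\mathcal E)>0$; put $R\coloneqq 2r/(1-q)$. Since $L_{\eps,q}$ is continuous, hence bounded on $[1,\ell+1]$, it suffices to treat $r>\ell+1$ (whence automatically $r>\ell_0$ and $R>\ell+1$), the range $r\le\ell+1$ following from $\pi(V\ge r)\ge\pi(V\ge\ell+2)>0$ on shrinking $c_{\eps,q}$. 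For positive Harris recurrent $X$ one has the Kac-type cycle formula $\pi(A)=\E_{\nu}\big[\int_0^{\tau}\mathbbm{1}\{X_t\in A\}\,\ud t\big]\big/\E_{\nu}[\tau]$, with $\tau$ the regeneration time of a Nummelin split chain whose atom may be taken inside $\mathcal E$, $\nu$ the regeneration law, and $\E_{\nu}[\tau]\in(0,\infty)$ (finiteness is exactly positive recurrence); alternatively one may argue through the ratio ergodic theorem $\tfrac1T\int_0^T\mathbbm{1}\{X_t\in A\}\,\ud t\to\pi(A)$ $\P_x$-a.s. Either way, the task reduces to a lower bound on the expected time $X$ spends in $\{V\ge r\}$ during one cycle.

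For the first input, $\limsup_{t\to\infty}V(X_t)=\infty$ $\P_x$-a.s.\ forces $\P_{\nu}(G)>0$, where $G$ is the event that the cycle visits $\{V\ge\ell+1\}$ (otherwise $X$ would eventually remain in $\{V<\ell+1\}$, a contradiction). On $G$, let $\sigma$ be the first time within the cycle with $V\ge\ell+1$; by the strong Markov property at $\sigma$ and Assumption~\ref{sub_drift_conditions(ii)} applied with level $R$, from the current state ($V\ge\ell+1$) the process reaches $\{V>R\}$ before $\{V<\ell\}$ with conditional probability at least $C_\ell/\Psi(R)$, uniformly in the starting state; since the atom lies in $\mathcal E\subseteq\{V\le\ell\}$, this occurs before the cycle ends.

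For the second input, let $y$ satisfy $V(y)\ge R$ and set $S\coloneqq\inf\{t\ge0:V(X_t)<r\}$. As $r>\ell_0$, the term $b\,\mathbbm{1}\{V\le\ell_0\}$ in Assumption~\ref{sub_drift_conditions(i)} vanishes on $[0,S)$, so $t\mapsto 1/V(X_{t\wedge S})-\int_0^{t\wedge S}\varphi(1/V(X_u))\,\ud u$ is a $\P_y$-supermartingale starting from $1/V(y)\le(1-q)/(2r)$; since $\varphi$ is non-decreasing and $V(X_u)\ge r$ for $u<S$, evaluating it at the deterministic time $t_0\coloneqq q/(2r\varphi(1/r))\in(0,\infty)$ and using $1/V(X_{t_0\wedge S})\ge(1/r)\mathbbm{1}\{S\le t_0\}$ (valid because $V(X_S)\le r$ by right-continuity on $\{S\le t_0\}$) gives
\[
\tfrac1r\,\P_y(S\le t_0)\ \le\ \tfrac{1}{V(y)}+\varphi(1/r)\,t_0\ \le\ \tfrac{1-q}{2r}+\tfrac{q}{2r}\ =\ \tfrac1{2r},
\]
so $\P_y(S>t_0)\ge\tfrac12$: having reached height $\ge R$, the excursion stays in $\{V\ge r\}$ for a further $t_0$ units of time with conditional probability at least $\tfrac12$, hence (as $r>\ell$) before re-entering $\mathcal E$ and so before the cycle ends. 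Chaining the two inputs via the tower property,
\[
\E_{\nu}\Big[\int_0^{\tau}\mathbbm{1}\{V(X_t)\ge r\}\,\ud t\Big]\ \ge\ \P_{\nu}(G)\cdot\frac{C_\ell}{\Psi(R)}\cdot\frac12\cdot t_0\ =\ \frac{\P_{\nu}(G)\,C_\ell\,q}{4\,r\varphi(1/r)\,\Psi(2r/(1-q))},
\]
and dividing by $\E_{\nu}[\tau]$ yields~\eqref{eq:main_result_invariant}; the factor $(\log\log r)^{\eps}$ in $L_{\eps,q}$, being $\ge1$ under the logarithm convention of the paper, is harmless slack absorbed into $c_{\eps,q}$.

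The step I expect to be the main obstacle is the passage from these per-cycle path-wise estimates to the bound on $\pi$. Using the Kac formula requires building the Nummelin split-chain regeneration in the continuous-time c\`adl\`ag setting with the atom in a sublevel set of $V$, and verifying $\P_{\nu}(G)>0$ and $\E_{\nu}[\tau]<\infty$ while carefully aligning the stopping times $\sigma$, $S$ and $\tau$. Arguing instead through the ratio ergodic theorem, one must cope with the fact that in the subexponential regime the excursions of $X$ from $\mathcal E$ are heavy-tailed, so that a naive renewal--reward accounting over completed excursions up to a finite horizon is too lossy; correcting this needs the quantitative excursion-duration lower bounds of Section~\ref{sec:return_times}, and with them presumably the $(\log\log r)^{\eps}$ correction. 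A secondary point is the bookkeeping around the gap between the levels $\ell$ and $\ell+1$ and the treatment of $r\le\ell+1$.
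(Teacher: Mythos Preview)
Your path-wise ingredients are correct and coincide with the paper's: the duration estimate $\P_y(S>t_0)\ge 1/2$ for $t_0=q/(2r\varphi(1/r))$ is exactly the content of Lemma~\ref{lem:return_times}, and the height estimate is Assumption~\ref{sub_drift_conditions(ii)} verbatim. Where you diverge is in converting these into a bound on $\pi$. The paper does \emph{not} use a cycle formula; instead it argues by contradiction. Assuming~\eqref{eq:main_result_invariant} fails along a sparse sequence $r_n\to\infty$, it manufactures a non-decreasing $h$ with $\int h\circ V\,\ud\pi<\infty$ (from the assumed tail decay) yet $\int^\infty \ud r/\Psi(2G_h(r)/(1-q))=\infty$; Corollary~\ref{cor:criteria_for_infinite_moments}, which packages Theorem~\ref{thm:modulated_moments}, Lemma~\ref{lem:bounded_petite_sets}, and Meyn--Tweedie's characterisation~\cite[Thm~1.2(b)]{tweedie1993generalized} of $\pi$-integrability via return-time functionals, then forces $\int h\circ V\,\ud\pi=\infty$. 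The $(\log\log r)^\eps$ factor is precisely the cost of this detour through a constructed $h$, and the paper flags it as an artefact in Remark~\ref{rem:after_main_theorems}(e); your direct route, if it closes, would remove it entirely.

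Your self-diagnosis of the obstacle is accurate and honest. The paper deliberately avoids building a continuous-time regeneration structure by leaning on the off-the-shelf implication in~\cite{tweedie1993generalized}; you trade that citation for the burden of setting up the Nummelin split with the atom inside a sublevel set of $V$ (Lemma~\ref{lem:bounded_petite_sets} makes this plausible since petite sets are bounded), checking $\E_\nu[\tau]<\infty$ and $\P_\nu(G)>0$, and threading $\sigma$, $T^{(R)}$, $S$, $\tau$ through one cycle via the strong Markov property. None of this is exotic under positive Harris recurrence, but it is genuine bookkeeping the paper sidesteps. Your handling of $r\le\ell+1$ is fine once the large-$r$ bound is in hand: it already gives $\pi(V\ge\ell+2)>0$, which bootstraps the small-$r$ range.
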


Theorem~\ref{thm:f_rate} provides a lower bound on the rate of $f$-variation convergence of $X$ to its invariant measure $\pi$ under Assumption~\nameref{sub_drift_conditions}. 

\begin{thm}[Lower bounds for $f$-variation]
\label{thm:f_rate}
Let Assumption~\nameref{sub_drift_conditions} hold. Consider a function $f:\cX\to[1,\infty)$, satisfying $f =f_\star \circ V$ for some differentiable $f_\star :[1,\infty)\to[1,\infty)$.
Assume further that~\ref{assumption:f_convergence_a} and~\ref{assumption:f_convergence_b} hold. 
\begin{myenumi}[label=(\alph*)]
\item\label{assumption:f_convergence_a}
Let a continuous function $h:[1,\infty)\to[1,\infty)$ be such that  the function $g\coloneqq h/f_\star$ is increasing (and without loss of generality $g(1)= 1$) and $\lim_{r\to\infty}g(r)=\infty$. Let  $v:\cX\times\RP\to[1,\infty)$ be increasing in the second argument and satisfy 
\begin{equation*}
%\label{eq:h_rate_codition}
\E_x[h\circ V(X_t)]\leq v(x,t)\quad\text{for all $x\in\cX$ and $t\in\RP$.}
\end{equation*}
\item\label{assumption:f_convergence_b}
Pick $\eps,q\in(0,1)$ and  a constant $c_{\eps,q}\in(0,1)$, such that the inequality in~\eqref{eq:main_result_invariant} holds with the function $L_{\eps,q}$, and consider a continuous function $a:[1,\infty)\to\RP$, satisfying
\begin{equation*}
%\label{eq:f_convergence_lower}
a(t) \leq  f_\star(g^{-1}(t))c_{\eps,q}/L_{\eps,q}(g^{-1}(t))\quad\text{for all $t\in[1,\infty)$,}
\end{equation*} where $g^{-1}$ is the inverse of the increasing function $g$ in~\ref{assumption:f_convergence_a}. Suppose also that the function $A(t)\coloneqq ta(t)$ is increasing, $\lim_{t\to\infty} A(t) = \infty$ and denote its inverse by $A^{-1}$.
\end{myenumi}
Define the function $r_f:\cX \times [1,\infty)\to\RP$ by $r_f \coloneqq a\circ A^{-1}\circ (2 v)$. Then %where $A^{-1}$ is the inverse of the increasing function %$A$. Then, for $f \coloneqq f_\star \circ V$, we obtain
\begin{equation}
\label{eq:main_result_rate_of_convergence}
r_f(x,t)/2\leq \| \P_x(X_t\in \cdot )-\pi(\cdot)\|_{f} \quad \text{for all $x\in\cX$ and $t\in[1,\infty)$.}
\end{equation}
\end{thm}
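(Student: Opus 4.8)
The plan is to realise the $f$-variation distance as a supremum over test functions and estimate it using a single, well-chosen truncation. Write $\mu_t\coloneqq\P_x(X_t\in\cdot)$ and recall from the definition of $\|\cdot\|_f$ that $\|\mu_t-\pi\|_f=\sup\{|\mu_t(\phi)-\pi(\phi)|:\phi\text{ measurable},\ |\phi|\le f\}$, where $\mu(\phi)\coloneqq\int_\cX\phi\,\ud\mu$. For a threshold $R\in[1,\infty)$, to be optimised at the end, I would take the test function $\phi_R\coloneqq f\cdot\mathbbm{1}\{V\ge R\}$; since $f\ge 1$ we have $|\phi_R|\le f$, and $\phi_R$ is measurable because $V$ is continuous, so
\[
\|\mu_t-\pi\|_f\ \ge\ \pi(\phi_R)-\mu_t(\phi_R).
\]
The two terms are then handled by the two inputs of the theorem: the $\pi$-term from below by Theorem~\ref{thm:invariant}, and the $\mu_t$-term from above by the moment bound in~\ref{assumption:f_convergence_a}.

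For the lower bound, $\pi(\phi_R)=\pi\big(f_\star(V)\mathbbm{1}\{V\ge R\}\big)$; since $f_\star$ is non-decreasing, $f_\star(V)\ge f_\star(R)$ on $\{V\ge R\}$, whence by~\eqref{eq:main_result_invariant}
\[
\pi(\phi_R)\ \ge\ f_\star(R)\,\pi(V\ge R)\ \ge\ f_\star(R)\,c_{\eps,q}/L_{\eps,q}(R).
\]
For the upper bound, write $h=g\cdot f_\star$ with $g$ the increasing function of~\ref{assumption:f_convergence_a}; on $\{V(X_t)\ge R\}$ monotonicity of $g$ gives $f_\star(V(X_t))=h(V(X_t))/g(V(X_t))\le h(V(X_t))/g(R)$, hence
\[
\mu_t(\phi_R)=\E_x\big[f_\star(V(X_t))\mathbbm{1}\{V(X_t)\ge R\}\big]\ \le\ \E_x[h\circ V(X_t)]/g(R)\ \le\ v(x,t)/g(R).
\]
Combining, $\|\mu_t-\pi\|_f\ge f_\star(R)\,c_{\eps,q}/L_{\eps,q}(R)-v(x,t)/g(R)$ for every $R\in[1,\infty)$.

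It remains to optimise over $R$. Substituting $t'\coloneqq g(R)$, i.e.\ $R=g^{-1}(t')$ (legitimate for $t'\in[1,\infty)$ since $g(1)=1$ and $g\to\infty$ by~\ref{assumption:f_convergence_a}) and invoking the defining inequality for $a$ in~\ref{assumption:f_convergence_b}, the last estimate becomes $\|\mu_t-\pi\|_f\ge a(t')-v(x,t)/t'$ for all $t'\in[1,\infty)$. I would then choose $t'\coloneqq A^{-1}(2v(x,t))$, which is admissible since $A$ is an increasing bijection of $[1,\infty)$ onto $[A(1),\infty)$ and $2v(x,t)$ lies in this range in the regime of interest; by construction $t'a(t')=A(t')=2v(x,t)$, so $v(x,t)/t'=\tfrac12 a(t')$, and therefore
\[
\|\mu_t-\pi\|_f\ \ge\ a(t')-\tfrac12 a(t')\ =\ \tfrac12\,a\big(A^{-1}(2v(x,t))\big)\ =\ \tfrac12\,r_f(x,t),
\]
which is~\eqref{eq:main_result_rate_of_convergence}.

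I do not expect an essential obstacle inside this argument: the substantive content sits in the input Theorem~\ref{thm:invariant} (and, beneath it, Assumption~\nameref{sub_drift_conditions} together with the path-wise excursion and return-time estimates of Section~\ref{sec:return_times}), while the step above is the routine device converting a \emph{lower} bound on the tail of $\pi$ and an \emph{a priori upper} bound on $\E_x[h\circ V(X_t)]$ (of the kind delivered by classical Foster--Lyapunov drift conditions) into a lower bound on $f$-variation valid for all $t$. The only points demanding care are bookkeeping: that $g^{-1}$ and $A^{-1}$ are applied on the ranges used, the non-decrease of $f_\star$ invoked in the $\pi$-term, and the mild boundary condition $2v(x,t)\ge A(1)$ needed to apply $A^{-1}$; the constant $2$ (and hence the factor $\tfrac12$) is merely a convenient way to split $v(x,t)/t'$ against $a(t')$ in the optimisation, and any $\lambda>1$ in its place would work verbatim.
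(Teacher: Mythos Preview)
Your proof is correct and follows essentially the same route as the paper: the paper factors the argument through an abstract template (Lemma~\ref{lem:lower_bound_f_convergence_rate}) and a preliminary tail estimate (Corollary~\ref{cor:f_invariant_lower_bound}), whereas you carry out the same test-function bound $\|\mu_t-\pi\|_f\ge\pi(\phi_R)-\mu_t(\phi_R)$, the same Markov-inequality control of $\mu_t(\phi_R)$ via $h=g\cdot f_\star$, and the same optimisation $t'a(t')=2v(x,t)$ inline. The monotonicity of $f_\star$ you invoke in the $\pi$-term is also used (implicitly, via $f_\star'\ge0$) in the paper's proof of Corollary~\ref{cor:f_invariant_lower_bound}, so your flagging it as a bookkeeping point is apt.
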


We obtain the \textit{lower} bound on the rate of convergence in the total variation distance by choosing $f_\star\equiv 1$ (and hence $f=f_\star\circ V \equiv 1$) in the previous theorem.

\begin{cor}[Lower bounds for total variation]
\label{cor:rate}
Let Assumption~\nameref{sub_drift_conditions} hold and set $f_\star \equiv 1$.
Assume there exist
 continuous functions  $h,a:[1,\infty)\to\RP$ and $v:\cX\times\RP\to[1,\infty)$, satisfying conditions~\ref{assumption:f_convergence_a} and~\ref{assumption:f_convergence_b} in Theorem~\ref{thm:f_rate}.
Let $r_1:\cX\times [1,\infty)\to\RP$,  $r_1 \coloneqq a\circ A^{-1}\circ (2 v)$, be as in Theorem~\ref{thm:f_rate}. Then the following lower bound holds
\begin{equation}
\label{eq:result_rate_of_convergence_TV}
r_1(x,t)/2\leq \| \P_x(X_t\in \cdot )-\pi(\cdot)\|_{\TV} \quad \text{for all $x\in\cX$ and $t\in[1,\infty)$}.
\end{equation}
\end{cor}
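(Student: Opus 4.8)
The plan is to deduce Corollary~\ref{cor:rate} directly from Theorem~\ref{thm:f_rate} by specialising to $f_\star\equiv 1$. First I would check that this choice is admissible: the constant function $f_\star\equiv 1$ is differentiable and maps $[1,\infty)$ into $[1,\infty)$, so $f=f_\star\circ V\equiv 1$ is a legitimate weight, and the hypotheses~\ref{assumption:f_convergence_a} and~\ref{assumption:f_convergence_b} of Theorem~\ref{thm:f_rate} reduce, upon substituting $f_\star\equiv 1$, to exactly the hypotheses assumed in the corollary. Indeed, $g=h/f_\star=h$, so requiring $g$ to be increasing with $g(1)=1$ and $g(r)\to\infty$ is precisely the stated condition on $h$, while the constraint on $a$ in~\ref{assumption:f_convergence_b} reads $a(t)\le f_\star(g^{-1}(t))\,c_{\eps,q}/L_{\eps,q}(g^{-1}(t))=c_{\eps,q}/L_{\eps,q}(g^{-1}(t))$, again as assumed; the requirements that $A(t)=ta(t)$ be increasing with $A(t)\to\infty$ are carried over unchanged.

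Next I would invoke the elementary identity $\|\mu\|_f=\|\mu\|_{\TV}$ valid when $f\equiv 1$, which is immediate from the definition $\|\mu\|_f=\sup\{|\int_\cX g(x)\,\mu(\ud x)|:|g|\le f\}$ recorded in Subsection~\ref{subsec:Definitions}. Finally, since the function $r_1\coloneqq a\circ A^{-1}\circ(2v)$ appearing in the corollary is literally the function $r_f\coloneqq a\circ A^{-1}\circ(2v)$ produced by Theorem~\ref{thm:f_rate}, the bound~\eqref{eq:main_result_rate_of_convergence} with $f\equiv 1$ is exactly~\eqref{eq:result_rate_of_convergence_TV}, and the corollary follows at once.

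Since the argument is a verbatim specialisation of Theorem~\ref{thm:f_rate}, there is no genuine obstacle here; the only point requiring (minimal) care is confirming that the degenerate choice $f_\star\equiv 1$ meets the structural requirements imposed on $f_\star$ in Theorem~\ref{thm:f_rate} --- differentiability is trivial, and the monotonicity of $g=h/f_\star=h$ is precisely what is hypothesised --- so nothing is lost in the passage to the total variation distance.
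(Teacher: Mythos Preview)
Your proposal is correct and follows exactly the paper's approach: the paper itself does not give a separate proof but merely notes, in the sentence preceding the corollary, that the total variation bound is obtained by choosing $f_\star\equiv 1$ (and hence $f=f_\star\circ V\equiv 1$) in Theorem~\ref{thm:f_rate}. Your careful verification that this specialisation meets all the structural requirements of Theorem~\ref{thm:f_rate} is a faithful elaboration of that one-line remark.
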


\begin{rem}
\label{rem:after_main_theorems}
\phantomsection
\begin{myenumi}[label=(\alph*)]

\item \label{rem:after_main_theorems(a)}
The lower bound in Theorem~\ref{thm:f_rate} is obtained by comparing the tail (with respect to $V$) of the invariant measure $\pi$  to the tail of marginal distribution of $V(X_t)$ at time $t$.  The function $h$ in condition~\ref{assumption:f_convergence_a} of Theorem~\ref{thm:f_rate} aims to maximise the growth of $r\mapsto r\pi(\{h\circ V\geq r\})$ as $r\to\infty$ with respect to the growth of $t\mapsto \E_x[h\circ V(X_t)]$  as $t \to\infty$, since this expectation controls the tail of $h\circ V(X_t)$ via Markov's inequality (see also Remark~\ref{rem:lem_lower_bound} below for more details).

\item 
Given a Lyapunov function $V$, our methods generate lower bounds on the $f$-variation distances with the property that the level sets of functions $f$ form a subset of the level sets of $V$. This is analogous to results concerning the upper bounds using a Lyapunov function $V$, see e.g.~\cite[Thm~3.2]{douc2009subgeometric}.

\item The $\mathbf{L}$-drift condition~\nameref{sub_drift_conditions} is not restricted to Markov processes with a subexponential invariant measure and rates of convergence. Indeed, in Section~\ref{subsubsec:Exponential} below, Theorems~\ref{thm:invariant} and~\ref{thm:f_rate}  yield exponential lower bounds both on the tails of the invariant measures and the convergence rates in a class of elliptic diffusion models where 
exponential upper bounds are known.

\item By Theorem~\ref{thm:invariant},  
the function $a(t)$ in Assumption~\ref{assumption:f_convergence_b} of Theorem~\ref{thm:f_rate} (with $f\equiv1$) provides a lower bound on the tail of $\pi$. In applications it is often simpler to work with some function $a$ rather than the actual lower bound on the tail of $\pi$ provided by Theorem~\ref{thm:invariant},
since the rate of convergence in  Theorem~\ref{thm:f_rate} is given in terms of the inverse of $t\mapsto A(t)=ta(t)$. In particular, under assumptions of Theorem~\ref{thm:f_rate}, inequality~\eqref{eq:main_result_invariant} implies  $0<f_\star(r)c_{\eps,q}/L_{\eps,q}(r)\leq f_\star(r) \pi(V\geq r)\to0$ as $r\to\infty$, making
the growth of $A(t)$ (as $t\to\infty$) sublinear. This typically leads to subexponential decay of $a\circ A^{-1}$ and thus subexponential lower bounds on $f$-variation.

\item The iterated logarithm  term in~\eqref{eq:def_L_eps_q}, and thus in~\eqref{eq:main_result_rate_of_convergence}, is an artefact of the proof of Theorem~\ref{thm:invariant}, where the lower bounds on modulated moments are used to establish  lower bounds on the invariant measure $\pi$. In all our examples in Section~\ref{sec:examples}, the iterated logarithm term is negligible, suggesting that it will not affect other applications. While we cannot fully remove this term, it is possible to modify the proof so that only
arbitrarily many (instead of two) iterations of the logarithm remain.
\item The submultiplicative function~\cite[Def.~25.2]{MR3185174} (cf. footnote on page~\pageref{footnote:submultiplicative})  $\Psi$ in~\nameref{sub_drift_conditions} transforms the process $V(X)$ into a submartingale. The function $\Psi$  features in lower bounds in our theorems through definition~\eqref{eq:def_L_eps_q}.
Since a product of submultiplicative functions is submultiplicative and, by~\cite[Prop.~25.4]{MR3185174}, $r\mapsto g(cr+\gamma)^\alpha$ (where $c,\gamma,\alpha>0$) is submultiplicative if $g$ is, it  follows that  
$r\mapsto r^a (\log r)^d \exp(b r^p) $ is submultiplicative for (i) $b>0$ and $p\in(0,1)$, (ii)  $b=0$ and $a>0$ or (iii) $b=a=0$ and $d\geq0$.
This form is similar to the subgeometric rate functions used in~\cite{douc2009subgeometric, Fort2005}. In fact,  the rate functions in~\cite{douc2009subgeometric, Fort2005}  are submultiplicative~\cite[Eq.~(5)]{Touminen94}, a crucial fact used in the proofs of~\cite{douc2009subgeometric, Fort2005}. 
\end{myenumi}
\end{rem}

\subsection{Modulated moments for the process}
A classical approach to the stability of Markov processes relies heavily  on decomposing the path of the process  as the sum of excursions from some petite set (see definition of a petite set in~\eqref{eq:petite} in Section~\ref{subsec:return_times} below). Many results, including bounds on the invariant measure, the rate of convergence and moderate deviations  have been established using such decomposition (see~\cite{Douc08} and the references therein for more details). 
It is thus natural for quantitative bounds on the tail of the \textit{modulated moments} (i.e. expectations of additive functionals of excursions from petite sets) to contain essential information necessary for bounding the rate of convergence to the invariant measure and related quantities.

The upper bounds on the modulated moments are well understood, see e.g.~\cite{douc2009subgeometric,Douc08}. In contrast, very little is known about the lower bounds on modulated moments. There are some known results regarding the finiteness of return time moments, e.g.~\cite{menshikovWillliams96}, which however are not sufficiently strong to either characterize the tail behavior of the return time or  complement the findings of~\cite[Thm 4.1]{douc2009subgeometric} for general ergodic Markov processes. In the following theorem, we both generalize and strengthen these results by accommodating a broader range of processes and providing lower bounds for the tail behavior. Moreover, in Section~\ref{sec:examples}, we show that, for a wide range of  models used in applications, our lower bounds on the return times match the upper bounds from~\cite{douc2009subgeometric}.

For a set $D\in \cB(\cX)$ and $\delta\in(0,\infty)$, let 
$\tau_D(\delta) \coloneqq \inf\{t>\delta: X_t\in D\}$ (with convention $\inf\emptyset \coloneqq\infty$)
be the first hitting time of $D$ after time $\delta$ (recall that $\tau_D(\delta)$ is an $(\cF_t)$-stopping time by~\cite[Thm~1.27]{Jacod2003}). In Theorem~\ref{thm:modulated_moments} we provide, under the \textbf{L}-drift condition~\nameref{sub_drift_conditions},
lower bounds on the tails of return times to arbitrary subsets of sublevel sets of the Lyapunov function~$V$.
Crucially, in Section~\ref{subsec:return_times} below, we prove that, under the \textbf{L}-drift condition, all petite sets are contained in the sublevel sets of $V$, making Theorem~\ref{thm:modulated_moments} applicable to any petite set of $X$.

\begin{thm}
\label{thm:modulated_moments}
Let Assumption~\nameref{sub_drift_conditions} hold. Consider a set $D\in \cB(\cX)$, with $D \subset \{V\leq m\}$ for some $m\in(1,\infty)$, and fix $q\in(0,1)$ and $\eps = (1-q)/2$. Then the following statements hold.
\begin{myenumi}[label=(\alph*)]
\item\label{thm:modulated_moments_a}
Let $h:[1,\infty)\to[1,\infty)$ be a non-decreasing continuous function and  $G_h$ the inverse of the increasing continuous function $v\mapsto \eps h(v)/(v\varphi(1/v))$ on $[1,\infty)$. Then
for every $x\in\cX$ there exist constants $C,r_0,\delta\in(0,\infty)$, such that 
$$
\P_x\left(\int_0^{\tau_{D}(\delta)} h\circ V(X_s)\ud s\geq r\right)\geq \frac{C}{\Psi(2G_h(r)/(1-q))} \quad \text{for all $r\in(r_0,\infty)$.}
$$
\item\label{thm:modulated_moments_b}
Let $G_1$ be the inverse of the increasing continuous function $v\mapsto \eps/(v\varphi(1/v))$ on $[1,\infty)$. Then for every $x\in\cX$ there exist constants $C,\delta,t_0\in(0,\infty)$, such that 
$$
\P_x(\tau_D(\delta)\geq t)\geq \frac{C}{\Psi(2G_1(t)/(1-q))}\quad \text{for all $t\in(t_0,\infty)$.}
$$
\end{myenumi}
\end{thm}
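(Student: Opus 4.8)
The plan is to reduce both parts to a single uniform excursion estimate for the real-valued process $V(X)$ and then to glue together the two halves of the $\mathbf{L}$-drift condition: part~\ref{sub_drift_conditions(ii)} produces, with probability at least a constant times $1/\Psi(\rho)$, an excursion of $V(X)$ that climbs to a high level $\rho$, while part~\ref{sub_drift_conditions(i)} forces such an excursion to stay above a fixed fraction of $\rho$ for a long time, via optional stopping. First I would note that~\ref{thm:modulated_moments_b} is the special case $h\equiv1$ of~\ref{thm:modulated_moments_a} (then $\int_0^{\tau_D(\delta)}h\circ V(X_s)\,\ud s=\tau_D(\delta)$ and $G_h=G_1$), so only~\ref{thm:modulated_moments_a} needs a proof. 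Since $D\subseteq\{V\le m\}$ and $h\ge1>0$, replacing $D$ by $\{V\le m'\}$ for any $m'\ge m$ only decreases both $\tau_D(\delta)$ and $\int_0^{\tau_D(\delta)}h\circ V(X_s)\,\ud s$, so it suffices to take $D=\{V\le m\}$ with $m>\ell_0$. Writing $\tau'\coloneqq\inf\{t\ge0:V(X_t)\le m\}$, on $[0,\tau')$ the indicator term in~\ref{sub_drift_conditions(i)} vanishes, so $M_t\coloneqq 1/V(X_t)-\int_0^t\varphi(1/V(X_u))\,\ud u$, stopped at $\tau'$ or at any hitting time of a level in $[\ell_0,\infty)$, is a bounded $(\cF_t)$-supermartingale under every $\P_x$.

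Next I would remove the dependence on the starting point. Fix $\ell\coloneqq\max(m,\ell_0)+1$ and $\sigma\coloneqq\inf\{t\ge0:V(X_t)\ge\ell+1\}$, which is $\P_x$-a.s.\ finite because $\limsup_{t}V(X_t)=\infty$ $\P_x$-a.s. Choose $\delta=\delta_x>0$ with $\P_x(\sigma\le\delta_x)\ge1/2$. On $\{\sigma\le\delta_x\}$ the process visits no point of $D$ while $V(X)$ stays above $m$ after time $\sigma$; hence $\tau_D(\delta_x)$ dominates $\sigma$ plus the post-$\sigma$ return time of $V(X)$ to $[1,m]$, and similarly for the additive functional. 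The strong Markov property at $\sigma$ then yields
\[
\P_x\!\Big(\int_0^{\tau_D(\delta_x)}h\circ V(X_s)\,\ud s\ge r\Big)\ \ge\ \tfrac12\,\inf_{y\in\{V\ge\ell+1\}}\P_y\!\Big(\int_0^{\tau'}h\circ V(X_s)\,\ud s\ge r\Big),
\]
so it remains to bound the infimum on the right from below by $c/\Psi\big(2G_h(r)/(1-q)\big)$ for $r$ large and some $c>0$. This uniform estimate involves only $V(X)$ and is exactly of the type treated for c\`adl\`ag semimartingales in Section~\ref{sec:return_times}.

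For this uniform estimate, given large $r$ I would set $\rho\coloneqq2G_h(r)/(1-q)$ and $w\coloneqq\eps\rho=G_h(r)$, so that $w>\max(m,\ell_0)$ and $\rho>\ell+1$ once $r\ge r_0$, and put $\Delta\coloneqq r/h(w)$, which by the definition of $G_h$ equals $\eps/(w\varphi(1/w))$. Starting from $y\in\{V\ge\ell+1\}$, the event $\{T^{(\rho)}<S_{(\ell)}\}$ has probability at least $C_\ell/\Psi(\rho)$ by~\eqref{eq:assumption_heavy_tail_bound}, and on it $V(X)\ge\ell>m$ up to $T^{(\rho)}$ with $V(X_{T^{(\rho)}})\ge\rho$. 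Restarting there from a state $z$ with $V(z)\ge\rho$ and writing $\bar\sigma\coloneqq\inf\{u\ge0:V(X_u)\le w\}$, optional stopping for $M$ at $\Delta\wedge\bar\sigma$, together with $\varphi(1/V(X_u))\le\varphi(1/w)$ for $u<\bar\sigma$ and $1/V(X_{\bar\sigma})\ge1/w$ on $\{\bar\sigma\le\Delta\}$, gives $\tfrac1w\P_z(\bar\sigma\le\Delta)\le\tfrac1\rho+\Delta\varphi(1/w)=\eps/w+\eps/w$, hence $\P_z(\bar\sigma>\Delta)\ge1-2\eps=q$. On $\{\bar\sigma>\Delta\}$ the restarted process stays above $w>m$ throughout $[0,\Delta)$, so $\tau'\ge T^{(\rho)}+\Delta$ and, as $h$ is non-decreasing, $\int_0^{\tau'}h\circ V(X_s)\,\ud s\ge\Delta\,h(w)=r$. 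Gluing the two restarts by the strong Markov property at $T^{(\rho)}$ then gives $\inf_{y\in\{V\ge\ell+1\}}\P_y\big(\int_0^{\tau'}h\circ V(X_s)\,\ud s\ge r\big)\ge qC_\ell/\Psi\big(2G_h(r)/(1-q)\big)$, and combined with the preceding display this proves~\ref{thm:modulated_moments_a} with $C=qC_\ell/2$.

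The hard part will not be any single estimate but the bookkeeping that makes everything align: one must choose $\ell$, $\delta_x$, $\rho$, $w$ and $\Delta$ so that after each strong-Markov restart the various ``no visit to $D$'' assertions genuinely hold, and so that the level the excursion is required to reach is \emph{exactly} $2G_h(r)/(1-q)$. It is this last requirement that forces the choice $\theta=\eps=(1-q)/2$ for the fraction of $\rho$ below which the excursion may not fall during the window $\Delta$, and it is where the factor $2/(1-q)$ inside $\Psi$ in the conclusion originates. The remaining points are routine: $G_h$ is well defined and increasing near $+\infty$ since $v\varphi(1/v)\downarrow0$ and $h\ge1$; the integrability needed for optional stopping is immediate because $1/V\in(0,1]$ and $\varphi$ is bounded on $(0,1]$; and jumps of $V(X)$ only cause overshoot of the levels $\rho$ and $\ell$, which only helps.
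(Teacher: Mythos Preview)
Your proof is correct and follows essentially the same route as the paper's. The paper packages the excursion estimate (your ``uniform estimate'') as Proposition~\ref{prop:lyapunov_return_times} built on Lemma~\ref{lem:return_times}, and then restarts via the ordinary Markov property at a fixed time $\delta$ chosen so that $\P_x(V(X_\delta)>\ell_0+1)>0$; you instead work the excursion estimate out inline and restart via the strong Markov property at the hitting time $\sigma$ with $\P_x(\sigma\le\delta_x)\ge1/2$, but the mathematical content---reach level $\rho=2G_h(r)/(1-q)$ via~\ref{sub_drift_conditions(ii)}, then stay above $w=\eps\rho=G_h(r)$ for time $\Delta=\eps/(w\varphi(1/w))$ via optional stopping for the supermartingale in~\ref{sub_drift_conditions(i)}---is identical.
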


\begin{rem}
\label{rem:modulated_every_delta}
   The proof of Theorem~\ref{thm:modulated_moments} in Section~\ref{sec:proofs} below in fact shows that  the inequalities in Theorem~\ref{thm:modulated_moments} hold for  every $\delta\in(0,\infty)$, satisfying $\P_x(V(X_\delta)>r)>0$
    for all $x\in\cX$ and $r\in[1,\infty)$.
   This condition holds for all $\delta>0$ in the models of Section~\ref{sec:examples} below, because their marginal distributions at  positive times have full support with respect to the Lebesgue measure.
\end{rem}

\subsection{How are  Theorems~\ref{thm:invariant},~\ref{thm:f_rate} and~\ref{thm:modulated_moments} applied in practice?}
\label{subsec:generator}
Continuous-time Markov processes,
where upper bounds on the rate of convergence have been established, are typically Feller~\cite{douc2009subgeometric,Fort2005}. It is thus natural to give sufficient conditions for the assumptions of Theorems~\ref{thm:invariant},~\ref{thm:f_rate} and~\ref{thm:modulated_moments} in terms of the infinitesimal characteristics of $X$ expressed via its extended generator. In this section we first provide tools for verifying the assumptions of our main theorems using the  generator of the process and then discuss their application in practice. 

\subsubsection{Generators and drift conditions}
\label{subsubsec:generators_and_drift} Following the monograph~\cite[Ch~1, Def~(14.15)]{Davis}, let $\cD(\cA)$ denote the set of measurable functions $g:\cX \to \R$ with the following property: there exists a measurable $h:\cX\to\R$, such that, for each $x\in\cX$, $t\rightarrow h(X_t)$ is integrable $\P_x$-a.s. 
 and the process $$
g(X)-g(x)-\int_0^\cdot h(X_s)\ud s\quad\text{is a $\P_x$-local martingale.}
$$
 Then we write $h = \cA g$ and call $(\cA,\cD(\cA))$ the \textit{extended generator} of the process $X$. Define the left limit at $t\in(0,\infty)$ of the process $X$ by $X_{t-} \coloneqq\lim_{s\uparrow t} X_s$ and $X_{0-} = X_0$. The following theorem provides a sufficient condition for the validity of Assumptions~\ref{sub_drift_conditions(i)} and~\ref{sub_drift_conditions(ii)} in the $\mathbf{L}$-drift condition~\nameref{sub_drift_conditions}.

\begin{thm}
\label{thm:generator} Let a continuous $V:\cX \to [1,\infty)$  satisfy  $\limsup_{t\to\infty} V(X_t) = \infty$ $\P_x$-a.s.,
 all $x\in\cX$.
\begin{myenumi}[label=(\alph*)]
    \item \label{generator_a}  
    Let $\varphi:(0,1]\to\RP$ be a non-decreasing, continuous function, such that $r\mapsto 1/(r\varphi(1/r))$ is  increasing on $[1,\infty)$ and $\lim_{r\to\infty}1/(r\varphi(1/r))=\infty$.
    If $1/V\in\cD(\cA)$ and there exist $b,\ell_0\in(0,\infty)$ such that
\begin{equation}
    \label{eq:generator_condition_super}
\cA (1/V)(x)\leq \varphi (1/V(x)) + b\1{V(x) \leq \ell_0}\quad \text{for all $x\in\cX$,}
\end{equation}
then the process in~\nameref{sub_drift_conditions}\ref{sub_drift_conditions(i)} is a supermartingale.
\item \label{generator_b}
Let $\Psi:[1,\infty)\to[1,\infty)$ be a differentiable, increasing, submultiplicative function. Assume $X$ has bounded jumps: for some constant $d\in\RP$, we have $\P_x\left(V(X_t)-V(X_{t-})\leq d \text{ for all $t\in\RP$}\right)=1$ for each $x\in \cX$. If $\Psi\circ V\in\cD(\cA)$ and there exist $c,\ell_0\in(0,\infty)$ such that
\begin{equation}
    \label{eq:generator_condition_sub}
    \cA(\Psi \circ V)(x) \geq - c\1{V(x)\leq \ell_0}\quad \text{for all $x\in\cX$,}
\end{equation}
then inequality~\eqref{eq:assumption_heavy_tail_bound} in~\nameref{sub_drift_conditions}\ref{sub_drift_conditions(ii)} holds for every $\ell\in(\ell_0,\infty)$ and some constant $C_\ell\in(0,\infty)$.
\end{myenumi}
\end{thm}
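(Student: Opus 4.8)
The plan is to translate the generator inequalities~\eqref{eq:generator_condition_super} and~\eqref{eq:generator_condition_sub} into the (super/sub)martingale statements of the $\mathbf{L}$-drift condition via the defining property of the extended generator, and then, for part~\ref{generator_b}, to invoke Lemma~\ref{lem:assumption_submart_exit_prob} (referenced in the excerpt) to pass from the submartingale property of $\Psi\circ V(X)$ to the exit-probability estimate~\eqref{eq:assumption_heavy_tail_bound}. The two parts are essentially independent.

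For part~\ref{generator_a}: since $1/V\in\cD(\cA)$, the process $M_t \coloneqq 1/V(X_t) - 1/V(x) - \int_0^t \cA(1/V)(X_s)\,\ud s$ is a $\P_x$-local martingale. Using the bound~\eqref{eq:generator_condition_super}, write $1/V(X_t) - \int_0^t \varphi(1/V(X_s))\,\ud s - b\int_0^t \1{V(X_s)\leq\ell_0}\,\ud s = 1/V(x) + M_t + \int_0^t\bigl(\cA(1/V)(X_s) - \varphi(1/V(X_s)) - b\1{V(X_s)\leq\ell_0}\bigr)\,\ud s$, where the integrand is $\leq 0$ pointwise. Thus the process is a local martingale minus a nondecreasing process. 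To upgrade "local supermartingale" to "true supermartingale" I would use the standard fact that a local martingale bounded below (here by $-\int_0^t\varphi(1/V(X_s))\,\ud s - bt \geq -(\varphi(1)+b)t$, since $\varphi$ is non-decreasing on $(0,1]$ so $\varphi(1/V)\leq\varphi(1)$, and $1/V\geq 0$) is a supermartingale; more carefully, a local martingale that is bounded below on each compact time interval is a supermartingale by Fatou's lemma applied along a localizing sequence. Note the hypotheses on $\varphi$ here ($r\mapsto 1/(r\varphi(1/r))$ increasing with limit $\infty$) are exactly equivalent to those in~\ref{sub_drift_conditions(i)} ($r\mapsto r\varphi(1/r)$ decreasing with limit $0$), so nothing extra is needed for consistency.

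For part~\ref{generator_b}: by the same reasoning, $\Psi\circ V\in\cD(\cA)$ and~\eqref{eq:generator_condition_sub} give that $\Psi(V(X_t)) + c\int_0^t\1{V(X_s)\leq\ell_0}\,\ud s$ is a local martingale plus a nondecreasing process, hence a local submartingale; I then localize. The key point is that we want a genuine submartingale property for the process \emph{stopped} at the exit time $S_{(\ell)}\wedge T^{(r)}$ for $\ell>\ell_0$, on which $\1{V(X_s)\leq\ell_0}=0$, so the stopped process $\Psi(V(X_{t\wedge S_{(\ell)}\wedge T^{(r)}}))$ is itself a local submartingale; boundedness of jumps of $V$ (the hypothesis $V(X_t)-V(X_{t-})\leq d$) ensures that $V(X_{t\wedge T^{(r)}})\leq r+d$, so $\Psi(V(\cdot))$ is bounded on the stopped process and the local submartingale is a true bounded submartingale. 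Then optional stopping at $S_{(\ell)}\wedge T^{(r)}$ (which is a.s.\ finite because $\limsup_t V(X_t)=\infty$ and $X$ is Harris recurrent, so $X$ must enter $\{V<\ell\}$ or exit above $r$) yields, for $x\in\{\ell+1\leq V\}$,
\begin{equation*}
\Psi(\ell+1) \leq \Psi(V(x)) \leq \E_x[\Psi(V(X_{S_{(\ell)}\wedge T^{(r)}}))] \leq \Psi(\ell)\P_x(S_{(\ell)}<T^{(r)}) + \Psi(r+d)\P_x(T^{(r)}<S_{(\ell)}),
\end{equation*}
using left-continuity/c\`adl\`ag control at $S_{(\ell)}$ to bound $V(X_{S_{(\ell)}})\leq\ell$ (here continuity of $V$ and the fact that $S_{(\ell)}$ is a hitting time of the open set $\{V<\ell\}$, approached from $\{V\geq\ell\}$, give $V(X_{S_{(\ell)}})=\ell$ on $\{S_{(\ell)}<\infty\}$ when paths cross continuously, or $\leq\ell$ in general). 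Rearranging gives $\P_x(T^{(r)}<S_{(\ell)}) \geq (\Psi(\ell+1)-\Psi(\ell))/(\Psi(r+d)-\Psi(\ell)) \geq (\Psi(\ell+1)-\Psi(\ell))/\Psi(r+d)$, and submultiplicativity of $\Psi$ together with $r\geq\ell+1$ gives $\Psi(r+d)\leq C\Psi(d+1)\Psi(r)$ (or $\Psi(r+d)\leq C\Psi(r)\Psi(d)$ if $d\geq1$, adjusting constants), so the bound is $\geq C_\ell/\Psi(r)$ for $C_\ell \coloneqq (\Psi(\ell+1)-\Psi(\ell))/(C\Psi(d+1))>0$, as required. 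Alternatively, and more cleanly, one simply cites Lemma~\ref{lem:assumption_submart_exit_prob} with the submartingale $\Psi\circ V(X)$ as input.

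I expect the main obstacle to be the technical bookkeeping in part~\ref{generator_b}: carefully justifying that the stopped process $\Psi(V(X_{\cdot\wedge S_{(\ell)}\wedge T^{(r)}}))$ is a genuine (not merely local) submartingale — which is where the bounded-jumps hypothesis is essential, since without it the process $\Psi\circ V$ could jump arbitrarily high at $T^{(r)}$ and fail to be integrable or uniformly controlled — and handling the boundary behaviour of $V(X)$ at the stopping times when $X$ is only c\`adl\`ag rather than continuous. Everything else (the local-to-true martingale upgrade in part~\ref{generator_a}, the algebra with submultiplicativity) is routine.
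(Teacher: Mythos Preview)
Your proposal is correct and follows essentially the same approach as the paper: for part~\ref{generator_a} the paper also passes from the local martingale to the true supermartingale via Fatou's lemma along a localising sequence (using that $1/V\in(0,1]$ and $\varphi(1/V)\leq\varphi(1)$), and for part~\ref{generator_b} it uses the bounded-jumps hypothesis to bound $\Psi\circ V(X_{\cdot\wedge T^{(r)}})\leq\Psi(r+d)$, applies dominated convergence to obtain the true submartingale in~\eqref{eq:subm_mart_bounded_jumps}, and then invokes Lemma~\ref{lem:assumption_submart_exit_prob} (whose proof is exactly the optional-stopping computation you wrote out). One small note: $T^{(r)}<\infty$ a.s.\ already follows from $\limsup_t V(X_t)=\infty$, so you do not need Harris recurrence for the finiteness of $S_{(\ell)}\wedge T^{(r)}$.
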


 If $X$ has bounded jumps, 
 Theorem~\ref{thm:generator} has a natural converse:
Assumption~\nameref{sub_drift_conditions}\ref{sub_drift_conditions(i)} 
and the submartingale property in~\eqref{eq:subm_mart_bounded_jumps} of Lemma~\ref{lem:assumption_submart_exit_prob} below (which implies~\nameref{sub_drift_conditions}\ref{sub_drift_conditions(ii)})
yield the inequalities in~\eqref{eq:generator_condition_super} and~\eqref{eq:generator_condition_sub} involving
$\cA(1/V)$ and $\cA(\Psi\circ V)$, respectively. As this fact is not used in the paper, the details are omitted.

We proceed with Lemma~\ref{lem:assumption_submart_exit_prob}, which  provides a key step in the proof of Theorem~\ref{thm:generator}\ref{generator_b}. We state it here because it is of independent interest in applications as it gives a  sufficient condition for the bound on the exit probability in~\nameref{sub_drift_conditions}\ref{sub_drift_conditions(ii)} in terms of  the submartingale condition in~\eqref{eq:subm_mart_bounded_jumps} below. Denote 
$t\wedge s\coloneqq \min\{s,t\}$, $t,s\in\RP$, and recall  $T^{(r)} = \inf\{t\geq 0: V(X_t)>r\}$, $r\in\RP$.

\begin{lem}
\label{lem:assumption_submart_exit_prob}
Let a continuous $V:\cX \to [1,\infty)$  satisfy  $\limsup_{t\to\infty} V(X_t) = \infty$ $\P_x$-a.s.,
for all $x\in\cX$.
 Let $\Psi:[1,\infty)\to[1,\infty)$ be a differentiable, increasing, submultiplicative function.
Assume that for some $d\in\RP$, we have $\P_x(V(X_t)-V(X_{t-})\leq d\text{ for all }t\in\RP)=1$ for each $x\in \cX$. 
If for 
some $\ell_0,c\in(0,\infty)$ and all $r\in(\ell_0,\infty)$, the process 
\begin{equation}
\label{eq:subm_mart_bounded_jumps}
\Psi \circ V(X_{\cdot\wedge T^{(r)} }) + c\int_0^{\cdot\wedge T^{(r)}}\1{V(X_u) \leq\ell_0}\ud u\quad\text{is an $(\cF_t)$-submartingale under $\P_x$}
\end{equation}
for all $x\in\cX$,
then the condition~\nameref{sub_drift_conditions}\ref{sub_drift_conditions(ii)} holds with  functions $V$ and $\Psi$.
\end{lem}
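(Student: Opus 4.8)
The plan is to fix $\ell\in(\ell_0,\infty)$ and $r\in(\ell+1,\infty)$ and to bound $\P_x(T^{(r)}<S_{(\ell)})$ from below by applying optional stopping to the submartingale in~\eqref{eq:subm_mart_bounded_jumps} stopped at $\sigma\coloneqq T^{(r)}\wedge S_{(\ell)}$. Two degenerate situations are disposed of first. If $\ell\le1$, then $\{V<\ell\}=\emptyset$ (since $V\ge1$), so $S_{(\ell)}=\infty$ and the asserted inequality reduces to $\P_x(T^{(r)}<\infty)=1$, which holds because $\limsup_{t\to\infty}V(X_t)=\infty$ $\P_x$-a.s.; and if $V(x)>r$, then $T^{(r)}=0$ while $S_{(\ell)}>0$ (by right-continuity of $X$ and $V(x)\ge\ell+1$), so the left-hand side again equals $1$. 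It therefore remains to treat $\ell>1$ and $\ell+1\le V(x)\le r$, and to produce a constant $C_\ell\in(0,1]$, depending only on $\ell$, $\Psi$ and $d$, such that $\P_x(T^{(r)}<S_{(\ell)})\ge C_\ell/\Psi(r)$; this establishes~\eqref{eq:assumption_heavy_tail_bound}.

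First I would record the relevant path properties. Since $\limsup_{t\to\infty}V(X_t)=\infty$ $\P_x$-a.s., we have $T^{(r)}<\infty$, and hence $\sigma\le T^{(r)}<\infty$, $\P_x$-a.s. For every $u<\sigma$ one has $\ell\le V(X_u)\le r$ (before $S_{(\ell)}$, $V\ge\ell$; before $T^{(r)}$, $V\le r$), so, using $\ell>\ell_0$, the additive term in~\eqref{eq:subm_mart_bounded_jumps} vanishes up to time $\sigma$ and the stopped process reduces to $N_t\coloneqq\Psi\circ V(X_{t\wedge\sigma})$. Writing $A\coloneqq\{T^{(r)}<S_{(\ell)}\}$: on $A$ the bounded-jump hypothesis yields $V(X_\sigma)=V(X_{T^{(r)}})\le V(X_{T^{(r)}-})+d\le r+d$, while on $A^c=\{S_{(\ell)}\le T^{(r)}\}$ the c\`adl\`ag path $V\circ X$ lies in the open set $\{v\in\R:v<\ell\}$ at its first entrance time $S_{(\ell)}$ into that set, so $V(X_\sigma)=V(X_{S_{(\ell)}})<\ell$. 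Consequently $1\le N_t\le\Psi(r+d)$ for all $t\ge0$, and $N_t$ is eventually constant, equal to $\Psi\circ V(X_\sigma)$.

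Next I would invoke optional stopping: $(N_t)_{t\ge0}$ is a submartingale, being the submartingale of~\eqref{eq:subm_mart_bounded_jumps} stopped at $\sigma\le T^{(r)}$, so $\E_x[N_t]\ge N_0=\Psi(V(x))\ge\Psi(\ell+1)$ for all $t\ge0$; letting $t\to\infty$ and using the uniform bound with dominated convergence gives $\E_x[\Psi\circ V(X_\sigma)]\ge\Psi(\ell+1)$. Decomposing the expectation over $A$ and $A^c$ and inserting the pointwise bounds above,
\[
\Psi(\ell+1)\le\E_x[\Psi\circ V(X_\sigma)]\le\Psi(r+d)\,\P_x(A)+\Psi(\ell),
\]
so $\P_x(A)\ge(\Psi(\ell+1)-\Psi(\ell))/\Psi(r+d)$, the numerator being strictly positive since $\Psi$ is increasing. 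Finally, submultiplicativity of $\Psi$ on $[1,\infty)$ (with $r\ge1$ and $\max\{d,1\}\ge1$) gives $\Psi(r+d)\le\Psi(r+\max\{d,1\})\le C\,\Psi(r)\,\Psi(\max\{d,1\})$, whence the claim follows with $C_\ell\coloneqq\min\{1,(\Psi(\ell+1)-\Psi(\ell))/(C\,\Psi(\max\{d,1\}))\}$.

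The step I expect to be the main obstacle is the passage to the limit in the optional-stopping argument, which rests on two path facts: that $\sigma<\infty$ $\P_x$-a.s.\ (supplied by the recurrence-type hypothesis $\limsup_{t\to\infty}V(X_t)=\infty$), and that $N$ is uniformly bounded---this is exactly where the bounded-jump assumption enters, since without it an upward jump of $V\circ X$ at the exit time $T^{(r)}$ could push $\Psi\circ V(X_\sigma)$ above $\Psi(r+d)$ and destroy the estimate. The remainder is bookkeeping: checking that the additive term in~\eqref{eq:subm_mart_bounded_jumps} really vanishes before $\sigma$ (where $\ell>\ell_0$ is used), and that the first entrance time of $V\circ X$ into $\{v<\ell\}$ is attained in that set, a standard consequence of the right-continuity of the paths.
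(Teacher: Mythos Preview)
Your proposal is correct and follows essentially the same approach as the paper's proof: stop the submartingale at $T^{(r)}\wedge S_{(\ell)}$, note that the additive term vanishes since $\ell>\ell_0$, use the bounded-jump hypothesis to obtain the uniform bound $\Psi(r+d)$, apply optional stopping with dominated convergence, and then invoke submultiplicativity of $\Psi$. You are in fact slightly more careful than the paper in two respects: you treat the trivial case $V(x)>r$ explicitly (the paper's proof silently restricts to $x\in\{\ell+1\le V<r\}$), and you write $\Psi(\max\{d,1\})$ rather than $\Psi(d)$ to keep $\Psi$ evaluated on its domain $[1,\infty)$ when $d<1$.
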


\begin{rem}
\label{rem:why_prob_L(ii)}
If $X$ has jumps with heavy tails, a submartingale argument of Lemma~\ref{lem:assumption_submart_exit_prob} may fail to imply the inequality in Assumption~\nameref{sub_drift_conditions}\ref{sub_drift_conditions(ii)}. This is because the overshoot of the process $\Psi\circ V(X)$  need not  be integrable
(see e.g. the class of models in Section~\ref{sec:levy} below).
It is thus crucial that  condition~\ref{sub_drift_conditions(ii)} in Assumption~\nameref{sub_drift_conditions} is given in terms of the probability $\P_x(T^{(r)}<S_{(\ell)})$ directly, rather than the submartingale property of $\Psi\circ V(X)$. As demonstrated in Section~\ref{sec:levy}, in such heavy-tailed cases it is possible to apply path-wise arguments directly to obtain the lower bound on $\P_x(T^{(r)}<S_{(\ell)})$, see Section~\ref{subsec:levy_proofs} for details.
\end{rem}

The expected growth condition assumed in 
Theorem~\ref{thm:f_rate}\ref{assumption:f_convergence_a} is easily verified via Lemma~\ref{lem:bounded_generator}.

\begin{lem}
\label{lem:bounded_generator}
Let  $H:\cX\to[1,\infty)$  be continuous with $H\in\mathcal{D}(\cA)$ and let $\xi:[1,\infty)\to[1,\infty)$ be concave, non-decreasing and differentiable, satisfying  $\cA H\leq \xi\circ H$  on $\cX$.
Define the function $\Xi(u)\coloneqq \int_1^u \ud s/\xi(s)$ for $u\in[1,\infty)$.
Then we have
$$\E_x[H(X_t)]\leq \Xi^{-1}(\Xi(H(x))+t)\quad\text{for all $x\in\cX$ and $t\in\RP$.}$$
\end{lem}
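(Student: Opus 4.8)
The plan is to compare the function $t\mapsto \E_x[H(X_t)]$ with the solution of the scalar ODE $\dot u = \xi(u)$, $u(0)=H(x)$, whose solution is exactly $u(t)=\Xi^{-1}(\Xi(H(x))+t)$ since $\Xi'=1/\xi$ and $\Xi$ is a strictly increasing bijection of $[1,\infty)$ onto $[0,\Xi(\infty))$ (note $\Xi(\infty)$ may be finite or infinite; one should first record that $\Xi^{-1}$ is well-defined on $[0,\Xi(\infty))$, and that the claimed bound is vacuous once $t+\Xi(H(x))\geq\Xi(\infty)$ — actually concavity of $\xi$ forces $\xi(s)\le \xi(1)+\xi'(1)(s-1)$ to grow at most linearly, so $\Xi(\infty)=\infty$ and $\Xi^{-1}$ is defined on all of $\RP$; I would note this at the outset).

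First I would set $m(t)\coloneqq\E_x[H(X_t)]$ and observe that, since $H\in\cD(\cA)$, the process $H(X)-H(x)-\int_0^\cdot \cA H(X_s)\,\ud s$ is a $\P_x$-local martingale; combined with the drift bound $\cA H\le \xi\circ H$ and a localisation/Fatou argument (using $H\ge 1>0$ and the a.s.\ integrability of $s\mapsto \cA H(X_s)$ from the definition of the extended generator) I would derive the integral inequality $m(t)\le H(x)+\int_0^t \E_x[\xi(H(X_s))]\,\ud s$. Here Jensen's inequality is the key tool: since $\xi$ is concave, $\E_x[\xi(H(X_s))]\le \xi(\E_x[H(X_s)])=\xi(m(s))$, so $m$ satisfies $m(t)\le H(x)+\int_0^t \xi(m(s))\,\ud s$.

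Next I would invoke a Grönwall-type comparison for the nonlinear ODE: if $m(t)\le H(x)+\int_0^t\xi(m(s))\,\ud s$ with $\xi$ non-decreasing and continuous, then $m(t)\le u(t)$ where $u$ solves $u(t)=H(x)+\int_0^t\xi(u(s))\,\ud s$, i.e.\ $u(t)=\Xi^{-1}(\Xi(H(x))+t)$. The cleanest route is: let $U(t)\coloneqq H(x)+\int_0^t\xi(m(s))\,\ud s$, so $m\le U$, $U$ is $C^1$ with $U'(t)=\xi(m(t))\le\xi(U(t))$ by monotonicity of $\xi$; hence $\frac{\ud}{\ud t}\Xi(U(t)) = U'(t)/\xi(U(t))\le 1$, and integrating gives $\Xi(U(t))\le\Xi(U(0))+t=\Xi(H(x))+t$, so $U(t)\le\Xi^{-1}(\Xi(H(x))+t)$, and therefore $m(t)\le U(t)\le\Xi^{-1}(\Xi(H(x))+t)$ as claimed.

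The main obstacle I anticipate is the first step — rigorously passing from the local-martingale property to the integral inequality $m(t)\le H(x)+\int_0^t\E_x[\xi(H(X_s))]\,\ud s$ without an a priori integrability bound on $H(X_t)$. The standard fix is to introduce stopping times $\sigma_n\coloneqq\inf\{t:H(X_t)\ge n\}$ (finite-valued truncations), apply the optional stopping theorem to the stopped local martingale to get $\E_x[H(X_{t\wedge\sigma_n})]= H(x)+\E_x\!\int_0^{t\wedge\sigma_n}\cA H(X_s)\,\ud s$, bound the drift term above using $\cA H\le\xi\circ H$, run the ODE comparison at the level of the truncated expectations to obtain a bound uniform in $n$, and then let $n\to\infty$ using monotone convergence (on the left, $H(X_{t\wedge\sigma_n})\to H(X_t)$ on $\{\sigma_n\to\infty\}$, which has full probability since $\limsup_t V(X_t)$ — or rather $H$ — finiteness is handled by the uniform bound preventing explosion) together with the already-established uniform-in-$n$ upper bound, which shows $\sup_n\E_x[H(X_{t\wedge\sigma_n})]<\infty$ and hence $\sigma_n\to\infty$ $\P_x$-a.s.\ and $m(t)<\infty$. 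One subtlety worth flagging: to apply Jensen one needs $m(s)<\infty$, which the truncation argument supplies, so the concavity step is legitimate only after this finiteness is secured.
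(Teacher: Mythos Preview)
Your proposal is correct and follows essentially the same route as the paper: localise (the paper uses $S_m\coloneqq\inf\{t:H(X_t)\ge m\}\wedge T_m$, combining the level-crossing time with the localising sequence $T_m$ from the definition of $\cD(\cA)$), pass to the limit via Fatou and monotone convergence to obtain $m(t)\le H(x)+\int_0^t\E_x[\xi\circ H(X_s)]\,\ud s$, apply Jensen, and then carry out the nonlinear Gr\"onwall comparison exactly as you describe with $U(t)=H(x)+\int_0^t\xi(m(s))\,\ud s$. One small point: when you write ``apply the optional stopping theorem to the stopped local martingale'' with $\sigma_n$ alone, note that stopping a local martingale does not produce a true martingale; you need to intersect with the localising sequence $T_n$ (as the paper does) so that the stopped process is an honest martingale before taking expectations.
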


Lemma~\ref{lem:bounded_generator} plays a key role in establishing the upper bound on the expected growth of the process $h\circ V(X)$ in condition~\ref{assumption:f_convergence_a} of Theorem~\ref{thm:f_rate}. If $\xi$ is constant, the bound on $\E_x[h\circ V(X_t)]$ is linear in time. Crucially Lemma~\ref{lem:bounded_generator}
permits an unbounded $\xi$, thus allowing the process $h\circ V(X)$ to exhibiting superlinear expected growth. This is key for establishing matching stretched exponential (see Sections~\ref{subsubsec_Subexponential} and~\ref{subsubsec:subexp_proofs} below) and exponential (Sections~\ref{subsubsec:Exponential} and~\ref{subsubsec:exponential_proofs} below)  lower bounds on the rate of convergence.

\subsubsection{How to find functions $\varphi$, $\Psi$ (in~\nameref{sub_drift_conditions}) and $h$ (in Theorem~\ref{thm:f_rate}\ref{assumption:f_convergence_a}) for a given Lyapunov function $V$?}
\label{subsubsec:applications_in_practice}
The classical Lyapunov drift condition~\cite{douc2009subgeometric,Fort2005,meyn1993stability1} requires the process $V(X)$ to satisfy the supermartingale condition in~\eqref{eq:douc_drif_condition} below. 
If such $V$ is available, verifying~\nameref{sub_drift_conditions} reduces to finding scalar functions $\varphi$ and $\Psi$, such that $1/V(X) - \int_0^\cdot \varphi \circ (1/V)(X_s)\ud s$ and $\Psi\circ V(X)$ are a super- and a submartingale, respectively. Since $\Psi$ is increasing, the Lyapunov function $V:\cX\to[1,\infty)$ determines the level sets of $\Psi \circ V$, while $\Psi:[1,\infty)\to[1,\infty)$ modulates only the growth of $\Psi \circ V$. Thus, identifying $\Psi$  is typically  straightforward if $V$ is given and the task is to find the slowest growing $\Psi$ so that $\Psi \circ V(X)$ is a submartingale. 
Identifying $\varphi$ for a given $V$ is also typically a simple task, as it reduces to bounding the drift of $1/V(X)$ using Theorem~\ref{thm:generator}\ref{generator_a} in
Subsection~\ref{subsubsec:generators_and_drift}.

The lower bound on the rate in Theorem~\ref{thm:f_rate} is based on the comparison of the tails of $\pi$ and the law of $X_t$.
Recall from Remark~\ref{rem:after_main_theorems}\ref{rem:after_main_theorems(a)} that the role of the function $h$ 
in assumption~\ref{assumption:f_convergence_a} of 
Theorem~\ref{thm:f_rate}
is to balance a lower bound on $r\mapsto r\pi(\{h\circ V\geq r\})$ and an upper bound on the $t\mapsto \E_x[h\circ V(X_t)]$. Typically, a good choice for $h$ is such that $r\mapsto r\pi(\{h\circ V\geq r\})$ grows to infinity as $r\to\infty$ at a polynomial (necessarily sublinear) rate, see Remark~\ref{rem:lem_lower_bound} below for more details. Lemma~\ref{lem:bounded_generator}, applied to $H=h\circ V$, yields  the desired upper bound on $t\mapsto \E_x[h\circ V(X_t)]$ (and hence by Theorem~\ref{thm:f_rate} a lower bound on the rate of convergence).

\subsection{Sketch of the proofs of the main results: Theorems~\ref{thm:invariant},~\ref{thm:f_rate} and~\ref{thm:modulated_moments}}
\label{subsec:sketch}
The $\mathbf{L}$-drift condition~\nameref{sub_drift_conditions} is the crucial ingredient of all the main theorems. The  following implications constitute  key steps in their proofs: 

\begin{table}[htp] 
\centering
\begin{tabular}{lclclcl}
%\hline
$\nameref{sub_drift_conditions}$ & $\xRightarrow{\text{(I)}}$ & \begin{tabular}[c]{@{}l@{}@{}@{}}Lower bounds \\on the tails of \\ return times to\\ bounded sets\\ (Thm~\ref{thm:modulated_moments})\end{tabular} & $\xRightarrow{\text{(II)}}$ & \begin{tabular}[c]{@{}l@{}}Lower bounds\\ on the tails of\\  invariant measure\\ (Thm~\ref{thm:invariant})\end{tabular} 
 &  $\xRightarrow{\text{(III)}}$ & \begin{tabular}[c]{@{}l@{}}Lower bounds on\\ convergence rates\\ (Thm~\ref{thm:f_rate})\end{tabular} 
\end{tabular}
\caption{The sequence of implications from $\mathbf{L}$-drift conditions to convergence rates.}
\label{tab:implications}
\end{table}

\begin{figure}[hbt]
\centering
    \includegraphics[width=150mm]{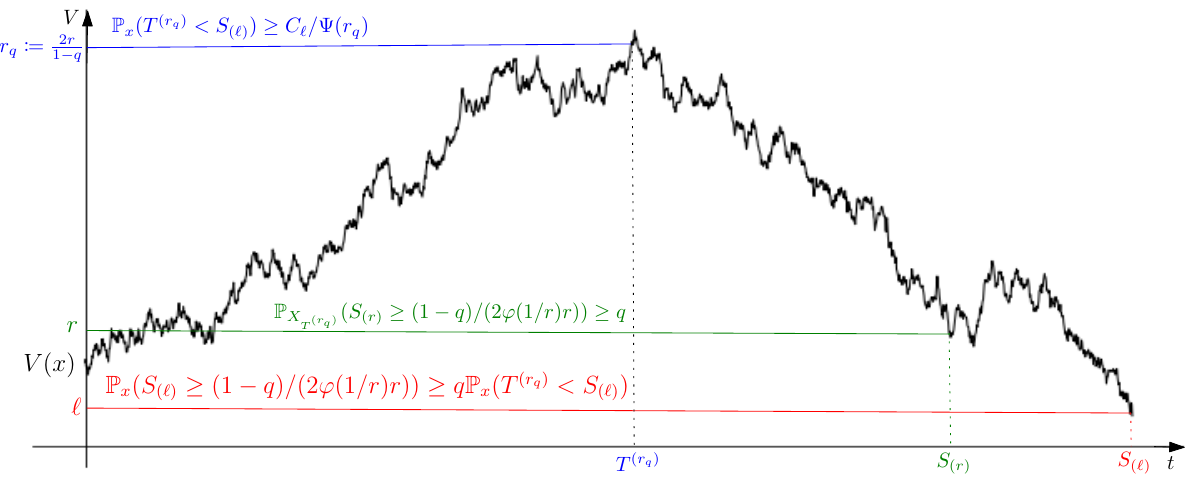}
    \caption{Establishing the lower bound in Theorem~\ref{thm:modulated_moments} (implication~(I)) requires  conditions~\ref{sub_drift_conditions(i)} and~\ref{sub_drift_conditions(ii)} in Assumption~\nameref{sub_drift_conditions} to obtain lower bounds on the \textit{duration} $S_{(\ell)}$ and the \textit{height} $\sup_{s\in[0,S_{(\ell)}]}V(X_s)$ of an excursion from the compact set $\{V\leq\ell\}$. The bound on the height (blue {\color{blue}inequality}) follows directly from~\nameref{sub_drift_conditions}\ref{sub_drift_conditions(ii)}, while the lower bound on the duration (red {\color{red}inequality}) requires a more involved argument (in Lemma~\ref{lem:return_times} below) using the supermartingale in~\nameref{sub_drift_conditions}\ref{sub_drift_conditions(i)}.}
    \label{fig:L_functions}
\end{figure}

We now describe informally each of these implications. Implication (I) in Table~\ref{tab:implications} above requires a lower bound on the return time of $X$ to a sublevel set $\{V\leq \ell\}$ of the Lyapunov function $V$ (see Figure~\ref{fig:L_functions}). 
Under the $\mathbf{L}$-drift condition~\nameref{sub_drift_conditions}\ref{sub_drift_conditions(i)},
the process $1/V(X) - \int_0^\cdot \varphi \circ (1/V)(X_s)\ud s$ 
is a supermartingale when $V(X)$ is above level $\ell$, implying an upper bound on the tail of the supremum of $1/V(X)$. This upper bound can be converted into a lower bound on the tail of the return time of $V(X)$ below a level $\ell$.
The argument requires only the supermartingale property, see Lemma~\ref{lem:return_times} below for details. 
In short, we first wait until the process $V(X)$, started from $V(x)>\ell$, reaches a large level $r_q=2r/(1-q)$ for $r\gg\ell$ and any $q\in(0,1)$, before descending below $\ell$.
By~\nameref{sub_drift_conditions}\ref{sub_drift_conditions(ii)} we have the blue {\color{blue}lower bound} on the exit probability in Figure~\ref{fig:L_functions}. 
On this event, once the process $V(X)$ is at $V(X_{T^{(r_q)}})$, we apply the supermartingale property in~\nameref{sub_drift_conditions}\ref{sub_drift_conditions(i)} to obtain the green {\color{ForestGreen}lower bound} on the tail of the return time $S_{(r)}$, which implies the red {\color{red}lower bound}.  
Since $q\in(0,1)$ is arbitrary, these estimates imply boundedness of any petite set of $X$ (Lemma~\ref{lem:bounded_petite_sets} below) as well as lower bounds on  the  tail probability 
$\P_x\left(\int_0^{\tau_D(\delta)}h\circ V(X_s)\ud s\geq r\right)$
of the additive functional of the excursion from any bounded set $D$, for $x\in D$, $\delta>0$ and $\tau_D(\delta)= \inf\{t>\delta:X_t\in D\}$ (Theorem~\ref{thm:modulated_moments}).

Implication (II) in Table~\ref{tab:implications} above uses the fact that, under~\nameref{sub_drift_conditions}, all petite sets of $X$ are bounded  
and, crucially, that the lower bounds on the probability $\P_x\left(\int_0^{\tau_D(\delta)}h\circ V(X_s)\ud s\geq r\right)$ hold  for \textit{all} non-decreasing functions $h:[1,\infty)\to[1,\infty)$. These facts, together with the well-known characterisation of Meyn and Tweedie in~\cite[Thm~1.2(b)]{tweedie1993generalized}  of the integrability with respect to $\pi$,
yield a lower bound on the tail of the invariant measure $\pi$ in Theorem~\ref{thm:invariant}. 

Implication (III) in Table~\ref{tab:implications} above requires a comparison of the lower bound on the tail of the invariant measure $\pi$, obtained via implication (II), with the upper bound on the tail of the law of $V(X_t)$, controlled by the expected growth of the process $V(X)$.
Finally, an application of  Lemma~\ref{lem:lower_bound_f_convergence_rate}, which is generalisation to $f$-variation norms of~\cite[Thm~3.6]{Hairer09} (see also~\cite[Thm~5.1]{hairer2010convergence}), yields our lower bound on the rate of convergence in Theorem~\ref{thm:f_rate}.

\subsection{Related literature}
\label{subsec:related_literature}
 \paragraph{\textbf{Lyapunov functions and lower bounds.}} 
A general Lyapunov-function approach to 
lower bounds on the total variation distance along a sequence of times  tending to infinity is developed in~\cite{Hairer09}. A key step~\cite[Thm~3.6]{Hairer09} consists of converting lower bounds on the tails of the invariant measure to the lower bounds on the total variation, an idea also exploited in the present paper (see its generalisation from total to $f$-variation in Lemma~\ref{lem:lower_bound_f_convergence_rate} below). Crucially, in~\cite[Thm~3.2]{Hairer09}, Hairer gives criteria  (based on ideas from~\cite{Wonham66}) for establishing lower bounds on total variation along a sequence of times tending to infinity when the invariant measure is not  explicitly known. These criteria can be  summarised briefly  as follows: let $W_1$ and $W_2$ be $C^2$-functions on $\R^n$ such that $W_1/W_2\to0$  as $W_1\to\infty$ and  the inequalities $\cA W_1\geq 0$  and $\cA W_2\leq F$ hold outside a compact set for some function $F$, where $\cA$ denotes the generator of $X$. Under these conditions~\cite[Thm~3.2]{Hairer09} implies $\int F\ud\pi = \infty$ and, together with~\cite[Thm~3.6]{Hairer09},  yields a sequence of times $t_n\to\infty$ at which a lower bound on the total variation (based on non-integrability of $F$) holds.

The critical step in~\cite{Hairer09} consists of finding a non-integrable function $F$, which essentially amounts to the function $\cA W_2$ being non-integrable: $\int \cA W_2\ud \pi=\infty$. Consequently, the lower bounds 
on the tails of $\pi$ 
hold only along a (possibly sparse) sequence of levels, yielding a (possibly sparse) sequence of times 
$t_n\to\infty$ at which the lower bound on the total variation can be established. In contrast, our approach 
 provides lower bounds for all times by essentially analysing lower bounds as upper bounds of $1/V(X)$ (as usual $V$ denotes a Lypuanov function for $X$). 
In the language of~\cite{Hairer09} this involves estimating the drift $\cA(1/V)$, and then using \textit{path-wise} arguments for semimartingales (Lemma~\ref{lem:return_times} below) to characterise  $\int h\circ V\ud \pi=\infty$ for \textit{all} increasing scalar functions $h$. 
This idea allows us to establish
 lower bounds on the tails of $\pi$ for all levels and hence get  lower bounds on the convergence rate in $f$-variation for all times. 
Moreover, this suggests that establishing lower bounds for all times using the approach in~\cite{Hairer09} directly would at the very least require a sufficiently rich family of functions $W_2^{(\kappa)}$, satisfying the assumptions of~\cite[Thm~3.2]{Hairer09}  (and hence $\int \cA W_2^{(\kappa)}\ud \pi=\infty$) for all  values of the parameter $\kappa$.

In the context of elliptic diffusions studied in Section~\ref{sec:diffusions} below, our results may be viewed as a generalisation of the bounds along $t_n\to\infty$, obtained via~\cite[Thm~3.2]{Hairer09}, to all times $t\in[1,\infty)$.
If the process $X$ has jumps with heavy tails as in  Section~\ref{sec:levy} below, \cite[Thm~3.2]{Hairer09} appears to be difficult to apply:   for the L\'evy-driven stochastic differential equation of Section~\ref{sec:levy}, $W_1$
is either not integrable with respect to the jump measure of $X$  or, if it is, we have $\cA W_1< 0$  outside of a compact (see the formula for $\cA$ in equation~\eqref{eq:generator_Levy} and Remark~\ref{rem:levy_integrability} below for details). This makes the condition $0\leq\cA W_1<\infty$ hard to satisfy outside of any compact set. 
Finally, 
it is feasible that our methods could yield novel insights for the hypoelliptic Hamiltonian system studied in~\cite{Hairer09} and possibly extend the result on lower bounds in~\cite[Thm~1.1]{Hairer09} to all times.
The function $W_1$ in~\cite[eq.~(5.15)]{Hairer09}, satisfying $\cA W_1\geq 0$ and thus making $W_1(X)$ a submartingale, is a natural candidate for $\Psi\circ V$ in the $\mathbf{L}$-drift condition~\nameref{sub_drift_conditions}\ref{sub_drift_conditions(ii)}. Determining the function $V$ and estimating the asymptotic behaviour of $\cA (1/V)$, so that~\nameref{sub_drift_conditions}\ref{sub_drift_conditions(i)} holds, would enable the application of our results. This is left for future research.

\paragraph{\textbf{Lyapunov functions and upper bounds.}}
The rate of convergence to invariant measures of ergodic Markov processes has been  studied extensively.  The majority of the modern literature, 
based on a  probabilistic approach using Lyapunov functions,
dates back to the seminal work of Meyn and Tweedie in the 1990s~\cite{meyn1993stability1,meyn1993stability2,tweedie1993generalized} and primarily focuses on the upper bound estimates. These results have since been further improved and generalised. Notable contemporary versions can be found in~\cite{douc2009subgeometric, bakry2008rate} and Hairer's lecture notes~\cite{hairer2010convergence}, see also monograph~\cite{Moulin18} and the references therein. Briefly, if 
for some continuous $V:\cX\to[1,\infty)$, increasing, differentiable, concave $\phi:[1,\infty)\to[1,\infty)$, a closed petite set (defined in~\eqref{eq:petite} below) $A\in\cB(\cX)$ and a constant $b\in\RP$, the process
\begin{equation}
\label{eq:douc_drif_condition}
%\E_x[V(X_s)] + \E_x\left[\int_0^s\phi\circ V (X_u)\ud %u\right] \leq V(x) + b\E_x\left[\int_0^s \mathbbm{1}\{X_u\in A\}\ud u\right],\quad\text{for all $s\in\RP$,}
V(X) + \int_0^\cdot\phi\circ V (X_u)\ud u - b\int_0^\cdot \mathbbm{1}\{X_u\in A\}\ud u\qquad\text{is a supermartingale,}
\end{equation}
the following statements hold (see e.g.~\cite{douc2009subgeometric}):
\begin{myenumi}
\item[(a)] the process $X$ is Harris recurrent with invariant  measure $\pi$ and $\int_{\cX} \phi \circ V(x)\pi(\ud x) <\infty$;
\item[(b)]  $r_{*}(t)\|\P_x(X_t \in \cdot) - \pi(\cdot)\|_{\text{TV}} \leq C V(x)$, where $r_{*}(s) \coloneqq \phi\circ H_{\phi}^{-1}(s)$ and $H_{\phi}(u) = \int_1^u \frac{\ud s}{\phi(s)}$, $u\geq 1$.
\end{myenumi}

 Since this approach relies on  transforming the state space with a Lyapunov function $V$, key information may be lost,  potentially resulting in poor upper bound estimates on the convergence rate (see the motivating example in Section~\ref{sec:diffusions} below; see also Example~\ref{example:missmatch}). 
This naturally motivates a general study of \textit{lower} bounds in the context of Lyapunov functions presented in this paper, particularly in the case when the upper bounds on the rate of convergence are subexponential.   As explained in Section~\ref{subsec:generator}, our results naturally augment the existing Lyapunov function approach for the stability of Markov processes and provide a robust method for checking the quality of upper bound estimates obtained via a given Lyapunov function. 

\paragraph{\textbf{Poincar\'e inequalities and the rates of convergence.}} These functional analytic techniques typically work directly with the infinitesimal generator and are thus not dependent on the potentially suboptimal choice of a Lyapunov function (see e.g.~\cite{Rockner02,bakry2008rate}). However, due to their analytical nature, such methods often require more restrictive (typically global rather than local) assumptions on the behavior of the transition operator of the process. Additionally, even under strong assumptions (e.g. reversibility),  the literature addressing lower bounds remains sparse.

\paragraph{\textbf{Potential theory and  L\'evy-driven Ornstein-Uhlenbeck (OU) processes.}} A potential theoretic approach for the stability of a class of  L\'evy-driven OU-processes, 
arising as limits of multiclass many-server queues, has been developed in~\cite{Sandric19,Sandric2022}. In contrast to our results, the lower bounds~\cite[Thm~1.2]{Sandric2022} for the general case of their models  hold along a sequence of times $t_n\to\infty$. The theory in~\cite{Sandric2022} gives no information on the sparsity of the sequence $(t_n)_{n\in\N}$.
However, in some special cases of the models in~\cite{Sandric19,Sandric2022}, matching lower and upper bounds for all $t\in[1,\infty)$ are established using Hairer's result~\cite[Thm~5.1]{hairer2010convergence}. In these special cases, the precise decay of the tails of the invariant measure of the model in~\cite{Sandric19} is  established using analytical methods. 

\paragraph{\textbf{Mixing coefficients of Rosenblatt and Kolmogorov}}
Convergence to stationarity of an ergodic Markov process can also be quantified via  Rosenblatt's and Kolmogorov's mixing coefficients. Veretennikov~\cite{veretenn97} provides upper bounds on the  coefficients for polynomialy ergodic elliptic diffusions, cf. Section~\ref{subsubsec:Polynomial} below.  In the special case when the noise is additive, the corresponding lower bounds on the coefficients are given in~\cite{Veretennikov06} (for discrete-time  additive-noise case see~\cite{Klokov}). 
%Since the proofs in~\cite{Veretennikov06} rely on comparing the tail densities of marginals and the invariant measure, the results of this paper mould potentially extend these lower bounds to diffusions with multiplicative noise..

\paragraph{\textbf{Markov Chain Monte Carlo.}} The vast majority of the convergence theory for
Markov Chain Monte Carlo algorithms
concerns upper bounds, see e.g.~\cite{Moulin18} and the references therein. In comparison, lower bounds on the rate of convergence are sparse. Some recent model dependent results are given in~\cite{Brown22,brown24}. However, the tools used appear to be hard to extend to general ergodic Markov processes discussed in this paper.

\paragraph{\textbf{Ergodic theory.}} Convergence of a Markov process to its invariant measure  is closely related to the decay of correlations and the mixing  of dynamical systems in ergodic theory.
In this setting, seminal papers~\cite{sarig02,Melbourne12} establish asymptotically matching polynomial upper and lower bounds on the mixing rates using \textit{operator renewal theory}.
This approach extends a classical probabilistic result of Rogozin, bounding the remainder term in the renewal theorem~\cite{Rogozin}. 

It would be natural to consider extending Rogozin's approach to obtain lower bounds on the convergence rates of Markov processes towards their invariant measures. While this might be achievable, as suggested in~\cite{Menshikov95}, employing renewal theory would  likely require much stronger assumptions on the process than in the present paper. 
Moreover, in this context, lower bounds on the rates of convergence can be established only if both upper and lower bounds on the model parameters are assumed to match. In contrast, as demonstrated by the examples in Section~\ref{sec:examples}, our approach yields asymptotically matching lower bounds on the rate of convergence even for the models with oscillating parameters. Adapting the techniques from~\cite{sarig02,Melbourne12} to the context of ergodic Markov processes studied in this paper would appear  to lead to weaker results.

\section{Applications to elliptic diffusions, L\'evy-driven SDEs and hypoelliptic stochastic damping Hamiltonian systems}
\label{sec:examples}
We now apply  the results of Section~\ref{sec:main_results} to stochastic models used across probability and its application. We mostly consider models where  Lyapunov drift conditions have been developed to obtain upper bounds on the rates of convergence. The examples are chosen to demonstrate the robustness and the complementary nature of our approach to the existing theory for upper bounds: we  cover the classes of models studied in~\cite[Sec.~5]{douc2009subgeometric} and~\cite[Sec.~3]{Fort2005}. In particular, we analyse models exhibiting polynomial, stretched exponential and exponential ergodicity and in all cases provide  lower bounds, which asymptotically match known upper bounds. More precisely, we give lower bounds in $f$-variation for  polynomially ergodic elliptic and hypoelliptic diffusions, studied in~\cite[Sec.~3]{Fort2005} and~\cite{douc2009subgeometric,Wu2001}, respectively. For brevity, in all other examples we consider total variation   (even though our methods could handle $f$-variation in these cases) and show that our lower bounds match asymptotically the upper bounds in~\cite{Fort2005,douc2009subgeometric}. 

The proofs of the results  in this section, contained in Section~\ref{sec:examples_proofs} below, typically involve verifying Assumption~\nameref{sub_drift_conditions}  via Theorem~\ref{thm:generator} and the condition in~\ref{assumption:f_convergence_a} of Theorem~\ref{thm:f_rate} via Lemma~\ref{lem:bounded_generator}. In all examples of this section, the dependence of the multiplicative constant 
on the initial position $x\in\cX$ can be obtained explicitly from our main estimate in~\eqref{eq:main_result_rate_of_convergence} of Theorem~\ref{thm:f_rate}. However, for ease of presentation, the explicit dependence on the starting point has been omitted.

\subsection{Elliptic diffusions}
\label{sec:diffusions}
In this section we apply our results to obtain lower bounds on the rate of convergence to stationarity of elliptic diffusions. We first introduce the general form of the  model and give a simple motivating example. 
In Sections~\ref{subsubsec:Polynomial} and~\ref{subsubsec_Subexponential}   we discuss the polynomial and stretched exponential cases, respectively. 
We stress that our assumptions allow  multiplicative noise with unbounded instantaneous variance of the process. 
Example~\ref{example:missmatch} of 
Section~\ref{subsubsec:Polynomial}
demonstrates the necessity of two-sided asymptotic assumptions on the coefficients (used in this section)  for obtaining the actual rate of convergence to stationarity. 
It also shows that upper bounds on the rate of convergence, established in~\cite{Fort2005, douc2009subgeometric, veretenn97}
using asymptotic upper bounds on the coefficients only,
may be orders of magnitude larger than the actual convergence rate.

Section~\ref{sec:diffusions} concludes with the class of elliptic diffusions exhibiting exponential ergodicity (see
Section~\ref{subsubsec:Exponential} below).
Even though our main focus  is on subexponential ergodicity, Section~\ref{subsubsec:Exponential} shows that our methods for lower bounds are also applicable to the exponentially ergodic case.

For  $n\in\N$,
let the process $X=(X_t)_{t\in\RP}$ with state space $\cX\coloneqq \R^n$
be the unique strong solution of the stochastic differential equation (SDE)
%We start with the fundamental example of elliptic diffusions. For the %additional information and the literature on upper bounds see %e.g.~\cite{douc2009subgeometric,Fort2005,veretenn97}.
\begin{equation}
\label{eq:elliptic_SDE}
X_t = X_0 + \int_0^t b(X_s)\ud s + \int_0^t \sigma(X_s)\ud B_s,
\end{equation}
where the functions $b:\R^n\to\R^n$ and $\sigma:\R^n\to\R^{n\times n}$ are locally Lipschitz (i.e. for every $l>0$ there exists a finite constant $c_l$ such that $\lvert b(x)-b(y)\rvert + \lvert \sigma(x)-\sigma(y)\rvert \leq c_l\lvert x-y\rvert$ for all
$x,y\in\R^n$ with  $|x|,|y|<l$), 
 and $(B_t)_{t\in\RP}$  a standard $n$-dimensional Brownian motion. Here and throughout we denote by $|\cdot|$ and  $\langle \cdot,\cdot\rangle$ the Euclidean norm and standard scalar product on $\R^n$, respectively.
 In particular, $|x|^2=\langle x,x\rangle$ for all $x\in\R^n$. 
 Let $\Sigma\coloneqq \sigma\sigma^\intercal$, where $\sigma^\intercal$ is the transpose of the matrix $\sigma$, be uniformly elliptic: 
 $\langle \Sigma(x)y,y\rangle \geq \delta_a |y|^2$ for some $\delta_a>0$ and all $y,x\in\R^n$. 
 The diffusion $X$ in~\eqref{eq:elliptic_SDE} is assumed to be ergodic with an invariant measure $\pi$.

\noindent \underline{Motivating example}: Consider the following toy example of an elliptic SDE~\eqref{eq:elliptic_SDE} with $n = 1$ (i.e. $\cX=\R$), $\sigma \equiv 1$, and $b(x) = -x/|x|$ for all $|x|>1$.
Applying the generator $\cA$ of $X$ to the Lyapunov function $V(x) = 1+ x^2$ yields $\cA V(x) \leq -V(x)^{1/2}$ for all $x\in\R$ with $|x|$ large. Then, for every initial condition $x\in\R$, an application of the drift condition from~\cite[Thms~3.2 and~3.4]{douc2009subgeometric} (see~\eqref{eq:douc_drif_condition} above)   with  $V(x) = 1+ x^2$ (and $\phi(r)$ proportional to $\sqrt{r}$ as $r\to\infty$) implies  the following upper bound:
$
\| \P_x(X_t\in\cdot)-\pi(\cdot)\|_{\TV} \leq C_x/t$ for all $t\in[1,\infty)$ and some constant $C_x\in(0,\infty)$.
Given that the process $X$ is in fact  exponentially ergodic (use~\cite[Thms~3.2 and~3.4]{douc2009subgeometric} with  $V(x)=\exp(|x|)$ for large $|x|$), it is evident that the upper bounds obtained through Lyapunov drift condition~\eqref{eq:douc_drif_condition} may be exceedingly inaccurate. 
Furthermore, this  example demonstrates that the classical Lyapunov-function theory for upper bounds is not sufficient for characterising the processes that are subexponentially ergodic. 
We now apply the results in Section~\ref{sec:main_results} to address this problem by establishing the actual subexponential rate of convergence using our  Lyapunov-function drift conditions in~\nameref{sub_drift_conditions}.

\subsubsection{Polynomial tails}
\label{subsubsec:Polynomial}
The following assumption ensures polynomial upper bounds on the tails of the invariant measure and the rate of convergence in total variation~\cite[Sec~3.2]{Fort2005},~\cite{veretenn97}. 
Recall that a function $g:\R^n\to\R$ satisfies $g(x)=o(1)$ as $|x|\to\infty$ if $\lim_{|x|\to\infty}\sup_{u\in\R^n, |u|=1}g(u|x|)=0$.

\noindent
\textbf{\namedlabel{example:ass_diffp}{Assumption~A$_p$}.}
\textit{There exist  $\alpha,\beta,\gamma\in(0,\infty)$ and $\ell\in[0,2)$, such that $2-\ell<2+(2\alpha-\gamma)/\beta=:m_c$ and the coefficients 
$b$ and $\Sigma=\sigma\sigma^\intercal$ of~\eqref{eq:elliptic_SDE} satisfy  (as $|x|\to\infty$)
$$
\langle b(x),x/|x|\rangle/|x|^{\ell-1} = -\alpha+ o(1),\quad
\langle \Sigma(x)x/|x|,x/|x| \rangle/|x|^{\ell} = \beta  + o(1), \quad\Tr(\Sigma(x))/ |x|^{\ell} = \gamma  + o(1).
$$
}

As defined in~\cite{Fort2005}, Langevin tempered diffusion on $\R^n$,  
given by a smooth  $\tilde\pi:\R^n\to(0,\infty)$ (proportional to the density of $\pi$ on $\R^n$), satisfies SDE~\eqref{eq:elliptic_SDE}
with coefficients
$\Sigma(x)=I_n/\tilde\pi(x)^{2d}$, where  $I_n$ is the identity matrix and $d\geq 0$, and $b(x)=(1-2d)\nabla(\log \tilde\pi)(x)/(2\tilde\pi(x)^{2d})$, where $\nabla$ denotes the gradient. If $d=0$, we get the classical Langevin diffusion with bounded volatility. This is an important class of diffusions because, by construction, their invariant measure equals $\pi$. They satisfy \ref{example:ass_diffp} when 
$1/\tilde\pi(x)^{2d}$ is proportional to $|x|^{\ell}$, with $\ell\in[0,2)$, and $\langle \nabla \tilde\pi(x), x\rangle$ is proportional to $\tilde\pi(x)$ as $|x|\to\infty$.

 When $\pi$ has polynomial tails,
upper bounds for the convergence in total variation for Langevin tempered diffusions on $\R^n$ (as well as their generalisation in \ref{example:ass_diffp})  were studied  in~\cite[Sec~3.2]{Fort2005}. In this section we apply our methods under  \ref{example:ass_diffp} to obtain matching lower bounds for the results in~\cite{Fort2005},   establishing the  rate of convergence to stationarity.

\begin{thm}
\label{thm:diff_poly}
Let~\ref{example:ass_diffp} hold. Pick  $k\in[0,\ell + (2\alpha-\gamma)/\beta)$ and fix the critical exponent $\alpha_k\coloneqq m_c/(2-\ell)-1-k/(2-\ell) =(2(\alpha+\beta)-\gamma)/((2-\ell)\beta)-1-k/(2-\ell)$. Then for 
the function  $f_k(x) \coloneqq 1+|x|^k$,
a starting point $x\in\R^n$ and $\eps>0$, there exists a constant $c_{k,\eps}\in(0,\infty)$ such that 
$$
c_{k,\eps}/t^{\alpha_k+\eps}\leq \|\P_x(X_t\in \cdot)-\pi(\cdot)\|_{f_k}\quad\text{for all $t\in[1,\infty)$.}
$$
\end{thm}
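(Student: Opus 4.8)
\emph{Step 1: choice of $V$, $\varphi$, $\Psi$ and the tail of $\pi$.} The plan is to verify the $\mathbf{L}$-drift condition~\nameref{sub_drift_conditions} via Theorem~\ref{thm:generator}, extract a polynomial lower bound on the tail of $\pi$ from Theorem~\ref{thm:invariant}, control the expected growth of a power of $V$ along $X$ via Lemma~\ref{lem:bounded_generator}, and feed everything into Theorem~\ref{thm:f_rate} with the remaining free exponents tuned so that the resulting rate exponent converges to $\alpha_k$. Concretely: fix $\rho\in(0,m_c)$ and set $V(x)\coloneqq(1+|x|^2)^{\rho/2}$, which is smooth, satisfies $V\geq1$, and has $\limsup_{t\to\infty}V(X_t)=\infty$ $\P_x$-a.s.\ since $X$ is Harris recurrent with $\supp\pi$ unbounded under~\ref{example:ass_diffp}. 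Writing $\cA g=\langle b,\nabla g\rangle+\tfrac12\Tr(\Sigma\Hessian g)$ and substituting the asymptotics of~\ref{example:ass_diffp}, one finds as $|x|\to\infty$ that $\cA(1/V)(x)$ is a \emph{strictly positive} constant times $|x|^{\ell-2-\rho}(1+o(1))$ (positivity uses $\rho<m_c$), and that, for $\theta>0$, $\cA(V^{\theta})(x)$ is a constant times $|x|^{\rho\theta+\ell-2}(1+o(1))$ with the sign of $\theta-m_c/\rho$. Since $1/V(x)\asymp|x|^{-\rho}$, putting $p\coloneqq(2-\ell)/\rho>0$ gives $|x|^{\ell-2-\rho}\asymp(1/V(x))^{1+p}$, so~\eqref{eq:generator_condition_super} holds with $\varphi(u)\coloneqq c_\varphi u^{1+p}$ for $c_\varphi$ large; this $\varphi$ is non-decreasing, continuous, with $r\mapsto1/(r\varphi(1/r))=r^{p}/c_\varphi$ increasing to $\infty$ as Theorem~\ref{thm:generator}\ref{generator_a} requires. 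Taking a small $\delta_1>0$ and $\theta\coloneqq m_c/\rho+\delta_1$, inequality~\eqref{eq:generator_condition_sub} holds with the differentiable, increasing, submultiplicative $\Psi(v)\coloneqq v^{\theta}$. As diffusions have continuous paths, Theorem~\ref{thm:generator} yields~\nameref{sub_drift_conditions}, and Theorem~\ref{thm:invariant} then gives, for each $q,\eps'\in(0,1)$, a constant $c_{\eps',q}>0$ with $\pi(V\geq r)\geq c_{\eps',q}/L_{\eps',q}(r)$, where $L_{\eps',q}(r)\asymp r^{\theta-p}(\log\log r)^{\eps'}$ and $\theta-p=\sigma_0/\rho+\delta_1$ with $\sigma_0\coloneqq m_c-2+\ell=\ell+(2\alpha-\gamma)/\beta$.

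\emph{Step 2: expected growth, and setting up Theorem~\ref{thm:f_rate}.} For $\eta>m_c/\rho$ let $h(v)\coloneqq v^{\eta}$ and $H\coloneqq h\circ V=(1+|x|^2)^{\eta\rho/2}\in\cD(\cA)$; the computation of Step~1 (with $\theta=\eta$) gives $\cA H(x)\leq C H(x)^{1-p/\eta}+C'$ on $\cX$, and since $m_c>2-\ell$ (part of~\ref{example:ass_diffp}) we have $\eta>m_c/\rho>p$, so $\xi(s)\coloneqq Cs^{1-p/\eta}+C'$ is non-decreasing, concave and differentiable with $\cA H\leq\xi\circ H$. Lemma~\ref{lem:bounded_generator} then yields $\E_x[h\circ V(X_t)]\leq v(x,t)\coloneqq c_x(1+t)^{\eta/p}=c_x(1+t)^{\eta\rho/(2-\ell)}$, increasing in $t$. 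We next choose a differentiable $f_\star:[1,\infty)\to[1,\infty)$ with $f_\star\circ V\leq f_k$ on $\R^n$ and $f_\star(v)\sim v^{k/\rho}$ as $v\to\infty$ (whence $\|\cdot\|_{f_\star\circ V}\leq\|\cdot\|_{f_k}$, which sidesteps the non-differentiability at $v=1$ of the literal $f_\star$ when $k<2$); then $g\coloneqq h/f_\star$ is increasing with $g(v)\asymp v^{\eta-k/\rho}\to\infty$ for $\eta>k/\rho$, and feeding the tail bound of Step~1 into condition~\ref{assumption:f_convergence_b} of Theorem~\ref{thm:f_rate} produces an admissible $a(t)\asymp t^{\mu}(\log\log t)^{-\eps'}$ with $\mu\coloneqq(k-\sigma_0-\rho\delta_1)/(\rho\eta-k)\in(-1,0)$ for $\eta$ large (using $k<\sigma_0$), so $A(t)=ta(t)$ is increasing to $\infty$ and $A^{-1}(s)\asymp s^{1/(1+\mu)}$ up to a slowly varying factor.

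\emph{Step 3: the exponent and conclusion.} With $r_f=a\circ A^{-1}\circ(2v)$ as in Theorem~\ref{thm:f_rate}, combining the above gives, for large $t$,
$$\|\P_x(X_t\in\cdot)-\pi(\cdot)\|_{f_k}\ \geq\ \tfrac12\,r_f(x,t)\ \gtrsim\ t^{-E}(\log\log t)^{-O(\eps')},\qquad E\coloneqq\frac{(\sigma_0+\rho\delta_1-k)\,\eta\rho}{(\rho\eta-\sigma_0-\rho\delta_1)(2-\ell)}.$$
Writing $E=\alpha_k\cdot\dfrac{\sigma_0+\rho\delta_1-k}{\sigma_0-k}\cdot\dfrac{\eta\rho}{\rho\eta-\sigma_0-\rho\delta_1}$ exhibits $E$ as a product of $\alpha_k$ with two factors exceeding $1$ that tend to $1$ as $\delta_1\downarrow0$ and $\eta\uparrow\infty$, so $E\downarrow\alpha_k$; since also $(\log\log t)^{-O(\eps')}\geq t^{-\eps/2}$ for large $t$, fixing $\delta_1$ small, $\eta$ large and $\eps'$ small makes the exponent at most $\alpha_k+\eps$. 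The bound $r_f(x,t)/2\leq\|\cdot\|_{f_k}$ of Theorem~\ref{thm:f_rate} holds for all $t\in[1,\infty)$, and $r_f(x,\cdot)$ is continuous and positive there, so the large-$t$ estimate extends to all $t\in[1,\infty)$ at the cost of the constant, giving the claimed $c_{k,\eps}/t^{\alpha_k+\eps}\leq\|\P_x(X_t\in\cdot)-\pi(\cdot)\|_{f_k}$.

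\emph{Main obstacle.} The delicate part is Step~1: under the purely \emph{asymptotic} hypotheses~\ref{example:ass_diffp} one must verify that the remainders in $\cA(1/V)$ and $\cA(V^{\theta})$ are genuinely of smaller order than the leading polynomial term, uniformly in the angular variable, so that the critical exponent $\theta=m_c/\rho$ — at which the leading coefficient of $\cA(V^\theta)$ vanishes — can be approached. This is exactly what forces the perturbation $\delta_1>0$ which, together with the limit $\eta\uparrow\infty$, produces the unavoidable loss of $\eps$ in the exponent. The remainder — the monotonicity and invertibility requirements of Theorem~\ref{thm:f_rate}, the tracking of the slowly varying factors, and the modification of $f_\star$ near $v=1$ — is routine.
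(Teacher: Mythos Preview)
Your proof is correct and follows the same overall architecture as the paper's: verify \nameref{sub_drift_conditions} via Theorem~\ref{thm:generator} and Proposition~\ref{prop:deterministic_eliptic_poly}, extract a polynomial tail lower bound on $\pi$ from Theorem~\ref{thm:invariant}, bound $\E_x[h\circ V(X_t)]$ via Lemma~\ref{lem:bounded_generator}, and feed into Theorem~\ref{thm:f_rate}. The one tactical difference is the choice of $h$. The paper takes the Lyapunov exponent just below critical, $V_\eps=p_{m_c-\eps}$, and $h(r)=r$; then $\cA(h\circ V_\eps)=\cA p_{m_c-\eps}$ has \emph{negative} leading coefficient by Proposition~\ref{prop:deterministic_eliptic_poly} (the constant $C_0$ there is negative for $0<m<m_c$), so $\cA(h\circ V_\eps)$ is globally bounded above and Lemma~\ref{lem:bounded_generator} applies with constant $\xi$, giving linear growth $\E_x[V_\eps(X_t)]\leq C(V_\eps(x)+t)$. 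This leaves a single perturbation $\eps\downarrow0$ to push the exponent to $\alpha_k$. Your route instead fixes $V$ and takes $h(v)=v^\eta$ with $\rho\eta>m_c$, so $\cA(h\circ V)$ has \emph{positive} leading coefficient, forcing a genuinely concave $\xi$ and the superlinear bound $v(x,t)\asymp t^{\eta\rho/(2-\ell)}$; you then need the additional limit $\eta\uparrow\infty$ (on top of $\delta_1\downarrow0$) to recover $\alpha_k$. Both are valid; the paper's is more economical (one free parameter and a trivial $\xi$), while yours illustrates the balancing role of $h$ described in Remark~\ref{rem:after_main_theorems}\ref{rem:after_main_theorems(a)}.
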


The next result provides lower bounds on the tail of the invariant measure $\pi$ and the return time $\tau_D(\delta)=\inf\{t>\delta: X_t\in D\}$ of the diffusion $X$ to a bounded set $D\in\cB(\R^n)$ after time $\delta>0$.

\begin{thm}
\label{thm:eliptic_poly_invariant}
Let \ref{example:ass_diffp} hold and recall  $0<2-\ell<m_c=2+(2\alpha-\gamma)/\beta$.
\begin{myenumi}
\item[(a)] For every $\eps>0$, there exists $c_\pi\in(0,\infty)$ such that 
$$
c_\pi/ r^{m_c+\ell-2+\eps}\leq\pi(\{|x|\geq r\})\qquad\text{for all $r\in[1,\infty)$.}
$$
\item[(b)] For any $x\in\R^n$, any bounded set $D\in \cB(\R^n)$ and arbitrary $\eps,\delta\in(0,\infty)$, 
there exists a constant $c_\tau\in(0,\infty)$,  such that
$$
c_\tau /t^{m_c/(2-\ell)+\eps}\leq \P_x(\tau_D(\delta)\geq t) \qquad \text{for all $t\in[1,\infty)$.}
$$
\end{myenumi}
\end{thm}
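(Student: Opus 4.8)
The plan is to verify the $\mathbf{L}$-drift condition~\nameref{sub_drift_conditions} for the diffusion $X$ of~\eqref{eq:elliptic_SDE} with a power-type Lyapunov function and then read off part~(a) from Theorem~\ref{thm:invariant} and part~(b) from Theorem~\ref{thm:modulated_moments}\ref{thm:modulated_moments_b}. Concretely I would take $V(x)\coloneqq(1+|x|^2)^{1/2}$, which is smooth, bounded below by $1$ and satisfies $|x|\leq V(x)\leq 1+|x|$; in particular $\{V\geq 2r\}\subset\{|x|\geq r\}$ for $r\geq 1$ and every bounded set lies in some sublevel set $\{V\leq m\}$, so that the $V$-statements of Theorems~\ref{thm:invariant} and~\ref{thm:modulated_moments} translate into statements about $|x|$ and bounded $D$. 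Since $X$ has continuous paths it has bounded jumps ($d=0$ in Theorem~\ref{thm:generator}\ref{generator_b}), and the non-confinement property $\limsup_{t\to\infty}V(X_t)=\infty$ $\P_x$-a.s.\ follows from positive Harris recurrence together with uniform ellipticity (a non-degenerate $\pi$ charges every neighbourhood of infinity, so the recurrent process visits $\{|x|>R\}$ for every $R$). Hence Theorem~\ref{thm:generator} is applicable.

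Writing $\cA g(x)=\langle b(x),\nabla g(x)\rangle+\tfrac12\Tr(\Sigma(x)\Hessian g(x))$ and applying it to $g_\rho(x)\coloneqq(1+|x|^2)^{\rho}$, Assumption~\ref{example:ass_diffp} gives, as $|x|\to\infty$,
\[
\cA g_\rho(x)=\rho\,|x|^{2\rho+\ell-2}\bigl(\gamma-2\alpha+2(\rho-1)\beta+o(1)\bigr).
\]
Taking $\rho=-1/2$, the bracket becomes $\gamma-2\alpha-3\beta+o(1)<(\ell-3)\beta+o(1)<0$ (the inequality $\gamma-2\alpha<\ell\beta$ being equivalent to $m_c>2-\ell$), so $\cA(1/V)(x)\asymp|x|^{\ell-3}$ with a positive constant; since $|x|\geq V(x)/\sqrt2$ for $|x|\geq 1$ this gives $\cA(1/V)(x)\leq C'(1/V(x))^{3-\ell}$ outside a compact set, and I would set $\varphi(s)\coloneqq C' s^{3-\ell}$. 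One checks directly that this $\varphi$ meets the requirements of~\ref{sub_drift_conditions(i)} and of Theorem~\ref{thm:generator}\ref{generator_a}: it is continuous and non-decreasing, $r\mapsto r\varphi(1/r)=C'r^{\ell-2}$ is decreasing to $0$ and $r\mapsto 1/(r\varphi(1/r))=r^{2-\ell}/C'$ is increasing to $\infty$, all because $\ell\in[0,2)$; and the bounded region is absorbed by the $+b\1{V\leq\ell_0}$ term. For~\ref{sub_drift_conditions(ii)} I would take $\Psi(v)\coloneqq v^{\,m_c+\eta}$ for a small $\eta>0$; then $\Psi$ is differentiable, increasing and submultiplicative, and plugging $\rho=(m_c+\eta)/2>m_c/2$ into the display makes the bracket equal $\eta\beta+o(1)>0$, so $\cA(\Psi\circ V)(x)\geq 0$ for $|x|$ large and hence $\cA(\Psi\circ V)\geq-c\1{V\leq\ell_0}$ on $\R^n$. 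As $1/V$ is bounded $C^2$ and $\Psi\circ V$ is $C^2$ of polynomial growth, a standard localisation argument with It\^o's formula gives $1/V,\Psi\circ V\in\cD(\cA)$, so Theorem~\ref{thm:generator} yields Assumption~\nameref{sub_drift_conditions}.

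For part~(a), Theorem~\ref{thm:invariant} provides, for any $q,\eps'\in(0,1)$, a constant $c$ with $\pi(\{V\geq r\})\geq c/L_{\eps',q}(r)$, and with the above $\varphi,\Psi$ one computes $L_{\eps',q}(r)=r\varphi(1/r)\Psi(2r/(1-q))(\log\log r)^{\eps'}\asymp r^{\,m_c+\ell-2+\eta}(\log\log r)^{\eps'}$. Using $\{V\geq 2r\}\subset\{|x|\geq r\}$ and evaluating $L_{\eps',q}$ at $2r$ gives $\pi(\{|x|\geq r\})\geq c/L_{\eps',q}(2r)\geq c_\pi/\bigl(r^{\,m_c+\ell-2+\eta}(\log\log r)^{\eps'}\bigr)$ for all $r\geq 1$, and choosing $\eta,\eps'$ small (e.g.\ $\eta=\eps/2$) so that $r^{\eta}(\log\log r)^{\eps'}\leq r^{\eps}$ for $r\geq 1$ yields the claimed bound with exponent $m_c+\ell-2+\eps$. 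For part~(b) I would apply Theorem~\ref{thm:modulated_moments}\ref{thm:modulated_moments_b} with $D\subset\{V\leq m\}$, $q\in(0,1)$ and $\eps=(1-q)/2$, using Remark~\ref{rem:modulated_every_delta} to get the bound for every $\delta>0$ (the marginals $X_\delta$ of the uniformly elliptic diffusion have full support, so $\P_x(V(X_\delta)>r)>0$ for all $x,r$). Here $v\mapsto\eps/(v\varphi(1/v))=(\eps/C')v^{2-\ell}$ has inverse $G_1(t)\asymp t^{1/(2-\ell)}$, hence $\Psi(2G_1(t)/(1-q))\asymp t^{(m_c+\eta)/(2-\ell)}$ and $\P_x(\tau_D(\delta)\geq t)\geq C/t^{(m_c+\eta)/(2-\ell)}$; picking $\eta$ with $\eta/(2-\ell)\leq\eps$ and adjusting the constant for small $t$ gives the exponent $m_c/(2-\ell)+\eps$.

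The only genuinely delicate point is the verification of Assumption~\nameref{sub_drift_conditions}: one must control the $o(1)$ errors in Assumption~\ref{example:ass_diffp} well enough that the strict inequalities (the bracket negative at $\rho=-1/2$, and $\rho=(m_c+\eta)/2>m_c/2$) hold \emph{outside a compact set}. This forced strictness — together with the iterated-logarithm factor coming from Theorem~\ref{thm:invariant} — is precisely the source of the arbitrarily small loss $\eps$ in both statements. Everything downstream of the drift condition is a mechanical substitution into Theorems~\ref{thm:invariant} and~\ref{thm:modulated_moments} plus the elementary translation between $V$-tails/level sets and $|x|$-tails/bounded sets.
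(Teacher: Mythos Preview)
Your proposal is correct and follows essentially the same route as the paper: verify the $\mathbf{L}$-drift condition via the generator estimates of Theorem~\ref{thm:generator} and Proposition~\ref{prop:deterministic_eliptic_poly}, then apply Theorems~\ref{thm:invariant} and~\ref{thm:modulated_moments}\ref{thm:modulated_moments_b}. The only difference is a cosmetic reparametrisation: the paper takes $V_\eps=p_{m_c-\eps}$ (so $V_\eps(x)\sim|x|^{m_c-\eps}$) with $\Psi_\eps(r)=r^{1+2\eps/(m_c-\eps)}$, whereas you take $V(x)=(1+|x|^2)^{1/2}\sim|x|$ with $\Psi(v)=v^{m_c+\eta}$; in both cases $\Psi\circ V$ grows like $|x|^{m_c+\text{small}}$ and $r\varphi(1/r)$, after translation back to $|x|$, contributes the factor $r^{\ell-2}$, so the resulting $L_{\eps,q}$ and $G_1$ are the same up to constants.
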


The proofs of Theorems~\ref{thm:diff_poly} and~\ref{thm:eliptic_poly_invariant} use the $\mathbf{L}$-drift condition~\nameref{sub_drift_conditions}, where $V$, $\varphi$ and $\Psi$ exhibit polynomial growth. The functions $\varphi$ and $\Psi$ are obtained directly from the generator inequalities of Theorem~\ref{thm:generator} and are also polynomial, see Section~\ref{subsubsec:poly_proofs} below for details.

\begin{rem}[\textbf{matching rates}]
As mentioned above, our lower bounds in Theorem~\ref{thm:diff_poly} on the $f$-variation distance matches the upper bounds in~\cite{Fort2005}. Recall from Theorem~\ref{thm:diff_poly} the parameters $k$, $\alpha_k$ and the function $f_k$. Then, for every $\eps>0$, there exist constants $c_{k,\eps},C_{k,\eps}\in(0,\infty)$ such that
\begin{equation}
\label{eq:matching_poly}
c_{k,\eps}/t^{\alpha_k+\eps}\leq \|\P_x(X_t\in\cdot)-\pi(\cdot)\|_{f_k}\leq C_{k,\eps}/t^{\alpha_k-\eps}\quad\text{for all $t\in[1,\infty)$.}
\end{equation}
Analogous bounds could be established for the tails of the invariant measure $\pi$ and the return time $\tau_D(\delta)$. Put differently, in conjunction with~\cite{Fort2005}, Theorems~\ref{thm:diff_poly} and~\ref{thm:eliptic_poly_invariant} imply the  convergence rate to stationarity and the decay of the tails of the invariant measure and the return times in logarithmic scale:
\begin{equation*}
\begin{aligned}
\lim_{t\to\infty} \frac{\log \|\P_x(X_t\in\cdot)-\pi(\cdot))\|_{f_k}}{\log t} = \alpha_k, \quad &\quad \lim_{t\to\infty} \frac{\log \P_x(\tau_D(\delta)>t)}{\log t} = m_c/(2-\ell), \\  
\lim_{t\to\infty} \frac{\log \pi(\{|x|\geq r\})}{\log r}&= m_c+\ell-2.
\end{aligned}
\end{equation*}
\end{rem}

\begin{rem}[\textbf{oscillating coefficients}]
Note that~\ref{example:ass_diffp} requires matching upper and lower bounds on the asymptotic behaviour (as $|x|\to\infty$) of the coefficients of SDE~\eqref{eq:elliptic_SDE}. 
The following is a natural generalisation of~\ref{example:ass_diffp}, allowing oscillating coefficients:
for all $x\in\R^n$ outside of some compact set it holds that $\gamma_L\leq\Tr(\Sigma(x))/|x|^{\ell}\leq \gamma_U$, 
$$
-\alpha_L\leq\langle b(x),x/|x|\rangle/|x|^{\ell-1}\leq -\alpha_U \qquad\&\qquad \beta_L\leq \langle \Sigma(x)x/|x|,x/|x|\rangle/|x|^\ell\leq \beta_U
$$
for some constants $\gamma_L,\gamma_U,\alpha_L,\alpha_U,\beta_L,\beta_U\in(0,\infty)$.
Under this assumption, the lower bounds in Theorem~\ref{thm:diff_poly} and~\ref{thm:eliptic_poly_invariant} remain valid with with $m_c^L = 2+(2\alpha_L-\gamma_L)/\beta_L$ (instead of $m_c$). While a slight modification of the proofs of Theorems~\ref{thm:diff_poly} and~\ref{thm:eliptic_poly_invariant} is necessary, the same polynomial class of  Lyapunov functions can be used. The details are omitted for brevity.
The upper bounds depend on $m_c^U=2+(2\alpha_U-\gamma_U)/\beta_U$ and can be obtained from the classical theory~\cite{Fort2005}. The discrepancy between 
$m_c^L>m_c^U$ leads to an asymptotic gap between 
the lower and upper bounds on the rate of convergence.
As demonstrated by the following example, this gap is not an  artefact of our methods. 
Example~\ref{example:missmatch} below also shows that  upper bounds on the coefficients alone, assumed  in~\cite[Remark after Thm~16]{Fort2005},  are  insufficient to deduce the rate of convergence in the sense of~\eqref{eq:matching_poly}. 
\end{rem}

\begin{exam}
\label{example:missmatch}
Let the function $\pi:\R\to (0,\infty)$ be  a positive thrice continuously differentiable density on $\R$ (up to a normalizing constant). The one-dimensional SDE
\begin{equation}
\label{eq:one_dimension_Langevin}
\ud X_t = (\log \pi)'(X_t)\ud t + \sqrt{2}\ud B_t.
\end{equation}
is an example of an elliptic diffusion of the form~\eqref{eq:elliptic_SDE} in $\R$.

\ref{example:ass_diffp} 
% in the diffusion examples of Sections~\ref{subsubsec:Polynomial},~\ref{subsubsec_Subexponential} and~\ref{subsubsec:Exponential}
 above is given in terms of the limits of the coefficients of SDE~\eqref{eq:elliptic_SDE} as $|x|\to\infty$. This differs from the assumptions in~\cite[Sec~3.2]{Fort2005}, 
 where only the asymptotic upper bounds on the model parameters are assumed. 
 In particular, in the context of  SDE~\eqref{eq:one_dimension_Langevin}, the results from~\cite[Sec~3.2]{Fort2005} imply the following:
if 
$\limsup_{|x|\to\infty}x(\log \pi)'(x)= -\alpha$ for some $\alpha\in(1,\infty)$, then for every $x\in\R$ and $\eps>0$ there exists a constant $C>0$, such that $\|\P_x(X_t\in\cdot)-\pi(\cdot)\|_{\TV}\leq C/t^{(\alpha-1)/2-\eps}$ holds for all $t\in[1,\infty)$.
However, as we shall now see,
an upper bound on the drift in~\eqref{eq:one_dimension_Langevin} is not sufficient to determine the actual rate of convergence to the invariant measure.

Fix $\alpha\in(1,\infty)$. Then for every $k\in(\alpha,\infty)$, there exists an invariant density $\pi$, such that the following statements hold: $\limsup_{|x|\to\infty}x(\log \pi)'(x)= -\alpha$ and, for every $x\in\R$  and
$\eps>0$,
there exist constants $c',C'\in(0,\infty)$ such that 
\begin{equation}
\label{eq:oscilating_langevin}
c'/t^{(k-1)/2} \leq \| \P_x(X_t\in\cdot)-\pi(\cdot)\|_{\TV}\leq C'/t^{(k-1)/2-\eps}\quad \text{for all $t\in[1,\infty)$,}
\end{equation}
where the process $X$ follows SDE~\eqref{eq:one_dimension_Langevin} with $X_0=x$. The proof of~\eqref{eq:oscilating_langevin}
is in Section~\ref{subsubsec:poly_proofs} below.

This example demonstrates that, assuming only upper bounds on the coefficients of the diffusion in~\eqref{eq:elliptic_SDE} (as in~\cite[Sec~3.2]{Fort2005}), may result in the upper bound on the rate being greater than the actual rate of convergence by any polynomial order.
In contrast, if we assume matching lower and upper bounds on the drift of~\eqref{eq:one_dimension_Langevin} (i.e. $(\log\pi)'(x) = -\alpha/x(1+o(1))$ as $|x|\to\infty$), the bounds in~\eqref{eq:matching_poly} imply~\eqref{eq:oscilating_langevin} with $k=\alpha$.
\end{exam}

\subsubsection{Stretched exponential tails}
\label{subsubsec_Subexponential}
Under the following assumption, the process $X$ exhibits stretched exponential ergodicity.

\noindent
\textbf{\namedlabel{example:ass_diffse}{Assumption~A$_{se}$}.}
\textit{There exist constants $p\in(0,1)$ and $\ell \in[0,2p)$, such that the coefficients 
$b$ and $\Sigma=\sigma\sigma^\intercal$ in~\eqref{eq:elliptic_SDE} satisfy  the following asymptotic assumptions: $\limsup_{|x|\to\infty}\Tr(\Sigma(x))/|x|^{\ell}<\infty$ and
$$
-\alpha_L\leq\langle b(x),x/|x|\rangle/|x|^{\ell-p}\leq -\alpha_U \qquad\&\qquad \beta_L\leq \langle \Sigma(x)x/|x|,x/|x|\rangle/|x|^\ell\leq \beta_U,
$$
for all $x$ outside of some compact set and some constants $\alpha_L,\alpha_U,\beta_L,\beta_U\in(0,\infty).$}

\ref{example:ass_diffse} covers tempered Langevin diffusions exhibiting stretched exponential ergodicity, studied in~\cite[Sec~5.2]{douc2009subgeometric}. In particular,  parameters $\beta\in (0,1)$ and $d\in[0,1/\beta-1)$ (in the notation of\cite[Sec~5.2]{douc2009subgeometric}), are included in the cases $p=\beta-1$ and $\ell = 2\beta d$. Moreover, the case $\ell =0$ includes Langevin diffusions.

%The critical value in this model will be $c_c\coloneqq (1+p-\gamma/\beta)$. 
 The main result, Theorem~\ref{thm:example_se_inv_rate} below, establishes lower bounds on the tail of the invariant measure $\pi$, the rate of convergence towards the invariant measure in the $\TV$-distance and  the tail of the return time $\tau_D(\delta)=\inf\{t>\delta: X_t\in D\}$  of $X$ to a bounded set $D\in\cB(\R^n)$ after time $\delta>0$. 

\begin{thm}
\label{thm:example_se_inv_rate}
Let \ref{example:ass_diffse} hold. 
\begin{myenumi}
\item[(a)]  There exist $c_\pi,u_\pi\in(0,\infty)$ such that 
$$
c_\pi/\exp(u_\pi r^{1-p}))\leq\pi(\{|x|\geq r\}) \quad\text{for all $r\in[1,\infty)$.}
$$
\item[(b)]  For every $x\in\R^n$, $\delta>0$ and a bounded set $D\in\cB(\R^n)$, there exist $c_\tau,u_\tau\in(0,\infty)$  such that
$$
 c_\tau/\exp(u_\tau t^{(1-p)/(1+p-\ell)}) \leq \P_x(\tau_D(\delta)\geq t)\quad\text{for all $t\in[1,\infty)$}.
$$
\item[(c)] For every $x\in\R^n$ there exist $c_\TV,u_\TV\in(0,\infty)$ such that
$$
c_\TV/\exp(u_\TV t^{(1-p)/(1+p-\ell)})\leq \| \P_x(X_t\in\cdot)-\pi(\cdot)\|_{\TV} \quad\text{for all $t\in[1,\infty)$.}
$$
\end{myenumi}
\end{thm}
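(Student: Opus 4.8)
The plan is to verify the $\mathbf{L}$-drift condition~\nameref{sub_drift_conditions} for the \emph{polynomial} Lyapunov function $V(x)\coloneqq 1+|x|^2$ and then feed it into Theorems~\ref{thm:invariant},~\ref{thm:modulated_moments} and~\ref{thm:f_rate}; the point is that although the tails are stretched exponential in $|x|$, the Lyapunov function itself is polynomial and all stretched-exponential behaviour is carried by the functions $\Psi$ and $h$. First, $V$ is continuous, $V\geq 1$, and $\limsup_{t\to\infty}V(X_t)=\infty$ $\P_x$-a.s.\ because $X$ is positive Harris recurrent and $\pi$ has unbounded support. Recall that the generator of~\eqref{eq:elliptic_SDE} on a radial $C^2$ function $g(x)=\phi(|x|)$ is
\[
\cA g(x)=\phi'(|x|)\langle b(x),x/|x|\rangle+\tfrac12\phi''(|x|)\langle\Sigma(x)x/|x|,x/|x|\rangle+\tfrac{\phi'(|x|)}{2|x|}\bigl(\Tr\Sigma(x)-\langle\Sigma(x)x/|x|,x/|x|\rangle\bigr).
\]
Applying this to $1/V$ under~\ref{example:ass_diffse}, the drift term is of larger order than the diffusion terms as $|x|\to\infty$ (since $p<1$), giving $\cA(1/V)(x)\leq C_\varphi|x|^{\ell-p-3}$, which after substituting $|x|\asymp V^{1/2}$ is precisely~\eqref{eq:generator_condition_super} with $\varphi(w)\coloneqq C_\varphi w^{(3+p-\ell)/2}$; since $\ell<2p<1+p$ the exponent $(3+p-\ell)/2$ exceeds $1$, so $r\mapsto 1/(r\varphi(1/r))=r^{(1+p-\ell)/2}/C_\varphi$ is increasing to $\infty$ and Theorem~\ref{thm:generator}\ref{generator_a} yields~\nameref{sub_drift_conditions}\ref{sub_drift_conditions(i)}. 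For~\nameref{sub_drift_conditions}\ref{sub_drift_conditions(ii)} I take $\Psi(v)\coloneqq v^{p/2}\exp\bigl(\theta_2 v^{(1-p)/2}\bigr)$ with $\theta_2\coloneqq\tfrac{2\alpha_L}{\beta_L(1-p)}(1+\eps_0)$ for a small $\eps_0>0$: this $\Psi$ is differentiable, increasing and submultiplicative (Remark~\ref{rem:after_main_theorems}), the diffusion has no jumps, and since $\psi(r)\coloneqq\Psi(1+r^2)$ satisfies $\psi''/\psi'\geq\tfrac{2\alpha_L}{\beta_L}r^{-p}$ a short computation gives $\cA(\Psi\circ V)(x)\geq\alpha_L\eps_0|x|^{\ell-p}\Psi(V(x))>0$ for $|x|$ large, i.e.~\eqref{eq:generator_condition_sub}, so Theorem~\ref{thm:generator}\ref{generator_b} applies (here $\eps_0>0$ is needed to clear the critical constant $\tfrac{2\alpha_L}{\beta_L(1-p)}$).

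Granting~\nameref{sub_drift_conditions}, part~(a) is immediate from Theorem~\ref{thm:invariant}: with $v\varphi(1/v)\asymp v^{-(1+p-\ell)/2}$ and $\Psi(2v/(1-q))\asymp v^{p/2}\exp(u\,v^{(1-p)/2})$, where $u\coloneqq\theta_2(2/(1-q))^{(1-p)/2}$, one gets $L_{\eps,q}(1+r^2)\asymp r^{\ell-1}\exp(u\,r^{1-p})(\log\log r)^{\eps}$, so $\pi(\{|x|\geq r\})=\pi(\{V\geq 1+r^2\})\geq c_{\eps,q}/L_{\eps,q}(1+r^2)$ rearranges, after absorbing the polynomial and iterated-logarithm factors into a marginally larger exponent, to $\pi(\{|x|\geq r\})\geq c_\pi\exp(-u_\pi r^{1-p})$. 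For part~(b): a bounded $D$ lies in some $\{V\leq m\}$, the elliptic diffusion has full-support marginals so Remark~\ref{rem:modulated_every_delta} allows any $\delta>0$, and Theorem~\ref{thm:modulated_moments}\ref{thm:modulated_moments_b} (with $\eps=(1-q)/2$) gives $\P_x(\tau_D(\delta)\geq t)\geq C/\Psi(2G_1(t)/(1-q))$, where $G_1$ inverts $v\mapsto\eps/(v\varphi(1/v))\asymp v^{(1+p-\ell)/2}$, i.e.\ $G_1(t)\asymp t^{2/(1+p-\ell)}$; substituting gives $\Psi(2G_1(t)/(1-q))\asymp t^{p/(1+p-\ell)}\exp\bigl(u'\,t^{(1-p)/(1+p-\ell)}\bigr)$ and hence $\P_x(\tau_D(\delta)\geq t)\geq c_\tau\exp(-u_\tau t^{(1-p)/(1+p-\ell)})$.

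For part~(c) (total variation, so $f_\star\equiv1$ and $f\equiv1$) I apply Corollary~\ref{cor:rate}. To meet condition~\ref{assumption:f_convergence_a} I choose $h(v)\coloneqq\exp\bigl(c(v^{(1-p)/2}-1)\bigr)$ with $c>\max\{2\alpha_U/(\beta_U(1-p)),\,u\}$ (note $g=h$ then has $g(1)=1$), so that $H\coloneqq h\circ V$ satisfies $H(x)\asymp\exp(c|x|^{1-p})$ and, by the same type of generator estimate, $\cA H(x)\leq C_3|x|^{\ell-2p}H(x)$ for $|x|$ large; this reads $\cA H\leq\xi\circ H$ with $\xi(s)\coloneqq C_3'(\log s)^{-(2p-\ell)/(1-p)}s+C_6$ concave and non-decreasing, so Lemma~\ref{lem:bounded_generator} (with $\Xi(u)=\int_1^u\ud s/\xi(s)\asymp(\log u)^{(1+p-\ell)/(1-p)}$) gives $\E_x[h\circ V(X_t)]\leq v(x,t)$ with $v(x,t)\asymp\exp\bigl(C_5 t^{(1-p)/(1+p-\ell)}\bigr)$ for large $t$. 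For condition~\ref{assumption:f_convergence_b}, $g^{-1}(t)\asymp(\log t)^{2/(1-p)}$ gives $L_{\eps,q}(g^{-1}(t))\asymp t^{u/c}$ up to logarithmic factors, so $a(t)\coloneqq c_0 t^{-\beta_a}$ with $\beta_a$ slightly larger than $u/c$ (hence $\beta_a<1$, as $c>u$) is admissible and has $A(t)=t\,a(t)\to\infty$. Unwinding $r_1=a\circ A^{-1}\circ(2v)$ through $A^{-1}(s)\asymp s^{1/(1-\beta_a)}$ produces $r_1(x,t)\asymp\exp\bigl(-u_\TV' t^{(1-p)/(1+p-\ell)}\bigr)$, whence $\|\P_x(X_t\in\cdot)-\pi(\cdot)\|_{\TV}\geq r_1(x,t)/2\geq c_\TV\exp(-u_\TV t^{(1-p)/(1+p-\ell)})$.

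The hard part will be the two sharp generator estimates: extracting the exact order $|x|^{\ell-p-3}$ of $\cA(1/V)$ and, more delicately, the critical rate $\theta_2$ of $\cA(\Psi\circ V)$ (in particular the fact that $\theta_2$ must be taken strictly above $2\alpha_L/(\beta_L(1-p))$) from the one-sided asymptotic bounds in~\ref{example:ass_diffse}. Once $V,\varphi,\Psi$ and $h$ are fixed, the remainder is bookkeeping: composing the inverse maps $G_1$, $\Xi^{-1}$ and $A^{-1}$ and checking that the polynomial and logarithmic corrections are negligible reproduces the exponent $(1-p)/(1+p-\ell)$ throughout.
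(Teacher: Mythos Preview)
Your proposal is correct and follows essentially the same route as the paper's proof: a polynomial Lyapunov function, $\varphi$ polynomial from the drift of $1/V$, $\Psi$ stretched-exponential to make $\Psi\circ V(X)$ a submartingale, and $h$ stretched-exponential combined with Lemma~\ref{lem:bounded_generator} for the growth bound in part~(c). The paper uses $V=p_1\approx|x|$ (so $\varphi(1/r)\propto r^{-(2+p-\ell)}$ and $\Psi(r)=\exp(ur^{1-p})$), whereas you take $V=1+|x|^2$; your exponents are exactly those of the paper divided by two, so after composing with the square root the two choices coincide. The extra polynomial prefactor $v^{p/2}$ in your $\Psi$ and the condition $c>2\alpha_U/(\beta_U(1-p))$ on $h$ are harmless but unnecessary (the upper bound $\cA H\le C_3|x|^{\ell-2p}H$ holds for every $c>0$, and the paper's pure stretched-exponential $\Psi_u$ already clears the submartingale threshold once $u>2\alpha_L/(\beta_L(1-p))$).
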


In the proof of Theorem~\ref{thm:example_se_inv_rate} we work with the $\mathbf{L}$-drift condition~
\nameref{sub_drift_conditions}, where the Lyapunov function $V$ grows polynomially, while $\Psi$ grows at the stretched exponential rate with the ``stretching'' parameter $1-p$. The function $\varphi$ is again polynomial and is obtained directly from the 
generator inequalities in Theorem~\ref{thm:generator}. Like $\Psi$, the function $h$ in Assumption~\ref{assumption:f_convergence_a} of Theorem~\ref{thm:f_rate} grows at the stretched exponential rate with parameter $1-p$,  see 
Section~\ref{subsubsec:subexp_proofs} below 
for the details of the proof of Theorem~\ref{thm:example_se_inv_rate}.

\begin{rem}[\textbf{matching rates}]
\label{cor:rate_diff_se}
The class of models defined by~\ref{example:ass_diffse} contains elliptic and tempered Langevin diffusions studied in the context of upper bounds in~\cite[Sections~5.1 and~5.2]{douc2009subgeometric}. The techniques of~\cite{douc2009subgeometric} can be applied to obtain upper bounds for all models covered by~\ref{example:ass_diffse}. Our lower bounds in Theorem~\ref{thm:example_se_inv_rate} match these upper bounds. More precisely, there exist constants   $c_\TV,C_\TV,u_\TV,U_\TV\in(0,\infty)$ 
such that 
\begin{equation}
\label{eq:tv_exponents_se}
c_\TV/\exp(u_\TV t^{(1-p)/(1+p-\ell)})\leq \| \P_x(X_t\in\cdot)-\pi(\cdot)\|_{\TV} \leq C_\TV/\exp(U_\TV t^{(1-p)/(1+p-\ell)})
\end{equation}
holds for all $t\in[1,\infty)$.
Analogous bounds hold for the tails of the invariant measure $\pi$ and the return time $\tau_D(\delta)$.
\end{rem}

\begin{rem}[\textbf{matching constants}]
\label{rem:example_se_inv_rate}
\ref{example:ass_diffse} cannot ensure that constants $u_\pi$, $u_\tau$, $u_\TV$ in the exponents in Theorem~\ref{thm:example_se_inv_rate} are only $\eps$-away from  optimal constants in upper bounds. This is because, under \ref{example:ass_diffse}, the coefficients of the SDE in~\eqref{eq:elliptic_SDE} may oscillate asymptotically as $|x|\to\infty$. However, assuming that 
$$
\langle b(x),x/|x|\rangle/|x|^{-p} = -\alpha+ o(|x|^{p-1}), \quad
\langle \Sigma(x)x/|x|,x/|x| \rangle = \beta + o(|x|^{p-1})\quad\&\quad \Tr(\Sigma(x)) = \gamma+ o(1),
$$
as $|x|\to\infty$,
for some constants $p\in(0,1)$, $\alpha, \beta,\gamma\in(0,\infty)$, it is possible to prove
$$
\lim_{r\to\infty} r^{(1-p)}\log \pi(\{|x|\geq r\}) = 2\alpha/(\beta(1-p)) = \alpha^{-(1-p)/(1+p)}\lim_{t\to\infty} t^{(1-p)/(1+p-\ell)}\log \P_x(\tau_D(\delta)\geq t).
$$
 Limit inferiors in these limits follow from the theory developed in this paper. However, they require a  more involved Lyapunov function (i.e.\hspace{-1mm} a product of a polynomial and stretched exponential functions) than the one in the proof of Theorem~\ref{thm:example_se_inv_rate}. 
 Upper bounds in these limits follow from~\cite[Thm~5.3]{douc2009subgeometric}.
 Moreover, for the constants  in the exponents for the total variation distance in~\eqref{eq:tv_exponents_se}, we can show $U_\TV/u_\TV<2+\eps$. 
We omit the details for brevity.
\end{rem}

\subsubsection{Exponential tails}
\label{subsubsec:Exponential}
In this section we demonstrate that our methods may also be used to derive lower bounds on the rate of convergence of certain exponentially ergodic processes.  

\noindent
\textbf{\namedlabel{example:ass_diffexp}{Assumption~A$_{e}$}.}
\noindent \textit{The coefficients 
$b$ and $\Sigma=\sigma\sigma^\intercal$ in~\eqref{eq:elliptic_SDE} satisfy  the following asymptotic assumptions: $\limsup_{|x|\to\infty}\Tr(\Sigma(x))<\infty$  and 
$$
-\alpha_L\leq\langle b(x),x/|x|\rangle\leq -\alpha_U \qquad \& \qquad \beta_L\leq \langle \Sigma(x)x/|x|,x/|x|\rangle\leq \beta_U
$$
for all $x$ outside of a compact set and some constants $\alpha_L,\alpha_U,\beta_L,\beta_U\in(0,\infty).$
}

\begin{thm}
\label{thm:example_exp}
Let~\ref{example:ass_diffexp} hold.
\begin{myenumi}
\item[(a)] There exist $c_{\pi},u_\pi\in(0,\infty)$ such that 
$$
c_{\pi}/\exp(u_\pi r)\leq \pi(\{|x|\geq r\})\quad\text{for all $r\in[1,\infty)$.}
$$
\item[(b)] For every $x\in\R^n$, $\delta\in(0,\infty)$ and a bounded set $D\in\cB(\R^n)$ there exist $c_\tau,u_\tau\in(0,\infty)$ such that 
$$
c_\tau/\exp(u_\tau t)\leq \P_x(\tau_D(\delta) \geq t)\quad\text{for all $t\in[1,\infty)$.}
$$
\item[(c)] For every $x\in\R^n$ there exist $c_\TV,u_\TV\in(0,\infty)$ such that
$$
c_\TV/\exp(u_\TV t)\leq \|\P_x(X_t\in\cdot)-\pi(\cdot)\|_{\TV}\quad\text{for all $t\in[1,\infty)$.}
$$
\end{myenumi}
\end{thm}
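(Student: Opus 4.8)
The plan is to verify the $\mathbf{L}$-drift condition~\nameref{sub_drift_conditions} for a suitable triple $(V,\varphi,\Psi)$ using Theorem~\ref{thm:generator}, and then read off (a), (b) and (c) from Theorems~\ref{thm:invariant} and~\ref{thm:modulated_moments}\ref{thm:modulated_moments_b} and Corollary~\ref{cor:rate}, respectively. The first, and perhaps counterintuitive, observation is that although the bounds in Theorem~\ref{thm:example_exp} are exponential, the Lyapunov function $V$ should be taken \emph{polynomial}, say $V(x)\coloneqq 1+|x|^2$: with an exponentially growing $V$ one has $\cA(1/V)\asymp 1/V$, which is incompatible with the requirement $\lim_{r\to\infty}r\varphi(1/r)=0$ in~\nameref{sub_drift_conditions}\ref{sub_drift_conditions(i)}, whereas a polynomial $V$ produces $\cA(1/V)\asymp(1/V)^{3/2}$ and hence an admissible $\varphi$.

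By It\^o's formula $1/V\in\cD(\cA)$ and $\Psi\circ V\in\cD(\cA)$ for smooth $\Psi$, with $\cA g=\langle b,\nabla g\rangle+\tfrac12\Tr(\Sigma\,\Hessian g)$. A direct computation using~\ref{example:ass_diffexp} (i.e.\ $\langle b(x),x/|x|\rangle\le-\alpha_U$, $\langle\Sigma(x)x/|x|,x/|x|\rangle\le\beta_U$ and $\Tr\Sigma$ bounded) gives $\cA(1/V)(x)\le C_\varphi(1/V(x))^{3/2}$ for $|x|$ large; since $\cA(1/V)$ is bounded on sublevel sets of $V$, Theorem~\ref{thm:generator}\ref{generator_a} applies with $\varphi(u)\coloneqq C_\varphi u^{3/2}$ (so $r\varphi(1/r)=C_\varphi r^{-1/2}\downarrow 0$), verifying~\nameref{sub_drift_conditions}\ref{sub_drift_conditions(i)}. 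For the submartingale part set $\Psi(v)\coloneqq\exp(\theta\sqrt v)$, which is differentiable, increasing and submultiplicative (parameter $1/2\in(0,1)$; cf.\ Remark~\ref{rem:after_main_theorems}) and satisfies $\Psi\circ V(x)\asymp\exp(\theta|x|)$. An analogous computation shows that the leading term of $\cA(\Psi\circ V)$ equals $\theta(\theta\beta_L/2-\alpha_L)\,\Psi\circ V$, which is nonnegative outside a compact set once $\theta>2\alpha_L/\beta_L$; as $X$ has continuous paths (bounded jumps with $d=0$), Theorem~\ref{thm:generator}\ref{generator_b} yields~\eqref{eq:assumption_heavy_tail_bound}, i.e.\ condition~\nameref{sub_drift_conditions}\ref{sub_drift_conditions(ii)}. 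The requirement $\limsup_{t\to\infty}V(X_t)=\infty$ $\P_x$-a.s.\ holds because $\pi$ has full support and $X$ is Harris recurrent, so Assumption~\nameref{sub_drift_conditions} holds.

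With this $(V,\varphi,\Psi)$ we have $L_{\eps,q}(r)\asymp r^{-1/2}\exp(\theta\sqrt{2r/(1-q)})(\log\log r)^{\eps}$, so Theorem~\ref{thm:invariant} gives $\pi(V\ge r)\ge c_{\eps,q}/L_{\eps,q}(r)$; substituting $r=1+\rho^2$ and absorbing the polynomial and iterated-logarithm factors into the exponential yields $\pi(\{|x|\ge\rho\})\ge c_\pi\exp(-u_\pi\rho)$, which is (a). For (b), the inverse $G_1$ of $v\mapsto\eps/(v\varphi(1/v))=(\eps/C_\varphi)v^{1/2}$ satisfies $G_1(t)\asymp t^2$, hence $\Psi(2G_1(t)/(1-q))\asymp\exp(u_\tau t)$, and Theorem~\ref{thm:modulated_moments}\ref{thm:modulated_moments_b} gives $\P_x(\tau_D(\delta)\ge t)\ge c_\tau\exp(-u_\tau t)$.

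For (c) we apply Corollary~\ref{cor:rate} ($f_\star\equiv1$). Choose $h(s)\coloneqq\exp(\eta(\sqrt s-1))$ with $\eta$ a large constant, so that $g=h$ is increasing with $g(1)=1$, $g^{-1}(t)=(1+\log(t)/\eta)^2$ and $h\circ V(x)\asymp\exp(\eta|x|)$. Applying Lemma~\ref{lem:bounded_generator} to $H=h\circ V$ with the affine (hence concave) majorant $\xi(u)=\max\{1,\kappa u+C\}$ of $\cA H$, where $\kappa\approx\tfrac{\eta^2\beta_U}{2}-\eta\alpha_U>0$ (positive since $\eta$ large), yields the exponential bound $\E_x[h\circ V(X_t)]\le v(x,t)\coloneqq C_x\exp(\kappa t)$, verifying condition~\ref{assumption:f_convergence_a}. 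Since $c_{\eps,q}/L_{\eps,q}(g^{-1}(t))\asymp(\log t)\,t^{-\mu}(\log\log t)^{-\eps}$ with $\mu\approx\theta\sqrt{2/(1-q)}/\eta<1$ (for $\eta$ large), one can take $a(t)\coloneqq\varepsilon_0 t^{-\mu'}$ with $\mu<\mu'<1$ and $\varepsilon_0$ small, so that $a(t)\le c_{\eps,q}/L_{\eps,q}(g^{-1}(t))$ for all $t\in[1,\infty)$ and $A(t)=ta(t)=\varepsilon_0 t^{1-\mu'}$ is increasing with limit $\infty$; then $r_1(x,t)=a\circ A^{-1}\circ(2v)(x,t)=C_x'\exp(-u_{\TV}t)$ with $u_{\TV}=\kappa\mu'/(1-\mu')$, and Corollary~\ref{cor:rate} gives (c). The main obstacle is the calibration in (c): $\eta$ must be large enough that $c_{\eps,q}/L_{\eps,q}(g^{-1}(\cdot))$ decays only polynomially (so that $A$ is increasing and $A^{-1}$ well behaved), while one must still control the expected growth of $h\circ V(X)$ — which Lemma~\ref{lem:bounded_generator} handles provided the delicate generator estimate $\cA(h\circ V)\le\kappa\,(h\circ V)+C$ holds with the sharp coefficient $\kappa\approx\tfrac{\eta^2\beta_U}{2}-\eta\alpha_U$; the companion estimate $\cA(\Psi\circ V)\ge0$ forcing $\theta>2\alpha_L/\beta_L$ (the restoring drift must dominate the diffusivity) is the other computation requiring care.
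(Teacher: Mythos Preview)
Your proof is correct and follows essentially the same route as the paper. The paper takes $V=p_1$ (so $V(x)\approx|x|$), $\varphi(1/r)=C/r^{2}$, $\Psi(r)=\exp(u_0r)$ and $h(r)=\exp(ur)$; your choice $V(x)=1+|x|^{2}$ with $\varphi(u)=C_\varphi u^{3/2}$, $\Psi(v)=\exp(\theta\sqrt v)$ and $h(s)=\exp(\eta(\sqrt s-1))$ is the same triple after the reparametrisation $V_{\text{yours}}\approx V_{\text{paper}}^{2}$, and the generator computations, the appeal to Theorem~\ref{thm:generator} for~\nameref{sub_drift_conditions}, and the applications of Theorems~\ref{thm:invariant},~\ref{thm:modulated_moments}\ref{thm:modulated_moments_b} and Corollary~\ref{cor:rate} (via Lemma~\ref{lem:bounded_generator} for the growth of $\E_x[h\circ V(X_t)]$) are identical in substance.

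One small slip: the majorant $\xi(u)=\max\{1,\kappa u+C\}$ is piecewise affine with increasing slope and hence \emph{convex}, not concave as Lemma~\ref{lem:bounded_generator} requires. Simply enlarge $C$ so that $\kappa+C\ge 1$ and take $\xi(u)=\kappa u+C$, which is affine (hence concave), non-decreasing and $\ge 1$ on $[1,\infty)$; the bound $\cA(h\circ V)\le \kappa\,(h\circ V)+C$ is unaffected.
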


 The $\mathbf{L}$-drift condition \nameref{sub_drift_conditions}
in the proof of Theorem~\ref{thm:example_exp}
uses
 $V$ and $\Psi$, which exhibit polynomial and  exponential growth, respectively. As in the examples of Sections~\ref{subsubsec:Polynomial} and~\ref{subsubsec_Subexponential}, the function $\varphi$ is polynomial and follows directly from the generator inequalities in Theorem~\ref{thm:generator}.
The details of the proof of Theorem~\ref{thm:example_exp}  are  in 
Section~\ref{subsubsec:exponential_proofs} below.

\begin{rem}[\textbf{matching rates}]
\label{rem:example_exponential}
Exponential upper bounds on the results in Theorem~\ref{thm:example_exp} can be obtained, for example, by applying results from~\cite[Sec~3]{douc2009subgeometric} and estimates in Proposition~\ref{prop:deterministic_exp} below. In particular, there exist constants $C_\TV,U_\TV\in(0,\infty)$ such that 
$$
c_\TV/\exp(u_\TV t)\leq \|\P_x(X_t\in\cdot)-\pi(\cdot)\|_{\TV}\leq C_\TV/\exp(U_{\TV}t)\quad\text{for all $t\in[1,\infty)$.}
$$
Similar bounds hold for the tails of the invariant measure $\pi$ and the return time $\tau_D(\delta)$. Moreover,
ensuring that the constants $u_\pi,u_\tau,u_\TV$ in the exponents in Theorem~\ref{thm:example_exp} are  close to their optimal constants in the upper bounds would require stronger assumptions and a more involved choice of Lyapunov functions. As the focus of the present paper is on subexponential ergodicity,  the details are omitted.
\end{rem}

\subsection{L\'evy-driven SDE}
\label{sec:levy}

In this section we apply the results of Section~\ref{sec:main_results} to a solution $X$ of a L\'evy-driven SDE taking values in $\cX = \R$. 
%We restrict ourselves to the additive noise case in order to keep the applications as concise as possible. We expect that, under appropriate asymptotic assumptions on the coefficients, the results of the present section can be generalised to the multi-dimensional setting with multiplicative L\'evy noise.
More precisely, let $X$ follow the SDE 
\begin{equation}
\label{eq:levy_OU}
\ud X_t = -\mu X_t\ud t + \sigma(X_{t-})\ud L_t,
\end{equation}
where $\mu\in(0,\infty)$, $\sigma:\R\to\RP$ is Lipschitz  with $0<\inf_{x\in\R}\sigma(x)\leq\sup_{x\in\R}\sigma(x)<\infty$ and $L$ is a pure-jump L\'evy process with L\'evy measure $\nu$ (cf.~\cite[Ch.~1]{MR3185174}). 
Such $X$ exists and is unique for any starting point $X_0=x\in\R$~\cite[Thm~V.6]{MR2273672}.
The drift and the Gaussian component of $L$ are zero as they do not influence the asymptotic behaviour of $X$ when  $\nu$ has heavy tails and $\sigma$ is bounded. Adding them would require only a minor modification of the proof of Theorem~\ref{thm:levy_OU} below. 

\noindent \textbf{\namedlabel{example:ass_Levy}{Assumption~A$_{\text{L}}$}.}
\textit{Assume that for some $m_c\in(1,\infty)$, 
the L\'evy measure $\nu$ satisfies
\begin{align*}
0 &<\liminf_{r\to\infty}(\log r)^{m_c}\nu([r,\infty)) \leq
\limsup_{r\to\infty}(\log r)^{m_c}\nu([r,\infty)) <\infty\quad\text{and}\\
0 &= \limsup_{r\to-\infty}\nu((-\infty,r])(\log |r|)^{m_c+1}.
\end{align*}
}

As we shall see, when the jumps of $L$ have very heavy tails (as in \ref{example:ass_Levy}), $X$ does not exhibit exponential ergodicity.  
 %The main theorem provides a matching lower bound on the rate of convergence.
Recall that  $\tau_D(\delta)=\inf\{t>\delta: X_t\in D\}$ is the return time
of the process $X$ to a bounded set $D\in\cB(\R^n)$ after time $\delta>0$.

\begin{thm}
\label{thm:levy_OU}
Let \ref{example:ass_Levy} hold and the process $X$ satisfy SDE~\eqref{eq:levy_OU} above.
\begin{myenumi}
\item[(a)] For any $\eps>0$ there exists a constant $c_{\pi}\in(0,\infty)$ such that $$
c_\pi/ (\log r)^{m_c-1+\eps} \leq \pi([r,\infty))  \quad \text{for all $r\in [1,\infty)$.}
$$
\item[(b)]  For every $x\in\R$, $\eps,\delta>0$ and bounded set $D\in\cB(\R)$, there exists $c_\tau\in(0,\infty)$ such that
$$
 c_\tau/t^{m_c+\eps} \leq \P_x(\tau_D(\delta)\geq t)\quad\text{for all $t\in[1,\infty)$}.
$$
\item[(c)] For any $x\in\R$ and $\eps>0$ there exists a constant $c_\TV\in(0,\infty)$ such  that $$
c_\TV/t^{m_c-1+\eps}\leq\|\P_x(X_t\in\cdot)-\pi\|_{\TV} \quad\text{for all $t\in[1,\infty)$.}
$$
\end{myenumi}
\end{thm}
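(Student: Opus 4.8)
The plan is to establish the $\mathbf{L}$-drift condition~\nameref{sub_drift_conditions} for a logarithmic Lyapunov function and then read off (a), (b), (c) from Theorems~\ref{thm:invariant} and~\ref{thm:modulated_moments} and Corollary~\ref{cor:rate}. I would take $V(x)=\log(\mathrm{e}+\theta(x)^2)$, where $\theta:\R\to[0,\infty)$ is smooth with $\theta(x)=x$ for $x\ge1$ and $\theta(x)=0$ for $x\le0$; thus $V\ge1$, $V$ is smooth, $V\equiv1$ on $(-\infty,0]$, $V(x)=\log(\mathrm{e}+x^2)$ for $x\ge1$, and $\{V\ge r\}=[\sqrt{\mathrm{e}^{r}-\mathrm{e}},\infty)$ for large $r$, so Theorem~\ref{thm:invariant} will bound precisely the \emph{right} tail $\pi([R,\infty))$ without having to compare it to the lighter left tail. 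Since $L$ has arbitrarily large positive jumps by~\ref{example:ass_Levy}, one gets $\limsup_{t\to\infty}X_t=\infty$, hence $\limsup_{t\to\infty}V(X_t)=\infty$, $\P_x$-a.s., from Harris recurrence and a conditional Borel--Cantelli argument. The point of the logarithm is that it rescales the polynomially weak mean reversion into something that dominates: using the extended generator $\cA$ of~\eqref{eq:levy_OU}, for $x\ge1$ large the drift part of $\cA(1/V)(x)$ equals $\tfrac{2\mu x^2}{\mathrm{e}+x^2}(\log(\mathrm{e}+x^2))^{-2}\asymp(\log x)^{-2}$, while the jump part is of order $(\log x)^{-2}$ or smaller (because $1/V$ is bounded with rapidly decaying second derivative), and for $x\le0$ one has $\cA(1/V)(x)\le0$ since $1/V\le1$; hence $\cA(1/V)(x)\le C_\varphi(1/V(x))^2+b\1{V(x)\le\ell_0}$ and Theorem~\ref{thm:generator}\ref{generator_a} yields~\nameref{sub_drift_conditions}\ref{sub_drift_conditions(i)} with $\varphi(s)=C_\varphi s^2$ (so $r\varphi(1/r)=C_\varphi/r\downarrow0$).

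For~\nameref{sub_drift_conditions}\ref{sub_drift_conditions(ii)} the generator route of Lemma~\ref{lem:assumption_submart_exit_prob} is unavailable — $V(X)$ has unbounded jumps and $\Psi\circ V\notin\cD(\cA)$ for the relevant $\Psi$ — so, as anticipated in Remark~\ref{rem:why_prob_L(ii)}, I would prove the exit estimate $\P_x(T^{(r)}<S_{(\ell)})\ge C_\ell r^{-m_c}$ path-wise, showing $\Psi(r)=r^{m_c}$ is admissible; this is the step where the heavy tail of $\nu$ enters, and it is the main obstacle. The construction: decompose $L=L^{\mathrm{sm}}+L^{\mathrm{big}}$ into the compensated small jumps and the compound-Poisson jumps of size exceeding $1$; fix a short window $[0,\theta_0(\ell)]$; a Doob/BDG bound on the martingale part of the SDE driven by $L^{\mathrm{sm}}$ gives $\P_x(\sup_{t\le\theta_0}|X_t-x\mathrm{e}^{-\mu t}|\le1)\ge\tfrac12$ uniformly in $x$, so that when $V(x)\ge\ell+1$ (equivalently $x\ge\sqrt{\mathrm{e}^{\ell+1}-\mathrm{e}}$) the process stays in $\{V>\ell\}$ until the first big jump $\tau_1$, provided $\theta_0$ is chosen small in terms of $\ell$. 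Intersecting with $\{\tau_1\le\theta_0/2\}$, with the absence of further big jumps before $\theta_0$, and with the event that the mark at $\tau_1$ exceeds $Z^*_r:=(2\sqrt{\mathrm{e}^{r}-\mathrm{e}}+|x|+1)/\inf_y\sigma(y)$ (which forces $V(X_{\tau_1})>r$, hence $T^{(r)}\le\tau_1<S_{(\ell)}$), and using the independence of these factors, one gets $\P_x(T^{(r)}<S_{(\ell)})\ge C_\ell\,\nu([Z^*_r,\infty))$. On the non-trivial range $V(x)<r$ one has $|x|<\sqrt{\mathrm{e}^{r}-\mathrm{e}}$ and thus $\log Z^*_r=\tfrac r2+O(1)$, so Assumption~\ref{example:ass_Levy} gives $\nu([Z^*_r,\infty))\ge c_0(r/2+O(1))^{-m_c}\ge c_1 r^{-m_c}$; the work is to carry this out uniformly in $x\in\{V\ge\ell+1\}$ and in $r$ (the cases $V(x)\ge r$ being trivial).

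Given~\nameref{sub_drift_conditions} with $\varphi(s)=C_\varphi s^2$ and $\Psi(r)=r^{m_c}$, one has $L_{\eps,q}(r)=r\varphi(1/r)\Psi(2r/(1-q))(\log\log r)^\eps\asymp r^{m_c-1}(\log\log r)^\eps$. Part~(a) is then immediate from Theorem~\ref{thm:invariant}: $\pi([R,\infty))=\pi(\{V\ge\log(\mathrm{e}+R^2)\})\ge c_{\eps,q}/L_{\eps,q}(\log(\mathrm{e}+R^2))\ge c/(\log R)^{m_c-1+\eps}$. Part~(b) follows from Theorem~\ref{thm:modulated_moments}\ref{thm:modulated_moments_b}: a bounded $D$ lies in a sublevel set of $V$, the inverse $G_1$ of $v\mapsto\eps/(v\varphi(1/v))=\eps v/C_\varphi$ is $\asymp t$, so $\P_x(\tau_D(\delta)\ge t)\ge C/\Psi(2G_1(t)/(1-q))\asymp t^{-m_c}$, a fortiori $\ge c\,t^{-(m_c+\eps)}$; Remark~\ref{rem:modulated_every_delta} covers every $\delta>0$ since $\P_x(V(X_\delta)>r)>0$ for all $x,r$ (a big positive jump in $[0,\delta]$). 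For part~(c), I would apply Corollary~\ref{cor:rate} with $f_\star\equiv1$ and $h(s)=s^\kappa$ for $\kappa\in(m_c-1,m_c)$ chosen near $m_c$: the constraint $\kappa<m_c$ is exactly what makes $\int(\log|z|)^\kappa\,\nu(\ud z)<\infty$ (by~\ref{example:ass_Levy}), so $h\circ V\in\cD(\cA)$, and splitting the jump integral of $\cA(h\circ V)$ by scale one checks $\cA(h\circ V)\le C$ (a nonpositive drift plus a bounded jump part), whence Lemma~\ref{lem:bounded_generator} with constant $\xi$ gives $\E_x[h\circ V(X_t)]\le h\circ V(x)+Ct=:v(x,t)$. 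Choosing $a(t)\asymp t^{-((m_c-1)/\kappa+\eps'')}$ (a lower envelope of $c_{\eps,q}/L_{\eps,q}(t^{1/\kappa})$) with $\eps''$ small enough that $A(t)=ta(t)$ increases to $\infty$, and computing $r_1=a\circ A^{-1}\circ(2v)$, one obtains $r_1(x,t)\gtrsim t^{-(m_c-1+\eps)}$ upon letting $\kappa\uparrow m_c$ and $\eps''\downarrow0$; then $\|\P_x(X_t\in\cdot)-\pi\|_{\TV}\ge r_1(x,t)/2\ge c\,t^{-(m_c-1+\eps)}$, which is~(c). Apart from the path-wise exit estimate, the remaining work — the two generator computations and the scale-split estimate for $\cA(h\circ V)$ — is routine.
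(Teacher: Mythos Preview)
Your proposal is correct and takes essentially the same approach as the paper: a logarithmic Lyapunov function constant on the negative half-line (the paper uses $g_1(x)=\log x$ for large $x$, you use $\log(\mathrm{e}+x^2)\sim 2\log x$), $\varphi(s)=Cs^2$ via Theorem~\ref{thm:generator}\ref{generator_a}, $\Psi(r)=r^{m_c}$ via a direct path-wise exit estimate (since the submartingale route is blocked by non-integrability), and then Theorems~\ref{thm:invariant}, \ref{thm:modulated_moments}\ref{thm:modulated_moments_b} and Corollary~\ref{cor:rate} with $h(r)=r^{\kappa}$, $\kappa\uparrow m_c$.

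The only noteworthy difference is the organisation of the path-wise estimate. The paper (Proposition~\ref{prop:OU_sub_condition}) uses a three-way decomposition $L=L^{(-)}+L^{(M)}+L^{(+)}$, works on the event $A_t=\{L^{(-)}_t=0\}\cap\{\sup|\int\sigma\,\ud L^{(M)}|\le 1/8\}$, and shows $\{S_{(\ell)}\le t\}\cap A_t=\emptyset$ via an upcrossing/downcrossing argument before intersecting with a large-jump event of $L^{(+)}$; a conditional-independence step (Claim~2) then separates the factors. Your two-way split with the first big-jump time $\tau_1$ and its mark is a bit more direct: since $\tau_1$ is the \emph{first} big jump, there are automatically no big negative jumps on $[0,\tau_1)$, so the small-jump martingale control alone keeps $X$ in $\{V>\ell\}$ up to $\tau_1$, and the independence of $\tau_1$ and its mark in a compound Poisson process replaces the paper's conditioning argument. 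Both routes give $\P_x(T^{(r)}<S_{(\ell)})\ge C_\ell\,\nu([ce^{r/2},\infty))\ge C_\ell' r^{-m_c}$; yours avoids the upcrossing bookkeeping at the (minor) cost of making $Z_r^*$ depend on $x$ and then bounding it uniformly on $\{V<r\}$.
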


The $\mathbf{L}$-drift condition \nameref{sub_drift_conditions}
in the proof of Theorem~\ref{thm:levy_OU}
uses the Lyapunov function 
 $V$ with logarithmic growth. This is necessary to ensure the integrability of the marginals of the process $V(X)$.
The function $\Psi$ has polynomial growth as does $\varphi$ (the latter is again obtained from the generator inequalities in Theorem~\ref{thm:generator}). The function $h$ in Assumption~\ref{assumption:f_convergence_a} of Theorem~\ref{thm:f_rate}
is taken to be  polynomial with the ``largest'' growth rate, such that  the marginals of the process $h\circ V(X)$ remain integrable.
The proof of Theorem~\ref{thm:levy_OU} 
is in
Section~\ref{subsec:levy_proofs} below.

\begin{rem}[\textbf{matching rates}]
\label{rem:levy_rate}
Theorem~\ref{thm:levy_OU} is applicable to general pure-jump L\'evy drivers with two-sided jumps and arbitrary path variation. In particular, it 
covers the simpler model studied in~\cite{Fort2005} with additive noise, where it is assumed that $\sigma \equiv 1$ and $\nu((-\infty,1])=0$ (making 
$L$ a compound Poisson process with positive jumps 
and, consequently, $\cX = \RP$ for $X_0=x\in\RP$). 
Our lower bounds match the upper bounds from~\cite{Fort2005}, i.e.
for some  $c_\TV,C_\TV\in(0,\infty)$ we have
$$
c_\TV/t^{m_c-1+\eps} \leq\|\P_x(X_t\in\cdot)-\pi\|_{\TV} \leq C_\TV /t^{m_c-1-\eps}\quad\text{for all $t\in[1,\infty)$.}
$$
Analogous bound holds for the tail of the invariant measure $\pi$ and the return time $\tau_D(\delta)$. 
\end{rem}

\begin{rem}
\label{rem:levy_inbalance}
Theorem~\ref{thm:levy_OU} and Remark~\ref{rem:levy_rate} show that  the process $X$ in~\eqref{eq:levy_OU} converges  to stationarity at a polynomial rate even though its invariant measure has a logarithmically heavy positive tail. Since \ref{example:ass_Levy}
stipulates that the negative tail of the L\'evy measure $\nu$ is orders of magnitude thinner than its positive tail, we prove Theorem~\ref{thm:levy_OU} in Section~\ref{subsec:levy_proofs} below using a Lyapunov function~\eqref{eq:log_function}, which is bounded on the negative half-line. 

The asymmetry in \ref{example:ass_Levy}  is also visible in the invariant measure $\pi$. If, for example, $\nu((-\infty,r])$ decays polynomially, then the 
form of the extended generator of $X$ in~\eqref{eq:generator_Levy} below, applied to an appropriate polynomial  Lyapunov function $V$, tends to $+\infty$ (as $r\to-\infty$) at the same rate as $V$ itself. This follows by a similar argument to the one in the proof of Proposition~\ref{prop:OU_deterministic}\ref{prop:deterministic_levy_a} below. In this case, by~\cite[Prop.~3.1]{douc2009subgeometric} and the Markov inequality, we obtain a polynomial  upper bound on the tail $\pi((-\infty,r])$ as $r\to-\infty$. Moreover, by the same argument, if $\nu((-\infty,r])$ decays exponentially as $r\to-\infty$, so does $\pi((-\infty,r])$.
\end{rem}

\subsection{Stochastic damping Hamiltonian system}
\label{subsec:hamiltonian}

Consider the  hypoelliptic diffusion $X=(Z,Y)$, where   $Z_t$ (resp. $Y_t$) is the position (resp. velocity) at time $t$ of a physical system moving in $\R^n$, satisfying the following SDE:
$\ud Z_t = Y_t\ud t$ and $\ud Y_t = \sigma(Z_t,Y_t)\ud B_t -(c(Z_t,Y_t)Y_t+\nabla U(Z_t))\ud t$. Here the gradient $\nabla U(Z_t)$ is a friction force, $c(Z_t,Y_t)Y_t$ a damping force and $\sigma(Z_t,Y_t)\ud B_t$ a random force, with $B$ being the standard $n$-dimensional Brownian motion, acting 
on the system at $(Z_t,Y_t)$.
Exponential convergence to the invariant measure of this class of models has been studied extensively, see e.g.~\cite{Wu2001,Mattingly02}.

Our primary focus is on deriving lower bounds in cases where subexponential upper bounds on the rate of convergence have been established. We thus consider a one-dimensional example previously studied in the context of subexponential upper bounds~\cite{douc2009subgeometric}:
\begin{equation}
\label{eq:Hamiltonian_damping_onedim}
\ud Z_t = Y_t\ud t, \qquad \ud Y_t = \sigma\ud B_t-(cY_t+U'(Z_t))\ud t,
\end{equation}
where $U:\R\to\R$ is in $C^2(\R)$ and $\sigma,c\in(0,\infty)$ are positive constants.
We are interested in the case when the invariant measure  of the process $X=(Z,Y)$
has polynomial tails.

\noindent \textbf{\namedlabel{example:ass_Hamilton}{Assumption~A$_{\text{H}}$}.} 
\textit{Let the function $U\in C^2(\R)$ be such that   $zU'(z) = a + o(1)$ as $|z| \to \infty$
for some $a\in(0,\infty)$.
Assume also that the constants $\sigma, c\in(0,\infty) $  in~\eqref{eq:Hamiltonian_damping_onedim} satisfy $ac/\sigma^2>1/2$.}

The next result provides matching polynomial lower and upper bounds on the rate of convergence to stationarity in  $f$-variation  of the hypoelliptic diffusion in~\eqref{eq:Hamiltonian_damping_onedim} (proof is in Section~\ref{subsec:hamiltonian_proofs} below). The parameter $m=0$ in Theorem~\ref{thm:hamiltonian} corresponds to total variation. 

\begin{thm}
\label{thm:hamiltonian}
Let \ref{example:ass_Hamilton} hold and pick $m\in[0,2ac/\sigma^2-1)$. Then for 
the function $f_m(z,y) \coloneqq 1+|z|^m$,
any $x=(z,y)\in\R^n$ and $\eps>0$, there exist  $c_{m,\eps},C_{m,\eps}\in(0,\infty)$ such that
$$
c_{m,\eps}/t^{ac/\sigma^2-1/2-m/2+\eps}\leq \|\P_{x}(X_t\in\cdot)-\pi(\cdot)\|_{f_m}\leq C_{m,\eps}/t^{ac/\sigma^2-1/2-m/2-\eps} \quad \text{for all $t\in[1,\infty)$.}
$$
\end{thm}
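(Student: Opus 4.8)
The plan is to obtain the lower bound by verifying the $\mathbf{L}$-drift condition~\nameref{sub_drift_conditions} and the hypotheses of Theorem~\ref{thm:f_rate}, and to obtain the (known) upper bound from~\cite{douc2009subgeometric}. The first and crucial step is to choose the Lyapunov function. Since the degenerate noise acts only on the velocity coordinate, the Euclidean norm $|x|$ is not a viable Lyapunov function; instead, following~\cite{Wu2001,douc2009subgeometric}, I would use a Hamiltonian-type function of the form $V(z,y) = 1 + H(z,y)^\kappa$ with $H(z,y) = y^2/2 + U(z) + \text{const}$ the energy, corrected by a small cross-term $\eps z y$ to make the drift strictly dissipative in the $z$-direction (the standard hypocoercive trick). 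The scaling exponent $\kappa$ should be chosen so that the tail of $\pi$ with respect to $V$ matches the target polynomial order $ac/\sigma^2$. Under~\ref{example:ass_Hamilton} we have $zU'(z)\to a$, so $U(z)\sim a\log|z|$, meaning $H$ grows logarithmically in $z$ but quadratically in $y$; the level sets of $V$ are therefore comparable to $\{|z|\le e^{cr^{1/\kappa}}\}\times\{|y|\le r^{1/(2\kappa)}\}$, and $f_m(z,y)=1+|z|^m$ is \emph{not} literally of the form $f_\star\circ V$. I would handle this by working with a Lyapunov function whose level sets do capture $|z|^m$, e.g. replacing $U(z)$ in $H$ by $e^{U(z)}\asymp |z|^a$, so that $V\asymp |z|^{a\kappa}+|y|^{2\kappa}$ modulo lower-order terms, and then $f_m = f_\star\circ V$ for a suitable power $f_\star$.

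The second step is to compute, via It\^o's formula and the explicit generator $\cA = \sigma^2\partial_{yy}/2 + y\partial_z - (cy+U'(z))\partial_y$, the drift of $1/V(X)$ and verify $\cA(1/V)(x) \le \varphi(1/V(x)) + b\ind\{V(x)\le\ell_0\}$ with $\varphi$ a suitable power function (so that $r\mapsto r\varphi(1/r)$ decreases to $0$), via Theorem~\ref{thm:generator}\ref{generator_a}; this is a routine but delicate asymptotic computation using $zU'(z)=a+o(1)$ and the dissipativity ensured by the cross-term, with the condition $ac/\sigma^2>1/2$ entering to guarantee the sign of the leading term. Third, since the diffusion has continuous paths (bounded, in fact zero, jumps), I would apply Lemma~\ref{lem:assumption_submart_exit_prob} and Theorem~\ref{thm:generator}\ref{generator_b}: choose $\Psi$ (a power function) so that $\cA(\Psi\circ V)(x)\ge -c\ind\{V(x)\le\ell_0\}$, which again reduces to an asymptotic generator estimate. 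Fourth, to get the hypotheses of Theorem~\ref{thm:f_rate}, I would apply Lemma~\ref{lem:bounded_generator} with $H = h\circ V$ for an appropriate power $h$ (here $\xi$ can be taken constant, giving the linear-in-time bound $\E_x[h\circ V(X_t)]\le v(x,t)$ with $v$ affine in $t$), then feed the resulting exponents through the definitions of $L_{\eps,q}$, $g=h/f_\star$, $a$, $A=ta(t)$ and $r_f = a\circ A^{-1}\circ(2v)$, and bookkeep the arithmetic of exponents to arrive at the stated rate $t^{-(ac/\sigma^2-1/2-m/2)+\eps}$. Theorem~\ref{thm:invariant} simultaneously yields the matching polynomial lower bound on $\pi(\{V\ge r\})$, hence on $\pi(\{|z|\ge r\})$.

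For the upper bound, I would invoke~\cite[Sec.~5]{douc2009subgeometric} (which treats precisely this stochastic damping Hamiltonian system): with the same Hamiltonian Lyapunov function one verifies the subgeometric drift condition~\eqref{eq:douc_drif_condition} with $\phi$ a power function, and the corresponding $r_\star(t)$ is polynomial of the matching order; control of the $f_m$-variation (as opposed to bare total variation) follows from the interpolation between Lyapunov functions of different powers as in~\cite[Thm~3.2]{douc2009subgeometric}, using $m<2ac/\sigma^2-1$ so that the relevant moments of $\pi$ are finite.

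The main obstacle I anticipate is the \textbf{choice and analysis of the Lyapunov function}, specifically reconciling two competing requirements: it must have level sets compatible with $f_m(z,y)=1+|z|^m$ (forcing growth like $|z|^{a\kappa}$ in the position variable), yet its reciprocal must satisfy a clean supermartingale-type generator bound despite the degeneracy of the noise in the $z$-direction. Getting the cross-term coefficient, the exponent $\kappa$, and the asymptotics of $\cA(1/V)$ and $\cA(\Psi\circ V)$ all to line up — so that the exponents produced by Theorems~\ref{thm:invariant} and~\ref{thm:f_rate} coincide asymptotically with the known upper-bound exponent $ac/\sigma^2-1/2-m/2$ — is where the real work lies; the remaining steps are mechanical applications of the machinery of Section~\ref{sec:main_results}.
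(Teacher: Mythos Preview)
Your plan for the \emph{upper} bound is essentially what the paper does: take the hypocoercive Lyapunov function $g_{k,\eps}(z,y)=(y^2/2+U(z)+c(1-\eps)(zy+cz^2/2)+b+1)^k$ with $k=1/2+ac(1-2\eps)/\sigma^2$, verify $\cA g_{k,\eps}\le -C g_{k,\eps}^{(k-1)/k}$ outside a compact set, and apply~\cite[Thm~3.2]{douc2009subgeometric} together with the interpolation $(\phi\circ V)^\eta\ge \tilde C f_{2(k-1)\eta}$ to handle the $f_m$-norm.

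For the \emph{lower} bound, however, your plan to verify the $\mathbf{L}$-drift condition~\nameref{sub_drift_conditions} and apply Theorem~\ref{thm:f_rate} is precisely the approach the paper says does \emph{not} yield the matching polynomial rate. The paper is explicit about this (see the discussion following the statement of Theorem~\ref{thm:hamiltonian} in Section~\ref{subsec:hamiltonian}): for the natural Lyapunov function $V_u=g_{k,\eps}$, no $\varphi$ making $1/V_u(X)$ a supermartingale can satisfy the growth conditions in~\nameref{sub_drift_conditions}\ref{sub_drift_conditions(i)}; and any alternative $V_l$ for which~\nameref{sub_drift_conditions}\ref{sub_drift_conditions(i)} does hold must depend mainly on the light-tailed velocity coordinate $Y$, which only produces \emph{exponential} lower bounds on return-time tails, not the required polynomial ones. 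Your proposed fix of replacing $U(z)$ by $e^{U(z)}\asymp|z|^a$ in the Hamiltonian does not resolve this structural mismatch between the heavy-tailed $Z$ and light-tailed $Y$ coordinates under the degenerate generator.

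What the paper actually does for the lower bound bypasses the $\mathbf{L}$-drift machinery entirely: since the invariant density is \emph{explicitly known} to be proportional to $\exp(-2c\sigma^{-2}(y^2/2+U(z)))$ and $U(z)\le a(1+\eps)\log|z|$ for large $|z|$, one computes directly
\[
\int_{\{G_m\ge r\}} f_m\,\ud\pi \ge c_\pi\, r^{(m+2-2k-6ca\eps/\sigma^2)/(2k-m)},
\]
and then applies Lemma~\ref{lem:lower_bound_f_convergence_rate} (the $f$-variation version of~\cite[Thm~3.6]{Hairer09}) with this tail bound and the growth estimate $\E_x[g_{k,\eps}(X_t)]\le C''(g_{k,\eps}(x)+t)$ from Lemma~\ref{lem:bounded_generator}. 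The obstacle you correctly anticipated in your final paragraph is not merely ``the real work'' but is in fact fatal to the $\mathbf{L}$-drift route; the resolution is to exploit the explicit form of $\pi$ instead.
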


The upper bound on the rate of convergence in Theorem~\ref{thm:hamiltonian} is obtained by applying the drift condition in~\cite{douc2009subgeometric} (see~\eqref{eq:douc_drif_condition} above) to an appropriate Lyapunov function (inspired by~\cite{Wu2001}). 
Note that the model in Theorem~\ref{thm:hamiltonian} has not been analysed in~\cite{douc2009subgeometric}, where a hypoelliptic diffusion in~\eqref{eq:Hamiltonian_damping_onedim} with stretched exponential tails is considered.
Following~\cite{Hairer09}, the matching lower bound in Theorem~\ref{thm:hamiltonian} is  obtained by comparing the tails of $X_t$ and $\pi$ via  Lemma~\ref{lem:lower_bound_f_convergence_rate}. This lower bound does not require a verification of the  $\mathbf{L}$-drift condition~\nameref{sub_drift_conditions}, because 
the invariant measure $\pi$ of the process in~\eqref{eq:Hamiltonian_damping_onedim} has a known density proportional to $(z,y)\mapsto \exp(-2c/\sigma^2(y^2/2 + U(z)))$.

The Lyapunov function $V_u$ used to obtain the upper bound in Theorem~\ref{thm:hamiltonian}  also yields polynomial upper bounds on the tail of return times. By analogy with all other models discussed in Section~\ref{sec:examples}, it would be natural to use the same $V_u$ to establish the $\mathbf{L}$-drift condition~\nameref{sub_drift_conditions}. However, a function  $\varphi$, which makes 
 the process $1/V_u(X)$ into a supermartingale, cannot satisfy the growth conditions  in~\nameref{sub_drift_conditions}\ref{sub_drift_conditions(i)}. 
Thus, unlike in the other models of this section, a different Lyapunov function  $V_l$ for lower bounds is needed. Such a $V_l$ exists but it only yields exponential lower bounds on the tails of return times. The reason for this discrepancy is that  $V_u$ necessarily mainly depends on the heavy-tailed component $Z$, while $V_l$ has to mostly depend on the light-tailed component $Y$.

\section{Return times to bounded sets for semimartingales}
\label{sec:return_times}

This section develops a general theory for the analysis of return times of continuous-time semimartingales to bounded sets. The main result in this section, Lemma~\ref{lem:return_times}, is a far reaching  generalisation of the approach dealing with return times initiated in~\cite[Sec~3]{brevsar2023brownian}
to stochastic processes  with jumps and/or unbounded variance.
Throughout this section we fix a probability space $(\Omega,\cF,\P)$ with 
 a right-continuous filtration $(\cF_t)_{t \in \RP}$.
We begin with an elementary maximal inequality. 

\begin{prop}[Maximal inequality]
\label{prop:maximal}
Let $(\cF_t)_{t\in\RP}$ be a right-continuous filtration and $\xi = (\xi_t)_{t\in\RP}$ an $(\cF_t)$-adapted process with  c\`adl\`ag paths taking values in $[0,1]$. 
Define an $(\cF_t)$-stopping time
$\tau_r\coloneqq\inf\{t\in\RP:\xi_t> r\}$ 
(recall $\inf \emptyset =\infty$)  
and assume that, for some $r>0$
and a locally bounded measurable function 
$f:\RP\times[0,1] \to \RP$, the process 
 %$W=(W_t)_{t\in\RP}$,
$(\xi_{t\wedge \tau_r} - \int_0^{t\wedge \tau_r} f(u,\xi_u)\ud u)_{t\in\RP}$
is an $(\cF_t)$-supermartingale. 
Then, for any $s\in(0,\infty)$, we have $$\P\left(\sup_{0\leq u < s}\xi_u> r\Big\vert\cF_0\right)\leq r^{-1}\left(\xi_0 + \E\left[\int_0^{s\wedge \tau_r} f(u,\xi_u)\ud u\Big\vert\cF_0\right]\right)\quad\text{a.s.}$$
\end{prop}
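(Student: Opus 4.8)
The plan is to deduce the bound from the defining inequality of a supermartingale applied at the deterministic time $s$, after a short path-regularity argument identifying the events and endpoint value involved. The first step is to observe that, because $\xi$ has c\`adl\`ag paths, $\{\sup_{0\le u<s}\xi_u>r\}=\{\tau_r<s\}$: if $\tau_r<s$, then since $\tau_r$ is an infimum there is a sequence $s_n\downarrow\tau_r$ with $\xi_{s_n}>r$, and $s_n<s$ for $n$ large; conversely any $u<s$ with $\xi_u>r$ forces $\tau_r\le u<s$. A similar use of right-continuity along such a sequence shows $\xi_{\tau_r}\ge r$ on $\{\tau_r<\infty\}$, and since $\xi\ge 0$ this yields the pointwise bound $\xi_{s\wedge\tau_r}\ge r\,\mathbbm{1}\{\tau_r<s\}$ (the two cases being $\tau_r<s$, where $s\wedge\tau_r=\tau_r$, and $\tau_r\ge s$, where $\xi_s\ge0$).

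Next I would invoke the supermartingale hypothesis. Set $M_t\coloneqq \xi_{t\wedge\tau_r}-\int_0^{t\wedge\tau_r}f(u,\xi_u)\ud u$, which is bounded on $[0,s]$ because $\xi\in[0,1]$ and $f$ is locally bounded (hence bounded on the compact set $[0,s]\times[0,1]$), so in particular $M_s$ is integrable and the integral is finite. Applying the supermartingale property at times $0$ and $s$ gives $\E[M_s\mid\cF_0]\le M_0=\xi_0$ a.s. (note $0\wedge\tau_r=0$, so $M_0=\xi_0$), which rearranges to
\[\E[\xi_{s\wedge\tau_r}\mid\cF_0]\le \xi_0+\E\Big[\int_0^{s\wedge\tau_r}f(u,\xi_u)\ud u\,\Big|\,\cF_0\Big]\quad\text{a.s.}\]

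Finally I would combine the two ingredients: the pointwise bound $\xi_{s\wedge\tau_r}\ge r\,\mathbbm{1}\{\tau_r<s\}$ gives $\E[\xi_{s\wedge\tau_r}\mid\cF_0]\ge r\,\P(\tau_r<s\mid\cF_0)=r\,\P(\sup_{0\le u<s}\xi_u>r\mid\cF_0)$ a.s., and dividing the resulting chain of inequalities by $r>0$ is precisely the claimed estimate. The only genuinely delicate point is the c\`adl\`ag bookkeeping of the first step — the identity $\{\sup_{0\le u<s}\xi_u>r\}=\{\tau_r<s\}$ and the fact that $\xi_{\tau_r}\ge r$ — everything after that is a one-line consequence of the supermartingale inequality; one should also record, as above, that the boundedness of $\xi$ on $[0,1]$ together with local boundedness of $f$ makes all the objects integrable.
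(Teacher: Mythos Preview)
Your proposal is correct and follows essentially the same approach as the paper's proof: both identify $\{\sup_{0\le u<s}\xi_u>r\}=\{\tau_r<s\}$ via right-continuity, use $\xi_{\tau_r}\ge r$ on this event, and combine the resulting pointwise bound $\xi_{s\wedge\tau_r}\ge r\,\mathbbm{1}\{\tau_r<s\}$ with the supermartingale inequality $\E[\xi_{s\wedge\tau_r}\mid\cF_0]\le \xi_0+\E[\int_0^{s\wedge\tau_r}f(u,\xi_u)\,\ud u\mid\cF_0]$. The only cosmetic difference is ordering: the paper first records integrability of the integral term, then derives the supermartingale estimate, and finally does the event identification, whereas you do the path-regularity step first.
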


\begin{proof}
Pick any $s\in(0,\infty)$ and consider an  $(\cF_t)$-stopping time $\tau_r \wedge s$, bounded above by $s$. Note that $\sup_{0\leq u<\tau_r\wedge s} \xi_u\leq r$. Since $f$ is bounded on the compact set $[0,s]\times [0,r]$, there exists a constant $C\in(0,\infty)$ such that $\sup_{0\leq u < \tau_r\wedge s}f(u,\xi_u)\leq C$ a.s. Thus we obtain
\begin{equation}
\label{eq:maximal_ineq_upper_bound}
0\leq \E\left[\int_0^{\tau_r \wedge s} f(u,\xi_u)\ud u \Big\vert \cF_0\right]\leq Cs<\infty\quad\text{a.s.}
\end{equation}

Since $(\xi_{t\wedge \tau_r} - \int_0^{t\wedge \tau_r} f(u,\xi_u)\ud u)_{t\in\RP}$
is an $(\cF_t)$-supermartingale, $\xi_{\tau_r \wedge s} -\int_0^{\tau_r \wedge s} f(u,\xi_u)\ud u$ is integrable and
$\E[\xi_{\tau_r \wedge s} -\int_0^{\tau_r \wedge s} f(u,\xi_u)\ud u\vert \cF_0] \leq \xi_0$. The inequality in~\eqref{eq:maximal_ineq_upper_bound}
 and the fact that $\xi$ is non-negative imply the following:
\begin{align}
\nonumber
\E[\xi_{\tau_r \wedge s}\vert \cF_0]&=
\E\left[\xi_{\tau_r \wedge s} -\int_0^{\tau_r \wedge s} f(u,\xi_u)\ud u\Big\vert \cF_0\right]+\E\left[\int_0^{\tau_r \wedge s} f(u,\xi_u)\ud u \Big\vert \cF_0\right]
\\
&\leq \xi_0 + \E\left[\int_0^{\tau_r \wedge s} f(u,\xi_u)\ud u \Big\vert \cF_0\right].
\label{eq:maximal_supermart_inequality}
\end{align}
Moreover, by the definition of $\tau_r$ in the proposition we have $\{\sup_{u\in[0,s)}\xi_u> r\} = \{\tau_r < s\}$
a.s. Since $\xi$ is c\`adl\`ag, on the event $\{\tau_r < s\}$ we have $\xi_{\tau_r\wedge s} = \xi_{\tau_r} \geq r$ a.s. Thus, by~\eqref{eq:maximal_supermart_inequality},
we have
\begin{align*}
\P\left(\sup_{0\leq u < s}\xi_u > r\Big\vert\cF_0\right) &=  \P\left(\tau_r < s\Big\vert\cF_0\right)
\leq r^{-1}\E[\xi_{\tau_r \wedge s}\mathbbm1\{\tau_r <s\}\vert \cF_0]\leq r^{-1}\E[\xi_{\tau_r \wedge s}\vert \cF_0]
\\&\leq r^{-1}\left(\xi_0 + \E\left[\int_0^{\tau_r \wedge s} f(u,\xi_u)\ud u\Big\vert \cF_0\right]\right),
\end{align*}
implying the proposition.
\end{proof}

Proposition~\ref{prop:maximal} will be applied in the proof of Lemma~\ref{lem:return_times} with a continuous function $f$. To state the lemma, consider an $(\cF_t)$-adapted  process $\kappa \coloneqq (\kappa_t)_{t \in \RP}$ with c\`adl\`ag paths, taking values in $[1,\infty)$.
Let $\cT$ denote the set of all $[0,\infty]$-valued stopping times with respect to $(\cF_t)_{t\in \RP}$.
For any $\ell,r \in \RP$ and stopping time $T \in \cT$, define the first entry times (after $T$) by
\begin{align}
\label{eq::lambda}
    \lambda_{\ell,T} \coloneqq  T + \inf\{s \in \RP: T < \infty,~ \kappa_{T+s} < \ell\}, \\
    \rho_{r,T} \coloneqq  T + \inf\{s \in \RP: T < \infty, ~\kappa_{T+s} > r\},
    \label{eq::rho}
\end{align}
where $\inf\emptyset =\infty$. If $T = 0$, we write $\lambda_\ell \coloneqq \lambda_{\ell,0}$ and $\rho_r \coloneqq \rho_{r,0}$.

%The following Lemma is the central tool of our approach to lower bounds. 

\begin{lem}
\label{lem:return_times}
Let $\kappa = (\kappa_t)_{t\in\RP}$ be a $[1,\infty)$-valued $(\cF_t)$-adapted process with c\`adl\`ag paths, satisfying $\limsup_{t\to\infty}\kappa_t =\infty$ a.s.
Suppose that there exist a level $\ell\in(1,\infty)$ and a non-decreasing continuous function $\varphi:(0,1]\to\RP$, such that 
the process 
$$
\left(1/\kappa_{(\rho_{r_q}+t)\wedge \lambda_{r,\rho_{r_q}}} - \int_{\rho_{r_q}}^{(\rho_{r_q} + t) \wedge \lambda_{r,\rho_{r_q}}} \varphi(1/\kappa_{u})\ud u\right)_{t\in\RP}\quad\text{is an $(\cF_{\rho_{r_q}+t})$-supermartingale}
$$
for every $q\in(0,1)$ and $r\in(\ell,\infty)$, where $r_q \coloneqq 2r/(1-q)$. 
Pick any $q\in(0,1)$,
$\eps\in(0,(1-q)/2]$ and
a non-decreasing function $f:\RP \to \RP$.
Then for every $r\in(\ell,\infty)$ we have 
\begin{equation}
\label{eq:return_lower_bound}
\P\left(\int_0^{\lambda_{\ell}} f(\kappa_s)\ud s \geq f(r)\eps/(r\varphi(1/r))\Big\vert \cF_0\right)\geq q\P(\rho_{r_q}<\lambda_{\ell}\vert \cF_0)\quad\text{a.s.}
\end{equation}
\end{lem}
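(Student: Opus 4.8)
The plan is to localise the argument to the event $\{\rho_{r_q}<\lambda_{\ell}\}$, on which the path of $\kappa$ climbs above the high level $r_q=2r/(1-q)$ before it first drops below $\ell$. Restarting the clock at the stopping time $\rho_{r_q}$, I would prove that, conditionally on $\cF_{\rho_{r_q}}$, the process stays above the level $r$ for a duration of at least $s^\star\coloneqq\eps/(r\varphi(1/r))$ with probability at least $q$. On this last event the path of $\kappa$ lies in $[r,\infty)$ throughout the interval $[\rho_{r_q},\lambda_{r,\rho_{r_q}})$, whose length is $\geq s^\star$ and which is contained in $[0,\lambda_{\ell})$ (see below), so monotonicity of $f$ forces $\int_0^{\lambda_{\ell}}f(\kappa_s)\,\ud s\geq f(r)\,(\lambda_{r,\rho_{r_q}}-\rho_{r_q})\geq f(r)s^\star=f(r)\eps/(r\varphi(1/r))$, which is exactly the event on the left-hand side of~\eqref{eq:return_lower_bound}.

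For the quantitative step, on $\{\rho_{r_q}<\infty\}$ I would set $\xi_t\coloneqq 1/\kappa_{\rho_{r_q}+t}$, a $[0,1]$-valued c\`adl\`ag process adapted to the right-continuous filtration $\cG_t\coloneqq\cF_{\rho_{r_q}+t}$, and note that $\tau\coloneqq\inf\{t\in\RP:\xi_t>1/r\}$ equals $\lambda_{r,\rho_{r_q}}-\rho_{r_q}$. The supermartingale hypothesis of the lemma, re-centred at $\rho_{r_q}$, says precisely that $\xi_{t\wedge\tau}-\int_0^{t\wedge\tau}\varphi(\xi_u)\,\ud u$ is a $\cG_t$-supermartingale, so Proposition~\ref{prop:maximal} applied with level $1/r$ and the bounded function $(u,\xi)\mapsto\varphi(\xi)$ gives, for every $s\in(0,\infty)$,
\begin{align*}
\P\!\left(\tau<s\mid\cF_{\rho_{r_q}}\right)
&=\P\!\left(\sup_{0\leq u<s}\xi_u>1/r\ \Big|\ \cF_{\rho_{r_q}}\right)\\
&\leq r\left(\xi_0+\E\!\left[\int_0^{s\wedge\tau}\varphi(\xi_u)\,\ud u\ \Big|\ \cF_{\rho_{r_q}}\right]\right).
\end{align*}
Because $\kappa$ is c\`adl\`ag we have $\kappa_{\rho_{r_q}}\geq r_q$, hence $\xi_0\leq 1/r_q=(1-q)/(2r)$; and for $u<\tau$ we have $\xi_u\leq 1/r$, so $\varphi(\xi_u)\leq\varphi(1/r)$ since $\varphi$ is non-decreasing. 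Substituting $s=s^\star$ into the display therefore yields $\P(\tau<s^\star\mid\cF_{\rho_{r_q}})\leq(1-q)/2+r s^\star\varphi(1/r)=(1-q)/2+\eps\leq 1-q$, that is $\P(\lambda_{r,\rho_{r_q}}-\rho_{r_q}\geq s^\star\mid\cF_{\rho_{r_q}})\geq q$ on $\{\rho_{r_q}<\infty\}$.

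It then remains to justify the deterministic inclusion and assemble the conditional estimate. On $\{\rho_{r_q}<\lambda_{\ell}\}$ one has $\rho_{r_q}\leq\lambda_{r,\rho_{r_q}}\leq\lambda_{\ell}$: if $\lambda_{r,\rho_{r_q}}>\lambda_{\ell}$ then $\lambda_{\ell}\in[\rho_{r_q},\lambda_{r,\rho_{r_q}})$, so $\kappa_{\lambda_{\ell}}\geq r>\ell$, contradicting $\kappa_{\lambda_{\ell}}\leq\ell$, which holds by right-continuity at the first time $\kappa$ falls below $\ell$ (note $\lambda_\ell>0$ there, since $\kappa_{\rho_{r_q}}\geq r_q>\ell$); moreover $\kappa_s\geq r$ for every $s\in[\rho_{r_q},\lambda_{r,\rho_{r_q}})$ by definition of $\lambda_{r,\rho_{r_q}}$, which gives the pathwise bound claimed in the first paragraph. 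Next, $\{\rho_{r_q}<\lambda_{\ell}\}$ is the intersection of $\{\rho_{r_q}<\infty\}$ with the $\cF_{\rho_{r_q}}$-measurable event that $\kappa$ stays in $[\ell,\infty)$ on $[0,\rho_{r_q}]$, hence $\mathbbm{1}_{\{\rho_{r_q}<\lambda_{\ell}\}}\in\cF_{\rho_{r_q}}$. Writing $A\coloneqq\{\rho_{r_q}<\lambda_{\ell}\}$ and $B\coloneqq\{\lambda_{r,\rho_{r_q}}-\rho_{r_q}\geq s^\star\}$, the previous two paragraphs give $A\cap B\subseteq\{\int_0^{\lambda_{\ell}}f(\kappa_s)\,\ud s\geq f(r)\eps/(r\varphi(1/r))\}$, and since $\cF_0\subseteq\cF_{\rho_{r_q}}$ the tower property yields
\begin{align*}
\P\!\left(\int_0^{\lambda_{\ell}}f(\kappa_s)\,\ud s\geq f(r)\eps/(r\varphi(1/r))\ \Big|\ \cF_0\right)
&\geq\E\!\left[\mathbbm{1}_{A}\,\P(B\mid\cF_{\rho_{r_q}})\mid\cF_0\right]\\
&\geq q\,\P(A\mid\cF_0),
\end{align*}
which is~\eqref{eq:return_lower_bound}.

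The analytic core---the maximal inequality together with the two one-line monotone bounds $\xi_0\leq 1/r_q$ and $\varphi(\xi_u)\leq\varphi(1/r)$---is short; I expect the main obstacle to be the measurability and localisation bookkeeping: checking that the re-centred supermartingale hypothesis is legitimately applied on $\{\rho_{r_q}<\infty\}$, that $\{\rho_{r_q}<\lambda_{\ell}\}\in\cF_{\rho_{r_q}}$, and in particular the deterministic inclusion $\lambda_{r,\rho_{r_q}}\leq\lambda_{\ell}$ on that event, whose verification relies on the (mild) c\`adl\`ag boundary behaviour of $\kappa$ at the hitting time $\lambda_{\ell}$.
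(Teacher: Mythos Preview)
Your proof is correct and follows essentially the same approach as the paper's own proof: define $\xi_t=1/\kappa_{\rho_{r_q}+t}$, apply Proposition~\ref{prop:maximal} at level $1/r$, use the two monotone bounds $\xi_0\leq 1/r_q$ and $\varphi(\xi_u)\leq\varphi(1/r)$ to obtain $\P(\lambda_{r,\rho_{r_q}}-\rho_{r_q}\geq s^\star\mid\cF_{\rho_{r_q}})\geq q$, and then combine with the inclusion $\lambda_{r,\rho_{r_q}}\leq\lambda_\ell$ on $\{\rho_{r_q}<\lambda_\ell\}$ via the tower property. Your write-up is in fact a bit more explicit than the paper's on the measurability bookkeeping (that $\{\rho_{r_q}<\lambda_\ell\}\in\cF_{\rho_{r_q}}$) and on the c\`adl\`ag argument giving $\kappa_{\lambda_\ell}\leq\ell$, which the paper passes over in one line; the paper, for its part, reduces to $\eps=(1-q)/2$ at the outset by monotonicity, whereas you keep $\eps\leq(1-q)/2$ throughout---both yield the same bound.
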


\begin{rem}
\noindent (I) The  assumption $\limsup_{t\to\infty}\kappa_t =\infty$ a.s.
in Lemma~\ref{lem:return_times}
implies $\P(\rho_r<\infty)=1$ for all $r\in[1,\infty)$, making $\kappa_{\rho_{r_q}}$ well defined. The main step in the proof of inequality~\eqref{eq:return_lower_bound} in Lemma~\ref{lem:return_times} consists of establishing the following: with probability at least $q$, after reaching the level $r_q$, 
the process $\kappa$ spends more than $\eps/(r\varphi(1/r))$ units of time before returning below the level $r$. %The inequality in~\eqref{eq:main_lower_bound} in Corollary~\ref{cor:return_times}
%follows from~\eqref{eq:return_lower_bound}  via %standard submartingale arguments.

\noindent (II) Note that  $f\equiv1$ in Lemma~\ref{lem:return_times} yields a lower bound on the tail probability  
$\P(
\lambda_{\ell} \geq t\vert\cF_0)$.
In applications of  Lemma~\ref{lem:return_times} it is crucial that $q$ can be taken arbitrarily close to 1.
This allows us to conclude that for any fixed time $t$ and  starting point $\kappa_0$ larger than the level $2G_f(t)/(1-q)$,
%(note that for $r=G_f(t)$, we have $r_q = 2G_f(t)/(1-q)$, where $r_q$ was defined in Lemma~\ref{lem:return_times}), 
the process does not leave $(\ell,\infty)$  before time $t$ with probability $q$. In particular, this will imply that petite sets of a Markov process satisfying~\nameref{sub_drift_conditions} are necessarily bounded (Lemma~\ref{lem:bounded_petite_sets} below).

\noindent (III) The null set where the inequality in~\eqref{eq:return_lower_bound}  fails to hold may vary with $r$. However, when applying Lemma~\ref{lem:return_times} in this paper, we only require the case where $\cF_0$ is a trivial $\sigma$-algebra, making~\eqref{eq:return_lower_bound} hold for all $r\in(\ell,\infty)$ simultaneously.
\end{rem}

\begin{proof}[Proof of Lemma~\ref{lem:return_times}]
%\underline{\it Inequality~\eqref{eq:return_lower_bound}}
Pick $q\in (0,1)$ and $r\in(\ell,\infty)$.
Note that the inequality in~\eqref{eq:return_lower_bound} holds for all $\eps\in(0,(1-q)/2]$ if it holds for 
$\eps=(1-q)/2$ (the right-hand side of~\eqref{eq:return_lower_bound} does not depend on $\eps$, while the probability on the left-hand side is decreasing in $\eps$).
We may thus fix $\eps=(1-q)/2$. 

We start by 
showing that, once the process $\kappa$ reaches the level $r_q=2r/(1-q)$, with probability at least $q$ it takes 
$\eps /(r\varphi(1/r))$ units of time for $\kappa$ to return to the interval $[1,r)$. More precisely, we now
establish the following inequality: 
\begin{equation}
\label{eq:r_squared_time_to_come_back}
\P(\lambda_{r,\rho_{r_q}} \geq \rho_{r_q } + \eps /(r\varphi(1/r))\vert \cF_{\rho_{r_q}}) \geq q\quad\text{a.s.}
\end{equation}

By the non-confinement assumption $\limsup_{t\to\infty}\kappa_t =\infty$, we have $\rho_{r_q}<\infty$ a.s. Define the c\`adl\`ag process $(\xi_t)_{t\in\RP}$ by  $\xi_t \coloneqq 1/\kappa_{\rho_{r_q} + t}$. Note that
$\tau_{1/r}=\inf\{t>0:\xi_t > 1/r\}=\lambda_{r,\rho_{r_q}}-\rho_{r_q}$
by~\eqref{eq::lambda}
and
hence
$t\wedge\tau_{1/r}=((\rho_{r_q}+t)\wedge \lambda_{r,\rho_{r_q}})-\rho_{r_q}$.
Moreover, by the definition of $\tau_{1/r}$, we have 
$\{\sup_{u\in[0,s)}\xi_u> 1/r\} = \{\tau_{1/r} < s\}$
a.s. for any $s\in\RP$.
% and let $f(u) \coloneqq Cu^2$ for $u\in\RP$. 
By assumption, the process $(\xi_{t\wedge\tau_{1/r}} - \int_0^{t\wedge\tau_{1/r}} \varphi(\xi_u)\ud u)_{t\in\RP}$ is an $(\cF_{\rho_{r_q}+t})$-supermartingale.  
Moreover, since $\varphi:(0,1]\to\RP$ is non-decreasing and continuous,
it has a unique extension (via its right-limit at $0$) to a continuous function $\varphi:[0,1]\to\RP$.
Applying Proposition~\ref{prop:maximal} (with a continuous function $f(u,s)=\varphi(s)$) to $\xi$  and the stopping time $\tau_{1/r}$ yields
\begin{align*}
    \P(\lambda_{r,\rho_{r_q }} < \rho_{r_q }+t\vert \cF_{\rho_{r_q }}) &= \P(\tau_{1/r}< t\vert \cF_{\rho_{r_q}} )=\P(\sup_{0\leq u < t} \xi_u > 1/r\vert \cF_{\rho_{r_q}}) \\
    &\leq r\left( \xi_0 + \E\left[\int_0^{t\wedge\tau_{1/r}} \varphi(\xi_u)\ud u\Big\vert \cF_{\rho_{r_q}}\right] \right) \\
    &= r\left(1/\kappa_{\rho_{r_q}} +\E\left[ \int_{\rho_{r_q}}^{(\rho_{r_q}+t)\wedge \lambda_{r,\rho_{r_q}}}\varphi(1/\kappa_u)\ud u\Big\vert \cF_{\rho_{r_q }}\right]\right) \\
    &\leq r\left(1/r_q +\varphi(1/r)\E\left[(\rho_{r_q}+t)\wedge \lambda_{r,\rho_{r_q}}- \rho_{r_q}\vert \cF_{\rho_{r_q }}\right]\right) \\
    &\leq r(1/r_q + \varphi(1/r)t) = (1-q)/2 + r\varphi(1/r)t,
    \quad\text{ $t\in(0,\infty)$,}
\end{align*}
where the second inequality holds by the following facts: $\varphi$ is a non-decreasing function and
the inequality $1/\kappa_u\leq 1/r$ is valid on the event $\{\rho_{r_q}<u<(\rho_{r_q}+t)\wedge \lambda_{r,\rho_{r_q}}\}$. The third inequality is a consequence of the fact $((\rho_{r_q}+t)\wedge \lambda_{r,\rho_{r_q}})-\rho_{r_q}=\tau_{1/r}\wedge t\leq t$, while the last equality follows from the definition of $r_q$.
%Note that, even though at time $\lambda_{r,\rho_{r_q}}$ the process $\kappa$ could jump to $0$, this does not affect the integral in the second inequality.
By taking complements, we get 
$$\P(\lambda_{r,\rho_{r_q }} \geq \rho_{r_q } + t\vert \cF_{\rho_{r_q}}) \geq 1 - ((1-q)/2 + r\varphi(1/r)t).$$
Setting $t = \eps/ (r\varphi(1/r))$ and recalling 
$\eps=(1-q)/2$,
we obtain~\eqref{eq:r_squared_time_to_come_back}.

Note that on the event $\{\lambda_{r,\rho_{r_q }} \geq \rho_{r_q} + \eps /(r\varphi(1/r))\}$, 
for any non-decreasing function $f$, 
we have $f(\kappa_{\rho_{r_q} + t}) \geq f(r)$ for all $t\in[0,\eps /(r\varphi(1/r))]$.
Since $r>\ell$, 
on the event 
$\{\rho_{r_q } < \lambda_{\ell}\}$, the inequality 
$\lambda_{\ell}\geq \lambda_{r,\rho_{r_q }}$
holds,
implying 
the following inclusion:
$$\left\{\int_0^{\lambda_{\ell}} f(\kappa_t)\ud t \geq f(r) \eps /(r\varphi(1/r))\right\}\supset \{\rho_{r_q } < \lambda_{\ell}\}\cap\{\lambda_{r,\rho_{r_q }} \geq \rho_{r_q } +\eps/ (r\varphi(1/r))\}.$$ 
By the inequality in~\eqref{eq:r_squared_time_to_come_back}, we thus obtain the inequality in~\eqref{eq:return_lower_bound}:
\begin{align*}
    \P\Bigg( \int_0^{\lambda_{\ell}} &f(\kappa_t) \ud t \geq f(r)\eps /(r\varphi(1/r))\Big \vert\cF_0\Bigg) \\ %\label{eq:inequality_length_maximum}
    &\geq \E\left[\mathbbm{1}\{\rho_{r_q } < \lambda_{\ell}\}\P\left(\lambda_{r,\rho_{r_q }} > \rho_{r_q } + \eps /(r\varphi(1/r))\vert \cF_{\rho_r}\right)  \big\vert\cF_0\right] 
    \geq q\P(\rho_{r_q }<\lambda_{\ell}\vert\cF_0)\quad\text{a.s.} \qedhere
\end{align*}
\end{proof}

\section{Lower bounds on the ergodicity of Markov processes}
\label{sec:proofs}

\subsection{A lower bound on the \texorpdfstring{$f$}{f}-variation rate of a Markov process}
In this subsection we consider a strong Markov process $X = (X_t)_{t\in\RP}$ on a general  metric space $\cX$ with invariant measure $\pi$ on $\cB(\cX)$ (see Section~\ref{subsec:Definitions} for definitions). 
%We say that $\pi\in\cX$ is the \textit{invariant measure} of a Markov %process $X$ if $\pi(\cdot) = \int_{\cX}\P_u(X_t\in\cdot) \pi(\ud u)$ %for every $t>0$.
The following lemma generalizes to \textit{$f$-variation} the lower bound in~\cite[Thm~3.6]{Hairer09} on
the \textit{total variation} between $\pi$ and the law of $X_t$.  The key assumption in Lemma~\ref{lem:lower_bound_f_convergence_rate} is the lower bound on the decay of the tail of the integral of $f$ with respect to $\pi$. We stress that Lemma~\ref{lem:lower_bound_f_convergence_rate} does not require the $\mathbf{L}$-drift condition~\nameref{sub_drift_conditions}.

\begin{lem}
\label{lem:lower_bound_f_convergence_rate}
Let $X$ be  a Markov process with an invariant measure $\pi$ on the state space $\cX$. Let functions $H,f,G:\cX \to [1,\infty)$ be such that $f(x)G(x) = H(x)$ for all $x\in\cX$ and~\ref{lem:lower_bound_f_convergence_rate_a} \& \ref{lem:lower_bound_f_convergence_rate_b} hold.
\begin{myenumi}[label=(\alph*)]
    \item \label{lem:lower_bound_f_convergence_rate_a}
There exists a function $a:[1,\infty)\to(0,1]$ such that the function $A(r)\coloneqq ra(r)$ is increasing, $\lim_{r\uparrow\infty}A(r)=\infty$ and $\int_{\{G\geq r\}}f(x)\pi(\ud x)\geq a(r)$ for all $r\in[1,\infty)$.
\item \label{lem:lower_bound_f_convergence_rate_b} There exists a function $v:\cX\times\RP\to[1,\infty)$, increasing in the second argument and satisfying  $\E_{x}[H(X_t)] \leq v(x,t)$ for all $x\in\cX$ and $t\in[1,\infty)$.
\end{myenumi}
Then the following bound holds for every $t\in[1,\infty)$ and $x\in\cX$:
$$
\|\pi(\cdot)-\P_{x}(X_t\in\cdot)\|_{f} \geq \left(a\circ A^{-1}\circ (2v)\right)(x,t)/2.
$$
\end{lem}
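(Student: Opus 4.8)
The plan is to compare the $f$-weighted mass that $\pi$ places on the super-level set $\{G\geq r\}$ with the $f$-weighted mass that the law of $X_t$ can place there. The duality definition of $f$-variation means that for any measurable set $B$ we may test against the function $g = f\cdot\mathbbm 1_B$ (which satisfies $|g|\le f$), so
\[
\|\pi(\cdot)-\P_x(X_t\in\cdot)\|_f \;\ge\; \left|\int_{\cX} f(y)\mathbbm 1_B(y)\,\pi(\ud y) - \E_x\!\left[f(X_t)\mathbbm 1_B(X_t)\right]\right|.
\]
First I would take $B = B_r := \{y\in\cX: G(y)\ge r\}$ for a value $r=r(t)$ to be chosen. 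The lower bound on the first term is exactly assumption~\ref{lem:lower_bound_f_convergence_rate_a}: $\int_{B_r} f\,\ud\pi \ge a(r)$. For the second term the key observation is that on $B_r$ we have $f(X_t) = H(X_t)/G(X_t) \le H(X_t)/r$, since $f,G\ge 1$ and $G\ge r$ there; hence $\E_x[f(X_t)\mathbbm 1_{B_r}(X_t)] \le \E_x[H(X_t)]/r \le v(x,t)/r$ by assumption~\ref{lem:lower_bound_f_convergence_rate_b}. Combining,
\[
\|\pi(\cdot)-\P_x(X_t\in\cdot)\|_f \;\ge\; a(r) - v(x,t)/r.
\]

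Now I would choose $r$ so that the subtracted term is at most half of $a(r)$, i.e. $v(x,t)/r \le a(r)/2$, equivalently $A(r) = r a(r) \ge 2v(x,t)$. Since $A$ is increasing with $\lim_{r\to\infty}A(r)=\infty$ and $v(x,t)\ge 1$, the value $r = A^{-1}(2v(x,t))$ is well-defined and gives $A(r) = 2v(x,t)$, hence $v(x,t)/r = a(r)/2$ exactly. With this choice the bound becomes
\[
\|\pi(\cdot)-\P_x(X_t\in\cdot)\|_f \;\ge\; a(r) - a(r)/2 \;=\; a(r)/2 \;=\; \tfrac12\,a\!\left(A^{-1}(2v(x,t))\right) = \tfrac12\left(a\circ A^{-1}\circ(2v)\right)(x,t),
\]
which is the claimed inequality. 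Monotonicity of $A^{-1}$ together with $v(x,t)\ge1$ ensures $r\in[1,\infty)$ so that assumption~\ref{lem:lower_bound_f_convergence_rate_a} applies at this $r$.

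The argument is essentially a one-line optimisation once the two estimates are in place, so there is no serious obstacle; the only points requiring a little care are (i) justifying the choice of the test function $g=f\mathbbm 1_{B_r}$ against the supremum defining $\|\cdot\|_f$ (immediate, since $|g|\le f$ and $g$ is measurable), and (ii) the measurability of $B_r$, which holds because $G$ is (implicitly) measurable as the ratio $H/f$ of the measurable functions in the statement. One should also note that the absolute value in the duality bound can be dropped here because the lower estimate $a(r)-v(x,t)/r$ we retain is nonnegative by the choice of $r$. This matches the structure of~\cite[Thm~3.6]{Hairer09}, with $f\equiv1$ recovering the total-variation statement there.
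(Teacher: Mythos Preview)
Your proof is correct and follows essentially the same approach as the paper's own proof: test against $f\mathbbm{1}_{\{G\ge r\}}$, bound the $\pi$-integral below by $a(r)$ and the $\P_x(X_t\in\cdot)$-integral above by $v(x,t)/r$ via $fG=H$, then set $r=A^{-1}(2v(x,t))$ so that the subtracted term equals $a(r)/2$. The paper phrases the second estimate as a Markov-inequality step rather than the pointwise inequality $f\le H/r$ on $\{G\ge r\}$, but this is the same computation.
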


\begin{proof}
It follows from the definition of $f$-variation distance and Markov inequality that, for every $t\in\RP$ and every $r\geq 1$, one has the lower bound
\begin{align*}
\|\pi(\cdot)-\P_{x}(X_t\in\cdot)\|_{f} &\geq \int_{\{G\geq r\}}f(x)\pi(\ud x) - \E_{x}[f(X_t)\mathbbm{1}\{G(X_t)\geq r\}] \\
&\geq a(r) - \frac{1}{r}\E_{x}[f(X_t)G(X_t)\mathbbm{1}\{G(X_t)\geq r\}] \\
&\geq a(r) - \frac{1}{r}\E_{x}[H(X_t)] \geq a(r) - \frac{v(x,t)}{r}.
\end{align*}
Let $r=r(t)$ be the unique solution to the equation $ra(r) = 2v(x,t)$. Put differently we have 
$r(t)=A^{-1}(2v(x,t))$ for all $t\in[1,\infty)$ and $v(x,t)/r(t)=a(r(t))/2$.
Thus we obtain
$$
a(r(t))-v(x,t)/r(t) = a(r(t))/2=a(A^{-1}(2v(x,t)))/2,
$$
which, combined with the previous display, concludes the proof.
\end{proof}

\begin{rem}
\label{rem:lem_lower_bound}
In applications of Lemma~\ref{lem:lower_bound_f_convergence_rate}  
in practice, a good choice of $H=f\cdot G$ (recall that $f$ is given by the variation norm) requires balancing (I) and (II) below. \\
\noindent (I) It is beneficial to choose $H$ so that the lower bound $a(r)$ on the tail $\pi(\{G\geq r\})$ is such that $A(r)=ra(r)$ tends to infinity polynomially. This is because a slower (logarithmic) growth in $A$ would imply a faster (stretched exponential) growth in $v$ of $A^{-1}\circ (2v)$, making
 the lower bound $a\circ A^{-1}\circ (2v)$ smaller (recall that $a(r)\to0$ as $r\to\infty$). In particular, this requires $H$ to grow sufficiently fast. \\
\noindent (II) The growth of $v(x,t)$ is often obtained via the application of Lemma~\ref{lem:bounded_generator}. In particular this lemma relies on bounding $\cA H$ (differently put, the derivatives of $H$) by  a concave function of $H$, introducing a restriction on the growth of $H$.

To see how the choice of $H$ plays out in specific models, see applications of Theorem~\ref{thm:f_rate} and Corollary~\ref{cor:rate} in Section~\ref{sec:examples_proofs} below, where $H=h\circ V$, $V$ is the Lyapunov function and $h$ an arbitrary function chosen with (I) and (II) above in mind.
\end{rem}

\subsection{Return time estimates and petite sets}
\label{subsec:return_times}
%In the remainder of the paper, we consider the process 
%$\kappa \coloneqq V(X)$ under Assumption~\nameref{sub_drift_conditions}. 
The main estimate required in the proofs of our main theorems, stated in Section~\ref{sec:main_results} above, is given in Proposition~\ref{prop:lyapunov_return_times}. It essentially bounds from below the tail of  the return time $S_{(\ell)}$ of the process $X$ into the set $\{V< \ell\}$.

\begin{prop}~\label{prop:lyapunov_return_times}
Let Assumption~\nameref{sub_drift_conditions} hold.
Then there exists $\ell_0\in[1,\infty)$  such that the following holds: for any  $\ell\in(\ell_0,\infty)$ there exists $C_\ell\in(0,\infty)$, such that for any
$x\in\{\ell+1\leq V\}$,
a non-decreasing continuous function $h:\RP\to\RP$, $q\in(0,1)$ and $\eps \in (0,(1-q)/2]$,  inequality~\eqref{eq:Lyapunov_lower_bound} holds for
all times $t>h(\ell)\eps/(\ell\varphi(1/\ell))$,
\begin{multline}
\label{eq:Lyapunov_lower_bound}
\P_x\left(\int_0^{S_{(\ell)}} h\circ V(X_s)\ud s \geq t\right) \\\geq q\1{V(x)<2G_h(t)/(1-q)}\frac{C_\ell}{\Psi(2G_h(t)/(1-q))}+q\1{V(x)\geq 2G_h(t)/(1-q)}.
\end{multline}
In~\eqref{eq:Lyapunov_lower_bound},
$G_h:(h(\ell)\eps/(\ell\varphi(1/\ell)),\infty)\to(\ell,\infty)$ is the inverse of  $r\mapsto h(r)\eps/(r\varphi(1/r))$ on $(\ell,\infty)$. 
\end{prop}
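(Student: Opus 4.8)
The plan is to combine the two halves of the $\mathbf{L}$-drift condition via Lemma~\ref{lem:return_times}, applied to the process $\kappa \coloneqq V(X)$. First I would check that $\kappa$ satisfies the hypotheses of Lemma~\ref{lem:return_times}: it has c\`adl\`ag paths (since $V$ is continuous and $X$ is c\`adl\`ag), it takes values in $[1,\infty)$, and by the non-confinement hypothesis in Assumption~\nameref{sub_drift_conditions} we have $\limsup_{t\to\infty}\kappa_t = \infty$ $\P_x$-a.s. The supermartingale requirement of Lemma~\ref{lem:return_times} is exactly the statement that the process in~\nameref{sub_drift_conditions}\ref{sub_drift_conditions(i)}, after the strong Markov property and optional stopping at the times $\rho_{r_q}$ and $\lambda_{r,\rho_{r_q}}$, is a supermartingale on the time interval where $V(X)\in(\ell,\infty)$ so that the indicator term $b\1{V(X_u)\le\ell_0}$ vanishes (taking $\ell>\ell_0$); this uses the fixed $\ell_0$ from Assumption~\nameref{sub_drift_conditions}. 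With $S_{(\ell)} = \lambda_\ell$ in the notation~\eqref{eq::lambda} (here $\cF_0$ is trivial under $\P_x$), Lemma~\ref{lem:return_times} with $f = h\circ V$ restricted to the range of $V$ — wait, more carefully: Lemma~\ref{lem:return_times} needs $f$ non-decreasing on $\RP$, and $h$ is non-decreasing continuous on $\RP$, so I apply it with the function $h$ itself, noting $f(\kappa_s) = h(V(X_s)) = h\circ V(X_s)$. This yields, for every $r\in(\ell,\infty)$ and every $q\in(0,1)$, $\eps\in(0,(1-q)/2]$,
$$
\P_x\left(\int_0^{S_{(\ell)}} h\circ V(X_s)\,\ud s \ge h(r)\eps/(r\varphi(1/r))\right)\ \ge\ q\,\P_x(\rho_{r_q} < S_{(\ell)}),
$$
where $r_q = 2r/(1-q)$ and $\rho_{r_q} = \inf\{t\ge 0 : V(X_t) > r_q\} = T^{(r_q)}$.

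The second ingredient is to bound $\P_x(T^{(r_q)} < S_{(\ell)})$ from below. If $V(x) \ge r_q$ then $T^{(r_q)} = 0 < S_{(\ell)}$ (here I use that $x\in\{\ell+1\le V\}$, so $S_{(\ell)}>0$ unless $V(x)<\ell$, which is excluded), so the probability is $1$; this produces the second term $q\1{V(x)\ge 2G_h(t)/(1-q)}$ in~\eqref{eq:Lyapunov_lower_bound}. If $V(x) < r_q$, then since $x\in\{\ell+1\le V\}$ and $r_q > \ell+1$, I can invoke~\eqref{eq:assumption_heavy_tail_bound} in Assumption~\nameref{sub_drift_conditions}\ref{sub_drift_conditions(ii)}: $\P_x(T^{(r_q)} < S_{(\ell)}) \ge C_\ell/\Psi(r_q) = C_\ell/\Psi(2r/(1-q))$. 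Combining with the display above gives
$$
\P_x\left(\int_0^{S_{(\ell)}} h\circ V(X_s)\,\ud s \ge h(r)\eps/(r\varphi(1/r))\right)\ \ge\ q\1{V(x)<r_q}\frac{C_\ell}{\Psi(2r/(1-q))} + q\1{V(x)\ge r_q}.
$$

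The final step is the change of variables $t = h(r)\eps/(r\varphi(1/r))$. The monotonicity hypotheses of Assumption~\nameref{sub_drift_conditions}\ref{sub_drift_conditions(i)} — $\varphi$ non-decreasing continuous, $r\mapsto r\varphi(1/r)$ decreasing to $0$ — together with $h$ non-decreasing continuous, ensure that $r\mapsto h(r)\eps/(r\varphi(1/r))$ is increasing and continuous on $(\ell,\infty)$, hence has a well-defined continuous increasing inverse $G_h$ mapping $(h(\ell)\eps/(\ell\varphi(1/\ell)),\infty)$ onto $(\ell,\infty)$. Substituting $r = G_h(t)$ for $t > h(\ell)\eps/(\ell\varphi(1/\ell))$ turns $h(r)\eps/(r\varphi(1/r))$ into $t$, turns $2r/(1-q)$ into $2G_h(t)/(1-q)$, and turns the condition $V(x) < r_q$ into $V(x) < 2G_h(t)/(1-q)$, yielding exactly~\eqref{eq:Lyapunov_lower_bound}. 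I would record at the outset that the $\ell_0$ and the constants $C_\ell$ are precisely those furnished by Assumption~\nameref{sub_drift_conditions}. The only genuinely delicate point — and the one I would write out with care — is the verification that the localised supermartingale hypothesis of Lemma~\ref{lem:return_times} follows from~\nameref{sub_drift_conditions}\ref{sub_drift_conditions(i)}: one must apply the strong Markov property at $\rho_{r_q}$, use that between $\rho_{r_q}$ and $\lambda_{r,\rho_{r_q}}$ the process $V(X)$ stays in $(r,\infty)\supset(\ell_0,\infty)$ so the $b$-correction term is constant in that window, and then invoke the optional stopping theorem for the (right-continuous) supermartingale stopped at $\lambda_{r,\rho_{r_q}}$. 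Everything else is monotone bookkeeping.
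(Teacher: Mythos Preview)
Your proposal is correct and follows essentially the same approach as the paper's proof: apply Lemma~\ref{lem:return_times} to $\kappa=V(X)$ (using that the $b$-term vanishes on $\{\kappa>\ell_0\}$ so the required supermartingale is obtained from~\nameref{sub_drift_conditions}\ref{sub_drift_conditions(i)}), then split according to $V(x)\gtrless r_q$ and invoke~\eqref{eq:assumption_heavy_tail_bound} in the small case, and finally reparametrise via the inverse $G_h$. The paper is terser about the supermartingale verification (it simply notes $\int_{\rho_{r_q}}^{\lambda_{r,\rho_{r_q}}}\1{\kappa_u\le\ell_0}\,\ud u=0$ $\P_x$-a.s.), but your more explicit mention of the strong Markov property and optional stopping is exactly what underlies that step.
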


The  proof of Proposition~\ref{prop:lyapunov_return_times} is based on Lemma~\ref{lem:return_times} and requires us to show that Assumption~\nameref{sub_drift_conditions} 
implies the assumptions of Lemma~\ref{lem:return_times} for the process $\kappa=V(X)$.

\begin{proof}
%The proof relies on applying Lemma~\ref{lem:return_times} to the process $\kappa = V(X)$. 
Consider the process $\kappa =V(X)$. 
Recall the definition of the return time $\lambda_{\ell,T}$ and the first-passage time $\rho_{r,T}$ (where $\ell,r\in(0,\infty)$ and $T$ an $(\cF_t)$-stopping time) for the process $\kappa$ in~\eqref{eq::lambda} and~\eqref{eq::rho} respectively. As in Section~\ref{sec:return_times}, we  denote $\lambda_\ell=\lambda_{\ell,0}$
and 
$\rho_r=\rho_{r,0}$. 
Note that, by Assumption~\nameref{sub_drift_conditions},  
we have $S_{(\ell)}=\lambda_\ell$ and 
$T^{(r)}=\rho_r$
and the process $\kappa$ satisfies the non-confinement property for every starting point $x\in\cX$,
 i.e.
 $\P_x(\limsup_{t\to\infty}\kappa_t=\infty)=1$.
Moreover, by~\nameref{sub_drift_conditions}\ref{sub_drift_conditions(i)}, there exists $\ell_0\in[1,\infty)$ such that, for every $r\in(\ell_0,\infty)$, the process 
$$
\left(1/\kappa_{(\rho_{r_q}+t)\wedge \lambda_{r,\rho_{r_q}}} - \int_{\rho_{r_q}}^{(\rho_{r_q} + t) \wedge \lambda_{r,\rho_{r_q}}} \varphi(1/\kappa_{u})\ud u\right)_{t\in\RP}
$$
is an $(\cF_{\rho_{r_q}+t})$-supermartingale under $\P_x$ (since $\int_{\rho_{r_q}}^{\lambda_{r,\rho_{r_q}}}\1{\kappa_u\leq \ell_0} \ud u=0$ $\P_x$-a.s.) for every $x\in\cX$, where $r_q \coloneqq 2r/(1-q)$.
Thus, by the inequality in~\eqref{eq:return_lower_bound} of Lemma~\ref{lem:return_times},  for any $\ell\in[\ell_0,\infty)$, $r\in(\ell,\infty)$, non-decreasing function $h:\RP\to\RP$, $q\in(0,1)$ and $\eps \in (0,(1-q)/2]$
we have
\begin{equation}
\label{eq:Markov_lower_bound}
\P_x\left(\int_0^{\lambda_\ell} h(\kappa_s)\ud s \geq\eps h(r)/(r\varphi(1/r))\right)\geq q\P_x(\rho_{r_q}<\lambda_\ell).
\end{equation}

Recall that by~\nameref{sub_drift_conditions}\ref{sub_drift_conditions(i)} the function $r\mapsto h(r)\eps/(r\varphi(1/r))$ is continuous and
 increasing  on $[1,\infty)$
(and thus 
invertible on $(\ell_0,\infty)$), with inverse $G_h$ is defined on $t\in(h(\ell_0)\eps/(\ell_0\varphi(1/\ell_0)),\infty)$.
For any $t>h(\ell)\eps/(\ell\varphi(1/\ell))$, set
$r=G_h(t)>\ell$ and note  $r_q=2G_h(t)/(1-q)>\ell+1$. 
The inequality in~\eqref{eq:Lyapunov_lower_bound} 
follows from~\eqref{eq:Markov_lower_bound}
and inequality~\eqref{eq:assumption_heavy_tail_bound} in~\nameref{sub_drift_conditions}\ref{sub_drift_conditions(ii)} for all 
$x\in\{\ell+1\leq V\}$, since on the subset $x\in\{r_q\leq V\}$
we have 
$\P_x(\rho_{r_q}<\lambda_\ell)=\P_x(T^{(r_q)}<S_{(\ell)})=1$ by definition.
\end{proof}

A non-empty measurable set $B\in\cB(\cX)$ is  \textit{petite} (for the Markov process $X$) if there exist a probability measure $a$ on $\cB(\RP)$ and a finite measure $\nu_a$ on $\cB(\cX)$ with  $\nu_a(\cX)>0$, satisfying
\begin{equation}
\label{eq:petite}
\int_0^\infty \P_x(X_t\in \cdot)a(\ud t) \geq \nu_a(\cdot) \quad \text{for all $x\in B$}.
\end{equation}

The following lemma shows that, under Assumptions~\nameref{sub_drift_conditions}, every petite set for $X$ belongs to a sublevel set of the Lyapunov function $V$. 

\begin{lem}[Under $\mathbf{L}$-drift condition, petite sets are bounded]
\label{lem:bounded_petite_sets}
Let~\nameref{sub_drift_conditions} hold. Assume that a set $B\in\cB(\cX)$ is petite for the process $X$. Then there exists $r_0\in(1,\infty)$ such that $B\subset \{V\leq r_0\}$.
\end{lem}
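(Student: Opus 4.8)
The plan is to argue by contradiction: suppose $B$ is petite but unbounded with respect to $V$, meaning $\sup_{x\in B}V(x)=\infty$, and derive a contradiction from the non-triviality of the measure $\nu_a$ in the definition~\eqref{eq:petite}. The key mechanism is that, by Proposition~\ref{prop:lyapunov_return_times} (or more directly Lemma~\ref{lem:return_times} applied to $\kappa=V(X)$ with $f\equiv 1$), starting from a point $x$ with $V(x)$ very large, the process $V(X)$ stays above a fixed level $\ell>\ell_0$ for a long time with probability close to $1$. Quantitatively: fixing $q\in(0,1)$ close to $1$ and $\eps=(1-q)/2$, for any target time $t$ the bound $V(x)\geq 2G_1(t)/(1-q)$ forces $\P_x(S_{(\ell)}\geq t)\geq q$, where $G_1$ is the inverse of $v\mapsto \eps/(v\varphi(1/v))$ as in Theorem~\ref{thm:modulated_moments}\ref{thm:modulated_moments_b}. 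Since $V$ is continuous and $B$ is unbounded in $V$, we can find points $x_n\in B$ with $V(x_n)\to\infty$; for each fixed $t$, eventually $V(x_n)$ exceeds the threshold $2G_1(t)/(1-q)$, so $\P_{x_n}(X_s\in\{V\geq\ell\}\text{ for all }s\in[0,t])\geq q$.

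Next I would convert this into a statement about the averaged kernel $\int_0^\infty \P_x(X_s\in\cdot)\,a(\ud s)$ appearing in~\eqref{eq:petite}. Since $a$ is a probability measure on $\cB(\RP)$, pick $T<\infty$ large enough that $a([0,T])\geq 1-\eta$ for a small $\eta>0$. Using the threshold above with $t=T$, for all $n$ large we have $\P_{x_n}(V(X_s)\geq\ell\text{ for all }s\in[0,T])\geq q$, hence for any measurable $C\subset\{V<\ell\}$,
\begin{equation*}
\int_0^\infty \P_{x_n}(X_s\in C)\,a(\ud s)\leq \int_0^T \P_{x_n}(X_s\in C)\,a(\ud s) + \eta \leq (1-q) + \eta,
\end{equation*}
because on the event that $V(X)$ stays above $\ell$ throughout $[0,T]$ we have $X_s\notin C$ for $s\leq T$. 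Taking $q$ close to $1$ and $\eta$ small makes the right-hand side arbitrarily small. On the other hand, the petite property~\eqref{eq:petite} forces $\int_0^\infty \P_{x_n}(X_s\in C)\,a(\ud s)\geq \nu_a(C)$ for every $n$. Since $\nu_a$ is a finite non-zero measure on $\cB(\cX)$ and $\{V<\ell\}\uparrow\cX$ as $\ell\uparrow\infty$ (as $V$ is real-valued), we may choose $\ell>\ell_0$ large enough that $\nu_a(\{V<\ell\})>0$; taking $C=\{V<\ell\}$ then yields $0<\nu_a(C)\leq (1-q)+\eta$, a contradiction once $q$ is close enough to $1$ and $\eta$ small enough.

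A minor technical point to handle carefully: Proposition~\ref{prop:lyapunov_return_times} is stated for $x\in\{\ell+1\leq V\}$ and gives, on the set $\{V(x)\geq 2G_1(t)/(1-q)\}$, the clean bound $\P_x(S_{(\ell)}\geq t)\geq q$ (the indicator $\1{V(x)\geq 2G_1(t)/(1-q)}$ term in~\eqref{eq:Lyapunov_lower_bound} with $h\equiv 1$), so I need only ensure $V(x_n)$ exceeds both $\ell+1$ and $2G_1(T)/(1-q)$, which holds for all large $n$ since $V(x_n)\to\infty$. One should also note $S_{(\ell)}=\inf\{t\geq 0: V(X_t)<\ell\}$ and that $\{S_{(\ell)}\geq T\}$ implies $V(X_s)\geq\ell$ for all $s\in[0,T)$; combined with right-continuity of paths this controls $X_s\notin\{V<\ell\}$ on $[0,T]$ up to the single time $T$, which is $a$-negligible after an arbitrarily small enlargement of $T$, or can be absorbed since the bound $\P_x(S_{(\ell)}\geq T+1)\geq q$ handles $[0,T]$ outright. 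The only genuine obstacle is making sure the quantifiers line up — that $q$, $\eta$, $\ell$ and the index $n$ can be chosen in the right order — but since $\nu_a$ is fixed once $B$ is declared petite, we first fix $\ell$ with $\nu_a(\{V<\ell\})>0$, then choose $q$ close to $1$ and $\eta$ small with $(1-q)+\eta<\nu_a(\{V<\ell\})$, then pick $T$ with $a([0,T])\geq 1-\eta$, and finally take $n$ large; this ordering works and closes the argument. Setting $r_0\coloneqq\ell$ gives $B\subset\{V\leq r_0\}$ as claimed.
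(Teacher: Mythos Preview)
Your argument is correct and follows essentially the same approach as the paper's own proof: both use Proposition~\ref{prop:lyapunov_return_times} with $h\equiv 1$ to show that from points with large $V$ the process stays above a fixed level for a prescribed time with probability at least $q$, then contradict the petite inequality~\eqref{eq:petite} by bounding the averaged kernel on a sublevel set of $V$. The only cosmetic difference is that the paper argues directly (defining $r_0$ and showing $B\cap\{V\geq r_0\}=\emptyset$) while you phrase it as a contradiction via a sequence $x_n\in B$ with $V(x_n)\to\infty$; your careful ordering of quantifiers and your handling of the endpoint $s=T$ via $T+1$ are both fine.
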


The proof of this lemma is based on a simple idea, which we first explain informally. Since $\nu_a$ in~\eqref{eq:petite} is a non-zero measure, we have $\nu_a(D)>0$ for some compact set $D$.  Denote by 
$\tau_D(0) \coloneqq \inf\{t>0:  X_t\in D\}$
the first time $X$ is in  $D$
and let $\tau_a$ be an independent random time  with law $a$ (in~\eqref{eq:petite}). Pick $t_0\in(0,\infty)$ such that $\P(\tau_a> t_0)
\leq \nu_a(D)/2$
and note $\{X_{\tau_a}\in D,\tau_a\leq t_0\}\subset 
\{\tau_D(0)\leq t_0\}$.
Since, by~\eqref{eq:petite}, it holds
$$\nu_a(D)\leq \P_x(X_{\tau_a}\in D)\leq \nu_a(D)/2+\P_x(X_{\tau_a}\in D,\tau_a\leq t_0)\leq \nu_a(D)/2+\P_x(\tau_D(0)\leq t_0),$$ we get
$0<\nu_a(D)/2\leq \P_x(\tau_D(0)\leq t_0)$ for all starting points $x$ in the petite set $B$.
%, where $a$ is the probability measure on %$\cB(\RP)$ in~\eqref{eq:petite}. For the inequality %in~\eqref{eq:petite} to hold, the process $X$ must %return to the compact set $D$ before time $t_0$ %with probability at least $\nu_a(D)/2>0$. 
However, under the $\mathbf{L}$-drift condition~\nameref{sub_drift_conditions} (by Proposition~\ref{prop:lyapunov_return_times}) we have
$\P_x(\tau_D(0)\leq t_0)\leq 1-q$ for any $q\in(0,1)$ and all $x$ with $V(x)$ sufficiently large. Hence $B$ must be contained in a sublevel set of $V$.

\begin{proof}[Proof of Lemma~\ref{lem:bounded_petite_sets}]
Let $B$ be an arbitrary petite set with a probability measure $a$ on $\cB(\RP)$ and a non-zero measure $\nu_a$ on $\cB(\cX)$ such that~\eqref{eq:petite} holds. Since $\cup_{\ell=1}^\infty \{V\leq \ell\} = \cX$ and, by~\eqref{eq:petite}, $1\geq \nu_a(\cX)>0$, there exists $\ell_1\in(1,\infty)$ such that $c \coloneqq\nu_a(\{V\leq \ell_1\})\in(0,1]$. 
%Recall that $\kappa =V(X)$ and the definition of return time $\lambda_\ell$ (for $\ell\in[1,\infty)$) in~\eqref{eq::lambda}. 
By Proposition~\ref{prop:lyapunov_return_times} (with $h\equiv1$), there exist $\ell_0\in[\ell_1,\infty)$ such that for every $q\in(0,1)$, $\eps=(1-q)/2$ and $x\in\cX$ we have
\begin{equation}
\label{eq:return_bound_petite}
\P_x(S_{(\ell_0)} \geq t) \geq q\quad\text{ for all $t\in(\eps/(\ell_0 \varphi(1/\ell_0)),\infty)$ and 
$x\in\{V\geq 2G_1(t)/(1-q)\}$,}
\end{equation}
 where $G_1:(\eps/(\ell_0\varphi(1/\ell_0)),\infty)\to(\ell_0,\infty)$ is the inverse of the function $r\mapsto \eps/(r\varphi(1/r))$. Since $a$ is a probability measure on $\cB(\RP)$, there exists $t_1\in(\eps/(\ell_0 \varphi(1/\ell_0),\infty)$ with $a([t_1,\infty))<c/2$.

Pick 
$q\in(1-c/2,1)$ and
define $r_0\coloneqq 2G_1(t_1)/(1-q)$. 
%We claim that $r_0\in(\ell_1,\infty)$ and $\P_x(S_{(\ell_1)}<t_1)<c/2$ for all $x\in \{V\geq r_0\}$. 
Since $G_1$ is increasing, we have
$r_0>G_1(t_1)>\ell_0\geq \ell_1$. Moreover,
since the return times satisfy $S_{(\ell_0)}\leq S_{(\ell_1)}$,
for any  $x\in \{V\geq r_0\}$ the inequality in~\eqref{eq:return_bound_petite} yields
$\P_x(S_{(\ell_1)}<t_1)\leq\P_x(S_{(\ell_0)}<t_1)<1-q<c/2$.

For  $x\in \{V\geq r_0\}$,  the inequalities $\P_x(V(X_t)\leq \ell_1)\leq \P_x(S_{(\ell_1)}<t)\leq \P_x(S_{(\ell_1)}<t_1)<c/2$ hold for all $t\in[0,t_1]$. 
Since $a([t_1,\infty))<c/2$, by~\eqref{eq:petite} the following inequalities hold for all $x\in B
\cap \{V\geq r_0\}$,
$$
c =\nu_a(\{V\leq \ell_1\})\leq\int_0^\infty \P_x(V(X_t)\leq \ell_1)a(\ud t) \leq \int_0^{t_1}\P_x(V(X_t)\leq \ell_1)a(\ud t) + a([t_1,\infty)) <c,
$$ 
implying $B\cap\{V\geq r_0\}=\emptyset$. Put differently, $B\subset\{V<r_0\}$ and the lemma follows.
\end{proof}

\subsection{Proofs of the main results}
We begin with the proof of the lower bounds on modulated moments stated in Theorem~\ref{thm:modulated_moments} above. This theorem will play a crucial role in the analysis of the stability of $X$ and, more specifically, in the proof of Theorem~\ref{thm:invariant}.

\begin{proof}[Proof of Theorem~\ref{thm:modulated_moments}]
Fix a set $D\in\cB(\cX)$, contained in $\{V\leq m\}$
for some $m\in(1,\infty)$, $q\in(0,1)$ and $\eps = (1-q)/2$. Since the function 
$h:[1,\infty)\to[1,\infty)$ in Theorem~\ref{thm:modulated_moments}
is continuous and non-decreasing by assumption, Proposition~\ref{prop:lyapunov_return_times} implies that there exist $\ell_0\in(m,\infty)$ and $C_{\ell_0}\in(0,1)$, such that
the inequality in~\eqref{eq:Lyapunov_lower_bound} holds for $h$, $\ell=\ell_0$ and all $r\in(\ell_0,\infty)$. 

By~\nameref{sub_drift_conditions}, 
the  function $r\mapsto \eps h(r)/(r\varphi(1/r))$ on $(\ell_0,\infty)$ 
is  increasing  and tends to infinity.
%$\eps h(r)/(r\varphi(1/r))\uparrow\infty$ as $r\to\infty$. 
Define $r_0\coloneqq \eps h(\ell_0)/(\ell_0\varphi(1/\ell_0))$
and denote by $G_h:(r_0,\infty)\to(\ell_0,\infty)$
its increasing inverse. 
%For $r\in(r_0,\infty)$
%we have $2G_h(r)/(1-q)\geq 2G_h(r_0)/(1-q)\geq 2\ell_0/(1-q)\geq \ell_0+1$, implying 
%$q/(d\Psi(2G_h(r)/(1-q)))<1$, where 
%$C'\coloneqq q/d$.
Since $C_{\ell_0}\in(0,1)$ and $\Psi:[1,\infty)\to[1,\infty)$, for all $x\in \{ V\geq \ell_0 +1 \}$ and $r\in(r_0,\infty)$,
the inequality in~\eqref{eq:Lyapunov_lower_bound} yields
\begin{equation}
\label{eq:lower:bound_additive_functional}
 \P_x\left(\int_0^{S_{(\ell_0)}}h\circ V(X_s)\ud s \geq r\right) \geq \frac{qC_{\ell_0}}{\Psi(2G_h(r)/(1-q))}.
%\quad \text{for all $v\in( \epsilon h(\ell_0)\eps/(\ell_0\varphi(1/\ell_0)),\infty)$},
\end{equation}

Note that $\E_x[\int_0^\infty \1{V(X_s)> \ell_0+1} \ud s]>0$.
Indeed, if $\P_x( V(X_s)> \ell_0+1)=0$ for Lebesgue almost every $s\in\RP$, the right-continuity of $X$ would imply $\sup_{s\in\RP}V(X_s)\leq \ell_0+1$ $\P_x$-a.s., contradicting the assumption
$\limsup_{t\to\infty}V(X_t)=\infty$ $\P_x$-a.s. In particular, since the expectation is positive, 
there exists $\delta>0$ satisfying
$\P_x(V(X_\delta)>\ell_0+1)>0$. 

Recall that $\tau_D(\delta) = \inf\{t>\delta:  X_t\in D\}$ is the first time, after time $\delta\geq0$, the process  $X$ hits the set $D\in\cB(\cX)$ fixed above.
By conditioning at time $\delta$, applying the Markov property of $X$ and the inequality in~\eqref{eq:lower:bound_additive_functional}, we obtain the following lower bound
\begin{align*}
\P_x\left(\int_0^{\tau_D(\delta)}h\circ V(X_s)\ud s \geq r\right) &\geq \P_x\left(\int_\delta^{\tau_D(\delta)}h\circ V(X_s)\ud s \geq r,V(X_\delta)>\ell_0+1)\right) \\
&\geq \E_x\left[\1{V(X_\delta) > \ell_0+1}\cdot \P_{X_\delta}\left(\int_0^{\tau_D(0)}h\circ V(X_s)\ud s \geq r\right)\right]\\
&\geq \E_x\left[\1{V(X_\delta) > \ell_0+1}\cdot \P_{X_\delta}\left(\int_0^{S_{(\ell_0)}}h\circ V(X_s)\ud s \geq r\right)\right] \\ 
&\geq qC_{\ell_0}\P_x(V(X_\delta)>\ell_0+1)/\Psi(2G_h(r)/(1-q))\quad \text{for $r\in(r_0,\infty)$},
\end{align*}
where the third inequality follows from the fact that, since $D\subset \{V\leq m\}$ and $m\leq \ell_0$, 
starting from any point in $\{V>\ell_0+1\}$
the first hitting time $\tau_D(0)$ satisfies
 $S_{(\ell_0)}\leq\tau_D(0)$.
Since $\delta$ was chosen so that $\P_x(V(X_\delta)>\ell_0+1)>0$, setting 
$C:=qC_{\ell_0}\P_x(V(X_\delta)>\ell_0+1)$ concludes the proof of part~\ref{thm:modulated_moments_a}.
Part~\ref{thm:modulated_moments_b} is a special case of part~\ref{thm:modulated_moments_a} for the function $h \equiv 1$.
\end{proof}

The following corollary combines the lower bounds of Theorem~\ref{thm:modulated_moments} with the fact that,
under Assumption~\nameref{sub_drift_conditions}, any petite set of $X$ is contained in a sublevel set of the Lyapunov function $V$ (see Lemma~\ref{lem:bounded_petite_sets} above). The result provides a sufficient condition (in the form of an integral test) for the divergence of the expectation with respect to invariant measure $\pi$ of a non-decreasing function composed with $V$.   

\begin{cor}
\label{cor:criteria_for_infinite_moments}
Let Assumption~\nameref{sub_drift_conditions} hold. Then for every $q\in(0,1)$ and a non-decreasing function $h:[1,\infty)\to[1,\infty)$, the following implication holds:
\begin{equation}
\label{eq:invariant_infinite_moment_condition}
\text{$\exists r'\in(0,\infty)$ s.t.}\quad \int_{r'}^\infty \frac{1}{\Psi(2G_h(r)/(1-q))}\ud r = \infty \quad
\implies \quad
\int_{\cX} h \circ V(x)\pi( \ud x) = \infty,
%\quad \text{if}\quad 
\end{equation}
where $G_h$ is the inverse of the increasing function $r \mapsto (1-q)h(r)/(2r\varphi(1/r))$. 
\end{cor}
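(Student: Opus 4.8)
The plan is to invoke the Meyn--Tweedie characterisation of $\pi$-integrability (cited as~\cite[Thm~1.2(b)]{tweedie1993generalized} in the excerpt) together with the lower bound on the modulated moments of excursions from bounded sets in Theorem~\ref{thm:modulated_moments}\ref{thm:modulated_moments_a}. Recall that this characterisation states, roughly, that for a non-negative measurable $g:\cX\to\RP$ one has $\int_\cX g\,\ud\pi<\infty$ if and only if $\E_x\big[\int_0^{\tau_D(\delta)} g(X_s)\,\ud s\big]<\infty$ for a (equivalently, every) closed petite set $D$ and some $\delta>0$ and $x$ in an appropriate set. Applying it with $g=h\circ V$, to prove $\int_\cX h\circ V\,\ud\pi=\infty$ it suffices to exhibit one closed petite set $D$, one $\delta>0$ and one starting point $x$ for which $\E_x\big[\int_0^{\tau_D(\delta)} h\circ V(X_s)\,\ud s\big]=\infty$.

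First I would fix any $q\in(0,1)$ and any closed petite set $D$ of $X$ (such a set exists since $X$ is positive Harris recurrent, hence admits a closed petite, indeed small, set). By Lemma~\ref{lem:bounded_petite_sets}, under~\nameref{sub_drift_conditions} we have $D\subset\{V\leq m\}$ for some $m\in(1,\infty)$, so Theorem~\ref{thm:modulated_moments}\ref{thm:modulated_moments_a} applies to $D$ and the given non-decreasing continuous $h$ (with $\eps=(1-q)/2$ as in that theorem): for every $x\in\cX$ there are constants $C,r_0,\delta\in(0,\infty)$ with
\begin{equation*}
\P_x\!\left(\int_0^{\tau_D(\delta)} h\circ V(X_s)\,\ud s\geq r\right)\geq \frac{C}{\Psi(2G_h(r)/(1-q))}\qquad\text{for all }r>r_0,
\end{equation*}
where $G_h$ is the inverse of $v\mapsto \eps h(v)/(v\varphi(1/v)) = (1-q)h(v)/(2v\varphi(1/v))$ on $[1,\infty)$, matching the statement of the corollary. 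Fix such $x$, $C$, $r_0$, $\delta$.

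Next I would integrate this tail bound. Writing $Y\coloneqq\int_0^{\tau_D(\delta)} h\circ V(X_s)\,\ud s\geq 0$, the layer-cake formula gives
\begin{equation*}
\E_x[Y]=\int_0^\infty \P_x(Y\geq r)\,\ud r\geq \int_{r_0}^\infty \P_x(Y\geq r)\,\ud r\geq \int_{\max\{r_0,r'\}}^\infty \frac{C}{\Psi(2G_h(r)/(1-q))}\,\ud r,
\end{equation*}
and by hypothesis the last integral diverges (the tail of the integral from $r'$ to $\infty$ is infinite, hence so is the tail from any larger lower limit). Therefore $\E_x\big[\int_0^{\tau_D(\delta)} h\circ V(X_s)\,\ud s\big]=\infty$, and by the Meyn--Tweedie integrability characterisation applied to the non-negative function $h\circ V$ and the closed petite set $D$, we conclude $\int_\cX h\circ V\,\ud\pi=\infty$.

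The main obstacle is a bookkeeping one rather than a conceptual one: making sure the version of the Meyn--Tweedie criterion I invoke is stated for continuous-time processes in the exact form needed (closed petite $D$, the specific $\delta>0$ produced by Theorem~\ref{thm:modulated_moments}, and the relevant set of starting points $x$), so that "$\E_x[\int_0^{\tau_D(\delta)}g(X_s)\,\ud s]=\infty$ for one such $x$" genuinely forces $\int g\,\ud\pi=\infty$. This requires checking that $D$ can be taken closed petite (guaranteed by positive Harris recurrence, so that closed small sets exist, together with Lemma~\ref{lem:bounded_petite_sets} placing it in a sublevel set of $V$), and that the starting point $x$ for which Theorem~\ref{thm:modulated_moments} gives the bound lies in the set over which the criterion quantifies — which is automatic since the criterion's conclusion about $\pi$ follows from divergence at a single suitable $x$ under irreducibility. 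The remaining steps (layer-cake integration, monotonicity of the divergent tail integral) are routine.
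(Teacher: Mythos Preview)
Your proposal is correct and follows essentially the same route as the paper: Lemma~\ref{lem:bounded_petite_sets} places any petite set inside a sublevel set of $V$, Theorem~\ref{thm:modulated_moments}\ref{thm:modulated_moments_a} gives the tail lower bound, layer-cake integration yields $\E_x\big[\int_0^{\tau_D(\delta)} h\circ V(X_s)\,\ud s\big]=\infty$, and the contrapositive of the Meyn--Tweedie criterion closes the argument. The only wrinkle is your phrasing ``it suffices to exhibit one closed petite set $D$'': as stated in the paper, the criterion yields the \emph{existence} of a petite $D$ with finite sup-expectation, so its contrapositive requires the expectation to blow up for \emph{every} closed petite $D$; but your argument already handles this, since Lemma~\ref{lem:bounded_petite_sets} and Theorem~\ref{thm:modulated_moments} apply to an arbitrary petite $D$, and you correctly flag this bookkeeping point in your final paragraph.
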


\begin{proof}
By Assumption~\nameref{sub_drift_conditions} the process $X$ is positive Harris recurrent. The seminal result~\cite[Thm~1.2(b)]{tweedie1993generalized} implies that a measurable  $h:[1,\infty) \to[1,\infty)$ satisfies the implication:
$$
\int_\cX h\circ V(x)\pi(\ud x) <\infty  \>\implies\>  \text{$\exists$ closed petite  $D$ s.t. $\forall \delta>0$, } \sup_{x\in D}\E_x\left[\int_0^{\tau_D(\delta)}h\circ V(X_s)\ud s\right]<\infty.
$$ 
By Lemma~\ref{lem:bounded_petite_sets}, \textit{every} petite set $D$ for $X$ satisfies $D\subset \{V\leq r_0\}$ for some $r_0\in[1,\infty)$. Thus, Theorem~\ref{thm:modulated_moments} implies that for a non-decreasing $h:[1,\infty)\to[1,\infty)$, every closed petite set $D$ and any $x\in D$, there exist $\delta>0$ and $C\in(0,\infty)$ such that 
$$
\E_x\left[\int_0^{\tau_D(\delta)} h\circ V(X_s)\ud s\right]
=
\int_0^\infty \P_x\left(\int_0^{\tau_D(\delta)} h\circ V(X_s)\ud s\geq r\right)\ud r
\geq \int_{r_0}^\infty \frac{C}{\Psi(2G_h(r)/(1-q))}\ud r
$$ 
for some sufficiently large $r_0\in(0,\infty)$, where $G_h$ is the inverse of the increasing function $r\mapsto (1-q)h(r)/(2r\varphi(1/r))$. 
If the assumption in the implication in~\eqref{eq:invariant_infinite_moment_condition} holds,
then the last integral in the previous display must also be infinite because the function $r\mapsto1/\Psi(2G_h(r)/(1-q))$ is continuous and thus locally bounded. 
The criterion in~\cite[Thm~1.2(b)]{tweedie1993generalized} 
stated above thus yields the conclusion of the implication in~\eqref{eq:invariant_infinite_moment_condition}.
\end{proof}

The implication in~\eqref{eq:invariant_infinite_moment_condition} in Corollary~\ref{cor:criteria_for_infinite_moments} is at the core of the proof of Theorem~\ref{thm:invariant}.
It is key that  integral test~\eqref{eq:invariant_infinite_moment_condition} covers all non-decreasing functions $h$, not only the  polynomial ones.

\begin{proof}[Proof of Theorem~\ref{thm:invariant}]
 Pick $q,\eps\in(0,1)$ and note that the statement in display~\eqref{eq:main_result_invariant}
of the theorem is equivalent to the following: 
$$\exists r_0\in(0,\infty)\text{ such that, }\quad
1/L_{\eps,q}(r)\leq\pi(\{x\in\cX:V(x)\geq r\})\quad
\text{ for all $r\in[r_0,\infty)$,}
$$
where $L_{\eps,q}(r)=r\varphi(1/r)\Psi(2r/(1-q)) (\log\log r)^{\eps}$. Assume~\nameref{sub_drift_conditions} holds.

The proof is by contradiction.
Assume  that there exists $\eps>0$, such that \textit{for every} $r_0\in(0,\infty)$ there exists $r_1\in[r_0,\infty)$ satisfying
$1/L_{\eps,q}(r_1)>\pi(\{x\in\cX:V(x)\geq r_1\})$.
We may pick $r_0>1$ and $r_1>\exp(\exp(\exp(1)))r_0$. Recursively we can define an increasing sequence $(r_n)_{n\in\N}$, satisfying 
$r_{n+1}>\exp(\exp(\exp(n+1)))r_n$ and $1/L_{\eps,q}(r_n)>\pi(\{x\in\cX:V(x)\geq r_n\})$ for all $n\in\N$. In particular, since $r_0>1$, we have
\begin{equation}
\label{eq:growth_of_r_n}
    \log\log r_n>\exp(n)\qquad\text{for all $n\in\N$.}
\end{equation}
Using the sequence $(r_n)_{n\in\N}$, we construct a non-decreasing  function $h:[1,\infty)\to[1,\infty)$, 
satisfying $\int_\cX h \circ V(x)\pi(\ud x)<\infty$ 
\textit{and} the assumption of the  implication in~\eqref{eq:invariant_infinite_moment_condition}.

Define the function 
$\mu:\RP\to\RP$ by 
$\mu(r)\coloneqq1$ for $r\in[0,r_1)$ and
$\mu(r) \coloneqq 1/L_{\eps,q}(r_n)$ for $r\in [r_n,r_{n+1})$, $n\in\N$.  
Since the function $r\mapsto \pi(\{V\geq r\}) $ is non-increasing, we have  
$\pi(\{V\geq r\})\leq \mu(r)$ for all $r\in\RP$. 
Let $h:[1,\infty)\to[1,\infty)$ be a differentiable function such that $h(r) = 1$ 
for $r\in[1,r_1)$. For $n\in\N\setminus\{1\}$ and $r\in[r_n,r_{n+1})$ we define the derivative of $h$ by 
\begin{equation}
\label{eq:def_h_prime}   
h'(r) = \begin{cases}
r\varphi(1/r)\Psi(2(r_n+1)/(1-q))(\log\log r_n)^{\eps/2},& r\in [r_n,r_{n}+1);\\
1/(r_n(r_{n+1}-r_n)),& r\in [r_n+1,r_{n+1}).
\end{cases}
\end{equation}
Since,
by Assumption~\nameref{sub_drift_conditions},
$r\mapsto r\varphi(1/r)$ is decreasing and $\Psi$ is differentiable, increasing and submultiplicative (i.e.
$\Psi(2(r_n+1)/(1-q))\leq \Psi(2r_n/(1-q))\Psi(2/(1-q))$
for all $r_n\in[1,\infty)$; without loss of generality we assume here that the constant $C$ in definition of a submultiplicative function in footnote on page~\pageref{footnote:submultiplicative} equals one, since  we may substitute $\Psi$ in~\nameref{sub_drift_conditions}\ref{sub_drift_conditions(ii)}
with $C\Psi$ if $C>1$),
we have
$$
h'(r)\mu(r) \leq \begin{cases}
\Psi(2/(1-q))(\log\log r_n)^{-\eps/2},&
r\in[r_n,r_n+1); \\
1/(r_n(r_{n+1}-r_n)),& r\in[r_n+1,r_{n+1}).
\end{cases}
$$
The identity
$1+\int_1^{V(x)}h'(r)\ud r=h(V(x))$ for all $x\in\cX$
and Fubini's theorem
imply the equality
$\int_{\cX} h(V(x))\pi(\ud x)=1+\int_1^\infty h'(r)\pi(\{V\geq r\})\ud r$.
Recall 
$\pi(\{V\geq r\})\leq \mu(r)$ for $r\in\RP$
and note
\begin{align}
\label{eq:finite_H_pi}
\nonumber\int_{\cX} h(V(x))\pi(\ud x) &=1+\int_{r_1}^\infty h'(r)\pi(\{V\geq r\})\ud r 
 \leq 1+ \int_{r_1}^\infty h'(r)\mu(r)\ud r\\
&\nonumber=1+\sum_{n=1}^\infty \left(\int_{r_n}^{1+r_n}h'(r)\mu(r)\ud r + \int_{1+r_n}^{r_{n+1}}h'(r)\mu(r)\ud r \right)\\
&\leq1+\Psi(2/(1-q))\sum_{n=1}^{\infty}(\log\log r_n)^{-\eps/2} + \sum_{n=1}^{\infty} 1/r_n<\infty,
\end{align}
where the final inequality follows from~\eqref{eq:growth_of_r_n},
which makes both sums in~\eqref{eq:finite_H_pi} clearly finite. 

Recall that 
the function $u\mapsto (1-q) h(u)/(2u\varphi(1/u))$
is increasing on $[1,\infty)$ by~\nameref{sub_drift_conditions}
and  define $r'\coloneqq (1-q)h(1)/(2\varphi(1))>0$  (in fact $h(1)=1$).
Denote by $G_h:[r',\infty)\to[1,\infty)$ the inverse 
and 
%consider the integral
%To conclude the proof we show that $\int_1^\infty %1/\Psi(2G_H(r)/(1-q))\ud r = \infty$, where $G_h$ is %the inverse of the . 
introduce the substitution $r = (1-q) h(u)/(2u\varphi(1/u))$ into the following integral:
\begin{align}
\nonumber
\int_{r'}^\infty 2/((1-q)&\Psi(2G_h(r)/(1-q)))\ud r = \int_{1}^\infty (h(u)/(u\varphi(1/u)))'/\Psi(2u/(1-q))\ud u\\
\nonumber
&= \int_{1}^\infty (h'(u)/(u\varphi(1/u))+(1/(u\varphi(1/u)))'h(u))/\Psi(2u/(1-q))\ud u\\
\nonumber
&\geq \sum_{n=n_0}^\infty \int_{r_n}^{1+r_n}\frac{u\varphi(1/u)\Psi(2(r_n+1)/(1-q))(\log\log r_n)^{\eps/2}}{\Psi(2u/(1-q))u\varphi(1/u)}\ud u \\
\label{eq:final_lower_bound_infinite}
&\geq \sum_{n=n_0}^\infty\int_{r_n}^{1+r_n} (\log\log r_n)^{\eps/2}\ud u = \sum_{n=n_0}^\infty (\log\log r_n)^{\eps/2} = \infty.
\end{align}
The first inequality follows from the definition of $h'$ given in~\eqref{eq:def_h_prime} above and the fact that the function $u\mapsto 1/(u\varphi(1/u))$ is continuous and increasing and hence almost everywhere differentiable with a non-negative derivative. The second inequality in the previous display follows from the fact that $\Psi$ is increasing
by Assumption~\nameref{sub_drift_conditions}. The divergence of the sum is a consequence of the inequality in~\eqref{eq:growth_of_r_n}.
By Corollary~\ref{cor:criteria_for_infinite_moments}, the inequality in~\eqref{eq:final_lower_bound_infinite} implies
$\int_\cX h\circ V(x)\pi(\ud x) = \infty$, which contradicts~\eqref{eq:finite_H_pi} and concludes the proof of Theorem~\ref{thm:invariant}.
\end{proof}

The drift condition on the Lyapunov function $V$ in Assumption~\nameref{sub_drift_conditions} and the lower bound on the invariant measure $\pi$ from Theorem~\ref{thm:invariant} are the key ingredients in the proof of the lower bound on the rate of convergence in total variation. For the $f$-variation distance we require the following corollary of Theorem~\ref{thm:invariant}.

\begin{cor}
\label{cor:f_invariant_lower_bound}
Let  Assumption~\nameref{sub_drift_conditions} hold.   Let $f_\star:[1,\infty)\to[1,\infty)$  be a differentiable function and consider an increasing continuous $g:[1,\infty)\to[1,\infty)$, satisfying $\lim_{r\to\infty}g(r)=\infty$. Then, for every $\eps,q\in(0,1)$ and the function $L_{\eps,q}$ in~\eqref{eq:def_L_eps_q}, there exists  $c_{\eps,q}\in(0,\infty)$ such that
\begin{equation}
\label{eq:lower_bound_tail_f}
\int_{\{g\circ V\geq r\}} f_\star\circ V(x)\pi(\ud x) \geq c_{\eps,q} f_\star(g^{-1}(r))/L_{\eps,q}(g^{-1}(r))
\quad \text{for all $r\in[1,\infty)$.}
\end{equation}
\end{cor}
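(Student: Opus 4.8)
The plan is to obtain Corollary~\ref{cor:f_invariant_lower_bound} directly from Theorem~\ref{thm:invariant} by transporting the tail bound on $\pi$ through the increasing change of variable $g$. The first step is the purely set-theoretic observation that, since $g:[1,\infty)\to[1,\infty)$ is (strictly) increasing and continuous with $\lim_{r\to\infty}g(r)=\infty$, it is a homeomorphism of $[1,\infty)$ onto $[g(1),\infty)$; extending its inverse by $g^{-1}(r)\coloneqq 1$ for $r\in[1,g(1)]$ (recall one may also normalise $g(1)=1$ as in condition~\ref{assumption:f_convergence_a} of Theorem~\ref{thm:f_rate}), we have, for every $r\in[1,\infty)$, the identity of events $\{x\in\cX:g(V(x))\geq r\}=\{x\in\cX:V(x)\geq g^{-1}(r)\}$.

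Next, fix $\eps,q\in(0,1)$ and let $c_{\eps,q}\in(0,1)$ be the constant supplied by Theorem~\ref{thm:invariant}, so that $\pi(\{V\geq\rho\})\geq c_{\eps,q}/L_{\eps,q}(\rho)$ for all $\rho\in[1,\infty)$, with $L_{\eps,q}$ as in~\eqref{eq:def_L_eps_q}. Given $r\in[1,\infty)$, set $\rho\coloneqq g^{-1}(r)\in[1,\infty)$. On the event $\{V\geq\rho\}$ one has $f_\star\circ V\geq f_\star(\rho)$ because $f_\star$ is non-decreasing -- the only property of $f_\star$ beyond $f_\star\geq 1$ that the argument uses, and one that holds in every application of the corollary in Section~\ref{sec:examples} (equivalently, this is the non-negativity of the remainder in the layer-cake identity $\int_{\{V\geq\rho\}}f_\star\circ V\,\ud\pi=f_\star(\rho)\,\pi(\{V\geq\rho\})+\int_\rho^\infty f_\star'(u)\,\pi(\{V\geq u\})\,\ud u$). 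Combining the set identity, this pointwise lower bound and Theorem~\ref{thm:invariant} gives
\[
\int_{\{g\circ V\geq r\}}f_\star\circ V(x)\,\pi(\ud x)=\int_{\{V\geq\rho\}}f_\star\circ V(x)\,\pi(\ud x)\geq f_\star(\rho)\,\pi(\{V\geq\rho\})\geq\frac{c_{\eps,q}\,f_\star(g^{-1}(r))}{L_{\eps,q}(g^{-1}(r))},
\]
which is precisely~\eqref{eq:lower_bound_tail_f}.

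The only point needing a separate word is the compact range $r\in[1,g(1)]$ (non-trivial only if $g(1)>1$): there $g^{-1}(r)=1$, the left-hand side is the fixed positive number $\int_\cX f_\star\circ V\,\ud\pi\geq f_\star(1)\geq 1$, and the right-hand side is the constant $c_{\eps,q}f_\star(1)/L_{\eps,q}(1)$, so replacing $c_{\eps,q}$ by $\min\{c_{\eps,q},L_{\eps,q}(1)/f_\star(1)\}$ -- still strictly positive under the $\mathbf{L}$-drift condition, since $L_{\eps,q}(1)=\varphi(1)\Psi(2/(1-q))>0$ -- makes the displayed inequality hold there as well, hence for all $r\in[1,\infty)$. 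I do not expect any genuine obstacle in this argument: all the analytic work, namely converting the pathwise return-time estimates into a lower bound on the tail of $\pi$, is already carried out in Theorem~\ref{thm:invariant}, and the corollary merely records the $f_\star$-weighted version of that bound read off along the level sets of $g\circ V$ instead of those of $V$.
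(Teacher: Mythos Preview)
Your proof is correct and follows essentially the same route as the paper: identify $\{g\circ V\geq r\}=\{V\geq g^{-1}(r)\}$, then bound $\int_{\{V\geq\rho\}}f_\star\circ V\,\ud\pi\geq f_\star(\rho)\,\pi(\{V\geq\rho\})$ and apply Theorem~\ref{thm:invariant}. The paper derives the same inequality via the layer-cake representation $f_\star\circ V=f_\star(1)+\int_1^{V}f_\star'(y)\,\ud y$ and Fubini, whereas you use the equivalent pointwise bound directly; you are also right that both arguments implicitly require $f_\star$ to be non-decreasing (equivalently $f_\star'\geq0$), an assumption the statement omits but which holds in every application.
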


\begin{proof}
Pick $\eps,q\in(0,1)$. Then, by Theorem~\ref{thm:invariant}, there exists a constant $c_{\eps,q}\in(0,\infty)$ such that 
$$\pi(\{V\geq g^{-1}(r)\})\geq c_{\eps,q}/L_{\eps,q}(g^{-1}(r))\quad\text{for all $r\in[1,\infty)$.}
$$

Using the the inequality above along with the facts that $f$ is differentiable and $g$ is continuous increasing and thus has an inverse $g^{-1}$, we obtain  
\begin{align*}
\int_{\{g\circ V\geq r\}}  f_\star\circ V(x)\pi(\ud x) &= \int_{\{V\geq g^{-1}(r)\}} \left(f_\star(1) + \int_1^{V(x)} f_\star'(y)\ud y\right)\pi(\ud x) \\
&= f_\star(1)\pi(\{V\geq g^{-1}(r)\}) + \int_1^\infty  f_\star'(y) \pi(\{V\geq \max\{g^{-1}(r), y\}\})\ud y\\
&\geq  f_\star(1)\pi(\{V\geq g^{-1}(r)\}) + \int_1^{g^{-1}(r)} f_\star'(y) \pi({\{V\geq g^{-1}(r)\}}) \ud y\\
&=  f_\star(g^{-1}(r))\pi(\{V\geq g^{-1}(r)\})\geq c_{\eps,q}f_\star(g^{-1}(r))/L_{\eps,q}(g^{-1}(r)).\qedhere
\end{align*}
\end{proof}

We now establish the lower bound on the convergence in $f$-variation in Theorem~\ref{thm:f_rate}.

\begin{proof}[Proof of the Theorem~\ref{thm:f_rate}]
By Assumption~\ref{assumption:f_convergence_a} in  Theorem~\ref{thm:f_rate}, a differentiable  $f_\star:[1,\infty)\to[1,\infty)$ and continuous  $h,g:[1,\infty)\to[1,\infty)$ satisfy $g = h/f_\star$ on $[1,\infty)$, with $g$ increasing and $\lim_{r\to\infty}g(r)=\infty$. Moreover, there exists a function
$v:\cX\times\RP\to[1,\infty)$, increasing in $t$,
such that 
$\E_x[h\circ V(X_t)]\leq v(x,t)$
for all $x\in\cX$ and $t\in\RP$.
By Corollary~\ref{cor:f_invariant_lower_bound}, for any $\eps,q\in(0,1)$ there exists a constant $c_{\eps,q}\in(0,\infty)$, such that the inequality in~\eqref{eq:lower_bound_tail_f} holds.
%for all $r\in[1,\infty)$.

Any function $a:[1,\infty)\to \RP$, satisfying 
Assumption~\ref{assumption:f_convergence_b} in  Theorem~\ref{thm:f_rate} (i.e. the inequality 
$a(r)\leq  c_{\eps,q} f_\star(g^{-1}(r))/L_{\eps,q}(g^{-1}(r))$ holds for $r\in[1,\infty)$ and the function $r\mapsto ra(r)$ is increasing with $\lim_{r\to\infty} ra(r)=\infty$), by~\eqref{eq:lower_bound_tail_f} also satisfies Assumption~\ref{lem:lower_bound_f_convergence_rate_a} of Lemma~\ref{lem:lower_bound_f_convergence_rate} with 
$f=f_\star\circ V$ and
$G\coloneqq g\circ V$.
As observed in the previous paragraph, the functions 
$H\coloneqq h\circ V$ and $v(x,t)$
satisfy the condition in 
Assumption~\ref{lem:lower_bound_f_convergence_rate_b} of Lemma~\ref{lem:lower_bound_f_convergence_rate}:
$\E_x[H(X_t)]\leq v(x,t)$ 
for all $x\in\cX$ and $t\in\RP$.
An application of Lemma~\ref{lem:lower_bound_f_convergence_rate} 
concludes the proof of the theorem.
\end{proof}

\begin{proof}[Proof of Lemma~\ref{lem:assumption_submart_exit_prob}]
Pick $\ell\in(\ell_0,\infty)$, $r\in(\ell+1,\infty)$ and $x\in\{\ell+1\leq V<r\}$, and recall the definitions $T^{(r)}\coloneqq\inf\{t\geq0:V(X_t)>r\}$ and $S_{(\ell)} \coloneqq \inf\{t\geq 0: V(X_t)<\ell\}$. Assumption of the lemma implies that for some $d\in[1,\infty)$, we have $V(X_{T^{(r)}})-V(X_{T^{(r)}-})\leq d$ $\P_x$-a.s, and since $\Psi$ is increasing we obtain $\Psi \circ V(X_{t\wedge S_{(\ell)}\wedge T^{(r)}})\leq \Psi (r+d)$ for all $t\in\RP$ $\P_x$-a.s. Moreover, given that $\ell\in(\ell_0,\infty)$ it follows that $\int_0^{\cdot\wedge S_{(\ell)}\wedge T^{(r)}} \1{V(X_u)\leq \ell_0}\equiv0$ $\P_x$-a.s. Thus, by the assumption of the lemma and the optional sampling theorem the process $\Psi \circ V (X_{\cdot \wedge S_{(\ell)}\wedge T^{(r)}})$ is an $(\cF_t)$-submartingale under $\P_x$. 

We establish a lower bound on $\P_x(T^{(r)}<S_{(\ell)})$ as follows. By assumption in Lemma~\ref{lem:assumption_submart_exit_prob} we have $\limsup_{t\to\infty} V(X_t) = \infty$ $\P_x$-a.s., which implies $T^{(r)}\wedge S_{(\ell)}\leq T^{(r)}<\infty$ $\P_x$-a.s. The dominated convergence theorem and the monotonicity of $\Psi$ yield
$$
\Psi(V(x))\leq \lim_{t\to\infty}\E_x[ \Psi \circ V (X_{t \wedge S_{(\ell)}\wedge T^{(r)}})] = \E_x[ \Psi \circ V (X_{ S_{(\ell)}\wedge T^{(r)}})] \leq \Psi(\ell) + \P_x(T^{(r)}<S_{(\ell)})\Psi(r + d).
$$
Thus, $\P_x(T^{(r)}<S_{(\ell)})\geq (\Psi( V(x))-\Psi(\ell))/\Psi(r+d)\geq (\Psi( V(x))-\Psi(\ell))/(C\Psi(r)\Psi(d))\geq C_\ell/\Psi(r)$, where $C_{\ell} \coloneqq (\Psi(\ell+1)-\Psi(\ell))/(C\Psi(d))$.
The second inequality holds since
$\Psi$ is submultiplicative (with a constant $C>0$) and 
the third holds because $\Psi$ is increasing (both properties are assumed in the lemma). 
Noting $C_{\ell} >0$ concludes the proof of the inequality in~\nameref{sub_drift_conditions}\ref{sub_drift_conditions(ii)}. 
\end{proof}

\begin{proof}[Proof of Theorem~\ref{thm:generator}]
\noindent \underline{Supermartingale condition~\ref{generator_a}}. Fix arbitrary $x\in\cX$ and let $\ell_0\in(1,\infty)$ be such that the inequality in~\eqref{eq:generator_condition_super} holds. By assumption 
in Theorem~\ref{thm:generator}\ref{generator_a},
we have $1/V \in \cD(\cA)$. Thus, by~\cite[Ch~1, Def~(14.15)]{Davis}, there exists an increasing sequence $\{T_n:n\in\N\}$ of $(\cF_t)$-stopping times, satisfying
$T_n\uparrow \infty$ as $n\to\infty$ $\P_x$-a.s. 
and the localised process
$$
1/V(X_{\cdot \wedge T_n})-1/V(x) - \int_0^{\cdot\wedge T_n} \cA (1/V)(X_s)\ud s \quad\text{is an $(\cF_t)$-martingale under $\P_x$ for all $n\in\N$. }
$$ 
By assumptions of the theorem, we have 
$0<1/V(x')\leq 1$ for  $x'\in\cX$
and $0\leq \varphi(u)\leq \varphi(1)$ for  $u\in(0,1]$,
implying 
$\E_x[1/V(X_{t\wedge T_{n}})]<\infty$ and
 $\E_x[\int_0^{t} \varphi (1/V(X_s))\ud s]<\infty$ for all $t\in\RP$, respectively.
 By
the inequality in~\eqref{eq:generator_condition_super},
there exists $b\in\RP$, such that for every $n\in\N$ we have
\begin{align*}
\E_x[1/V(X_{t\wedge T_{n}})] &- \E_x\left[\int_0^{t\wedge T_{n}} \varphi (1/V(X_s))\ud s\right] \\
&= 1/V(x) + \E_x\left[ \int_0^{t\wedge T_{n}}(\cA (1/V)(X_s)- \varphi  (1/V(X_s)))\ud s\right] \\
&\leq 1/V(x) +b\E_x\left[\int_0^{t\wedge T_n} \1{V(X_s)\leq \ell_0}\ud s\right]\quad\text{ for all $t\in\RP$.}
\end{align*}
This inequality, Fatou's lemma and the monotone convergence theorem, yield 
\begin{align*}
\E_x[1/V(X_t)] &= \E_x[\liminf_{n\to\infty} 1/V(X_{t\wedge T_n})]\leq \liminf_{n\to\infty}\E_x[1/V(X_{t\wedge T_n})] \\
&\leq\liminf_{n\to\infty}\left(V(x) + \E_x\left[\int_0^{t\wedge T_n}\varphi(1/V(X_s))\ud s\right]+ b \E_x\left[\int_0^{t\wedge T_n} \1{V(X_s)\leq \ell_0}\ud s\right]\right) \\
&= 1/V(x) + \E_x\left[\int_0^{t}\varphi  (1/V(X_s))\ud s\right] + b \E_x\left[\int_0^{t} \1{V(X_s)\leq \ell_0}\ud s\right].
\end{align*}
This proves the condition~\ref{sub_drift_conditions(i)} in Assumption~\nameref{sub_drift_conditions}.

\noindent \underline{Exit probability  condition~\ref{generator_b}}. We employ analogous arguments to show that for 
some $\ell_0,c\in(0,\infty)$ and all $r\in(\ell_0,\infty)$, the process 
$$
\Psi \circ V(X_{\cdot\wedge T^{(r)} }) + c\int_0^{\cdot\wedge T^{(r)}}\1{V(X_u) \leq\ell_0}\ud u, \quad\text{is an $(\cF_t)$-submartingale under $\P_x$ for all $x\in\cX$.}
$$
The condition in~\nameref{sub_drift_conditions}\ref{sub_drift_conditions(ii)} then follows from the application of Lemma~\ref{lem:assumption_submart_exit_prob}.

Fix arbitrary $x\in\cX$ and let $\ell_0\in(1,\infty)$ be such that the inequality~\eqref{eq:generator_condition_sub} hold. 
Since $\Psi \circ V\in\cD(\cA)$ (by assumption in Theorem~\ref{thm:generator}\ref{generator_b}), as before there exists 
a localising sequence of $(\cF_t)$-stopping times 
$\{T_n:n\in\N\}$, such that $(M^{n}_t)_{t\in\RP}$, where 
%an increasing sequence $T_n\uparrow \infty$ of %$(\cF_t)$-stopping times such that for any $n\in\N$, %the process
$$
M^{n}_t\coloneqq \Psi \circ V(X_{t\wedge T_n})-\Psi \circ V(x)-\int_0^{t\wedge T_n}\cA (\Psi \circ V)(X_s)\ud s,
$$
is an $(\cF_t)$-martingale under $\P_x$. 
Thus, for any $r\in(\ell_0,\infty)$, the stopped process $(M^{n}_{t\wedge T^{(r)}})_{t\in\RP}$
is also an $(\cF_t)$-martingale under $\P_x$ (recall $T^{(r)}=\inf\{t\geq0:V(X_t)>r\}$). Moreover, by
assumption in~Theorem~\ref{thm:generator}\ref{generator_b} there exists $d\in[1,\infty)$ such that $V(X_{t\wedge T^{(r)}})-V(X_{t\wedge T^{(r)}-}) \leq r+d$ for all $t\in\RP$ and $r\in(\ell_0,\infty)$ $\P_x$-a.s. The fact that $\Psi$ is increasing implies $\Psi\circ V(X_{t\wedge T_n\wedge T^{(r)}})\leq \Psi(r+d)$ for all $t\in\RP$, $r\in(\ell_0,\infty)$ and $n\in\N$ $\P_x$-a.s.
Since $\E_x[M^{n}_{t\wedge T^{(r)}}]=0$, by the inequality in~\eqref{eq:generator_condition_sub}, there exists $c\in\RP$, such that for every $n\in\N$,
\begin{align*}
\E_x[\Psi \circ V(X_{t\wedge T_n\wedge T^{(r)}})] &= \Psi \circ V(x) + \E_x\left[\int_0^{t\wedge T_n\wedge T^{(r)}} \cA(\Psi \circ V)(X_s)\ud s\right]\\
&\geq \Psi \circ V(x) - c\E_x\left[\int_0^{t\wedge T_n\wedge T^{(r)}}\1{V(X_s)\leq \ell_0}\ud s\right],\>\text{$t\in\RP$, $r\in(\ell_0,\infty)$.}
\end{align*}
The dominated convergence theorem (as $n\to\infty$), applied to both sides of the inequality, yields the submartingale condition, and by Lemma~\ref{lem:assumption_submart_exit_prob}, Assumption~\nameref{sub_drift_conditions}\ref{sub_drift_conditions(ii)}.
\end{proof}

\begin{proof}[Proof of Lemma~\ref{lem:bounded_generator}]
Pick  $x\in\cX$ and recall  $H\in\cD (\cA)$.
Thus, by~\cite[Ch~1, Def~(14.15)]{Davis}, there exists an increasing sequence $\{T_n:n\in\N\}$ of $(\cF_t)$-stopping times, such that
$T_n\uparrow \infty$ as $n\to\infty$ $\P_x$-a.s. 
and  $M^{(n)}\coloneqq H(X_{\cdot \wedge T_n})-H(x) - \int_0^{\cdot\wedge T_n} \cA H(X_s)\ud s$ is an $(\cF_t)$-martingale under $\P_x$ for all $n\in\N$. Set $S_m \coloneqq \inf\{t>0:H(X_t)\geq m\}\wedge T_m$ for all $m\in\N$ and note $S_m\uparrow \infty$ as $m\to\infty$ $\P_x$-a.s.  Since $M^{(m)}_t$ is integrable and, on the event  $\{t<S_m\}$,  $H(X_{t\wedge S_m})$ is bounded, we have $\E_x[\int_0^{t\wedge S_m} \cA H(X_s)\ud s]<\infty$. Moreover, since $\cA H\leq \xi \circ H$ on $\cX$, for any $t\in\RP$ and $m\in\N$ we obtain
\begin{align*}
\E_x[H(X_{t\wedge S_m})] = H(x) + \E_x\left[\int_0^{t\wedge S_m}\cA H(X_s)\ud s\right] \leq H(x) + \E_x\left[\int_0^{t\wedge S_m} \xi\circ H(X_s)\ud s\right]
\end{align*}
This inequality, Fatou's lemma and the monotone convergence theorem, yield
\begin{align*}
\E_x[H(X_t)] &=\E_x[\liminf_{m\to\infty} H(X_{t\wedge S_m})]\leq \liminf_{m\to\infty}\E_x[H(X_{t\wedge S_m})]\\
&\leq H(x) + \liminf_{m\to\infty}\E_x\left[\int_0^{t\wedge S_m} \xi \circ H(X_s)\ud s\right] = H(x) + \E_x\left[\int_0^{t} \xi \circ H(X_s)\ud s\right]. 
\end{align*}
Since $\xi:[1,\infty)\to[1,\infty)$ is concave, Tonelli's theorem and Jensen's inequality imply $$
\E_x\left[\int_0^{t} \xi \circ H(X_s)\ud s\right]=\int_0^t \E_x\left[\xi\circ H(X_s)\right]\ud s\leq \int_0^t \xi\left(\E_x[ H(X_s)]\right)\ud s\quad \text{for all $t\in\RP$.}
$$
Denote $g(t)\coloneqq\E_x[H(X_t)]\geq 1$. Thus $g(0)=H(x)$ and 
\begin{equation}
\label{eq:generator_bound_growth}
g(t)\leq g(0)+\int_0^t\xi(g(s))\ud s\quad \text{for all $t\in\RP$.}
\end{equation}

The increasing function $\Xi:[1,\infty)\to\RP$, given by $\Xi(t) \coloneqq \int_1^t \ud s/\xi(s)$, has a differentiable inverse $\Xi^{-1}$. Denote $G(t)\coloneqq \int_0^t \xi(g(s))\ud s$. By~\eqref{eq:generator_bound_growth} we have $G'(v)=\xi(g(v))\leq \xi(g(0)+G(v))$ for all $v\in\RP$, since  $\xi$ is non-decreasing. This yields
$$
\Xi(g(0)+G(t))- \Xi(g(0))=
\int_{g(0)}^{g(0)+G(t)} \frac{\ud z}{\xi(z)}=
\int_0^{G(t)} \frac{\ud z}{\xi(g(0)+z)}=\int_0^t \frac{G'(v)}{\xi(g(0)+G(v))}\ud v\leq t.
$$
Thus
$\Xi(g(0)+G(t))\leq \Xi(g(0))+t$ and hence
$g(t)\leq g(0)+G(t)\leq \Xi^{-1}(\Xi(g(0))+t)$.
\end{proof}

\section{Proofs of the examples in Section~\ref{sec:examples}}
\label{sec:examples_proofs}

This section is dedicated to proving the theorems presented in Section~\ref{sec:examples}.
All strong Markov processes $X$ considered 
in this section are ergodic with an invariant measure $\pi$ and Feller continuous (see~\cite[Sec.~5]{douc2009subgeometric} for more details), and
thus positive Harris recurrent by~\cite[Thm~1.1]{tweedie1993generalized}. Moreover, all the models in this section are irreducible, either because of uniform ellipticity and the irreducibility of the driver (a Brownian motion or, more generally, a L\'evy process) or by a direct argument in the hypoelliptic case. Thus, for any Lyapunov function $V$ for $X$, we will have  $\P_x(V(X_t)\geq r_0)>0$ for all starting points $x$, times $t>0$  and levels $r_0$, thus satisfying the assumptions of Lemma~\ref{lem:non_confinement} below and implying non-confinement.
Verifying the $\mathbf{L}$-drift condition~\nameref{sub_drift_conditions} in our examples will thus  reduce to finding functions $V,\varphi,\Psi$, which satisfy the  conditions~\ref{sub_drift_conditions(i)} and~\ref{sub_drift_conditions(ii)} in~\nameref{sub_drift_conditions}. Since we are working with Feller processes, we will obtain these conditions by applying the generator to the relevant functions, establishing the appropriate point-wise inequalities and using Theorem~\ref{thm:generator}.

\subsection{Diffusions from Section~\ref{sec:diffusions}}
\label{subsec:diffusions_proofs}

Since the SDE in~\eqref{eq:elliptic_SDE} possesses a unique strong solution, the process $X$ is strong Markov. Feller continuity follows from~\cite[Thm~3.4.1]{khasminski}.
 %, and assume that $b$ and $\sigma$ satisfy
%\begin{assumption*}[\textbf{A1}]
%    \namedlabel{example:ass_diff1} Functions $b$ and $\sigma$ are %locally Lipschitz: for any $l>0$, there exists a finite constant $c_l$ %such tht for all $\lvert x \rvert\leq l,$ $\lvert b(x)-b(y)\rvert + %\lvert \sigma(x)-\sigma(y)\rvert \leq c_l\lvert x-y\rvert$
%\end{assumption*}
By It\^o's formula applied to $g(X)$, the extended generator (see Section~\ref{subsec:generator} above for definition) of the diffusion $X$ takes the following form for any twice continuously differentiable $g\in C^2(\R^n)$:
%Define the operator $L$ acting on the twice continuously differentiable %function $V:\R^n\to\R$, by
\begin{equation}
\label{eq:extended_gen_diffusion}
\cA g(x) = \langle b(x),\nabla g(x)\rangle +\frac{1}{2}\Tr\left(\Sigma(x) \Hessian(g)(x)\right),\qquad\text{for all $x\in\R^n$,}
\end{equation}
where $\Sigma=\sigma\sigma^\intercal$ is the instantaneous covariance of $X$, $\nabla g$ is the gradient of $g$,  $\Hessian(g)$ is the Hessian (i.e. the matrix  of the second derivatives of $g$) and  $\Tr(\cdot)$ denotes the trace of a matrix in $\R^{n\times n}$.

\subsubsection{Polynomial tails: proofs for  Section~\ref{subsubsec:Polynomial}}
\label{subsubsec:poly_proofs}
For any $m\in\R\setminus\{0\}$, consider a function $p_m:\R^n\to(0,\infty)$
in $C^2(\R^n)$, satisfying $p_m(x)\leq 1+|x|^m$ for all $x\in\R^n$ and
\begin{equation}
\label{eq:power_function}
p_m(x)= 
|x|^m\quad\text{ for all $x\in\R^n$ with $|x|$ sufficiently large.}
\end{equation}
If $m>0$, we assume in addition that $p_m:\R^n\to[1,\infty)$ takes values in $[1,\infty)$ only.
Recall parameters $\alpha,\beta,\gamma\in(0,\infty)$, $\ell\in[0,2)$ and $m_c$ in~\ref{example:ass_diffp}. 
The following (deterministic) proposition allows us to construct the functions in~\nameref{sub_drift_conditions}.

\begin{prop}
\label{prop:deterministic_eliptic_poly}
Under~\ref{example:ass_diffp}, extended generator~\eqref{eq:extended_gen_diffusion} 
of the diffusion $X$ in~\eqref{eq:elliptic_SDE}
satisfies 
the following asymptotic inequalities:
if $m\in(-\infty,m_c)\setminus \{0\}$ (resp.  $m\in(m_c,\infty)$),
then $\cA p_m(x)\leq C_0 |x|^{m+\ell-2}$ 
(resp. $\cA p_m(x) \geq C_0 |x|^{m+\ell-2}$)
for all $x\in\R^n$ with $|x|$ sufficiently large
and $C_0\coloneqq \max\{-m\beta (m_c-m), -m\beta (m_c-m)/4\}\in\R\setminus\{0\}$ (resp. $C_0\coloneqq m\beta (m-m_c)/4\in(0,\infty)$).
\end{prop}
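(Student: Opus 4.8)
The plan is to reduce the claim to an explicit asymptotic expansion of $\cA$ applied to the radial function $x\mapsto|x|^m$, and then to read off the sign of the leading coefficient. First I would observe that the diffusion $X$ in~\eqref{eq:elliptic_SDE} has no jump component, so its extended generator~\eqref{eq:extended_gen_diffusion} is a purely local second-order operator: $\cA g(x)$ depends only on $b(x)$, $\Sigma(x)$ and the first two derivatives of $g$ at $x$. Since $p_m(x)=|x|^m$ for all $|x|$ large, $p_m$ coincides with $x\mapsto|x|^m$ on a neighbourhood of every such point, hence $\cA p_m(x)=\cA(|\cdot|^m)(x)$ once $|x|$ exceeds some threshold. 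It therefore suffices to analyse the right-hand side.

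Next I would differentiate: $\nabla|x|^m=m|x|^{m-2}x$ and $\Hessian(|\cdot|^m)(x)=m(m-2)|x|^{m-4}xx^{\intercal}+m|x|^{m-2}I_n$. Substituting into~\eqref{eq:extended_gen_diffusion} gives, for $|x|$ large,
$$
\cA(|\cdot|^m)(x)=m|x|^{m-2}\langle b(x),x\rangle+\tfrac{m(m-2)}{2}|x|^{m-4}\langle\Sigma(x)x,x\rangle+\tfrac{m}{2}|x|^{m-2}\Tr(\Sigma(x)).
$$
Feeding in~\ref{example:ass_diffp} in the equivalent forms $\langle b(x),x\rangle=|x|^{\ell}(-\alpha+o(1))$, $\langle\Sigma(x)x,x\rangle=|x|^{\ell+2}(\beta+o(1))$ and $\Tr(\Sigma(x))=|x|^{\ell}(\gamma+o(1))$ as $|x|\to\infty$, and using $\tfrac{2\alpha-\gamma}{\beta}=m_c-2$ to simplify, the three terms collapse to
$$
\cA(|\cdot|^m)(x)=|x|^{m+\ell-2}\big(\kappa_m+o(1)\big),\qquad\kappa_m\coloneqq\tfrac{m\beta}{2}(m-m_c),\qquad\text{as }|x|\to\infty.
$$

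Finally I would discuss signs and pin down $C_0$. Because $m_c>2-\ell>0$, the leading coefficient $\kappa_m$ is strictly positive for $m\in(m_c,\infty)$ and for $m\in(-\infty,0)$, and strictly negative for $m\in(0,m_c)$. For $m\in(m_c,\infty)$ the error $o(1)$ exceeds $-\kappa_m/2$ for $|x|$ large, giving $\cA p_m(x)\ge\tfrac12\kappa_m|x|^{m+\ell-2}$, so one may take $C_0=\kappa_m/2=m\beta(m-m_c)/4>0$. For $m\in(-\infty,m_c)\setminus\{0\}$ the error is bounded in modulus by $|\kappa_m|/2$ when $\kappa_m<0$ (yielding $\cA p_m(x)\le\tfrac12\kappa_m|x|^{m+\ell-2}$, i.e.\ $C_0=-m\beta(m_c-m)/4$) and by $|\kappa_m|$ when $\kappa_m>0$ (yielding $\cA p_m(x)\le2\kappa_m|x|^{m+\ell-2}$, i.e.\ $C_0=-m\beta(m_c-m)$); in both sub-cases the resulting constant is exactly $\max\{-m\beta(m_c-m),-m\beta(m_c-m)/4\}$, and it is nonzero in all cases. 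The only delicate point is this last step: the $o(1)$ in the hypothesis on the coefficients must be carried through to an $o(|x|^{m+\ell-2})$ error in $\cA p_m$ and then absorbed into the leading term, the admissible direction of absorption depending on the sign of $\kappa_m$, which is precisely what forces the asymmetric form of $C_0$. Beyond this bookkeeping there is no substantive obstacle.
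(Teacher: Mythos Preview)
Your proof is correct and follows essentially the same approach as the paper: compute the gradient and Hessian of $|x|^m$, substitute into the generator formula~\eqref{eq:extended_gen_diffusion}, use the asymptotics of~\ref{example:ass_diffp} together with $m_c-2=(2\alpha-\gamma)/\beta$ to obtain $\cA p_m(x)=\tfrac{m\beta}{2}(m-m_c+o(1))|x|^{m+\ell-2}$, and read off the inequalities from the sign of the leading coefficient. Your explicit case analysis for $C_0$ is slightly more detailed than the paper's, which simply states that the constant and inequalities follow from the asymptotic representation.
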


\begin{proof}
Since $\nabla p_m(x)=m|x|^{m-2}x$ and
$\Hessian(p_m)(x)=m|x|^{m-2}((m-2)xx^\intercal/|x|^2+I_n)$ for all 
$m\in\R\setminus \{0\}$
and 
$x\in\R^n$ with sufficiently large $|x|$ ($I_n\in\R^{n\times n}$ is the identity matrix), 
the representation of $\cA$ in~\eqref{eq:extended_gen_diffusion}, the identity $\langle \Sigma(x)x/|x|,x/|x| \rangle =\Tr\left(\Sigma(x)xx^\intercal/|x|^2\right)$ for all points $x\in\R^n\setminus\{0\}$
and \ref{example:ass_diffp}
yield
\begin{align}
\label{eq:extended_gen_diff_polynomial}
\cA p_m (x) &=
%m\langle b(x),x/|x|\rangle |x|^{m-1} + m/2\Tr(a(x))|x|^{m-2+\ell} + m(m-%2)/2\langle a(x)x/|x|,x/|x|\rangle |x|^{m-2+\ell}   
%\\
-\frac{m\beta}{2} \left(\frac{2\alpha -\gamma}{\beta} -(m-2) +o(1)\right)|x|^{m+\ell-2}=
-\frac{m\beta}{2} (m_c-m+o(1))|x|^{m+\ell-2},
\end{align}
for all $x\in\R^n$ with $|x|$ sufficiently large. Recall $m_c>0$ by~\ref{example:ass_diffp}. If $m\in(-\infty,m_c)\setminus \{0\}$ (resp. $m\in(m_c,\infty)$), then a constant $C_0$
and the inequalities follow from representation~\eqref{eq:extended_gen_diff_polynomial}.
\end{proof}

As with all of the results in this section, Theorem~\ref{thm:eliptic_poly_invariant} is a direct consequence of the theory developed in Section~\ref{sec:main_results} applied to an appropriate class (in this case polynomial) of Lyapunov functions. Its proof is straightforward, but somewhat tedious. It consist of verifying the assumptions of Theorem~\ref{thm:generator} and translating them into lower bounds via Theorems~\ref{thm:invariant} and~\ref{thm:modulated_moments}\ref{thm:modulated_moments_b}. 

\begin{proof}[Proof of Theorem~\ref{thm:eliptic_poly_invariant}]
Pick $\eps\in(0,m_c)$. 
Let $V_\eps:=p_{m_c-\eps}$ be a $C^2(\R^n)$ 
function in~\eqref{eq:power_function}.
By Proposition~\ref{prop:deterministic_eliptic_poly}, we have 
$\cA(1/V_\eps)(x)=\cA p_{\eps-m_c}(x)\leq C_0 |x|^{\eps-m_c+\ell-2}$ for $C_0>0$ and all $x\in\R^n$ with large norm $|x|$. For $r\in[1,\infty)$, define
$\varphi_\eps(1/r)\coloneqq C_0 r^{(\eps-m_c+\ell-2)/(m_c-\eps)}$. Then $\cA(1/V_\eps)\leq \varphi(1/V_\eps)$  holds outside of a large ball centered at the origin, implying condition~\eqref{eq:generator_condition_super} in Theorem~\ref{thm:generator}.
Moreover, since
the function $r\mapsto 1/(r\varphi_\eps(1/r))=r^{(2-\ell)/(m_c-\eps)}/C_0$ is increasing with infinite limit as $r\to\infty$, all assumptions in Theorem~\ref{thm:generator}\ref{generator_a} concerning $\varphi_\eps$ are satisfied.
Define differentiable, increasing and submultiplicative function $\Psi_\eps(r)\coloneqq r^{1+2\eps/(m_c-\eps)}$, $r\in[1,\infty)$, and note (by~\eqref{eq:power_function}) that
$\Psi_\eps\circ V_\eps= p_{m_c+\eps}$.
By Proposition~\ref{prop:deterministic_eliptic_poly}, we have 
$\cA(\Psi_\eps\circ V_\eps)\geq  0$
outside of a large ball centered at the origin,
implying 
conditions in Theorem~\ref{thm:generator}\ref{generator_b}.
Thus, by Theorem~\ref{thm:generator}, $\mathrm{L}$-drift condition~\nameref{sub_drift_conditions} holds
with $(V,\varphi,\Psi)\coloneqq (V_\eps,\varphi_\eps,\Psi_\eps)$
for the diffusion $X$ in SDE~\eqref{eq:elliptic_SDE} and every $\eps\in(0,m_c)$.

The function $L_{\eps,q}$ in~\eqref{eq:def_L_eps_q} (with $q\in(0,1)$) satisfies $L_{\eps,q}(r)= r\varphi(1/r)\Psi(2r/(1-q))(\log \log r)^{\eps}\leq 
r^{1+(\ell-2+2\eps)/(m_c-\eps)}(\log \log r)^{\eps}/C_1$
 for some constant $C_1>0$ and all $r\in[1,\infty)$. Since $V=V_\eps$, 
  by Theorem~\ref{thm:invariant},  there exists $c_\pi'>0$ such that
$$\pi(\{|x|\geq r\})=c_\pi'/L_{\eps,q}(r^{m_c-\eps})\geq c_\pi'C_1 / (r^{\ell-2 + m_c+\eps} (\log \log (r^{m_c-\eps}))^{\eps})\geq c_\pi / r^{\ell-2 + m_c+2\eps}$$
for all $r\in[1,\infty)$ and a sufficiently small constant $c_\pi>0$, implying part~(a) of the theorem.

Theorem~\ref{thm:modulated_moments}\ref{thm:modulated_moments_b} provides a lower bound on the tail of the return time $\tau_D(\delta)$. The inverse function $G_1(t)$ (of the function proportional to $r\mapsto 1/(r\varphi_\eps(1/r))=r^{(2-\ell)/(m_c-\eps)}/C_0$) is proportional to
$t\mapsto t^{(m_c-\eps)/(2-\ell)}$, implying in particular that there exist $t_0,C'>0$ such that 
$\Psi(2G_1(t)/(1-q))\leq C' t^{(m_c+\eps)/(2-\ell)}$ for all $t\in(t_0,\infty)$.
Since $\eps\in(0,m_c)$ can be chosen to be arbitrarily small, part~(b) follows. By Remark~\ref{rem:modulated_every_delta}, following Theorem~\ref{thm:modulated_moments} above, the lower bound holds for all $\delta>0$ since the diffusion $X$ in SDE~\eqref{eq:elliptic_SDE} has full support  at every positive time.
\end{proof}

\begin{proof}[Proof of Theorem~\ref{thm:diff_poly}]
Pick $\eps\in(0,(2-\ell)/3)$. Let $V_\eps = p_{m_c-\eps}$ (where $p_{m_c-\eps}$ is a $C^2(\R^n)$ function in~\eqref{eq:power_function}), $\Psi_{\eps}(r) = r^{1+2\eps/(m_c-\eps)}$ and $\varphi_\eps(1/r) = C_0r^{(\eps-m_c+\ell-2)/(m_c-\eps)}$, for $r\in [1,\infty)$ and some constant $C_0\in(0,\infty)$.
Recall, from the proof of Theorem~\ref{thm:eliptic_poly_invariant}, that the $\mathbf{L}$-drift condition~\nameref{sub_drift_conditions} holds with $(V,\varphi,\Psi) := (V_\eps,\varphi_\eps,\Psi_\eps)$.

Pick $k\in[0,\ell + (2\alpha-\gamma)/\beta)$ and note  $k<2+(2\alpha-\gamma)/\beta =m_c$. Consider the functions $h,f_\star,g:[1,\infty)\to[1,\infty)$ given by $h(r) = r$, $f_\star(r) = r^{k/(m_c-\eps)}$ and $g(r)=h(r)/f_\star(r)=r^{1-k/(m_c-\eps)}$. By Proposition~\ref{prop:deterministic_eliptic_poly}, there exists $C_h'\in(1,\infty)$ such that $\cA(h\circ V_\eps)(x)\leq C_h'$ for all $x\in\R^n$. Thus, Lemma~\ref{lem:bounded_generator} (with $H=h\circ V_\eps$ and $\xi\equiv C_h'$) yields $\E_x[h\circ V_\eps(X_t)]\leq C_h(h\circ V_\eps(x)+t)$ for all $x\in\R^n$, $t\in\RP$ and some $C_h\in(1,\infty)$. 

The function $L_{\eps,q}$ in~\eqref{eq:def_L_eps_q} (with $q\in(0,1)$) satisfies $L_{\eps,q}(r)= r\varphi(1/r)\Psi(2r/(1-q))(\log \log r)^{\eps}\leq 
r^{1+(\ell-2+2\eps)/(m_c-\eps)}(\log \log r)^{\eps}/C\leq r^{1+(\ell-2+3\eps)/(m_c-\eps)}/C'$
 for some constants $C,C'>0$ and all $r\in[1,\infty)$. We define the function $a:[1,\infty)\to\RP$ by
\begin{align*}
\frac{c_{\eps,q} f_\star(g^{-1}(t))}{L_{\eps,q}(g^{-1}(t))}
&\geq\frac{\tilde c_{\eps,q} t^{k/(m_c-\eps-k)}}{t^{(\ell-2+m_c+2\eps)/(m_c-\eps-k)}}
=\tilde c_{\eps,q}t^{-1-(\ell-2+3\eps)/(m_c-k-\eps)} \\
&=\tilde c_{\eps,q}t^{-1+(2-\ell)/(m_c-k)-b(\eps)}=: a(t),
\end{align*}
where $b(\eps) \downarrow 0$ as $\eps\downarrow0$.  Let $A(t) \coloneqq ta(t) = \tilde c_{\eps,q} t^{(2-\ell)/(m_c-k)-b(\eps)}$ for $t\in[1,\infty)$. As $(2-\ell)/(m_c-k)>0$, for all sufficiently small $\eps>0$, we have $\lim_{t\to\infty}A(t)=\infty$.

 Applying Theorem~\ref{thm:f_rate}  with functions $h$, $f_\star$, $g$, $a$ and $A$  defined above yields: for every $x\in\R^n$, there exists a constant $c\in(0,\infty)$ such that $$\|\P_x(X_t\in\cdot)-\pi(\cdot)\|_{f_\star \circ V_\eps}\geq a\circ A^{-1}(2C_h(h\circ V_\eps(x)+t)) \geq c /t^{\eta_\eps},$$ where $\eta_\eps \coloneqq (m_c-k)/((2-\ell)-b(\eps)(m_c-k))-1$. Since $b(\eps)\downarrow 0$ as $\eps\downarrow 0$ and $k<m_c$, we 
get $\eta_\eps\downarrow \alpha_k=m_c/(2-\ell)-1-k/(2-\ell)$ as $\eps\downarrow 0$ (recall the definition of $\alpha_k$ in the statement of Theorem~\ref{thm:diff_poly}). Moreover, since we have $V_\eps(x)\leq 1+|x|^{m_c-\eps}$ for all $x\in\R^n$ and $f_\star(r) = r^{k/(m_c-\eps)}$ is concave on  $[0,\infty)$, it follows that $f_\star \circ V_\eps(x)\leq f_\star(1+|x|^{m_c-\eps})\leq f_\star(1)+f_\star(|x|^{m_c-\eps})= 1+|x|^{k}=f_k(x)$ for all $x\in\R^n$. This implies $\|\cdot\|_{f_{\star\circ V_\eps}}\leq \|\cdot\|_{f_k}$, which concludes the proof.
\end{proof}

Having given full details of the proofs of Theorems~\ref{thm:diff_poly} and~\ref{thm:eliptic_poly_invariant}, in the  proofs of the remaining results of Section~\ref{sec:examples}, we will be less explicit in the applications of our theory.

In the last proof of this section, we will show that the assumptions on the upper bounds of the model parameters, used in~\cite{douc2009subgeometric,Fort2005,veretenn97}, are not sufficient for determining the rate of convergence. We will prove that, under assumption on upper bounds from~\cite{Fort2005}, one-dimensional Langevin diffusions may achieve polynomial ergodicity of \textit{any} order.

\begin{proof}[Proof of the inequalities in~\eqref{eq:oscilating_langevin}]
Fix $\alpha\in(1,\infty)$,
%$\limsup_{|x|\to\infty}x(\log \pi)'(x)= %-r$ for some $r\in(1,\infty)$
pick arbitrary $k\in(\alpha,\infty)$ and set $b\in(0,\infty)$ such that $\alpha=kb/(1+b)$ holds. Consider $\pi$ satisfying $\pi(x) = |x|^{-k}((1+b)+\sin(k|x|)/|x|)$ for $|x|$ sufficiently large and note that  $\limsup_{|x|\to\infty}x(\log\pi)'(x)= -kb/(1+b) = -\alpha$.

For $\eta\in(0,1]$ define the Lyapunov function $V_\eta:\R\to[1,\infty)$ satisfying $V_\eta(x) = |\int_0^{x}1/\pi(y)^\eta\ud y|$ for all $|x|$ sufficiently large. Choosing $\eta\in(0,1)$ implies the equalities $\cA V_\eta(x) =((\log \pi)'V_\eta' +V_\eta'')(x)= (1-\eta)(\log\pi)'(x)/\pi(x)^\eta$
for all $x$ outside of some compact set,
where $\cA$ is the generator of the process in~\eqref{eq:one_dimension_Langevin}. Since there exist $c,C\in(0,\infty)$ such that $c/|x|^k\leq \pi(x)\leq C/|x|^k$ for all $x\in\R$ outside a compact set, there also exist  constants $c_1,c_2\in(0,\infty)$ satisfying $V_\eta(x) \geq c_1x^{1+\eta k}$ and $(\log\pi)'(x)/\pi(x)^\eta \leq c_2 x^{\eta k-1}$ for all $x$ with large $|x|$.
Thus, $\cA V_\eta (x)\leq c_3V_\eta(x)^{(\eta k-1)/(\eta k +1)}$ for all $x\in\R$ with $|x|$ sufficiently large and some constant $c_3\in(0,\infty)$. By~\cite[Thms~3.2 and~3.4]{douc2009subgeometric},  for each $x\in\R$ there exists  $C_0\in(0,\infty)$ such that $ \|\P_x(X_t\in\cdot)-\pi(\cdot)\|_{\TV}\leq C_0t^{-(\eta k-1)/2}$ for all $t\in[1,\infty)$.

To obtain the matching lower bound, note that the inequality $\cA V_1 =((\log \pi)'V_1' +V_1'') \leq C'$ holds on $\R$
 for some constant $C'\in(0,\infty)$. Thus, Lemma~\ref{lem:bounded_generator} (with $H=V_1$ and $\xi\equiv C'$) yields a constant $C''\in(1,\infty)$, such that $\E_x[V_1(X_t)]\leq C''(V_1(x) + t)$ for all $x\in\R$ and $t\in\RP$. Since $\pi(\{V\geq r\})\geq c_\pi r^{(1-k)/(1+k)}$ for some $c_\pi>0$ and all large $r\in[1,\infty)$, by Lemma~\ref{lem:lower_bound_f_convergence_rate} with $f \equiv 1$, for every $x\in\R$ there exist $c,C\in(0,\infty)$ such that
 $$
 ct^{(1-k)/2} \leq \|\P_x(X_t\in\cdot)-\pi(\cdot)\|_{\TV}\leq Ct^{(1-\eta k)/2} \quad \text{for all $t\in[1,\infty)$}.
 $$
 The proof of~\eqref{eq:oscilating_langevin} is complete since  $\eta\in(0,1)$ was chosen arbitrarily.
\end{proof}

\subsubsection{Stretched exponential tails: proofs for Section~\ref{subsubsec_Subexponential}}
\label{subsubsec:subexp_proofs}
Recall from~\ref{example:ass_diffse} parameters $p\in(0,1)$, $\ell\in[0,2p)$. 
For any  $u\in\R\setminus\{0\}$, consider a function $g_u:\R^n\to(0,\infty)$
in $C^2(\R^n)$, satisfying
\begin{equation}
\label{eq:stretched_exponential_function}
g_u(x)= 
\exp(u|x|^{1-p})\quad\text{ for all $x\in\R^n$ with $|x|$ sufficiently large.}
\end{equation}
We may assume that $g_u$ is a function of $|x|$ for all $x\in\R^n$ and,
if $u>0$, then $g_u\geq 1$ on $\R^n$. , 

The following (deterministic) proposition allows us to construct the functions in~\nameref{sub_drift_conditions}.

\begin{prop}
\label{prop:deterministic_eliptic_subgeom}
Under \ref{example:ass_diffse}, the extended generator $\cA$ in~\eqref{eq:extended_gen_diffusion} 
of the diffusion $X$ in~\eqref{eq:elliptic_SDE}
satisfies 
the following.
\begin{myenumi}
\item[(i)] For some $u_c\in(0,\infty)$ and all $u\in(u_c,\infty)$, there exists a constant $C_u\in(0,\infty)$ such that 
$$
0\leq \cA g_u\leq C_u g_u/(\log g_u)^{(2p-\ell)/(1-p)}\qquad\text{outside of some compact set. }
$$
\begin{comment}
\item[(ii)] For any  $u\in(0,\infty)$,  the function $$\varphi(1/r)\coloneqq C_1(\log r)^{-2p/(1-p)}/r,\quad\text{  $r\in[1,\infty)$,}$$
with the constant $C_1\coloneqq 2u(1-p)(\beta u(1-p) +\alpha)>0$,
satisfies
$$\cA (1/g_u)(x)\leq \varphi(1/g_u(x))\quad
\text{
for all $x\in\R^n$ with $|x|$ sufficiently large.}$$
\end{comment}
\item[(ii)] For the function $p_{1}$, defined in~\eqref{eq:power_function} above,  we obtain $\cA (1/p_{1})(x)\leq C_2/p_{1}(x)^{2+p-\ell}$ for some constant $C_2\in(0,\infty)$ and all $x\in\R^n$ with $|x|$ large.
\end{myenumi}
\end{prop}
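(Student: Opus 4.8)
The plan is a direct computation with the extended generator~\eqref{eq:extended_gen_diffusion} applied to the functions $g_u$ and $1/p_1$, which are radially symmetric for large $|x|$, followed by an asymptotic analysis driven by~\ref{example:ass_diffse}. Since the proposition only asserts its inequalities outside a compact set, the way $g_u$ and $1/p_1$ are smoothed near the origin plays no role. First I would record the generic identity: if $\psi\in C^2$ and $g(x)=\psi(|x|)$ for $|x|$ large, then, with $r=|x|$ and using $\nabla g(x)=\psi'(r)x/r$ together with $\Hessian(g)(x)=\psi''(r)xx^{\intercal}/r^2+(\psi'(r)/r)(I_n-xx^{\intercal}/r^2)$,
\[
\cA g(x)=\psi'(r)\langle b(x),x/|x|\rangle+\tfrac12\psi''(r)\langle\Sigma(x)x/|x|,x/|x|\rangle+\tfrac12\frac{\psi'(r)}{r}\bigl(\Tr(\Sigma(x))-\langle\Sigma(x)x/|x|,x/|x|\rangle\bigr).
\]
Directly from~\ref{example:ass_diffse} I would then note that, for $|x|$ large, $\langle b(x),x/|x|\rangle=r^{\ell-p}\theta_b(x)$ with $\theta_b(x)\in[-\alpha_L,-\alpha_U]$, $\langle\Sigma(x)x/|x|,x/|x|\rangle=r^{\ell}\theta_\Sigma(x)$ with $\theta_\Sigma(x)\in[\beta_L,\beta_U]$, and $0\le\Tr(\Sigma(x))-\langle\Sigma(x)x/|x|,x/|x|\rangle\le Cr^{\ell}$ for some $C\in(0,\infty)$ (the nonnegativity uses that $\Sigma$ is positive semidefinite, the upper bound uses the $\limsup$ hypothesis on $\Tr\Sigma$).

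For part~(i) I would substitute $\psi(r)=\exp(ur^{1-p})$, so that $\psi'(r)=u(1-p)r^{-p}\psi(r)$ and $\psi''(r)=u(1-p)r^{-2p}\bigl(u(1-p)-pr^{p-1}\bigr)\psi(r)$. Collecting powers of $r$ gives
\[
\cA g_u(x)=\psi(r)r^{\ell-2p}\Bigl(u(1-p)\theta_b(x)+\tfrac12u^2(1-p)^2\theta_\Sigma(x)\Bigr)+\psi(r)r^{\ell-p-1}R_u(x),
\]
where $R_u$ is bounded for $|x|$ large (it collects the $-pr^{p-1}$ contribution of $\psi''$ and the trace term). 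As $\ell<2p<p+1$, the second summand is of strictly lower order. The quantitative core is the choice $u_c:=2\alpha_L/((1-p)\beta_L)$: for $u>u_c$ the bracket is bounded, uniformly in large $|x|$, below by $\tfrac12u(1-p)\bigl((1-p)\beta_L u-2\alpha_L\bigr)>0$ and above by $\tfrac12u^2(1-p)^2\beta_U$. Hence for $|x|\ge r_0$ with $r_0$ large, $0<\tfrac{c_1}{2}\psi(r)r^{\ell-2p}\le\cA g_u(x)\le C'\psi(r)r^{\ell-2p}$. Finally I would convert the power $r^{\ell-2p}$: since $r^{1-p}=u^{-1}\log g_u(x)$ and $\ell<2p$ (so the exponent $(2p-\ell)/(1-p)$ is positive), $r^{\ell-2p}=u^{(2p-\ell)/(1-p)}(\log g_u(x))^{-(2p-\ell)/(1-p)}$, which yields $\cA g_u(x)\le C_u g_u(x)/(\log g_u(x))^{(2p-\ell)/(1-p)}$ with $C_u:=C'u^{(2p-\ell)/(1-p)}$.

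For part~(ii) I would substitute $\psi(r)=1/r$, so $\psi'(r)=-r^{-2}$, $\psi''(r)=2r^{-3}$, and the generic identity reads, for $|x|$ large,
\[
\cA(1/p_1)(x)=-r^{-2}\langle b(x),x/|x|\rangle+\tfrac32r^{-3}\langle\Sigma(x)x/|x|,x/|x|\rangle-\tfrac12r^{-3}\Tr(\Sigma(x)).
\]
Discarding the nonnegative term $\tfrac12r^{-3}\Tr(\Sigma(x))$, bounding $-\langle b(x),x/|x|\rangle\le\alpha_L r^{\ell-p}$ and $\langle\Sigma(x)x/|x|,x/|x|\rangle\le\beta_U r^{\ell}$, and using $\ell-3<\ell-p-2$ to absorb the remaining term, one gets $\cA(1/p_1)(x)\le C_2r^{\ell-p-2}=C_2/p_1(x)^{2+p-\ell}$ for $|x|$ large; this is entirely parallel to the computation in the proof of Proposition~\ref{prop:deterministic_eliptic_poly}. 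The only mildly delicate point of the whole argument is the exponent bookkeeping confirming in each part that the claimed term genuinely dominates (which rests on $\ell<2p$ and $p<1$), together with pinning down the threshold $u_c$ ensuring $\cA g_u\ge0$; there is no conceptual obstacle.
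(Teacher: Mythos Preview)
Your proof is correct and follows essentially the same route as the paper's: both compute the generator on radial functions, isolate the leading $r^{\ell-2p}$ term for $g_u$ (with the same threshold $u_c=2\alpha_L/((1-p)\beta_L)$ ensuring positivity), convert via $r^{1-p}=u^{-1}\log g_u$, and for part~(ii) reduce to the leading $r^{\ell-p-2}$ term using $p<1$. Your write-up is in fact slightly more explicit than the paper's in recording the generic radial identity and the exponent bookkeeping.
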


\begin{proof}
Denote $g(|x|)=g_u(x)$ for $x\in\R^n$.
Note that $\nabla g_u(x)=
g'(|x|) x$ and  
\begin{equation*}
\Hessian(g_u)(x)=\Hessian (g(|x|))=g'(|x|)|x|^{-1}I_n+(g''(|x|)/|x|^{2}-g'(|x|)/|x|^3) x x^\intercal,
\end{equation*} 
where  $I_n\in\R^{n\times n}$ is the identity matrix. By~\ref{example:ass_diffse}, $\limsup_{|x|\to\infty}\Tr(\Sigma(x))/|x|^{\ell}<\infty$ and there exist $0<\alpha_U<\alpha_L$ and $0<\beta_L<\beta_U$ such that
$$
-\alpha_L\leq\langle b(x),x/|x|\rangle/|x|^{\ell-p}\leq -\alpha_U \quad\text{and}\quad \beta_L\leq \langle \Sigma(x)x/|x|,x/|x|\rangle/|x|^\ell\leq \beta_U
$$ for all $x$ outside of some compact set.
By the formula for $\cA$ in~\eqref{eq:extended_gen_diffusion}, we get
\begin{equation}
\label{eq:A_g_u}
(\beta_L u(1-p)/2-\alpha_L+o(1))/|x|^{2p-\ell}\leq \frac{\cA g_u(x)}{u(1-p)g_u(x)} \leq (\beta_U u(1-p)/2-\alpha_U+o(1))/|x|^{2p-\ell},
\end{equation}
as $|x|\to\infty$.
Thus, for $u\in(2\alpha_L/(\beta_L(1-p)),\infty)$ we have $\cA(g_u)(x)\geq 0$ for all $x$ with $|x|$ large. 
Since $|x|=(u^{-1}\log g_u(x))^{1/(1-p)}$ for $x\in\R^n$ with large $|x|$, there exists a constant $C_u>0$, such that the upper bound on $\cA g_u$ in~(i) holds. 

For 
$x\in\R^n$ outside of a compact set, $p_1(x)=|x|$, $\nabla(1/p_{1})(x)=-x/|x|^{3}$ and
$\Hessian(1/p_{1})(x)=-(-3xx^\intercal/|x|^2+I_n)/|x|^{3}$. Thus the representation of $\cA$ in~\eqref{eq:extended_gen_diffusion}
and the \ref{example:ass_diffse} imply
$$
\cA (1/p_{1}) (x) \leq
 (\alpha_L+o(1))/p_1(x)^{2+p-\ell},
$$
for all 
$x\in\R^n$ with sufficiently large $|x|$. This concludes the proof of~(ii).
\end{proof}

%The deterministic bounds in  Proposition~\ref{prop:deterministic_eliptic_subgeom}, combined with a stretched exponential Lyapunov function yield lower bounds in Theorem~\ref{thm:example_se_inv_rate}(a) and~(c).

\begin{proof}[Proof of Theorem~\ref{thm:example_se_inv_rate}]
Let $V \coloneqq p_1$ be a $C^2(\R^n)$ Lyapunov function in~\eqref{eq:power_function}. By Proposition~\ref{prop:deterministic_eliptic_subgeom}(ii) we have $\cA(1/V)(x)\leq  C/V(x)^{2+p-\ell} = \varphi(1/V(x))$ for some constant $C\in(0,\infty)$ and all $x\in\R^n$ outside of some compact set, where the function $\varphi:(0,1]\to\R$ is given by $\varphi(1/r) \coloneqq C/r^{2+p-\ell}$.
Moreover, by Proposition~\ref{prop:deterministic_eliptic_subgeom}(i), there exists $u_c\in(0,\infty)$, such that for all $u\in(u_c,\infty)$ the function $\Psi_u(r) = \exp(ur^{1-p})$ satisfies $\cA(\Psi_u\circ V)(x) = \cA g_u \geq 0$ for all $x\in\R^n$ with $|x|$ large ($g_u$ is defined in~\eqref{eq:stretched_exponential_function}).
The functions $(V,\varphi,\Psi_u)$ (with any $u>u_c$) defined above, satisfy the $\mathbf{L}$-drift condition~\nameref{sub_drift_conditions} by Theorem~\ref{thm:generator}.
For any $u_0\in(2u,\infty)$, Theorem~\ref{thm:invariant} yields
\begin{equation}
\label{eq:pi_se_bound}
\pi(\{V\geq r\})\geq c\exp(-u_0r^{1-p}) \quad \text{for some constant $c\in(0,\infty)$ and all $r\in[1,\infty)$,}
\end{equation}
implying Theorem~\ref{thm:example_se_inv_rate}(a) (polynomial and logarithmic terms  in~\eqref{eq:main_result_invariant} of Theorem~\ref{thm:invariant} are negligible, compared to the stretched exponential decay of $1/\Psi$, implying inequality~\eqref{eq:pi_se_bound}).
Moreover, an application of Theorem~\ref{thm:modulated_moments}\ref{thm:modulated_moments_b} yields the lower bound on the tail of the return time in Theorem~\ref{thm:example_se_inv_rate}(b). 

Pick $u\in(u_0,\infty)$ (for $u_0$ in~\eqref{eq:pi_se_bound}) and define $h(r) \coloneqq \exp(ur^{1-p})$. By the inequality in~\eqref{eq:pi_se_bound} we have $\pi(\{h\circ V\geq r\})\geq c/r^{u_0/u}$ for some $c\in(0,1)$ and all $r\in[1,\infty)$. Since $h\circ V = g_u$ outside of a compact set, by Proposition~\ref{prop:deterministic_eliptic_subgeom}(i) there exists a constant $C_h'\in(0,\infty)$, such that $\cA(h\circ V)  \leq C_h' h\circ V/(\log h\circ V )^{(2p-\ell)/(1-p)} $ outside of a compact set. Lemma~\ref{lem:bounded_generator} (with $H=h\circ V$, $\xi(r)=C_h'r/(\log r)^{(2p-\ell)/(1-p)}$ and hence $\Xi(u) = \int_1^u\ud s/\xi(s)=(C_h'^{-1}\log u)^{(1+p-\ell)/(1-p)}$) implies 
$$
\E_x[h\circ V(X_t)]\leq \Xi^{-1}(\Xi (h\circ V(x))+t)\leq \exp(C_h( h\circ V(x) +t)^{(1-p)/(1+p-\ell)})\eqqcolon v(x,t)
$$ for all $x\in\R^n$, $t\in\RP$ and some $C_h\in(1,\infty)$. Thus, by Corollary~\ref{cor:rate}, applied with $a(r) = c/r^{u_0/u}$ and $v(x,t)$, we obtain the claimed lower bound in part~(c) of Theorem~\ref{thm:example_se_inv_rate}.
\end{proof}

\subsubsection{Exponential tails: proofs for Section~\ref{subsubsec:Exponential}}
\label{subsubsec:exponential_proofs}

\begin{prop}
\label{prop:deterministic_exp}
Under \ref{example:ass_diffexp}, the extended generator $\cA$ in~\eqref{eq:extended_gen_diffusion} 
of the diffusion $X$ in~\eqref{eq:elliptic_SDE}
satisfies 
the following.
\begin{myenumi}
\item[(i)] For some $u_c\in(0,\infty)$ and all $u\in(u_c,\infty)$, there exists a constant $C_u\in(0,\infty)$ such that 
$$
0\leq \cA \exp(u|x|)\leq C_u \exp(u|x|)\qquad\text{outside of some compact set. }
$$
\begin{comment}
\item[(ii)] For any  $u\in(0,\infty)$,  the function $$\varphi(1/r)\coloneqq C_1(\log r)^{-2p/(1-p)}/r,\quad\text{  $r\in[1,\infty)$,}$$
with the constant $C_1\coloneqq 2u(1-p)(\beta u(1-p) +\alpha)>0$,
satisfies
$$\cA (1/g_u)(x)\leq \varphi(1/g_u(x))\quad
\text{
for all $x\in\R^n$ with $|x|$ sufficiently large.}$$
\end{comment}
\item[(ii)] For the function $p_{1}$, defined in~\eqref{eq:power_function} above,  we obtain $\cA (1/p_{1})(x)\leq C'/p_{1}(x)^{2}$ for some constant $C'\in(0,\infty)$ and all $x\in\R^n$ with $|x|$ large.
\end{myenumi}
\end{prop}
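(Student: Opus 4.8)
The plan is to establish both bounds by a direct computation with the extended generator formula~\eqref{eq:extended_gen_diffusion}, mirroring the proof of Proposition~\ref{prop:deterministic_eliptic_subgeom}. The starting observation is that for a radial $C^2$ function $x\mapsto\psi(|x|)$ one has, away from the origin,
\[
\nabla(\psi(|x|))=\psi'(|x|)\tfrac{x}{|x|},\qquad
\Hessian(\psi(|x|))=\tfrac{\psi'(|x|)}{|x|}I_n+\Big(\tfrac{\psi''(|x|)}{|x|^{2}}-\tfrac{\psi'(|x|)}{|x|^{3}}\Big)xx^{\intercal},
\]
so that, using the identity $\Tr(\Sigma(x)xx^{\intercal})=|x|^{2}\langle\Sigma(x)x/|x|,x/|x|\rangle$, substituting into~\eqref{eq:extended_gen_diffusion} reduces the computation to the one-sided bounds on $\langle b(x),x/|x|\rangle$, $\langle\Sigma(x)x/|x|,x/|x|\rangle$ and $\Tr(\Sigma(x))$ supplied by~\ref{example:ass_diffexp}.

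For part~(i), I would fix a $C^2(\R^n)$ function agreeing with $x\mapsto\exp(u|x|)$ for $|x|$ large and take $\psi(r)=e^{ur}$ above; the computation then gives
\[
\cA\exp(u|x|)=u\exp(u|x|)\Big(\langle b(x),\tfrac{x}{|x|}\rangle+\tfrac{u}{2}\langle\Sigma(x)\tfrac{x}{|x|},\tfrac{x}{|x|}\rangle+\tfrac{\Tr(\Sigma(x))}{2|x|}-\tfrac{1}{2|x|}\langle\Sigma(x)\tfrac{x}{|x|},\tfrac{x}{|x|}\rangle\Big).
\]
By~\ref{example:ass_diffexp} the last two bracketed terms are $O(1/|x|)$ as $|x|\to\infty$, while the sum of the first two lies in $[-\alpha_L+\tfrac u2\beta_L,\,-\alpha_U+\tfrac u2\beta_U]$ outside a compact set. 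Hence, setting $u_c\coloneqq 2\alpha_L/\beta_L$, for every $u>u_c$ the bracket is eventually bounded below by a positive constant, giving $\cA\exp(u|x|)\ge 0$ outside a compact set, and it is eventually at most $\tfrac u2\beta_U$, so the upper bound holds with, e.g., $C_u\coloneqq u^{2}\beta_U/2$.

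For part~(ii), I would take $\psi(r)=1/r$, which gives $\nabla(1/p_1)(x)=-x/|x|^{3}$ and $\Hessian(1/p_1)(x)=-I_n/|x|^{3}+3xx^{\intercal}/|x|^{5}$ for $|x|$ large (exactly as recorded in Proposition~\ref{prop:deterministic_eliptic_subgeom}(ii)), whence
\[
\cA(1/p_1)(x)=-\frac{\langle b(x),x/|x|\rangle}{|x|^{2}}-\frac{\Tr(\Sigma(x))}{2|x|^{3}}+\frac{3\langle\Sigma(x)x/|x|,x/|x|\rangle}{2|x|^{3}}.
\]
The second term is nonpositive since $\Sigma=\sigma\sigma^{\intercal}$ is positive semidefinite, the third is $O(1/|x|^{3})$, and $-\langle b(x),x/|x|\rangle\le\alpha_L$ by~\ref{example:ass_diffexp}, so $\cA(1/p_1)(x)\le(\alpha_L+o(1))/|x|^{2}\le C'/p_1(x)^{2}$ for $|x|$ large and any $C'>\alpha_L$. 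There is no substantive obstacle: this is a routine application of It\^o's formula together with the hypotheses of~\ref{example:ass_diffexp}, and the only care required is to track the correct one-sided constants ($\alpha_L,\beta_L$ for the lower inequality in~(i), $\alpha_U,\beta_U$ for the upper, $\alpha_L$ again in~(ii)) and to confirm that all curvature and trace correction terms are genuinely of lower order, so that the compact exceptional set and the constants $u_c$, $C_u$, $C'$ can be chosen as claimed.
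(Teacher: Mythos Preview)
Your proposal is correct and follows essentially the same approach as the paper: both compute the gradient and Hessian of the radial functions $\exp(u|x|)$ and $1/|x|$ explicitly, substitute into the generator formula~\eqref{eq:extended_gen_diffusion}, and use the one-sided bounds from~\ref{example:ass_diffexp} to obtain the two-sided estimate $(\beta_L u/2-\alpha_L+o(1))\le \cA\exp(u|x|)/(u\exp(u|x|))\le(\beta_U u/2-\alpha_U+o(1))$ and $\cA(1/p_1)(x)=(\alpha_L+o(1))/p_1(x)^2$. Your treatment is in fact slightly more explicit, recording $u_c=2\alpha_L/\beta_L$ and $C_u=u^2\beta_U/2$ and separating out the $O(1/|x|)$ curvature corrections, but the argument is identical in substance.
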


\begin{proof}
For any $u\in(0,\infty)$ it holds that $\nabla \exp(u |x|) = u\exp(u |x|) x/|x|$ and $\Hessian(\exp(u |x|)) = u\exp(u |x|)(I_n/|x|+ (u /|x|^2 + 1/|x|^{3})xx^\intercal)$. By~\ref{example:ass_diffexp}, $\limsup_{|x|\to\infty}\Tr(\Sigma(x))<\infty$ and there exist $0<\alpha_U<\alpha_L$ and $0<\beta_L<\beta_U$ such that
$$
-\alpha_L\leq\langle b(x),x/|x|\rangle\leq -\alpha_U \quad\text{and}\quad \beta_L\leq \langle \Sigma(x)x/|x|,x/|x|\rangle/\leq \beta_U
$$ for all $x$ outside of some compact set.
By the formula for $\cA$ in~\eqref{eq:extended_gen_diffusion}, we get
\begin{equation*}
(\beta_L u/2-\alpha_L+o(1))\leq \frac{\cA \exp(u|x|)}{u\exp(u|x|)} \leq (\beta_U u/2-\alpha_U+o(1))\qquad\text{as $|x|\to\infty$.}
\end{equation*}
Thus, for $u\in(2\alpha_L/\beta_L,\infty)$ we have $\cA(\exp(u|x|))\geq 0$ for all $x$ with $|x|$ large, implying~(i).

For all 
$x\in\R^n$ with sufficiently large $|x|$, we have $p_1(x)=|x|$, $\nabla (1/p_{1})(x)=-|x|^{-3}x$ and
 $\Hessian(1/p_{1})(x)=-|x|^{-3}(-3xx^\intercal/|x|^2+I_n)$. Thus,
$$
\cA (1/p_{1}) (x) =
 (\alpha_L+o(1))/p_1(x)^{2} \quad \text{as $|x|\to\infty$.\qedhere}
$$
\end{proof}

\begin{proof}[Proof of Theorem~\ref{thm:example_exp}]
 Let $V =  p_1$, where $p_1$ is a $C^2(\R^n)$ function in~\eqref{eq:power_function}. Then, by Proposition~\ref{prop:deterministic_exp} we have $\cA(1/V)(x) = \cA(1/p_{1})(x)\leq \varphi(1/p_{1}(x))$, for all $x\in\R^n$ outside of some compact set centered at the origin, where $\varphi(1/r) = C/r^2$ for all $r\in[1,\infty)$ and some constant $C\in(\alpha,\infty)$. Moreover, by Proposition~\ref{prop:deterministic_exp}, there exists $u_c\in(0,\infty)$ such that for any $u_0\in(u_c,\infty)$ the function $\Psi(r)\coloneqq \exp(u_0r)$ satisfies $\cA(\Psi \circ p_1) = \cA \exp(u_0|x|)\geq 0$ for all $x\in\R^n$ outside of some compact set. Thus, by Theorem~\ref{thm:generator}, the conditions of~\nameref{sub_drift_conditions} are satisfied with the aforementioned functions  $(V,\varphi,\Psi)$. 
By Theorem~\ref{thm:invariant} we obtain a lower bound on the tail of the invariant measure, implying Theorem~\ref{thm:example_exp}(a).
Moreover, an application of Theorem~\ref{thm:modulated_moments}\ref{thm:modulated_moments_b} yields the lower bound on the tail of the return time in Theorem~\ref{thm:example_exp}(b). 

The function $L_{\eps,q}$ in~\eqref{eq:def_L_eps_q} (with $q,\eps\in(0,1)$) satisfies $L_{\eps,q}(r)= r\varphi(1/r)\Psi(2r/(1-q))(\log \log r)^{\eps}\leq 
\exp(2u_0r/(1-q))/C_1$
 for some constant $C_1>0$ and all $r\in[1,\infty)$. 
Pick $u\in(2u_0/(1-q),\infty)$ and denote $h(r) \coloneqq \exp(ur^{1-p})$. Define the function $a:[1,\infty)\to\RP$ by $a(r)\coloneqq c_{\eps,q}' c/r^{2u_0/(1-q)/u}\leq c_{\eps,q}/ L_{\eps,q}(h^{-1}(r))$.  Since $h\circ V = \exp(u|x|)$ outside of a compact set, by Proposition~\ref{prop:deterministic_eliptic_subgeom}(i) there exists a constant $C_h'\in(0,\infty)$, such that $\cA(h\circ V)  \leq C_h' h\circ V $ outside of a compact set. Lemma~\ref{lem:bounded_generator} (with $H=h\circ V$, $\xi(r)=C_h'r$ and hence $\Xi(u) = \int_1^u\ud s/\xi(s)=(C_h'^{-1}\log u)$) implies 
$$
\E_x[h\circ V(X_t)]\leq \Xi^{-1}(\Xi (h\circ V(x))+t)\leq \exp(C_h( h\circ V(x) +t))\eqqcolon v(x,t)
$$ for all $x\in\R^n$, $t\in\RP$ and some $C_h\in(1,\infty)$. Thus, by Corollary~\ref{cor:rate}, applied with $a$ and $v(x,t)$, we obtain the claimed lower bound in part~(c) of Theorem~\ref{thm:example_exp}.
\end{proof}

\subsection{L\'evy-driven SDE: proofs for Section~\ref{sec:levy}}
\label{subsec:levy_proofs}
Let $X$ be  the solution of SDE~\eqref{eq:levy_OU} with a bounded Lipschitz dispersion coefficient $\sigma$, driven by a pure-jump L\'evy process $L$ with L\'evy measure $\nu$. By It\^o's formula~\cite[Thm~II.36]{MR2273672} applied to $g(X)$, where $g\in C^1(\R)$ and $\int_{\R\setminus(-1,1)}|g(y)|\nu(\ud y)<\infty$, 
the extended generator (see Section~\ref{subsec:generator} above for definition) of the process $X$ takes the form
\begin{equation}
\label{eq:generator_Levy}
\cA g(x) =-\mu xg'(x)+ \int_{\R} (g(x+\sigma(x)y)-g(x) - g'(x)\sigma(x)y\1{y\in[-1,1]})\nu(\ud y),\qquad\text{$x\in\R$.}
\end{equation}
For any $m\in[1,\infty)$, consider a non-decreasing function $g_m:\R\to [1,\infty)$ in $C^2(\R)$ satisfying
\begin{equation}
\label{eq:log_function}
g_m\equiv 1 \text{ on $(-\infty,1]$}\quad \&\quad g_m(x) = (\log x)^m \quad \text{for all $x\in\RP$ sufficiently large.}
\end{equation}
Note that, in particular, the derivatives $g',g''$ are globally bounded.

\begin{prop}
\label{prop:OU_deterministic}
Under \ref{example:ass_Levy}, extended generator~\eqref{eq:generator_Levy} 
of the process $X$ in~\eqref{eq:levy_OU}
satisfies 
the following  inequalities.
\begin{enumerate}[label=(\alph*)]
\item \label{prop:deterministic_levy_a}For any $m\in[1,m_c)$ there exists a constant $C_1\in(0,\infty)$ such that
$$
\cA g_m(x) \leq C_1 \quad \text{for all $x\in\R$.}
$$
\item\label{prop:deterministic_levy_b}
There exist constants $C_2,b,\ell_0\in(0,\infty)$, such that the following holds
$$
\cA (1/g_1)(x) \leq  C_2/g_1(x)^2 + b\1{g_1(x)\leq \ell_0} \quad \text{for all $x\in\R$.}
$$ 
\end{enumerate}
\end{prop}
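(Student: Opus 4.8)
The plan is to estimate the generator~\eqref{eq:generator_Levy} applied to $g_m$ (resp.\ $1/g_1$) by splitting the L\'evy-integral into the small-jump part on $[-1,1]$, the large positive-jump part on $(1,\infty)$, and the large negative-jump part on $(-\infty,-1)$, and then to combine these with the drift term $-\mu x g_m'(x)$. For the small-jump part I would use the $C^2$-smoothness of $g_m$: a second-order Taylor expansion shows $|g_m(x+\sigma(x)y)-g_m(x)-g_m'(x)\sigma(x)y|\le \tfrac12\sup|g_m''|\,\sigma(x)^2 y^2$ on $|y|\le 1$, and since $\sigma$ is bounded and $\int_{[-1,1]}y^2\nu(\ud y)<\infty$ (L\'evy measure), this contribution is uniformly bounded on $\R$. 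For the large negative-jump part, boundedness of $g_m$ from below by $1$ and the fact that $g_m\equiv 1$ on $(-\infty,1]$ together with $\nu((-\infty,-1))<\infty$ make this term bounded; in fact it is $\le 0$ for $x$ large positive since jumping left decreases $g_m$.

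The two genuinely delicate pieces are the drift term and the large positive-jump part, and their interplay is where the hypotheses of \ref{example:ass_Levy} enter. For \ref{prop:deterministic_levy_a}: when $x\to+\infty$ one has $g_m(x)=(\log x)^m$, so $-\mu x g_m'(x) = -\mu m (\log x)^{m-1}\to-\infty$, which is the stabilising term; when $x\to-\infty$, $g_m\equiv 1$ so the drift term vanishes. For the positive-jump integral $\int_{(1,\infty)}(g_m(x+\sigma(x)y)-g_m(x))\nu(\ud y)$, the key estimate is that for $x$ large and $y>1$, $g_m(x+\sigma(x)y)-g_m(x)=(\log(x+\sigma(x)y))^m-(\log x)^m\le C(\log y)^m + C'(\log x)^{m-1}(\log y)$ (by concavity of $t\mapsto(\log t)^m$ for the leading term and a mean-value estimate); since $m<m_c$, the upper tail bound $\nu([r,\infty))\le C(\log r)^{-m_c}$ guarantees $\int_{(1,\infty)}(\log y)^m\,\nu(\ud y)<\infty$ and also $\int_{(1,\infty)}(\log y)\,\nu(\ud y)<\infty$ (as $m_c>1$), so the positive-jump contribution is at most $C + C'(\log x)^{m-1}$, which is dominated by the drift term $-\mu m(\log x)^{m-1}$ for $x$ large. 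Hence $\cA g_m$ is bounded above for $|x|$ large; on the compact complementary region it is bounded by continuity, giving the constant $C_1$.

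For \ref{prop:deterministic_levy_b} the same decomposition is applied to $1/g_1$, which is bounded (in $(0,1]$), has bounded first and second derivatives, and equals $1/\log x$ for $x$ large positive and equals $1$ for $x\le 1$. The small-jump and negative-jump terms are again uniformly bounded, contributing to the $b\mathbbm{1}\{g_1\le\ell_0\}$ term after choosing $\ell_0$ large enough that all the ``large $|x|$'' asymptotics hold on $\{g_1>\ell_0\}$; the drift term is $-\mu x\,(1/g_1)'(x)=\mu/( \log x)$ for $x$ large positive, i.e.\ of order $1/g_1(x)$, hence it is $o(1/g_1(x)^{2})$ — wait, it is actually larger than $1/g_1(x)^2$, so care is needed: one notes $\mu/\log x \le C/(\log x)^2$ fails, so instead one observes that the positive-jump integral $\int_{(1,\infty)}((1/\log(x+\sigma(x)y))-(1/\log x))\nu(\ud y)$ is \emph{negative} and of order $-c/(\log x)^2$ (since $1/\log$ is decreasing and the increment over a jump of typical log-size is $\asymp (\log y)/(\log x)^2$), and this negative term together with the positive drift $\mu/\log x$ must be shown to sum to at most $C_2/(\log x)^2$. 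The cancellation here is exactly the mechanism: using $\nu((1,\infty))<\infty$ and concavity, $\int_{(1,\infty)}((\log x)^{-1}-(\log(x+\sigma(x)y))^{-1})\nu(\ud y) \ge c(\log x)^{-2}\int_{(1,\infty)}(\log(1+\sigma(x)y/x))\nu(\ud y)$, and since $\sigma$ is bounded and $x$ is large this integrand behaves like $\sigma(x)y/x$ for moderate $y$ and like $\log y$ for large $y$; the resulting lower bound on the ``jump gain'' $1/g_1$ provides a term of order $(\log x)^{-2}$ with the right sign, while the drift $\mu/\log x$ — being $O(1/g_1(x))$ rather than $O(1/g_1(x)^2)$ — is the main obstacle and must be absorbed.

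\emph{Main obstacle.} The delicate point in \ref{prop:deterministic_levy_b} is reconciling the drift term $\mu/\log x = \mu/g_1(x)$, which is of strictly larger order than the claimed bound $C_2/g_1(x)^2$, with the compensating negative contribution from the positive-jump integral; this requires a sharp (two-sided) estimate of $\int_{(1,\infty)}\bigl((\log x)^{-1}-(\log(x+\sigma(x)y))^{-1}\bigr)\nu(\ud y)$ showing it equals $-(\mu+o(1))/\log x$ as $x\to\infty$, which in turn hinges on the precise two-sided tail control of $\nu$ near $+\infty$ supplied by \ref{example:ass_Levy}. I would carry this out by writing the increment as $\frac{\log(1+\sigma(x)y/x)}{\log x\,\log(x+\sigma(x)y)}$, bounding it above and below, and using $0<\liminf (\log r)^{m_c}\nu([r,\infty))$ to ensure the integral does not vanish too fast; the bookkeeping of error terms is routine but must be done carefully. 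The function $g_1$ having logarithmic growth (so that $g_1(X)$ and its powers have integrable marginals against $\pi$, cf.\ the remark preceding the proposition) is precisely what makes this balance possible.
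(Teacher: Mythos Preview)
Your treatment of part~\ref{prop:deterministic_levy_a} is broadly correct and close in spirit to the paper's argument, though the paper handles the large positive-jump integral more cleanly via Tonelli's theorem: writing $g_m(x+\sigma(x)y)-g_m(x)=\int_0^{\sigma(x)y}g_m'(x+u)\,\ud u$ and swapping the order of integration gives $\int_0^\infty \nu([\max\{1,u/\sigma(x)\},\infty))g_m'(x+u)\,\ud u$, which is bounded uniformly in large $x$ by replacing $g_m'(x+u)$ with $g_m'(1+u)$ and $\sigma(x)$ with $\sup\sigma$. This avoids the somewhat ad hoc splitting you propose and makes the role of $m<m_c$ transparent.

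Part~\ref{prop:deterministic_levy_b}, however, contains a genuine error that derails the argument. You compute the drift term as $-\mu x\,(1/g_1)'(x)=\mu/\log x$, but for $g_1(x)=\log x$ one has $(1/g_1)'(x)=-1/(x(\log x)^2)$, so in fact
\[
-\mu x\,(1/g_1)'(x)=\mu/(\log x)^2=\mu/g_1(x)^2.
\]
The drift is already of the desired order $O(1/g_1(x)^2)$; there is no ``main obstacle'' and no delicate cancellation with the positive-jump integral is needed. Since $1/g_1$ is non-increasing, the positive-jump contribution $\int_{(1,\infty)}((1/g_1)(x+\sigma(x)y)-(1/g_1)(x))\,\nu(\ud y)$ is simply $\le 0$ and can be discarded.

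What you have missed is that the genuine work lies in the \emph{negative}-jump part, which you dismissed as ``uniformly bounded''. For large positive $x$, a negative jump increases $1/g_1$, so this contribution is positive and must be shown to be $O(1/g_1(x)^2)$ on $\{g_1>\ell_0\}$, not merely bounded. The paper splits the integral at $y=-x/(2\sigma(x))$: for $y\in[-x/(2\sigma(x)),-1]$ the post-jump position stays above $x/2$, giving an increment bounded by $\log 2/(\log(x/2)\log x)$ and hence a total contribution $\le\nu((-\infty,-1])\cdot C/g_1(x)^2$; for $y<-x/(2\sigma(x))$ one uses the crude bound $1/g_1\le 1$ together with the negative-tail hypothesis $\nu((-\infty,r])=o((\log|r|)^{-(m_c+1)})$ from \ref{example:ass_Levy}, which gives $\nu((-\infty,-x/(2\sigma(x))))=o(1/g_1(x)^{m_c+1})\le C/g_1(x)^2$. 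This is precisely where the asymmetric tail condition on $\nu$ enters, and your proposal does not invoke it.
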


\begin{proof}
%Pick $m\in(-\infty,m_c)\setminus\{0\}$ and consider the function $g_m$.
For any $g\in C^2(\R)$ and  $C_1' \coloneqq \sup_{x\in\R}\sigma(x)^2\int_{[-1,1]}y^2 \nu(\ud y)\in[0,\infty)$,
Lagrange's theorem yields 
\begin{equation}
\label{eq:levy_martingale_bound}
\int_{[-1,1]} (g(x+\sigma(x)y)-g(x) - g'(x)\sigma(x)y)\nu(\ud y) \leq C_1' \sup_{u\in[x-\sigma(x),x+\sigma(x)]}|g''(u)| \quad \text{for all $x\in\R$.}
\end{equation}

\noindent \underline{Part~(a)}.
Let $m\in[1,m_c)$. 
For all large $x\in\RP$ 
we have  $g_m'(x)=m(\log x)^{m-1}/x$, implying  $g_m'(x+u)\leq g_m'(1+u)$ for all $u>0$. Thus, by Tonelli's theorem, for all large $x\in\RP$ we have
\begin{align}
\nonumber
\int_{[1,\infty)} (g_m(x+\sigma(x)y)-g_m(x)) \nu(\ud y) &= \int_{[1,\infty)}\nu(\ud y)\int_0^{\sigma(x)y} g_m'(x+u)\ud u\\
& \nonumber= \int_0^\infty \nu([\max\{1,u/\sigma(x)\},\infty))g_m'(x+u)\ud u \\
& \label{eq:levy_inequality} \leq \int_0^\infty \nu([\max\{1,u/\sup_{y\in\R}\sigma(y)\},\infty))g_m'(1+u)\ud u<\infty,
%&\leq C_2\int_{\mathrm{e}}^\infty (\log (1+u))^{-m_c} (\log(x+u))^{m-1}/(x+u)\ud u \\
%&\leq C_2\int_{\mathrm{e}}^\infty (\log(1+u))^{m-1-m_c}/(1+u)\ud u <\infty,
\end{align}
where the final inequality holds by~\ref{example:ass_Levy} since $m<m_c$ and $\sigma$ is bounded on $\R$.

Since $g_m$ is non-decreasing and $\sup_{u\in\R}|g_m''(u)|<\infty$ (by definition~\eqref{eq:log_function}), inequalities~\eqref{eq:levy_martingale_bound} and~\eqref{eq:levy_inequality} imply there exists $x_0\in(0,\infty)$, such that,  for all $x\in[x_0,\infty)$, the representation of $\cA$ in~\eqref{eq:generator_Levy} yields
\begin{align}
\label{eq:upper_bound_generagor_jumps}
\cA g_m(x) &\leq -\mu  m (\log x)^{m-1}+C_1'\sup_{u\in\R}|g_m''(u)|+\int_{[1,\infty)} (g_m(x+\sigma(x)y)-g_m(x)) \nu(\ud y) \\
&\leq C_1'\sup_{u\in\R}|g_m''(u)|+\int_0^\infty \nu([\max\{1,u/\sup_{y\in\R}\sigma(y)\},\infty))g_m'(1+u)\ud u<\infty.
\nonumber
\end{align}
Let $x\in(-\infty,x_0)$. Then  
$\int_{[1,\infty)}g_m(x+\sigma(x)y)\nu(\ud y)\leq \int_{[1,\infty)}g_m(x_0+\sup_{u\in\R}\sigma(u)y)\nu(\ud y)<\infty$ since $g_m$ is non-decreasing and integrable with respect to $\nu$ by~\ref{example:ass_Levy}. Hence
the inequality in~\eqref{eq:levy_martingale_bound} and the global boundedness of $g_m''$ imply
 $$
 \cA g_m(x)\leq C_1'\sup_{u\in\R} |g_m''(u)| +\int_{[1,\infty)} g_m(x_0+\sup_{y\in\R}\sigma(y)u)\nu(\ud u) <\infty\quad\text{for all $x\in(-\infty, x_0)$.}
 $$ 

\noindent \underline{Part~(b)}. By definition of $g_1$ in~\eqref{eq:log_function}, there exists large $x_0\in\RP$, such that 
\begin{align}
\nonumber
\int_{[-\frac{x}{2\sigma(x)},-1]} ((1/g_1) (x+\sigma(x)y)-(1/g_1)(x))\nu(\ud y)&= \int_{[-\frac{x}{2\sigma(x)},-1]} \frac{\log x-\log(x+\sigma(x)y)}{\log(x+\sigma(x)y)\log x}\nu(\ud y) \\
\nonumber
&\leq \nu((-\infty,-1])\log 2/(\log (x/2)\log (x))\\
& \leq 
\nu((-\infty,-1])\log 4/g_1(x)^2\qquad\text{for all  $x\geq x_0$.}
\label{eq:jumps_bound_(b)}
\end{align}
By enlarging $x_0$ if necessary, we may assume $- x(1/g_1)'(x) = 1/g_1(x)^2$ and  $g_1''(x)\leq 1/g_1(x)^2$ for all $x\geq x_0$.
By~\eqref{eq:jumps_bound_(b)}, the bound on small jumps in~\eqref{eq:levy_martingale_bound} and the representation of
$\cA$ in~\eqref{eq:generator_Levy}, we get
\begin{align*}
\cA (1/g_1)(x) &\leq C_1' \sup_{u\in[x-\sigma(x),x+\sigma(x)]}|(1/g_1)''(u)| - \mu x(1/g_1)'(x) + \int_{(-\infty,-1]}\hspace{-5pt}\frac{g_1(x)-g_1(x+\sigma(x)y)}{g_1(x)g_1(x+\sigma(x)y)}\nu(\ud y) \\
&\leq C_2'/g_1(x)^2 + \nu((-\infty,-\frac{x}{2\sigma(x)})) +\int_{[-\frac{x}{2\sigma(x)},-1]} ((1/g_1) (x+\sigma(x)y)-(1/g_1)(x))\nu(\ud y)\\
&\leq C_2/g_1(x)^2\qquad\text{for all $x\geq x_0$ and some constants $C_2',C_2\in(0,\infty)$.}
\end{align*}
Jumps in $[1,\infty)$ can be disregarded since $1/g_1$ is non-increasing and the third inequality follows from \ref{example:ass_Levy} on the negative tail of $\nu$, bound~\eqref{eq:jumps_bound_(b)} and the fact that $\sigma$ is bounded.

By~\eqref{eq:log_function}, $1/g_1$ is non-increasing, $x(1/g_1)'(x)$ is bounded from below for $x\in \R$ and $1/g_1\leq 1$. Thus, since $\mu\in(0,\infty)$, by~\eqref{eq:levy_martingale_bound} we have
$$
\cA (1/g_1)(x) \leq C_1' \sup_{u\in\R} (1/g_1)''(u) - \inf_{u\in\R}\mu u (1/g_1)'(u) + \nu((-\infty,-1])  \eqqcolon b< \infty \quad \text{ for all $x\in \R$,}
$$ implying part (b) of the proposition with $\ell_0\coloneqq g_1(x_0)$.
%Note that $V_m\in C^2(\R)$ is in the domain of the extended generator $\cA$ of the process $X$. Thus, for any $x\in\R$, there exists a sequence of stopping times $T_n\uparrow\infty$, $\P_x$-a.s., as $n\uparrow\infty$, such that the process $t\mapsto V_m(X_{t\wedge T_n})-\int_0^{t\wedge T_n}\cA V_m(X_s)\ud s$ is a supermartingale. Application of Fatou's lemma then yields parts~(a) and~(b) of this proposition.
\end{proof}

In this section we will work with the Lyapunov function $V \coloneqq g_1$. Recall the definitions  $S_{(\ell)} = \inf\{t\geq 0: V(X_t)<\ell\}$ and $T^{(r)} = \inf\{t\geq 0: V(X_t)>r\}$, for $r,\ell\in(1,\infty)$.

\begin{prop}
\label{prop:OU_sub_condition}
Let \ref{example:ass_Levy} hold. For some $\ell_0\in(1,\infty)$ and each $\ell\in(\ell_0,\infty)$ there exists $C_\ell\in(0,\infty)$ such that 
\begin{equation*}
\P_x(T^{(r)}<S_{(\ell)}) \geq 
%\1{V(x)\geq r} + 
C_\ell/r^{m_c} \quad \text{for all $r\in(\ell+1,\infty)$ and $x\in\{\ell+1\leq V<r\}$.}
\end{equation*}
\end{prop}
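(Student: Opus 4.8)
The plan is to establish the bound by a path-wise ``single big jump'' argument, as anticipated in Remark~\ref{rem:why_prob_L(ii)}: starting from $x$ with $\ell+1\le V(x)<r$, we make $V(X)$ exceed $r$ by a single upward jump of the driver $L$ before $X$ returns below the level $\ell$. Write $\underline{\sigma}\coloneqq\inf_{z\in\R}\sigma(z)>0$ and $K_r\coloneqq e^{r}/\underline{\sigma}$. Since the jump of $X$ at a jump time $t$ of $L$ is $\sigma(X_{t-})\Delta L_t$ and $\sigma\ge\underline{\sigma}$, a jump of $L$ of size larger than $K_r$ increases $X$ by more than $e^{r}$. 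Recall $V=g_1$, which by~\eqref{eq:log_function} equals $\log$ near $+\infty$; after enlarging the base level $\ell_0$ we may assume $\{V<\ell\}=(-\infty,e^{\ell})$ and $\{V>r\}=(e^{r},\infty)$ for all $\ell,r\ge\ell_0$. Let $T_\star$ be the first time $L$ has a jump of size exceeding $K_r$; it is $\mathrm{Exp}(\lambda_r)$-distributed with $\lambda_r\coloneqq\nu((K_r,\infty))$, it is independent of the process $\tilde X$ solving~\eqref{eq:levy_OU} with all jumps of $L$ of size $>K_r$ deleted, and $X_t=\tilde X_t$ for $t<T_\star$. On $\{T_\star<S_{(\ell)}\}$ we have $V(X_{T_\star-})\ge\ell$, hence $X_{T_\star-}\ge e^{\ell}>0$ and $X_{T_\star}>e^{\ell}+e^{r}>e^{r}$, so $T^{(r)}\le T_\star<S_{(\ell)}$; therefore $\P_x(T^{(r)}<S_{(\ell)})\ge\P_x(T_\star<S_{(\ell)})$. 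Since the jump at $T_\star$ is upward it cannot trigger $S_{(\ell)}$, so a routine argument identifies $\{T_\star<S_{(\ell)}\}$ up to a $\P_x$-null set with $\{\tilde S_{(\ell)}\ge T_\star\}$, where $\tilde S_{(\ell)}\coloneqq\inf\{t\ge0:V(\tilde X_t)<\ell\}$; conditioning on $\tilde X$ and using independence of $T_\star$ gives
\begin{equation*}
\P_x(T^{(r)}<S_{(\ell)})\ \ge\ \E_x\!\left[1-e^{-\lambda_r\tilde S_{(\ell)}}\right]\ \ge\ (1-e^{-1})\,\min(\lambda_r,1)\,\E_x\!\left[\min(\tilde S_{(\ell)},1)\right].
\end{equation*}

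By Assumption~\nameref{example:ass_Levy}, since $\log K_r=r-\log\underline{\sigma}$, the tail $\lambda_r$ is bounded below by $c_1 r^{-m_c}$ for all $r\ge\ell_0$ and some $c_1\in(0,1]$ (after a further enlargement of $\ell_0$), and $\lambda_{\ell+1}\to0$ as $\ell\to\infty$. Thus $\min(\lambda_r,1)\ge c_1 r^{-m_c}$, and it remains to bound $\E_x[\min(\tilde S_{(\ell)},1)]$ below by a positive constant uniformly in $x\in\{V\ge\ell+1\}$ and in $r>\ell+1$. Because $X=\tilde X$ on $[0,T_\star)$ we have $\tilde S_{(\ell)}\ge S_{(\ell)}\wedge T_\star$, hence for any $\Delta\in(0,1]$,
\begin{equation*}
\E_x\!\left[\min(\tilde S_{(\ell)},1)\right]\ \ge\ \Delta\,\P_x\!\left(S_{(\ell)}>\Delta,\ T_\star>\Delta\right)\ \ge\ \Delta\big(\P_x(S_{(\ell)}>\Delta)-\lambda_r\Delta\big),
\end{equation*}
using $\P(T_\star\le\Delta)\le\lambda_r\Delta$. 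The crux is therefore the uniform return-time estimate $\inf_{x\in\{V\ge\ell+1\}}\P_x(S_{(\ell)}>\Delta_\ell)\ge p_\ell>0$ for a suitable $\Delta_\ell\in(0,1]$.

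To prove this uniform estimate I would use Proposition~\ref{prop:OU_deterministic}\ref{prop:deterministic_levy_b}, which gives $\cA(1/g_1)(x)\le C_2/g_1(x)^2+b\1{g_1(x)\le \ell_0}$ on $\R$. Taking $\varphi(u)\coloneqq C_2u^2$ (so $r\mapsto 1/(r\varphi(1/r))=r/C_2$ is increasing with infinite limit), Theorem~\ref{thm:generator}\ref{generator_a} makes $1/V(X_\cdot)-\int_0^\cdot\varphi(1/V(X_u))\,\ud u-b\int_0^\cdot\1{V(X_u)\le\ell_0}\,\ud u$ a $\P_x$-supermartingale; stopping it at $S_{(\ell)}$ kills the $\1{V\le\ell_0}$ term (since $V(X_u)\ge\ell>\ell_0$ for $u<S_{(\ell)}$) and bounds the remaining integrand by $C_2/\ell^2$. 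Applying the maximal inequality of Proposition~\ref{prop:maximal} to $\xi\coloneqq 1/V(X)\in(0,1]$ with the stopping time $\tau_{1/\ell}=S_{(\ell)}$ and trivial $\cF_0$ then gives $\P_x(S_{(\ell)}<s)\le \ell/(\ell+1)+C_2 s/\ell$ for $x\in\{V\ge\ell+1\}$, i.e.\ $\P_x(S_{(\ell)}\ge s)\ge 1/(\ell+1)-C_2s/\ell$; choosing $s=\Delta_\ell\coloneqq\tfrac{1}{2C_2(\ell+1)}\wedge\tfrac12$ yields the estimate with $p_\ell=\tfrac{1}{2(\ell+1)}$ once $\ell_0$ is large. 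Finally one assembles: enlarging $\ell_0$ once more so that $\lambda_{\ell+1}\Delta_\ell\le p_\ell/2$, one obtains $\E_x[\min(\tilde S_{(\ell)},1)]\ge\Delta_\ell p_\ell/2$ for all $r>\ell+1$ and $x\in\{V\ge\ell+1\}$, whence $\P_x(T^{(r)}<S_{(\ell)})\ge (1-e^{-1})c_1\Delta_\ell p_\ell/(2r^{m_c})\eqqcolon C_\ell/r^{m_c}$, and a trivial truncation handles $x\in\{\ell+1\le V<r\}$ with $V(x)$ not in the regime $e^{\ell}$ by the same reasoning.

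I expect the uniform-in-$x$ return-time estimate to be the main obstacle: it is exactly where the ``a lower bound analysed as an upper bound for $1/V(X)$'' philosophy enters, and one must check that the supermartingale/maximal-inequality argument does not degrade as $V(x)\to\infty$ — this works precisely because $\cA(1/g_1)\le C_2/g_1^2$, so the drift of $1/V(X)$ is negligible while $V(X)$ is large. A secondary technical point is the measure-theoretic bookkeeping in the decomposition of $L$ into its jumps of size $>K_r$ and the remainder (independence of $T_\star$ and $\tilde X$, and the $\P_x$-a.s.\ identity $\{T_\star<S_{(\ell)}\}=\{\tilde S_{(\ell)}\ge T_\star\}$), which is routine but uses the upward-jump observation to exclude $S_{(\ell)}=T_\star$.
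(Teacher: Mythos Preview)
Your proof is correct and shares the ``single big jump'' idea with the paper, but the two arguments diverge in how they control the return time $S_{(\ell)}$.

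The paper decomposes $L=L^{(-)}+L^{(M)}+L^{(+)}$ into three independent pieces (big negative, small, big positive jumps) and introduces a good event $A_t=\{L^{(-)}_t=0\}\cap\{\sup_{s\le t}|\int_0^s\sigma(X_{u-})\,\ud L^{(M)}_u|\le 1/8\}$. Its Claim~1 is a \emph{deterministic} pathwise argument using upcrossing/downcrossing times of the interval $[e^{\ell+1},e^{\ell+2}]$: on $A_t$, the only source of downward motion for $X$ is the drift $-\mu X$, which is bounded on that interval, so for $t<1/(2\mu e^{\ell+2})$ one gets $\P_x(\{S_{(\ell)}\le t\}\cap A_t)=0$. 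Claim~2 then bounds $\P_x(\{T^{(r)}<t\}\cap A_t)$ from below by a conditional-independence argument for the supremum of $L^{(+)}$.

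You instead use a two-part decomposition (jumps of $L$ exceeding $K_r$ versus the rest), exploit independence of the first big-jump time $T_\star$ from the censored process $\tilde X$, and reduce to a \emph{probabilistic} lower bound on $\P_x(S_{(\ell)}>\Delta_\ell)$ via Proposition~\ref{prop:OU_deterministic}\ref{prop:deterministic_levy_b}, Theorem~\ref{thm:generator}\ref{generator_a} and the maximal inequality of Proposition~\ref{prop:maximal}. This has the appeal of routing the example back through the paper's own $\mathbf{L}$-drift machinery rather than an ad~hoc pathwise argument; the price is a weaker intermediate estimate ($\P_x(S_{(\ell)}>\Delta_\ell)\ge 1/(2(\ell+1))$ rather than probability~$1$ on a good event), which is however entirely sufficient for the conclusion. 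The paper's route is more self-contained and yields sharper intermediate control, but requires the bespoke upcrossing/downcrossing bookkeeping and a separate martingale estimate for the small-jump integral (its Claim~2).
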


The proof of this proposition is based on a simple idea, which we first explain informally.
%Denote by $A_t^{(-)}\cap A_t^{(M)}$ the event that, before time $t>0$, the driving process $L$ does not admit large negative jumps and the supremum of the martingale part of the process $L$ is bounded by $1/8$.
The process $X$ 
satisfies SDE~\eqref{eq:levy_OU}, 
\begin{equation}
\label{eq:levy_decomposition}
X_t = x -\mu\int_0^t X_s\ud s + \int_0^t \sigma(X_{s-})\ud L_s^{(-)} + \int_0^t \sigma(X_{s-})\ud L_s^{(M)} + \int_0^t \sigma(X_{s-}) \ud L_s^{(+)},\quad \text{for all $t\in\RP$,}
\end{equation}
where 
the driving pure-jump L\'evy process $L= L^{(-)}+ L^{(M)}+ L^{(+)}$ is decomposed into a sum of independent pure-jump L\'evy processes $L_t^{(-)}$, $L_t^{(M)}$ and $L_t^{(+)}$ with L\'evy measures $\nu^{(-)}(\cdot) = \nu(\cdot \cap (-\infty,1])$, $\nu^{(M)}(\cdot) =\nu(\cdot \cap (-1,1))$ and $\nu^{(+)}(\cdot) =\nu(\cdot \cap [1,\infty))$, respectively.
For any $t\in\RP$, the process $X$ has
no negative jumps in $(-\infty,-1]$ on the event   
\begin{equation}
\label{eq:levy_events}
A_t \coloneqq \{L_t^{(-)} = 0\}\cap\left\{\sup_{0\leq s \leq t} \left|\int_0^s \sigma(X_{u-})\ud L_u^{(M)}\right| \leq 1/8\right\}.
\end{equation}
On $A_t$, it is thus necessary for $X$, started at $x\in \{V\geq \ell+1\}$, to accumulate sufficient negative drift  in order to return to $\{V\leq \ell\}$ before time $t$. Since the drift of the process $X$ is bounded on $\{\ell \leq V\leq \ell+1\}$, we will prove $\P_x(\{S_{(\ell)}\leq t\} \cap A_{t}) = 0$ for all sufficiently small $t>0$, implying
$$\{T^{(r)}<S_{(\ell)}\} \supset \{T^{(r)}<S_{(\ell)}\}\cap A_t \supset \{T^{(r)}<t\}\cap A_{t} \supset \left\{\sigma_-\sup_{0\leq s\leq t} (L_s-L_{s-})\geq V^{-1}(r)\right\}\cap A_{t}$$ 
$\P_x$-a.s., where $\sigma_-\coloneqq \inf_{u\in\R}\sigma(u)>0$. Evaluating the probability of the smallest event in the last display will complete the proof.

\begin{proof}[Proof of Proposition~\ref{prop:OU_sub_condition}]
Recall that $X$ follows~\eqref{eq:levy_decomposition} and $V = g_1$, where $g_1$ is a $C^2(\R)$ function in~\eqref{eq:log_function}. Choose $\ell_0\in(1,\infty)$ such that $V^{-1}(y) = \exp(y)$ for all $y\in[\ell_0,\infty)$. Note that for all $\ell,r\in[\ell_0,\infty)$, we have  $S_{(\ell)} = \inf\{t>0: X_t < \exp(\ell)\}$ and $T^{(r)} = \inf\{t>0: X_t>\exp(r)\}$.  By possibly increasing $\ell_0$,  we  may (and do) assume that $\nu(\exp(\ell_0)/\sigma_-,\infty)<2\mu \exp(\ell_0)$ holds.

Fix $\ell\geq\ell_0$ and define the upcrossing and downcrossing times of the interval $[\exp(\ell+1),\exp(\ell+2)]$ as follows: $\overline{\theta}_1 \coloneqq 0$ and for $k\in\N$, $$\underline{\theta}_k \coloneqq \inf\{t>\overline{\theta}_k:X_t<\exp(\ell + 1)\}\quad\text{and}\quad\overline{\theta}_{k+1} \coloneqq \{t>\underline{\theta}_k: X_t>\exp(\ell +2)\}.$$ Thus we have $0=\overline{\theta}_1\leq \underline{\theta}_1\leq\dots\leq \overline{\theta}_{k}\leq \underline{\theta}_{k}\leq\overline{\theta}_{k+1}\leq\dots$. 
The equality $\overline\theta_k=\underline\theta_k$
may occur if the process $X$ jumps downwards over the entire interval $[\exp(\ell+1),\exp(\ell+2)]$.

\noindent\textbf{Claim 1.} For all $x\in\{V\geq \ell+1\}$ 
and $t\in(0,1/(2\mu\exp(\ell+2)))$ 
we have $\P_x(\{S_{(\ell)}\leq t\} \cap A_{t}) = 0$.

\noindent \textit{Proof of Claim 1.}
Pick $x\in\{V\geq \ell+1\}$. Since $\{\overline{\theta}_k \leq S_{(\ell)}<\underline{\theta}_{k}\}= \emptyset$ for all  $k\in\N$, % and $D(t_0)<\infty$ $\P_x$-a.s., the 
the following  holds 
\begin{equation}
\label{eq:union_representation}
\{S_{(\ell)}\leq t\}\cap A_{t} = \cup_{k = 1}^{\infty} \{\underline{\theta}_k\leq S_{(\ell)}\leq t\wedge \overline{\theta}_{k+1}\}\cap A_{t} \qquad \text{$\P_x$-a.s.}
\end{equation}
It is thus sufficient to show that  $\P_x(\{\underline{\theta}_k\leq S_{(\ell)}\leq t\wedge \overline{\theta}_{k+1}\}\cap A_{t})=0$ for every $k\in\N$.
By~\eqref{eq:levy_events}, on $A_t$,  the negative jumps of  $X$ can only come from the L\'evy process $L^{(M)}$. The modulus of the negative jumps of $\int_0^\cdot \sigma(X_{s-})\ud L^{(M)}_s$ on the event $A_t$ is by definition~\eqref{eq:levy_events} bounded above by $1/4$, implying $X_{\underline{\theta}_k}\geq \exp(\ell+1)-1/4$.
Clearly, for $u\in[\underline{\theta}_k,\overline{\theta}_{k+1})$, we have $X_u\leq \exp(\ell+2)$.  Hence, by~\eqref{eq:levy_decomposition}, on the event $A_{t}$  we obtain
\begin{align*}
\inf_{\underline{\theta}_k\leq s\leq t\wedge \overline{\theta}_{k+1}}\hspace{-8pt}X_{s} &=
 X_{\underline{\theta}_k}+\inf_{s\in[\underline{\theta}_k, t\wedge \overline{\theta}_{k+1}]} \left\{-\mu \int_{\underline{\theta}_k}^{s} X_u\ud u + \int_{\underline{\theta}_k}^s \sigma(X_{u-})\ud L_{u}^{(M)} + \int_{\underline{\theta}_k}^s \sigma(X_{u-})\ud L_{u}^{(+)}\right\}\\
&\geq \exp(\ell+1) -1/4 - \mu\exp(\ell+2)t-1/4>\exp(\ell+1)-1>\exp(\ell),
\end{align*} 
where the second inequality follows from
 $t< 1/(2\mu\exp(\ell+2))$.
Hence, on the event $A_t$, 
the process $X$ cannot go below $\exp(\ell)$ 
during the time interval $[\underline{\theta}_k, t\wedge \overline{\theta}_{k+1}]$ for any $k\in\N$, 
implying $\P_x(\{\underline{\theta}_k\leq S_{(\ell)}\leq t\wedge \overline{\theta}_{k+1}\}\cap A_{t})=0$ for all $k\in\N$.
By~\eqref{eq:union_representation}, the claim follows.

To conclude the proof of the proposition, pick $r\in(\ell+1,\infty)$ and $x\in\{\ell+1\leq V<r\}$. Note that by Claim 1 above we have
$\{S_{(\ell)}>T^{(r)}\}\supset \{S_{(\ell)}>t\}\cap \{T^{(r)}<t\}\cap A_{t} =  \{T^{(r)}<t\}\cap A_{t}$ $\P_x$-a.s. for any $t\in(0,1/(2\mu\exp(\ell+2)))$.
Since the positive jumps of $L$ greater than one can only come from $L^{(+)}$ in~\eqref{eq:levy_decomposition}, we have 
\begin{align*}
  \{T^{(r)}<t\}\cap A_{t}  \supset  \left\{\sup_{0\leq s\leq t} (L_s-L_{s-})\geq \frac{V^{-1}(r)}{\sigma_-}\right\}\cap A_{t}=\left\{\sigma_-\sup_{0\leq s\leq t} (L_s^{(+)}-L_{s-}^{(+)})\geq V^{-1}(r)\right\}\cap A_{t}
\end{align*} 
(recall $V^{-1}(r)=\exp(r)>\exp(\ell_0)>1$). For some $t,c\in(0,\infty)$,
Claim~2 below yields
\begin{align*}
\P_x(T^{(r)}<S_{(\ell)})&\geq \P_x(\{T^{(r)}<t\}\cap A_{t}) \geq  \P_x\left(\{\sup_{0\leq s\leq t} (L_s^{(+)}-L_{s-}^{(+)})\geq \exp(r)/\sigma_-\}\cap A_t\right)
\\&\geq c\left(1-\exp(-t\nu(\exp(r)/\sigma_-,\infty))\right) > 
c\nu(\exp(r)/\sigma_-,\infty)t/2,
\end{align*}
where the constant $c\in(0,\infty)$ is such that the third inequality holds uniformly in $r\in(\ell+1,\infty)$ and $x\in\{V\geq \ell+1\}$.
Recall 
$$t\nu(\exp(r)/\sigma_-,\infty))<t\nu(\exp(\ell_0)/\sigma_-,\infty))<\nu(\exp(\ell_0)/\sigma_-,\infty))/(2\mu\exp(\ell_0+2))<1$$ for the last inequality.
%where the second inequality follows from Claims 1 and 2 %and the third inequality follows from the fact that on %the event $\{S_{(\ell)}\geq t_0\}$ we have $\{T^{(r)}%<t_0\}\supset\{\sup_{0\leq t\leq t_0}(L^{(+)}_{t}-L^{(+)}_{t-})\geq \exp(r)\}$. 
Proposition~\ref{prop:OU_sub_condition} now follows from the lower bound in \ref{example:ass_Levy}.

\noindent\textbf{Claim 2.} 
There exists  $c\in(0,\infty)$ such that $$\P_x(\{\sup_{0\leq s\leq t} (L_s^{(+)}-L_{s-}^{(+)})\geq \exp(r)/\sigma_-\}\cap A_t)>c(1-\exp(-t\nu(\exp(r)/\sigma_-,\infty)))$$ for all $x\in\{V\geq \ell+1\}$, $r\in (\ell+1,\infty)$ and  all sufficiently small $t\in(0,\infty)$. 

\noindent \textit{Proof of Claim 2.} The processes $L^{(-)}$ and $L^{(+)}$ are independent with  $\P_x(L_t^{(-)}=0)=e^{-t\nu((-\infty,-1])}$ and $\P_x(\sup_{0\leq s\leq t} (L_s^{(+)}-L_{s-}^{(+)})\geq \exp(r)/\sigma_-)=1-\exp(-t\nu(\exp(r)/\sigma_-,\infty))$ for all $t\in\RP$. For any $t>0$, let  $\mathcal{B}_t$ 
be $\sigma$-algebra generated by $(L^{(+)}_s,L^{(-)}_s)_{s\in[0,t]}$. We have to show that  there exists a constant $\tilde c\in(0,\infty)$ such that $\P_x(\sup_{0\leq s \leq t} \left|\int_0^s \sigma(X_{u-})\ud L_u^{(M)}\right| \leq 1/8\big\vert \mathcal{B}_t)>\tilde c$, for all small $t\in(0,\infty)$ and $x\in\{V\geq \ell+1\}$.
By Markov's inequality, it is sufficient to identify a constant $C\in(0,\infty)$, such that $\E_x[\sup_{0\leq s \leq t}|\int_0^s \sigma(X_{u-})\ud L_u^{(M)}|^2\big\vert\mathcal{B}_t]\leq Ct$ holds for all $t\in(0,\infty)$ and $x\in\{V\geq \ell+1\}$.

For any $y\in \R$, let the process $(Y_t^y)_{t\in\RP}$ be the unique strong solution\cite[Thm~V.6]{MR2273672} of the SDE
\begin{equation}
\label{eq:levy_decomposition2}
 Y_t^y = y -\mu\int_0^t Y_s^y\ud s + \int_0^t \sigma(Y_{s-}^y)\ud L_s^{(M)},\quad \text{for all $t\in\RP$.}
\end{equation}
For any pair of deterministic sequences  
$S=(s_k)_{k\in\N}$ and  $J=(j_k)_{k\in\N}$, such that $0<s_k\uparrow\infty$ and $j_k\in\R\setminus\{0\}$, define the process $Z$ recursively as follows:
\[
Z^{S,V}_s\coloneqq Y_s^x \1{s\in[0,s_1)}+\sum_{k\in\N}\1{s\in[s_k,s_{k+1})} Y^{Z^{S,V}_{s_k}+j_k}_{s-s_k}.
\]
%for $s\in[0,s_1)$
%$Z_s\coloneqq Y_s^x$ and for any $k\in\N$,
%we define recursively
%$Z^{S,V}_s\coloneqq Y^{Z_{s_k}+j_k}_{s-s_k}$ for $s\in[s_k,s_{k+1})$.

Note that the conditional law $(\int_0^s \sigma(X_{u-})\ud L_u^{(M)})_{s\in[0,t]}$, given the $\sigma$-algebra $\mathcal{B}_t$,
equals the law of $(\int_0^s\sigma(Z^{S,V}_{u-})\ud L_u^{(M)})_{s\in[0,t]}$ for 
the sequences $S$ and $V$ equal to the jump times and sizes of the components of the L\'evy process
$(L^{(+)}_s,L^{(-)}_s)_{s\in[0,t]}$. This is because the L\'evy-driven SDE in~\eqref{eq:levy_decomposition2}
has a unique strong solution. Since $\sigma$ is bounded,~\cite[Thm~V.66]{MR2273672} yields a constant $C\in(0,\infty)$ such that for all $x\in\R$ and any pair of deterministic sequences $S$ and $J$ satisfying the above conditions, we have
$
\E[\sup_{0\leq s \leq t}|\int_0^s\sigma(Z^{S,V}_{u-})\ud L_u^{(M)}|^2]\leq Ct$ for all $t\in\RP$. This implies that
$$
\E_x\left[\sup_{0\leq s \leq t}\left|\int_0^s \sigma(X_{u-})\ud L_u^{(M)}\right|^2\Big\vert \mathcal{B}_t\right] \leq Ct\quad\text{for all $t\in\RP$ and $x\in\R$,}
$$
which concludes the proof of Claim~2.
\end{proof}

\begin{rem}
\label{rem:levy_integrability}
Under~\ref{example:ass_Levy}, the process $\Psi\circ V(X)=(\log X)^{m_c}$ is not a submartingale as its marginals are not integrable. As discussed in Remark~\ref{rem:why_prob_L(ii)} above, this makes a submartingale argument, similar to the one in the proof of Lemma~\ref{lem:assumption_submart_exit_prob}, to deduce a lower bound on the probability $\P_x(T^{(r)}<S_{(\ell)})$ infeasible. A slight perturbation $(\log X)^{m_c-\eps}$ (for a small $\eps>0$) makes the process integrable under~\ref{example:ass_Levy}. However, $\Psi(r)=r^{m_c-\eps}$
cannot be used because the process  $(\log X)^{m_c-\eps}$
is a supermartingale for all small $\eps>0$ by~\eqref{eq:levy_inequality}--\eqref{eq:upper_bound_generagor_jumps} in the proof of Proposition~\ref{prop:OU_deterministic}\ref{prop:deterministic_levy_a}.
\end{rem}

\begin{proof}[Proof of Theorem~\ref{thm:levy_OU}]
The process $X$ is a Feller process by~\cite[Thm~1.1]{Kuhn18}, since we can represent it as a solution of the SDE $\ud X_t = x + \int_0^t \tilde \sigma(X_{s-})\ud \tilde L_s$ for the L\'evy process $\tilde L_t=(t,L_t)$ and a covariance matrix $\tilde \sigma(x)=(-\mu x,\sigma(x))^\intercal$ (note that the condition on the L\'evy measure of~\cite[Thm~1.1]{Kuhn18} holds since $\sigma$ is bounded). Moreover, irreducibility of $X$ follows from the irreducibility of the L\'evy driver $L$ in~\eqref{eq:levy_OU}. Furthermore, Harris recurrence and ergodicity follow by applying~\cite[Thm~3.2]{douc2009subgeometric} with a Lyapunov function $g_2$ and using an argument analogous to that in~\cite[Sec.~3.1]{Fort2005}.

Recall that we are working with the Lyapunov function $V =g_1$, where $g_1$ is a $C^2(\R)$ function in~\eqref{eq:log_function}. By Proposition~\ref{prop:OU_deterministic}\ref{prop:deterministic_levy_b}, there exists a constant $C_2\in(0,\infty)$ such that the function  $\varphi(1/r) \coloneqq C_2/r^{2}$, $r\in[1,\infty)$, satisfies the assumptions of
Theorem~\ref{thm:generator}\ref{generator_a}, which in turn implies  condition~\nameref{sub_drift_conditions}\ref{sub_drift_conditions(i)}. Moreover, by Proposition~\ref{prop:OU_sub_condition} the function $\Psi(r) \coloneqq r^{m_c}$, $r\in[1,\infty)$, satisfies condition~\nameref{sub_drift_conditions}\ref{sub_drift_conditions(ii)}. Note that under~\nameref{sub_drift_conditions} the function $L_{\eps,q}$ in~\eqref{eq:def_L_eps_q} (with arbitrary $q, \eps\in(0,1)$) is bounded above by
$$L_{\eps,q}(r)= r\varphi(1/r)\Psi(2r/(1-q))(\log \log r)^{\eps}\leq 
C_1 r^{m_c-1}(\log \log r)^{\eps}\qquad \text{for all $r\in[1,\infty)$}$$
and some constant $C_1\in(0,\infty)$. By Theorem~\ref{thm:invariant}, for any $\eps>0$, we obtain the lower bound $\pi(\{V\geq r\})\geq c_{\eps,q}/r^{m_c-1+\eps}$, and hence $\pi([r,\infty))\geq c_{\eps,q}'/(\log r)^{m_c-1+\eps}$, for  all $r\in[1,\infty)$ and some constants $c_{\eps,q}, c_{\eps,q}'\in(0,1)$, implying part~(a) of the theorem. Moreover, applying Theorem~\ref{thm:modulated_moments}\ref{thm:modulated_moments_b} yields part~(b) of the theorem.

Pick $\eps\in(0,1/2)$ and consider the function $h(r) \coloneqq r^{m_c-\eps}$.  Proposition~\ref{prop:OU_deterministic}\ref{prop:deterministic_levy_a} implies the inequality $\cA(h\circ V)\leq C_h'$ on $\R$ for some $C_h'\in(1,\infty)$, and Lemma~\ref{lem:bounded_generator} (with $H=h\circ V$ and $\xi\equiv C_h'$) yields a constant $C_h\in(1,\infty)$ such that  $\E_x[h\circ V(X_t)]\leq C_h(h(V(x)) + t)$ holds for all $x\in\R$ and $t\in\RP$. Thus we may apply Corollary~\ref{cor:rate} with functions $h$ and $a(r) \coloneqq  C_a r^{(1-m_c-\eps)/(m_c-\eps)} \leq c_{\eps,q}/L_{\eps,q}(h^{-1}(r))$, $r\in[1,\infty)$, and some $C_a\in(0,1)$. For each $x\in\R$ we obtain a constant $c_\TV\in(0,\infty)$, such that the lower bound $\|\P_x(X_t\in\cdot)-\pi(\cdot)\|_{\TV}\geq c_\TV t^{(1-m_c-\eps)/(1-2\eps)}$ holds for all $t\in[1,\infty)$.
\end{proof}

\subsection{Stochastic damping Hamiltonian system: proofs for Section~\ref{subsec:hamiltonian}}
\label{subsec:hamiltonian_proofs}
Let $X=(Z,Y)$ be the hypoelliptic diffusion satisfying the stochastic damping Hamiltonian system in~\eqref{eq:Hamiltonian_damping_onedim}.
Recall that by \ref{example:ass_Hamilton} we have $\inf_{z\in\R} U(z) \eqqcolon -b>-\infty$ and constants $\sigma, c\in(0,\infty) $  in~\eqref{eq:Hamiltonian_damping_onedim} satisfy $ac/\sigma^2>1/2$.
Following~\cite{Wu2001},   for $\eps\in(0,1)$ and $k\in(0,\infty)$, define a twice differentiable function %$g_{k,\eps}:\R^2\to[1,\infty)$ by
\begin{equation}
\label{eq:hamiltonian_function_g}
g_{k,\eps}(z,y) \coloneqq (y^2/2+U(z) + c(1-\eps)(zy+cz^2/2) + b+1)^k\geq 1\qquad\text{for all $(z,y)\in\R^2$.}
\end{equation} 

By It\^o's formula applied to $g(Z,Y)$, the extended generator (see Section~\ref{subsec:generator} above for definition) of the Hamiltonian system $(Z,Y)$ takes the following form: 
\begin{align}
\label{eq:genrator_hamilton}
\cA g(z,y) = \frac{1}{2}\sigma^2\partial_y^2 g(z,y) + y\partial_z g(z,y)-(cy+U'(z))\partial_y g(z,y)\qquad\text{for any  $g\in C^2(\R^2)$.}
\end{align}

We now apply  the generator $\cA$ to the function $g_{k,\eps}$.
\begin{prop}
\label{prop:deterministic_hamiltonian}
Under \ref{example:ass_Hamilton}, extended generator $\cA$  of the Hamiltonian system in~\eqref{eq:Hamiltonian_damping_onedim} satisfies the following: 
 if $\eps\in(0,1/2)$ and $k \coloneqq 1/2+ac(1-2\eps)/\sigma^2$, the constant $C \coloneqq k\eps a c/2>0$ satisfies   $\cA g_{k,\eps}(z,y)\leq -C (g_{k,\eps}(z,y))^{(k-1)/k}$ for all $(z,y)$ outside of some large compact set. Moreover, $\cA g_{k,\eps}\leq C'$ on $\R^2$ for some positive constant $C'>0$.
\end{prop}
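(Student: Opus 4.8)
\emph{Setup.} The plan is to set $g_{k,\eps}=\Phi^{k}$ with $\Phi(z,y):=\tfrac12 y^{2}+U(z)+b+1+c(1-\eps)(zy+\tfrac{c}{2}z^{2})$ (recall $b=-\inf_{z}U(z)$), and to first check that $g_{k,\eps}$ is a legitimate Lyapunov function. Completing the square in $y$,
\[
\Phi(z,y)=\tfrac12\bigl(y+c(1-\eps)z\bigr)^{2}+\tfrac{c^{2}\eps(1-\eps)}{2}z^{2}+\bigl(U(z)+b+1\bigr)\ge 1,
\]
since $\eps\in(0,1)$ and $U(z)+b\ge0$; hence $g_{k,\eps}=\Phi^{k}\ge1$ is well defined and, as $U\in C^{2}(\R)$, twice differentiable. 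Write $\Theta(z):=\tfrac{c^{2}\eps(1-\eps)}{2}z^{2}+U(z)+b+1\ge1$, so that $\Phi=\tfrac12(\partial_{y}\Phi)^{2}+\Theta(z)$ with $\partial_{y}\Phi=y+c(1-\eps)z$.

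\emph{Generator computation.} As $\cA$ in~\eqref{eq:genrator_hamilton} has only a $\partial_{y}^{2}$ second-order part, the chain rule gives $\cA(\Phi^{k})=k\Phi^{k-1}\cA\Phi+\tfrac{\sigma^{2}}{2}k(k-1)\Phi^{k-2}(\partial_{y}\Phi)^{2}$. A direct calculation, in which the $U'(z)$- and $zy$-terms cancel, yields $\cA\Phi=\tfrac{\sigma^{2}}{2}-c\eps y^{2}-c(1-\eps)zU'(z)$. Substituting $(\partial_{y}\Phi)^{2}=2\Phi-2\Theta(z)$ and using that the choice $k=\tfrac12+ac(1-2\eps)/\sigma^{2}$ is equivalent to $\tfrac{\sigma^{2}}{2}+\sigma^{2}(k-1)=ac(1-2\eps)$, one obtains
\[
\cA(\Phi^{k})=k\Phi^{k-1}\bigl[ac(1-2\eps)-c\eps y^{2}-c(1-\eps)zU'(z)\bigr]-\sigma^{2}k(k-1)\,\Theta(z)\,\Phi^{k-2}.
\]

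\emph{Estimate outside a compact set.} By~\ref{example:ass_Hamilton}, $zU'(z)=a+o(1)$ as $|z|\to\infty$ and $zU'(z)$ is bounded on $\R$; thus the bracket above tends to $-ac\eps-c\eps y^{2}\le-ac\eps$ as $|z|\to\infty$, and to $-\infty$ when $|z|$ is bounded while $|y|\to\infty$. When $k\ge1$ (the regime relevant to Theorem~\ref{thm:hamiltonian}, where $\eps\downarrow0$), the term $-\sigma^{2}k(k-1)\Theta\Phi^{k-2}$ is $\le0$ and may be dropped, so on a sufficiently large ball $\cA g_{k,\eps}\le-\tfrac{kac\eps}{2}\Phi^{k-1}=-C\,g_{k,\eps}^{(k-1)/k}$ with $C=kac\eps/2$ (note $\Phi^{k-1}=g_{k,\eps}^{(k-1)/k}$). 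When $k<1$ this term is positive and is controlled by splitting $\R^{2}$ (minus a compact) according to $(\partial_{y}\Phi)^{2}\gtrless\Phi$: on $\{(\partial_{y}\Phi)^{2}\ge\Phi\}$ one has $\Theta=\Phi-\tfrac12(\partial_{y}\Phi)^{2}\le\tfrac12\Phi$, so the positive term is $\le\tfrac{\sigma^{2}k(1-k)}{2}\Phi^{k-1}$ and the whole expression is at most $k\Phi^{k-1}\bigl[\tfrac{ac(1-2\eps)}{2}+\tfrac{\sigma^{2}}{4}-c\eps y^{2}-c(1-\eps)zU'(z)\bigr]$, which tends to $\tfrac{\sigma^{2}}{4}-\tfrac{ac}{2}-c\eps y^{2}<0$ as $|z|\to\infty$ because $ac/\sigma^{2}>1/2$; on $\{(\partial_{y}\Phi)^{2}<\Phi\}$ the inequality forces $|y+c(1-\eps)z|<c\sqrt{\eps(1-\eps)}\,|z|\,(1+o(1))$ for large $|z|$, hence $|y|\ge c\sqrt{1-\eps}\,(\sqrt{1-\eps}-\sqrt{\eps})\,|z|\,(1+o(1))>0$ — this is where $\eps<\tfrac12$ enters — so $-c\eps y^{2}$ dominates and $\cA(\Phi^{k})\to-\infty$ relative to $\Phi^{k-1}$. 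Either way one obtains a bound of the required form $\cA g_{k,\eps}\le-C\,g_{k,\eps}^{(k-1)/k}$ outside a large compact. Finally, $\cA g_{k,\eps}$ is continuous on $\R^{2}$ (since $\Phi\ge1>0$ and $U\in C^{2}$), hence bounded on that compact, and negative off it; so $\cA g_{k,\eps}\le C'$ on all of $\R^{2}$ for some $C'>0$.

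\emph{Main obstacle.} The crux is the bracketed estimate, which requires balancing two negative mechanisms: the friction term $-c(1-\eps)zU'(z)\to-ac(1-\eps)$, effective only for large $|z|$, and the quadratic term $-c\eps y^{2}$, which degenerates as $y\to0$. The decisive observation is that the degenerate direction $\{y\approx0,\ |z|\to\infty\}$ satisfies $(\partial_{y}\Phi)^{2}/\Phi\to2(1-\eps)>1$, so it lies in the region governed by the friction mechanism; the region split is arranged precisely so that one of the two mechanisms always applies, and the threshold $\eps<\tfrac12$ is exactly what makes this dichotomy consistent. The borderline regime $k<1$ (equivalently $\eps$ not small) is the only one needing this finer bookkeeping, and pinning down the stated constant $C=kac\eps/2$ there is the most delicate point; for the application to Theorem~\ref{thm:hamiltonian} it is the clean case $k\ge1$ that matters.
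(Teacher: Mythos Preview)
Your argument is essentially the paper's: both compute $\cA g_{1,\eps}=\sigma^{2}/2-c\eps y^{2}-c(1-\eps)zU'(z)$, apply the chain rule $\cA(\Phi^{k})=k\Phi^{k-1}\cA\Phi+\tfrac{\sigma^{2}}{2}k(k-1)\Phi^{k-2}(\partial_{y}\Phi)^{2}$, and estimate the resulting bracket via the dichotomy ``$|z|$ large versus $|z|$ bounded and $|y|$ large''. The only difference is in how the second-order term is handled. The paper uses the single inequality $(\partial_{y}g_{1,\eps})^{2}/(2g_{1,\eps})\le 1$ to absorb it into the bracket as $+\sigma^{2}(k-1)$; this is exactly your ``drop $-\sigma^{2}k(k-1)\Theta\Phi^{k-2}$'' step, and both yield the stated constant $C=k\eps ac/2$ immediately when $k\ge1$. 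You are right that this replacement goes the wrong way when $k<1$, and your region-splitting via $(\partial_{y}\Phi)^{2}\gtrless\Phi$ is a genuine refinement that the paper omits; the paper's proof, as written, tacitly needs $k\ge1$. Since the only application (Theorem~\ref{thm:hamiltonian}) fixes $\eps$ small enough that $k>1$, this does not affect the paper's results, but your observation and the additional argument are correct and worth keeping.
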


\begin{proof}
Fix $\eps\in(0,1/2)$. Applying the generator $\cA$ in~\eqref{eq:genrator_hamilton} to the function $g_{1,\eps}$ defined in~\eqref{eq:hamiltonian_function_g} yields
\begin{equation*}
\label{eq:hamiltonion_g_generator}
\cA g_{1,\eps}(z,y) = \sigma^2/2 -\eps c y^2 - (1-\eps)cU'(z)z.
\end{equation*}
Since  $\cA g_{k,\eps} = kg_{k-1,\eps}\cA g_{1,\eps}+(\sigma^2/2)k(k-1) g_{k-2,\eps}(\partial_y g_{1,\eps})^2$
and $g_{k-1,\eps}=g_{1,\eps}g_{k-2,\eps}$
 we obtain
\begin{align}
\nonumber
\cA g_{k,\eps}(z,y)  &= kg_{k-1,\eps}(z,y)\left(\cA g_{1,\eps}(z,y)+\sigma^2(k-1)(\partial_y g_{1,\eps}(x,y))^2/(2g_{1,\eps}(z,y))\right) \\
\nonumber &\leq kg_{k-1,\eps}(z,y)(\sigma^2/2-\eps cy^2-c(1-\eps)U'(z)z + \sigma^2(k-1))\\
\nonumber &= kg_{k-1,\eps}(z,y)((k-1/2)\sigma^2-\eps cy^2-c(1-\eps)U'(z)z) \\
\label{eq:hamilton_drift} &\leq -k\eps a cg_{k-1,\eps}(z,y)/2,\qquad\text{as $z^2+y^2\to\infty$,}
\end{align}
where the first inequality follows since
$$\frac{(\partial_y g_{1,\eps}(z,y))^2}{2g_{1,\eps}(z,y)} \leq (y+c(1-\eps )z)^2/((y+c(1-\eps)z)^2 + c^2(1-\eps)\eps z^2+1) \leq 1 \text{ for all $(z,y)\in \R^2$}.$$
The inequality in \eqref{eq:hamilton_drift} follows from the fact that, for $z^2+y^2$ sufficiently large, the following holds: either $|z|$ is large and hence by~\ref{example:ass_Hamilton}, $c(1-\eps)U'(z)z\geq ac(1-3/2\eps)$ or $|z|$ is small and $|y|$ is large and then $\eps c y^2\geq  (k-1/2)\sigma^2 + zU'(z) +k\eps ac$, since $zU'(z)$ is bounded for $z\in \R$ by \ref{example:ass_Hamilton}.
Moreover, since $\cA g_{k,\eps}$ is continuous, it is also globally bounded from above. 
\end{proof}

Recall that the solution $X$ of~\eqref{eq:Hamiltonian_damping_onedim} is a strong Markov process, all the skeletons are irreducible with respect to the Lebesgue measure and compact sets are petite~\cite[Lem. 1.1, Prop. 1.2]{Wu2001}.

\begin{proof}[Proof of Theorem~\ref{thm:hamiltonian}]
By \ref{example:ass_Hamilton}, we have $1<1/2+ac/\sigma^2$. Thus, we may choose $\eps\in(0,1/2)$  such that $k \coloneqq 1/2+ac(1-2\eps)/\sigma^2 >1$. Let $V\coloneqq g_{k,\eps}$ be the function given in~\eqref{eq:hamiltonian_function_g}. By Proposition~\ref{prop:deterministic_hamiltonian} there exist positive constants $C,b>0$, such  that the following inequality $\cA V\leq -\phi\circ V + b\mathbbm{1}_{D}$ holds for a sufficiently large compact (hence petite) set $D$ and an increasing function $\phi(r) \coloneqq Cr^{(k-1)/k}$. Then, by~\cite[Thm~3.4]{douc2009subgeometric}, the assumptions of~\cite[Thm~3.2]{douc2009subgeometric} (i.e. the drift condition in~\eqref{eq:douc_drif_condition}) are satisfied with $(V,\phi,D,b)$. 

Recall that for any $\eta\in[0,1)$, we have $(r_1/\eta)^{\eta}(r_2/(1-\eta))^{1-\eta}\leq r_1+r_2$ for all $r_1,r_2\in[1,\infty)$, where for $\eta=0$ we take $(1/\eta)^{\eta} = 1$. Note that $g_{\eta(k-1),\eps}= (\phi \circ V)^\eta$.
Thus, by~\cite[Thm~3.2]{douc2009subgeometric}, for every $\eta\in[0,1)$ and $x\in\R^2$, there exists $C_\eta\in(0,\infty)$ such that $\|\P_{x}(X_t\in\cdot)-\pi(\cdot)\|_{g_{\eta(k-1),\eps}}\leq C_{\eta}/t^{(k-1)(1-\eta)}$ holds for all 
$t\in[1,\infty)$. Moreover, by the definition of $g_{k,\eps}$ in~\eqref{eq:hamiltonian_function_g}, it holds that 
\begin{align}
\label{eq:hamiltonian_lower_g}
(\phi \circ V)^\eta(z,y) = g_{\eta(k-1),\eps}(z,y) & \geq (1+c^2(1-\eps)\eps z^2/2)^{\eta(k-1)} \\
& \nonumber\geq \tilde C (1+|z|^{2(k-1)\eta})= f_{2(k-1)\eta}(x)
\end{align}
for some 
$\tilde C\in(0,1)$ and all $x=(z,y)\in\R^2$, where $f_m(z,y) = 1+|z|^m$ was defined in Theorem~\ref{thm:hamiltonian} for any $m\in[0,2(k-1))$. Hence the upper bound follows. 

%Let $m\in[0,2(k-1))$ and recall %$f_m(z,y) = 1+|z|^m$.
As in~\eqref{eq:hamiltonian_lower_g}, we have 
$H(z,y) \coloneqq C_H(1+c^2(1-\eps)\eps z^2/2)^k \leq C_Hg_{k,\eps}(z,y)$ for all $(z,y)\in\R^2$. The constant $C_H\in(1,\infty)$ is chosen so that $G_m\coloneqq H/f_m\geq 1$ on $\R^2$ for any $m\in[0,2(k-1))$.
By Proposition~\ref{prop:deterministic_hamiltonian} we have $\cA g_{k,\eps}(z,y)\leq C'$ on $\R^2$ for some $C'\in(0,\infty)$. By Lemma~\ref{lem:bounded_generator} 
there exists $C''\in(0,\infty)$, such that $\E_x[H(X_t)]\leq \E_{x}[C_Hg_{k,\eps}(X_t)]\leq C''(g_{k,\eps}(x)+t)\eqqcolon v(x,t)$ holds for all $x\in\R^2$ and $t\in[1,\infty)$.
By~\cite{Wu2001}, $X = (Z,Y)$ admits an invariant measure $\pi$ with density proportional to $(z,y)\mapsto \exp(-2c/\sigma^2(y^2/2 + U(z)))$. Moreover, by \ref{example:ass_Hamilton}, we have $U(z)\leq a(1+\eps)\log |z|$ for all $z\in\R$ outside of some compact set.
Thus, $\int_\R\pi( z,\ud y) \geq c'|z|^{-2ca(1+\eps)/\sigma^2}$ for some constant $c'\in(0,1)$ and all $|z|$ sufficiently large.
We have  
$\liminf_{|z|\to\infty}G_m(z,y)/|z|^{2k-m}>0$ since $G_m=H/f_m$. Thus, for all large $r\in\RP$, there exists $c_\pi>0$ such that (recall $2-2k=1-2ac(1-2\eps)/\sigma^2$)
$$
\int_{\{G_m\geq r\}} f_m(z,y)\pi(\ud z,\ud y) \geq  \int_{\{|z|\geq r^{1/(2k-m)}\}} c' |z|^{m-2ca(1+\eps)/\sigma^2}\ud z \geq c_\pi r^{(m+2-2k-6ca\eps/\sigma^2)/(2k-m)}.
$$ 
Applying Lemma~\ref{lem:lower_bound_f_convergence_rate} with functions $f_m$, $H$, $G_m$, $v$  and $a(r) \coloneqq  c_\pi r^{(m+2-2k-6ca\eps/\sigma^2)/(2k-m)}$, for every $x\in \R^2$ yields a positive constant $c_m$ satisfying $$\|\P_{x}(X_t\in\cdot)-\pi(\cdot)\|_{f_m}\geq c_{m}t^{(m+2-2k-6ca\eps/\sigma^2)/(2(1-3ca\eps/\sigma^2))} \quad\text{for all $t\in[1,\infty)$.}$$ Since $\eps>0$ is arbitrary, the theorem follows. 
\end{proof}

\section{Conclusion}
\label{sec:conclusion}

This papers develops a general theory for establishing lower bounds in $f$-variation for Markov processes in continuous time. The applications discussed in Section~\ref{sec:examples}
demonstrate the wide applicability of our results. Nevertheless, many questions remain open. We now discuss briefly some interesting possible further directions of research.

\paragraph{\textbf{Lower bounds for the convergence in Wasserstein metrics}}
Lyapunov drift conditions have also been developed for establishing upper bounds on the rates of convergence in Wasserstein distance. Applications include convergence of solutions of certain stochastic delay differential equations on infinite-dimensional state spaces~\cite{Mattingly11}. Subgeometric upper bounds in this context have also received attention~\cite{Butkovsky}. 
It is  feasible that our $\mathbf{L}$-drift conditions, suitably adapted to this setting, could yield  lower bounds on the return times to sets bounded in an appropriate metric.
Due to the lack of local compactness in such state spaces, the notion of petite sets from~\cite{MeynTweedie} has been replaced by a weaker notion of metric-dependent small sets, suitable for applications in infinite-dimensional settings.  A natural interesting question is whether our lower bounds on modulated moments, derived via suitably adapted $\mathbf{L}$-drift conditions, could be used to characterise the decay of the tail of the invariant measure and the rate of convergence in the Wasserstein distance. Put differently, can the lower bound Lyapunov drift conditions, developed in this paper for locally compact state spaces, be adapted to the infinite-dimensional setting?

\paragraph{\textbf{Ergodic averages.}} The quantification of the asymptotic behavior of an ergodic average $\frac{1}{t}\int_0^tX_s\ud s$, as $t\to\infty$, of a Markov process $X$ represents a fundamental problem in probability and beyond. Results on the asymptotic behaviour of  ergodic averages, such as the central limit theorem, as well as moderate and large deviations, are commonly derived using Lyapunov drift conditions and associated upper bounds on the modulated moments~\cite{douc2009subgeometric,Douc08}. For example,  the growth rate of the speed function in  moderate deviations of additive functionals of Markov processes is bounded above by the tails of the modulated moments~\cite[Thm~3.3]{douc2009subgeometric}. In Theorem~\ref{thm:modulated_moments} of the present paper we establish asymptotically matching lower bounds on the modulated moments for a wide range of models (Section~\ref{sec:examples} above). It is thus reasonable to expect that our lower  bounds have a role to play in establishing the optimal growth rate of the speed function in moderate deviations.

\paragraph{\textbf{Discrete-time Markov chains}}
To the best of our knowledge, there are no general results for establishing lower bounds on the rates of convergence to the invariant measure for discrete-time Markov chains in uncountable state spaces. It is natural to expect that the results presented in this paper can be readily adapted to establish convergence in the discrete-time setting. The assumption \nameref{sub_drift_conditions} includes a 
 supermartingale condition, exit probability, ergodicity, positive Harris recurrence and non-confinement, all of which have natural counterparts in discrete time. Moreover, since all the tools used in our proofs in Section~\ref{sec:proofs} are also available in discrete time (note that the results of Section~\ref{sec:return_times} hold for discrete-time chains already, if viewed as piecewise constant continuous-time Markov chains), the conclusions of our main results (Theorems~\ref{thm:invariant},~\ref{thm:f_rate} and~\ref{thm:modulated_moments}) are expected to hold in this setting.

\paragraph{\textbf{Application to Markov chain Monte Carlo}}
Markov chain Monte Carlo (MCMC) algorithms represent a major area of application for ergodic Markov processes~\cite{MeynTweedie,Moulin18}. In particular, upper bounds on the convergence rates of MCMC algorithms are often obtained using Lyapunov drift conditions.  
Knowing whether a convergence rate of one algorithm is better than that of another can however not be deduced from the corresponding pair of Lyapunov functions used to obtain the two upper bounds on the rates.
Our results on lower bounds for the convergence rate open the door for such comparisons of MCMC algorithms.

\appendix

\section{Non-confinement of irreducible Feller continuous processes}
A strong Markov process $X=(X_t)_{t\in\RP}$ on a locally compact metric state space $\cX$ is \textit{Feller continuous} if the function $x\mapsto\E_x[f(X_t)]$ is continuous for all $t\in(0,\infty)$ and continuous bounded functions $f:\cX\to\RP$. 

\begin{lem}
\label{lem:non_confinement}
Let $V:\cX\to[1,\infty)$ be continuous and such that $\{V\leq r_0\}$ is compact for all $r_0\in(1,\infty)$. Assume that the process $X$ is Feller continuous and that for every $r_0\in(1,\infty)$ there exists $k_0\in(0,\infty)$ such that $\P_x(V(X_{k_0})> r_0)>0$ for all $x\in\cX$. Then, we have $\P_x(\limsup_{t\to\infty}V(X_t)=\infty) = 1$.
\end{lem}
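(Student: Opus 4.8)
The plan is to prove the statement directly (without contradiction): for each fixed threshold $r_0$ I will show that $V(X_t)$ exceeds $r_0$ at arbitrarily large times $\P_x$-a.s., and then intersect over a sequence of thresholds tending to infinity.

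First I would fix $r_0\in(1,\infty)$ and take $k_0\in(0,\infty)$ as supplied by the hypothesis. The crucial point — and the only genuinely non-routine step — is to upgrade the pointwise positivity $\P_x(V(X_{k_0})>r_0)>0$ to a \emph{uniform} positive lower bound over the compact sublevel set $\{V\le r_0\}$. Since $V$ is continuous, $\{V>r_0\}$ is open, so its indicator is lower semicontinuous and may be written as an increasing pointwise limit of nonnegative bounded continuous functions $f_j$ on the metric space $\cX$ (for instance $f_j=\min(1,j\,\dist(\cdot,\{V\le r_0\}))$, assuming $\{V\le r_0\}\neq\emptyset$; if this set is empty the assertion of the lemma for this level is trivial). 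By monotone convergence and Feller continuity, $g(x):=\P_x(V(X_{k_0})>r_0)=\sup_j\E_x[f_j(X_{k_0})]$ is a supremum of continuous functions, hence lower semicontinuous; a lower semicontinuous function attains its infimum on a compact set, so $\epsilon_0:=\inf_{x\in\{V\le r_0\}}g(x)$ is attained and therefore strictly positive.

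Next I would pass to the skeleton chain $Y_n:=X_{nk_0}$ and set $B_n:=\{V(Y_n)\le r_0\}$. By the Markov property at the deterministic time $nk_0$, on the event $B_n$ one has $\P_x(B_{n+1}\mid\cF_{nk_0})=1-g(Y_n)\le 1-\epsilon_0$. Conditioning successively on $\cF_{(M+k-1)k_0},\dots,\cF_{Mk_0}$ and using $\bigcap_{n=M}^{M+k-1}B_n\subseteq B_{M+k-1}$ at each step yields $\P_x\big(\bigcap_{n=M}^{M+k}B_n\big)\le(1-\epsilon_0)^k\to 0$ as $k\to\infty$, so $\P_x\big(\bigcap_{n\ge M}B_n\big)=0$; a countable union over $M\in\N$ then gives that $\{V(X_{nk_0})>r_0 \text{ infinitely often}\}$ has probability one, which forces $\limsup_{t\to\infty}V(X_t)\ge r_0$ $\P_x$-a.s.

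Finally I would apply the construction above with $r_0=m$ and the corresponding time $k_m$ for each $m\in\N$: each instance gives $\P_x(\limsup_{t\to\infty}V(X_t)\ge m)=1$, and intersecting over $m\in\N$ yields $\P_x(\limsup_{t\to\infty}V(X_t)=\infty)=1$. The hard part is really just the lower-semicontinuity-plus-compactness argument producing $\epsilon_0>0$ uniformly on the sublevel set; once that is in place, the remainder is a routine geometric-decay (conditional Borel--Cantelli) estimate for the skeleton chain.
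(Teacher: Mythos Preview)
Your proof is correct and follows essentially the same approach as the paper: both reduce to the skeleton chain $(X_{nk_0})$, establish a uniform lower bound $\epsilon_0>0$ for $\P_x(V(X_{k_0})>r_0)$ over the compact sublevel set $\{V\le r_0\}$, and then use the Markov property to obtain geometric decay of $\P_x\bigl(\bigcap_{n=M}^{M+k}\{V(X_{nk_0})\le r_0\}\bigr)$. Your lower-semicontinuity argument for the uniform bound is in fact more explicit than the paper's, which simply asserts the positivity of the infimum from Feller continuity and compactness without spelling out why $x\mapsto\P_x(V(X_{k_0})>r_0)$ is lower semicontinuous.
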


In all our examples the processes will be Feller continuous and irreducible with respect to the $d$-dimensional Lebesgue measure, making the conditions of Lemma~\ref{lem:non_confinement} easy to check.

\begin{proof}[Proof of Lemma~\ref{lem:non_confinement}]
Pick an arbitrary $r_0\in(1,\infty)$. We will show that $\P_x(\limsup_{t\to\infty} V(X_t)\leq r_0) = 0$ holds for every $x\in\cX$. Pick $x\in\cX$ and estimate
\begin{align}
\nonumber\P_x(\limsup_{t\to\infty}V(X_t)\leq r_0) &\leq \P_x(\limsup_{n\to\infty}V(X_{nk_0})\leq r_0) =\P_x(\cup_{j=1}^{\infty} \cap_{n=j}^{\infty}\{V(X_{k_0n})\leq r_0\}) \\
\label{eq:non_confinment_estimtate}&\leq\lim_{j \to\infty}\P_x(\cap_{n=j}^{\infty}\{V(X_{k_0n})\leq r_0\})
\end{align} 
where we have used the continuity from below property of the measure $\P_x$.
Since $\P_{x'}(V(X_{k_0})>r_0)>0$ for every $x'\in\cX$, the set $\{V\leq r_0\}$ is compact, and the process $X$ is Feller continuous it follow that $\inf_{x'\in \{V\leq r_0\}}\P_{x'}(V(X_{k_0})>r_0) \eqqcolon \eps>0$. Thus, by recurrent application Markov property at times $nk_0$ for $j_1,j_2\in\N$ satisfying $j_2>j_2$, we have $\P_x(\cap_{n = j_1}^{j_2}\{V(X_{nk_0})\leq r_0\}) \leq (1-\eps)^{j_2-j_1}$. The continuity from above of the measure $\P_x$, implies that for any $j\in\N$ 
$$
\P_x(\cap_{k=j}^{\infty}\{V(X_{km})\leq n_0\}) = \lim_{j'\to\infty}\P_x(\cap_{k=j}^{j'}\{V(X_{km})\leq n_0\}) \leq\lim_{j'\to\infty} (1-\eps)^{j'-j} = 0.
$$
Combining the inequality in the above display with~\eqref{eq:non_confinment_estimtate} concludes the proof.
\end{proof}

\section*{Acknowledgements}
\addcontentsline{toc}{section}{Acknowledgements}
We thank Martin Hairer for useful comments that improved the presentation of the paper and for bringing 
references~\cite{Hairer09,Wonham66} to our attention.
MB and AM are supported by  EPSRC grant  EP/V009478/1. AM is also  supported by
 EPSRC grant EP/W006227/1.
%and, through The Alan Turing
%Institute, by~EP/X03870X/1.

\bibliography{Lower_convergence}
\bibliographystyle{amsplain}

\end{document}